\documentclass[a4paper, reqno, 12pt]{amsart}

\usepackage[usenames,dvipsnames]{color}
\usepackage{amsthm,amsfonts,amssymb,amsmath,amsxtra}
\usepackage[all]{xy}
\SelectTips{cm}{}
\usepackage{xr-hyper}
\usepackage[colorlinks=false,
   citecolor=Black,
   linkcolor=Red,
   urlcolor=Blue]{hyperref}
\usepackage{verbatim}

\usepackage[margin=1.25in]{geometry}
\usepackage{mathrsfs}

\RequirePackage{xspace}
\RequirePackage{etoolbox}
\RequirePackage{varwidth}
\RequirePackage{enumitem}
\RequirePackage{tensor}
\RequirePackage{mathtools}
\RequirePackage{longtable}
\RequirePackage{multirow}
\RequirePackage{tikz}
\usetikzlibrary{arrows.meta}
\RequirePackage{dynkin-diagrams}

\def\ge{\geqslant}
\def\le{\leqslant}
\def\a{\alpha}

\def\d{\delta}

\def\t{\tau}
\def\th{\theta}

\def\v{\vartheta}

\def\i{^{-1}}

\def\torusO#1{\mathcal T_{#1}}

\def\<{\langle}
\def\>{\rangle}

\newcommand{\BC}{\ensuremath{\mathbb {C}}\xspace}

\newcommand{\BF}{\ensuremath{\mathbb {F}}\xspace}
\newcommand{{\BG}}{\ensuremath{\mathbb {G}}\xspace}

\newcommand{{\BK}}{\ensuremath{\mathbb {K}}\xspace}

\newcommand{\BQ}{\ensuremath{\mathbb {Q}}\xspace}

\newcommand{\BZ}{\ensuremath{\mathbb {Z}}\xspace}

\newcommand{\CD}{\ensuremath{\mathcal {D}}\xspace}

\newcommand{\CO}{\ensuremath{\mathcal {O}}\xspace}

\newcommand{\CT}{\ensuremath{\mathcal {T}}\xspace}

\newcommand{\Ad}{{\mathrm{Ad}}}

\DeclareMathOperator{\Aut}{Aut}

\newcommand{\Cl}{{\mathrm{Cl}}}

\DeclareMathOperator{\diag}{diag}

\DeclareMathOperator{\End}{End}

\DeclareMathOperator{\Hom}{Hom}

\newcommand{\Ind}{{\mathrm{Ind}}}

\DeclareMathOperator{\tr}{tr}

\DeclareMathOperator{\Irr}{Irr}

\def\kk{\mathbf k}
\DeclareMathOperator{\supp}{supp}

\newtheorem{theorem}{Theorem}
\newtheorem{alphatheorem}{Theorem}

\newtheorem{proposition}[theorem]{Proposition}
\newtheorem{lemma}[theorem]{Lemma}

\newtheorem{corollary}[theorem]{Corollary}

\theoremstyle{definition}
\newtheorem{definition}[theorem]{Definition}

\newtheorem{remark}[theorem]{Remark}

\numberwithin{equation}{section}
\numberwithin{theorem}{section}

\setitemize[0]{leftmargin=*,itemsep=\the\smallskipamount}
\setenumerate[0]{leftmargin=*,itemsep=\the\smallskipamount}

\renewcommand{\to}{%
   \ifbool{@display}{\longrightarrow}{\rightarrow}%
   }
\let\shortmapsto\mapsto
\renewcommand{\mapsto}{%
   \ifbool{@display}{\longmapsto}{\shortmapsto}%
   }
\newlength{\olen}
\newlength{\ulen}
\newlength{\xlen}
\newcommand{\xra}[2][]{%
   \ifbool{@display}%
      {\settowidth{\olen}{$\overset{#2}{\longrightarrow}$}%
       \settowidth{\ulen}{$\underset{#1}{\longrightarrow}$}%
       \settowidth{\xlen}{$\xrightarrow[#1]{#2}$}%
       \ifdimgreater{\olen}{\xlen}%
          {\underset{#1}{\overset{#2}{\longrightarrow}}}%
          {\ifdimgreater{\ulen}{\xlen}%
             {\underset{#1}{\overset{#2}{\longrightarrow}}}
             {\xrightarrow[#1]{#2}}}}%
      {\xrightarrow[#1]{#2}}
   }
\makeatother
\newcommand{\xyra}[2][]{%
   \settowidth{\xlen}{$\xrightarrow[#1]{#2}$}%
   \ifbool{@display}%
      {\settowidth{\olen}{$\overset{#2}{\longrightarrow}$}%
       \settowidth{\ulen}{$\underset{#1}{\longrightarrow}$}%
       \ifdimgreater{\olen}{\xlen}%
          {\mathrel{\xymatrix@M=.12ex@C=3.2ex{\ar[r]^-{#2}_-{#1} &}}}%
          {\ifdimgreater{\ulen}{\xlen}%
             {\mathrel{\xymatrix@M=.12ex@C=3.2ex{\ar[r]^-{#2}_-{#1} &}}}
             {\mathrel{\xymatrix@M=.12ex@C=\the\xlen{\ar[r]^-{#2}_-{#1} &}}}}}%
      {\mathrel{\xymatrix@M=.12ex@C=\the\xlen{\ar[r]^-{#2}_-{#1} &}}}%
   }
\makeatletter
\newcommand{\xla}[2][]{%
   \ifbool{@display}%
      {\settowidth{\olen}{$\overset{#2}{\longleftarrow}$}%
       \settowidth{\ulen}{$\underset{#1}{\longleftarrow}$}%
       \settowidth{\xlen}{$\xleftarrow[#1]{#2}$}%
       \ifdimgreater{\olen}{\xlen}%
          {\underset{#1}{\overset{#2}{\longleftarrow}}}%
          {\ifdimgreater{\ulen}{\xlen}%
             {\underset{#1}{\overset{#2}{\longleftarrow}}}
             {\xleftarrow[#1]{#2}}}}%
      {\xleftarrow[#1]{#2}}
   }
\newcommand{\isoarrow}{%
   \ifbool{@display}{\overset{\sim}{\longrightarrow}}{\xrightarrow\sim}%
   }
\newcommand\q{\mathbf q}

\begin{document}

\title[]{Cocenter of Hecke algebras of Kac-Moody groups}

\author[Xuhua He]{Xuhua He}
\address{Department of Mathematics and New Cornerstone Science Laboratory, The University of Hong Kong, Pokfulam, Hong Kong, Hong Kong SAR, China}
\email{xuhuahe@hku.hk}

\author[Felix Schremmer]{Felix Schremmer}
\address{School of Mathematical Sciences, Xiamen University, 361005, Xiamen, P.\ R.\ China}
\email{schremmer@xmu.edu.cn}

\thanks{}

\keywords{Hecke algebras, Kac-Moody groups, Cocenter, Orbital Integral, Deligne-Lusztig theory}
\subjclass[2020]{Primary 20C08; Secondary 20G44, 20F55, 22D15, 14M15.}


\begin{abstract}
Let $H$ be the generic Hecke algebra over $\mathbb{Z}[\mathbf{q}^{\pm 1}]$ associated to a split Kac--Moody group $G$, arising as the deformation of the group algebra of its Weyl group $W$. The cocenter $\overline{H} = H/[H, H]$ encodes the trace and character theory of $H$, playing a fundamental role in representation theory and harmonic analysis. Through deep combinatorial results on cyclic reductions in Coxeter groups, each conjugacy class $\mathcal{O}$ of $W$ determines a canonical element $T_{\mathcal{O}}$ in $\overline{H}$, and these elements are known to span the cocenter. However, establishing their linear independence has remained an open problem outside of finite and affine types.

In this paper, we solve this problem: the canonical elements form a $\mathbb{Z}[\mathbf{q}^{\pm 1}]$-basis of the cocenter $\overline{H}$. Our approach differs from earlier representation-theoretic methods in finite and affine types. To construct explicit functionals that separate all conjugacy classes, we develop a new framework based on re-normalized orbital integrals. This framework synthesizes parabolic induction, traces of infinite-dimensional bimodules, and Kac--Moody harmonic analysis into an almost-dual basis for the cocenter.

As a key local ingredient, we establish a generic duality theorem for finite groups of Lie type relating the cocenter to regular semisimple conjugacy classes, and determine precisely when this pairing is non-degenerate. Finally, we deduce the existence and uniqueness of generic class polynomials for $W$, and prove a uniform ``dimension=degree'' theorem for basic Deligne--Lusztig varieties of the split Kac--Moody group $G$.
\end{abstract}

\maketitle


\section{Introduction}
\label{sec:introduction}

\subsection{Hecke algebras and the cocenter problem}\label{sec:1.1}
Kac-Moody groups are an important class of groups in representation theory, generalizing classical notions such as Lie groups, linear algebraic groups, $p$-adic groups and loop groups within a uniform framework. Their Weyl groups $W$ are crystallographic Coxeter groups, that is, generated by a finite set $S$ of simple reflections subject to the relations $s^2 = 1$ for $s\in S$, and the braid relations
\[
\underbrace{sts\cdots}_{m_{s,t}\text{ factors}} = \underbrace{tst\cdots}_{m_{s,t}\text{ factors}} \qquad (s\neq t \text{ with } m_{s,t} < \infty),
\]
with $m_{s,t} = m_{t,s} \in \{2, 3, 4, 6, \infty\}$. The generic Iwahori--Hecke algebra $H$ associated to $(W, S)$ is the deformation of the group algebra $\mathbb{Z}[W]$ over the Laurent polynomial ring $\mathbb{Z}[\q^{\pm 1}]$. It is the associative algebra defined by generators $\{T_s \mid s\in S\}$ subject to the quadratic relations $(T_s+1)(T_s-\q) = 0$ for $s\in S$ and the deformed braid relations
\[
\underbrace{T_s T_t T_s\cdots}_{m_{s,t}\text{ factors}} = \underbrace{T_t T_s T_t\cdots}_{m_{s,t}\text{ factors}} \qquad (s\neq t \text{ with } m_{s,t} < \infty).
\]
Specializing $\q = 1$ recovers the group algebra $\mathbb{Z}[W]$. When $G$ is defined and split over a finite field $\mathbb F_q$, the $\q\mapsto q$ specialization $H_{\mathbb C}
:=
H\otimes_{\mathbb Z[\q^{\pm1}]}\mathbb C$ carries a geometric meaning: Namely, $H_{\mathbb C}$ agrees with the convolution algebra of Borel-biinvariant functions on $G(\mathbb F_q)$ which are supported on finitely many double cosets. Hecke algebras and their representations play a key role in various disciplines of mathematics, such as group theory, number theory and representation theory.

Following Dehn's work of 1911, fundamental structural questions in combinatorial group theory are often formulated as algorithmic decision problems. The first two of Dehn's three problems---the word problem and the conjugacy problem---are particularly relevant to the Weyl group considered in this paper.

Dehn's first problem for $(W,S)$ is the \emph{word problem}: determining whether two expressions in the simple generators represent the same element of $W$. The deletion condition, together with Matsumoto's theorem \cite{Ma64}, shows that every expression can be reduced using cancellations $ss\rightsquigarrow 1$ and braid relations, and that any two reduced (=minimal length) expressions of the same element are connected by braid relations. In particular, the standard elements
\[
T_w := T_{s_1} \cdots T_{s_\ell} \in H \qquad (w = s_1 \cdots s_\ell \text{ reduced})
\]
are well-defined and form a free $\mathbb{Z}[\q^{\pm 1}]$-basis $\{T_w \mid w \in W\}$ of the Hecke algebra $H$. 

Dehn's second problem is the \emph{conjugacy problem}: determining whether two elements of $W$ are conjugate. The corresponding question for the Hecke algebra is to understand the structure of its cocenter, \[
\overline{H} = H / [H, H] = H / \operatorname{span}_{\mathbb{Z}[\q^{\pm1}]}\{h_{1}h_{2}-h_{2}h_{1} \mid h_{1},h_{2}\in H\},
\]
as a module over $\mathbb Z[\q^{\pm 1}]$. Since every trace on $H$ vanishes on commutators, the cocenter is the natural domain for traces and characters and is therefore a fundamental object in representation theory.

Although Krammer \cite{Kr09} proved that the conjugacy problem for $W$ is algorithmically decidable, understanding the cocenter requires considerably finer structural information. Indeed, to construct canonical elements of $\overline H$ associated with conjugacy classes $\mathcal O\subseteq W$, one encounters a basic difficulty: the images of the standard basis elements $T_w$ in $\overline H$ are not invariant under arbitrary conjugation $w\mapsto v^{-1}wv$. They are, however, invariant under the more rigid relation of \emph{strong conjugation}. Thus, lifting conjugacy classes to the cocenter requires an explicit, length-controlled procedure for reducing arbitrary elements of $\mathcal O$ to its set of minimal-length representatives
$$
\mathcal O_{\min}
=
\{w\in\mathcal O\mid
\ell(w)=\min_{v\in\mathcal O}\ell(v)\}.
$$

This reduction phenomenon was first discovered for finite Weyl groups by Geck--Pfeiffer \cite{GP93} via a case-by-case analysis. He--Nie \cite{HN1, HN2} introduced the \emph{geodesic method}, providing a uniform proof for both finite and affine Weyl groups. Marquis \cite{Ma21, Ma25} applied the He--Nie geodesic function to the Davis complex to establish the reduction theorem in the general case. These results provide the following two fundamental properties:

\begin{enumerate}
\item Every element $w\in\mathcal O$ can be transformed into an element of $\mathcal O_{\min}$ through a sequence of cyclic shifts
$$
w\longmapsto sws,
\qquad
\ell(sws)\leq\ell(w).
$$
\item Any two elements of $\mathcal O_{\min}$ are connected by a sequence of strong conjugations.
\end{enumerate}

Since strong conjugation preserves the image of $T_w$ in the cocenter, the canonical element
$$
T_{\mathcal O}
:=
T_w+[H,H]\in\overline H
\qquad
(w\in\mathcal O_{\min})
$$
is well-defined and independent of the choice of $w\in\mathcal O_{\min}$. The first property further implies that the canonical elements
$$
\{T_{\mathcal O}\mid\mathcal O\in\Cl(W)\}
$$
form a natural $\mathbb Z[\q^{\pm1}]$-spanning set of $\overline H$.

The fundamental open question is whether these canonical spanning elements $\{T_{\mathcal{O}}\}_{\mathcal{O}\in \mathrm{Cl}(W)}$ are \textbf{linearly independent}, or equivalently, whether $\{T_{\mathcal{O}}\}_{\mathcal{O}\in \mathrm{Cl}(W)}$ forms a basis of $\overline{H}$. This in particular, implies that the cocenter $\overline{H}$ is a free $\mathbb{Z}[\q^{\pm 1}]$-module. Previously, this was known only in finite type \cite{GP93} and affine type \cite{HN2}. We solve this question in full generality:

\begin{alphatheorem}[Main Theorem]\label{thm:intro_main}
Let $G$ be a split Kac--Moody group with Weyl group $(W,S)$, and let $H$ be its generic Iwahori--Hecke algebra. Then the cocenter $\overline H$ is a free $\mathbb Z[\q^{\pm1}]$-module with basis $\{T_{\mathcal O}\mid\mathcal O\in\Cl(W)\}$.
\end{alphatheorem}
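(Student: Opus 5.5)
Spanning is already established by the reduction theorem (properties (1) and (2) above), so only linear independence over $\mathbb Z[\q^{\pm1}]$ remains. The plan is to produce, for each conjugacy class $\mathcal O$, a $\mathbb Z[\q^{\pm1}]$-linear trace $f_{\mathcal O}\colon H\to R$. Here $R$ is some torsion-free extension ring, possibly after inverting or completing. The traces should make the matrix $\bigl(f_{\mathcal O}(T_{\mathcal O'})\bigr)$ \emph{almost dual}. Concretely, $f_{\mathcal O}(T_{\mathcal O})$ should be a nonzero element of $R$, and $f_{\mathcal O}(T_{\mathcal O'})=0$ unless $\mathcal O'$ lies above $\mathcal O$ in a suitable partial order. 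A natural candidate order is by minimal length together with the parabolic type of the support. Given such a triangular pairing, a finite relation $\sum a_{\mathcal O'}T_{\mathcal O'}=0$ is handled by choosing $\mathcal O$ minimal among the classes with nonzero coefficient. Applying $f_{\mathcal O}$ then gives $a_{\mathcal O}f_{\mathcal O}(T_{\mathcal O})=0$, so $a_{\mathcal O}=0$. A Noetherian or finiteness argument on the partial order then finishes the proof. It suffices to work after specializing $\q\mapsto q$ for infinitely many prime powers $q$, or over $\BC$. This is enough because a Laurent polynomial vanishing at infinitely many $q$ is zero.

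To build the $f_{\mathcal O}$, the first step is to reduce to elliptic-type classes via parabolic induction. Each class $\mathcal O$ has a minimal-length representative $w$ lying in a standard parabolic subgroup $W_J$ and elliptic there. After a finite/infinite dichotomy, the case of interest is $W_J$ finite, since conjugacy classes meeting infinite parabolics would be handled by passing to $J$ itself. The Hecke algebra $H$ is a bimodule over $H_J$. The second step is to define $f_{\mathcal O}$ as a re-normalized trace of the action of $h\in H$ on an infinite-dimensional module such as $H\otimes_{H_J}M$, composed with an $H_J$-trace on $M$ detecting the elliptic class of $w$ in $W_J$. Geometrically, when $\q=q$ this is an orbital integral over $G(\mathbb F_q)$: the integral of the Borel-biinvariant function $T_v$ against the characteristic function of the $G$-conjugacy class of a regular semisimple element of a Levi $L_J(\mathbb F_q)$ attached to $w$. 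This integral is infinite in general and must be normalized by a volume factor. The third step is to show that $T_v$ contributes only if $v$ is conjugate into $W_J$ in a length-compatible way. This should follow from a Bruhat decomposition and support computation: the orbit through regular semisimple elements of $L_J$ meets $BvB$ essentially only for $v$ conjugate to elements of $W_J$. The resulting functional is then triangular, with the diagonal entry computed inside the finite group $L_J(\mathbb F_q)$.

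The local ingredient is a duality for finite groups of Lie type. The trace of $T_w$ on the function $g\mapsto\#\{x B: x^{-1}gx\in BwB\}$, evaluated at regular semisimple classes of type $w'$, should give a pairing between $\overline{H_J}$ and regular semisimple classes. This pairing is computable via Deligne--Lusztig theory: Lefschetz numbers of $X(w)$ at regular semisimple elements involve $\sum_{\mathcal O}$ of character values of Green functions. I expect it to be nondegenerate on elliptic classes for $q$ large. The \textbf{main obstacle} is the global step. One must make the re-normalized infinite orbital integrals well defined and show that they vanish on $T_{\mathcal O'}$ for non-comparable $\mathcal O'$ in a general Kac--Moody setting, where $W$ is infinite and no Bernstein presentation exists. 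I would first try to control the supports through the Davis complex geometry used in Marquis's reduction, together with the $(B,N)$-pair combinatorics of $G(\mathbb F_q)$.
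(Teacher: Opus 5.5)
Your outline matches one strand of the paper: regular semisimple orbital integrals on $G(\mathbb F_q)$, evaluated through Deligne--Lusztig theory on a spherical Levi, combined with parabolic induction. It has two genuine gaps.

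\textbf{First gap: elliptic classes of infinite order.} Nothing in your plan detects a class whose minimal support is non-spherical, for instance any elliptic class of an infinite irreducible $W$. When $\supp(w)=S$ there is no proper parabolic to induce from, so ``passing to $J$ itself'' is circular. Moreover, every orbital integral over conjugates of regular semisimple elements of spherical Levis vanishes on such $T_{\mathcal O}$. Those elements are topologically periodic, while for a suitable minimal-length representative $w=ab$ with $b$ straight, $(BwB)^n$ lies in cells of length at least $n\ell(b)-\ell(w_{0,J})$. The paper needs a genuinely different functional here: the trace $\mathrm{tr}_{b,w}$ on the infinite-rank $H$-bimodule $M^b$ built from a straight element $b$ via Marquis's normal form. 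Its defining sum is finite only because of Krammer's theorem that $\langle w\rangle$ has finite index in $Z_W(w)$ for elliptic $w$ in irreducible indefinite type; affine type is taken from He--Nie.

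Relatedly, an induction such as $H\otimes_{H_J}M$ gives finite sums only under a normalizer condition $N_W(W_J)\subseteq W_I$ and a support condition on the local functional. This is why the paper induces only from $I=J'\cup\{s\in S\mid s\text{ commutes with }J'\}$, for $J'$ a non-spherical or $E_8$ component of the support. For the remaining spherical supports it uses orbital integrals instead of induction.

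\textbf{Second gap: local nondegeneracy fails in $E_7$ and $E_8$.} You expect the local pairing between $\overline H_J$ and regular semisimple classes to be nondegenerate on elliptic classes; this is false for every $q$. Factoring it through Lusztig's Fourier matrix, its kernel is $1$-dimensional in type $E_7$ and $2$-dimensional in type $E_8$. The two $512$-dimensional representations of $W(E_7)$ give unipotent characters with identical inner products against every $R^\theta_T$, and similarly for the $4096$-dimensional ones of $W(E_8)$. In $E_7$ the kernel vector is a combination of elliptic classes with nonzero coefficient at $\{w_0\}$. So regular semisimple orbital integrals cannot separate these classes, and when such a component sits inside a larger $W$ your plan has no substitute.

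The paper handles $E_8$ components by ordinary parabolic induction: no connected spherical subset properly contains $E_8$, so the normalizer condition holds. For $E_7$ components, $w_{0,E_7}w_{0,E_8}$ centralizes $W_{E_7}$ but is not in $W_I$, which breaks that condition. There the paper uses a truncated induction over ``valid pairs'' that exploits the $w_{0,E_7}$-factor.

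Finally, these inductions are controlled only modulo $\q-1$. You should therefore aim for $f_{\mathcal O}(T_{\mathcal O'})\equiv\delta_{\mathcal O,\mathcal O'}\pmod{\q-1}$ rather than an exactly triangular pairing.
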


The freeness of $\overline{H}$ over the Laurent polynomial ring $\mathbb{Z}[\q^{\pm 1}]$ is a subtle feature of the generic deformation. If one instead considers the Hecke algebra ${H}_{\mathbb{Z}[\q]}$ over the polynomial ring $\mathbb{Z}[\q]$, its cocenter $\overline{H}_{\mathbb{Z}[\q]} = H_{\mathbb{Z}[\q]} / [H_{\mathbb{Z}[\q]}, H_{\mathbb{Z}[\q]}]$ need not be a free $\mathbb Z[\q]$-module. For example, when $W = S_3$ with simple reflections $(1\,2)$ and $(2\,3)$, one has $$\overline{H}_{\mathbb{Z}[\q]} \cong \mathbb Z[\q]^{\oplus 3}\oplus (\mathbb Z[\q] / (\q)).$$ The torsion summand is generated by $T_{(1~2)} - T_{(2~3)}$.

\subsection{Success and obstructions of the representation-theoretic approach}
To explain why establishing the linear independence of $\{T_{\mathcal{O}}\mid \mathcal{O} \in \Cl(W)\}$ for general Kac-Moody groups requires new insights, it is illuminating to review the classical trichotomy of Kac-Moody groups and the status of Theorem~\ref{thm:intro_main} in these cases.
\begin{equation*}
    \begin{array}{ccc}
    \begin{tikzpicture}[scale=0.85, baseline=0.35cm]
        \node[circle, fill, inner sep=1.5pt, label={[font=\scriptsize]below:$s_1$}] (1) at (0,0) {};
        \node[circle, fill, inner sep=1.5pt, label={[font=\scriptsize]below:$s_2$}] (2) at (1.4,0) {};
        \draw (1) -- (2);
    \end{tikzpicture}
    & \hspace{1.2cm}
    \begin{tikzpicture}[scale=0.85, baseline=0.35cm]
        \node[circle, fill, inner sep=1.5pt, label={[font=\scriptsize]below:$s_1$}] (1) at (0,0) {};
        \node[circle, fill, inner sep=1.5pt, label={[font=\scriptsize]below:$s_2$}] (2) at (1.4,0) {};
        \draw (1) -- node[above, font=\scriptsize, inner sep=1pt] {$\infty$} (2);
    \end{tikzpicture}
    & \hspace{1.2cm}
    \begin{tikzpicture}[scale=0.85, baseline=0.35cm]
        \node[circle, fill, inner sep=1.5pt, label={[font=\scriptsize]below:$s_1$}] (1) at (0,0) {};
        \node[circle, fill, inner sep=1.5pt, label={[font=\scriptsize]below:$s_2$}] (2) at (1.4,0) {};
        \node[circle, fill, inner sep=1.5pt, label={[font=\scriptsize]above:$s_3$}] (3) at (0.7,1.05) {};
        \draw (1) -- node[below, font=\scriptsize, inner sep=1pt] {$\infty$} (2);
        \draw (2) -- node[right, font=\scriptsize, inner sep=1pt] {$\infty$} (3);
        \draw (3) -- node[left, font=\scriptsize, inner sep=1pt] {$\infty$} (1);
    \end{tikzpicture} \\
    \text{(1) Finite ($A_2$)} & \text{(2) Affine ($\widetilde{A}_1$)} & \text{(3) Indefinite (Universal rank 3)}
    \end{array}
\end{equation*}

\begin{enumerate}
    \item \textbf{Finite Type:} $G$ is a split reductive group, $W$ is finite, and the generalized Catan matrix (a generalization of the classical Cartan matrices, obtained from the Kac-Moody root datum) is positive definite (e.g., $A_2$ for $\mathrm{SL}_3$). In this setting, Tits' deformation theorem guarantees that after base change to $\BC$, $H$ becomes semisimple and isomorphic to the group algebra of $W$. With that, the linear independence of the standard generating set $\{T_{\CO}\mid \CO\in \Cl(W)\}$ of $\overline H$ follows. 
    
    Since semisimple algebras over $\BC$ are necessarily finite-dimensional, we see that $H_{\BC}$ is semisimple if and only if $W$ is finite. Hence this method has no chance to be generalized to other types.
    \item \textbf{Affine Type:} This is the case where the generalized Cartan matrix is positive semidefinite. The associated Kac-Moody group is (a central extension of) a $p$-adic group or loop group (e.g., $\widetilde{A}_1$ for $\widehat{\mathrm{SL}}_2$). Here, $H$ is no longer semisimple. Nevertheless, He and Nie \cite{HN2} established linear independence by exploiting Lusztig's asymptotic algebra ($J$-ring) and the sophisticated representation theory of affine Hecke algebras; see also Ciubotaru--He \cite{CH}.

    This approach relies on special properties of the affine setting and a very thoroughly developed theory, and is thus hardly suited for further generalization.
    \item \textbf{Indefinite Type:} This is the remaining case, where the generalized Cartan matrix is indefinite. This is the least understood case. New phenomena emerge, especially the role of imaginary roots becomes much more prominent.
\end{enumerate}

We summarize that in cases (1) and (2), Theorem \ref{thm:intro_main} was established through a \emph{representation-focused paradigm}. That is, algebraic independence is proved by evaluating the canonical basis elements against a pre-existing, comprehensive family of global representations.
For general, indefinite Kac--Moody groups and their associated infinite Coxeter groups, such an approach is infeasible.

\subsection{Overview of the proof and novel ingredients}
To overcome the absence of a generic representation theory, we develop a new \emph{orbital-integral focused architecture}. Geometrically, the naive orbital integral associated to an element $g\in G(\mathbb{F}_q)$ would be the integration
\begin{equation}\label{eq:intro_orbital_integral_geometric}
    H_{\mathbb{C}}\longrightarrow \mathbb{C}, \qquad f\longmapsto \int_{G(\mathbb{F}_q) / B(\mathbb{F}_q)} f(h^{-1} g h)\, dh.
\end{equation}
When $G$ is an infinite Kac--Moody group, such integrals often fail to converge. The purely algebraic analogue of this geometric integral is the formal trace of a bimodule endomorphism on $H$:
\begin{equation}\label{eq:intro_orbital_integral_algebraic}
    H\to \mathbb Z[\q^{\pm 1/2}],~h_1\mapsto \sum_{w\in W}(\text{$\BZ[\q^{\pm 1/2}]$-coefficient of $T_w$ in }h_1 T_w h_2),
\end{equation}
where $h_2\in H$ is a fixed element. Whenever $W$ is infinite, this expansion also yields a divergent infinite series.

The core of our strategy is to construct explicit, \emph{convergent renormalizations} of the trace functionals \eqref{eq:intro_orbital_integral_geometric} and \eqref{eq:intro_orbital_integral_algebraic} that directly separate the canonical basis elements $\{T_{\mathcal{O}} \mid \mathcal{O}\in \mathrm{Cl}(W)\}$. 

One key insight is a comparison of local versus global functionals. For any proper subset $I\subseteq S$, the standard parabolic subgroup $W_I \subseteq W$ is itself a Coxeter group with generic Hecke algebra $H_I$. We refer to linear maps $\overline{H}_I\to \mathbb{Q}[\q^{\pm 1/2}]$ as \emph{local functionals}, in contrast to \emph{global functionals} on $\overline{H}$.

\begin{itemize}
    \item \textbf{Well-Defined Parabolic Induction for Infinite Cocenters (Section \ref{sec:parabolic_induction}):} Inspired by the Harish-Chandra philosophy in finite Lie theory, we construct a normalized parabolic induction method that lifts certain local functionals $f$ to global functionals $\mathrm{Ind}_I^S(f)$. While parabolic induction on infinite groups typically produces divergent series, we prove that by imposing strict support conditions on local functionals, the induced infinite sum collapses to a well-defined \textbf{finite sum} when evaluated on any given element. This allows us to lift local trace maps to the global cocenter, reducing the separation problem to elliptic classes and spherical (finite-order) classes.
    
    \item \textbf{Elliptic Classes via Infinite-Dimensional Bimodules (Section \ref{sec:elliptic}):} For elliptic conjugacy classes, we construct infinite-dimensional $H$-bimodules $M^b$ built on Krammer's straight elements \cite{Kr09} and Marquis's normal forms \cite{Ma21, Ma25}. By analyzing the symmetry and vanishing properties of endomorphisms on $M^b$, we define explicit bimodule trace maps $\mathrm{tr}_{b,w}: \overline{H} \longrightarrow \mathbb{Z}[\q^{\pm 1}]$ that isolate elliptic conjugacy classes as convergent realizations of \eqref{eq:intro_orbital_integral_algebraic}.
    
    \item \textbf{Kac--Moody Orbital Integrals for Spherical Classes (Sections \ref{sec:orbital_integrals} \& \ref{sec:spherical}):} To separate spherical classes, we define renormalized orbital integrals on $G(\mathbb{F}_q)$ for regular semisimple classes as geometric instances of \eqref{eq:intro_orbital_integral_geometric}. We prove that these orbital integrals \emph{vanish identically on all non-spherical elements} $T_{\CO}\in \overline H$. Consequently, orbital integrals serve as the exact geometric complement to parabolic induction. Via rigid polynomial interpolation across infinitely many prime powers $q \gg 0$, these point counts assemble into generic trace maps on $\overline{H}$.
    
    \item \textbf{Surgical $E_7$ Parabolic Induction (Section \ref{sec:E7}):} We encounter an intrinsic deficiency that only arises for type $E_7$ and $E_8$ root systems (see Theorem~\ref{thm:intro_perfect_pairing} below). To overcome it, we introduce a particular renormalization of \eqref{eq:intro_orbital_integral_algebraic} that is tailor-made to lift very specific local functionals $\overline H_I\to \mathbb Z[\q^{\pm 1/2}]$ to global functionals whenever the Dynkin diagram of $I$ contains a connected component of type $E_7$.

    We remark that, while a similar deficiency occurs in type $E_8$, our parabolic induction method is already sufficient to handle these cases.
\end{itemize}

With these ingredients in place, the proof of Theorem \ref{thm:intro_main} proceeds by induction on the rank $\#S$ of $W$ (Section \ref{sec:proofOfMainThm}). More precisely, our construction yields an {\it ``almost dual basis''} for the cocenter: a family of functionals $\{F_{\mathcal{O}} : \overline{H} \longrightarrow \mathbb{Q}[\q^{\pm 1/2}] \mid \mathcal{O}\in \mathrm{Cl}(W)\}$ satisfying
\[
F_{\mathcal{O}_1}(T_{\mathcal{O}_2}) \equiv \delta_{\mathcal{O}_1,\mathcal{O}_2} \pmod{\q-1}.
\]
In other words, evaluating the matrix $[F_{\mathcal{O}_1}(T_{\mathcal{O}_2})]$ modulo $\q-1$ yields the identity matrix, whose non-zero determinant establishes that the canonical elements $\{T_{\mathcal{O}}\}$ are linearly independent over $\mathbb{Q}[\q^{\pm 1/2}]$. This immediately implies the freeness of $\overline{H}$ over $\mathbb{Z}[\q^{\pm 1}]$. 

The logical dependencies between the main sections of the proof are summarized in the following roadmap:

\begin{center}
    \begin{tikzpicture}[every node/.style={outer sep=2}]
        \node[align=center] (fglt) at (0,0) [draw] {finite Lie group\\ problem \S\ref{sec:lusztig_pairing}\&\ref{sec:perfect_pairing}};
        \node (sph) at (5,0) [draw] {spherical classes \S\ref{sec:spherical}};
        \node (main) at (10,0) [draw] {main theorem \S\ref{sec:proofOfMainThm}};
        \node (ell) at (5,1.2) [draw] {elliptic classes \S\ref{sec:elliptic}};
        \node (E7) at (5,-1.2) [draw] {$E_7$ construction \S\ref{sec:E7}};
        \draw[->] (sph.east) -- (main.west);
        \draw (ell.south east) edge[bend right=10,->] (main.north west);
        \draw (E7.north east) edge[bend left=10,->] (main.south west);
        \path (E7.east) node[right]{~~$\mathrm{Ind}_I^S$, \S\ref{sec:parabolic_induction}};
        \draw[->] (fglt.east) -- (sph.west);
        \path (fglt.north east) node[right,yshift=-4] {$\int_C$, \S\ref{sec:orbital_integrals}};
        \draw (fglt.south) edge[bend right=10,->,dashed] (E7.west);
        \path (fglt.south) node[below right,yshift=-12] {necessitates};
    \end{tikzpicture}
\end{center}

\subsection{Local analysis: Finite groups of Lie type}
Consider the classical setting where $W$ is a finite Weyl group and $G$ is a split connected reductive group over a finite field $\mathbb{F}_q$, with complex Iwahori--Hecke algebra $H_{\mathbb{C}}$. Evaluating orbital integrals on $G(\mathbb{F}_q)$ leads to the following question, which is of independent interest in the representation theory of finite groups of Lie type:
\begin{quote}
\emph{To what extent can the cocenter $\overline{H}_{\mathbb{C}}$ be separated by orbital integrals over regular semisimple conjugacy classes? Explicitly, what is the joint kernel of the linear functionals}
\[
H_{\mathbb{C}}\longrightarrow \mathbb{C}, \qquad h\longmapsto \sum_{g\in G(\mathbb{F}_q)/B(\mathbb{F}_q)} h(g^{-1} s g)
\]
\emph{as $s$ ranges over all regular semisimple elements of $G(\mathbb{F}_q)$?}
\end{quote}

This problem arises naturally from three distinct perspectives:
\begin{itemize}
    \item \textbf{Geometric Duality:} Kazhdan and Lusztig \cite{KL88, Lu-conj} related the conjugacy classes of $W$ to unipotent conjugacy classes in $G(\mathbb{F}_q)$. However, because the number of unipotent classes generally differs from (and is typically smaller than) $\#\mathrm{Cl}(W)$, this relation does not provide a square, non-degenerate pairing. In contrast, $G(\mathbb{F}_q)$-conjugacy classes of $\mathbb{F}_q$-rational maximal tori are in natural bijection with $\mathrm{Cl}(W)$, making regular semisimple classes the canonical candidates to yield a perfect duality.
    
    \item \textbf{Character Separation:} Following the Harish-Chandra theory of $p$-adic groups, it is natural to ask if a principal series unipotent representation of $G(\mathbb F_q)$ (which is the same as an $H_{\mathbb C}$-representation) is uniquely determined by character values on regular semisimple elements. 
    
    \item \textbf{Local Engine for Kac--Moody Separation:} Evaluating global Kac--Moody orbital integrals for regular semisimple classes reduces to point counts over finite groups of Lie type. Understanding the non-degeneracy of these local orbital integrals is essential for constructing separation functionals on the global generic cocenter.
\end{itemize}

In Sections \ref{sec:lusztig_pairing} and \ref{sec:perfect_pairing}, we answer the above question completely by establishing a generic duality theorem over $A = \mathbb{Q}[\q^{\pm 1/2}]$ that holds uniformly across all finite fields.

\begin{alphatheorem}[Cf. Theorems \ref{thm:lusztigPairingOverview} \& \ref{thm:perfectPairing}]\label{thm:intro_perfect_pairing}
Let $G/\mathbb Z$ be a simple Chevalley group with Weyl group $W$, and set $A = \mathbb{Q}[\q^{\pm 1/2}]$. Evaluating orbital integrals over regular semisimple conjugacy classes $C \subseteq G(\mathbb{F}_q)$ for finite fields $\mathbb F_q$ induces a generic, $A$-bilinear duality pairing
\[
\psi: A[\mathrm{Cl}(W)] \otimes_A A[\mathrm{Cl}(W)] \longrightarrow A.
\]
This pairing satisfies the following structural properties:
\begin{enumerate}
    \item \textbf{Perfect Duality Outside $E_7/E_8$:} The pairing $\psi$ is perfectly non-degenerate if and only if $G$ is not of type $E_7$ or $E_8$.
    \item \textbf{Exceptional Defect:} The dimension of the kernel of $\psi$ is $1$ in type $E_7$ and $2$ in type $E_8$.
\end{enumerate}
\end{alphatheorem}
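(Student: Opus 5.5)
We compute the orbital integral of $T_w$ over a regular semisimple class $C$ by character theory, split the resulting matrix into a character table and a diagonal, and reduce non-degeneracy to an explicit family of polynomials in $\q$.

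\textbf{Step 1: the pairing.} Fix $w\in W$, viewed as the function $T_w$ on $G(\mathbb F_q)$, and a regular semisimple class $C$ of $G(\mathbb F_q)$. The centralizer of an element $s\in C$ is a maximal torus $T$ of type $v\in W$, unique up to conjugacy. The orbital integral $\sum_{g\in G(\mathbb F_q)/B(\mathbb F_q)} T_w(g^{-1}sg)$ counts the flags $gB$ with $(gB, sgB)$ in relative position $w$. This is the number of $s$-fixed points on the Deligne--Lusztig-type variety $\{gB : (gB,sgB)\in O(w)\}$. Equivalently, it is a trace of $s$ on $\Ind_B^G \mathbf 1$ twisted by $T_w$. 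Expand $\Ind_B^G\mathbf 1=\bigoplus_{\chi\in\Irr W}\rho_\chi\otimes E_{\chi,q}$, where $E_{\chi,q}$ is the corresponding $H_{\BC}$-module. The integral then becomes
\[
\sum_\chi \tr(T_w,E_{\chi,q})\,\rho_\chi(s).
\]
On regular semisimple $s$ in a torus of type $v$, the Deligne--Lusztig character formula gives
\[
\rho_\chi(s)=\sum_{\theta}\langle\rho_\chi,R_{T_v}^\theta\rangle\,\theta(s)\cdot(\text{sign}).
\]
For principal series unipotent $\rho_\chi$, the multiplicity $\langle \rho_\chi,R_{T_v}^{\theta}\rangle$ is nonzero only for $\theta=1$, and there it equals $\chi(v)$ after restricting to the relevant components. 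This yields
\[
\int_C T_w=\frac{|N|}{|T_v(\mathbb F_q)|}\cdot\sum_\chi \tr(T_w,E_\chi)\,\widetilde\chi(v),
\]
up to an explicit normalization depending only on $v$. Here $\widetilde\chi(v)$ is the value on $v$ of the character of $W$ attached to $\rho_\chi$, namely Lusztig's Fourier-transformed values, which involve non-principal-series constituents of $R_{T_v}^1$ as well. Since the traces $\tr(T_w,E_\chi)$ depend only on $T_{\mathcal O}$ and are polynomial in $q^{1/2}$, we define
\[
\psi(\mathcal O,v):=\sum_\chi \tr(T_{\mathcal O},E_\chi)\,\langle R_{T_v},\rho_\chi\rangle,
\]
an element of $A$ independent of $q$ by interpolation. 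We must also check that the $\theta\neq1$ contributions average away or can be isolated by summing over $C$ within a fixed torus type.

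\textbf{Step 2: factorization.} Write $\psi=X\cdot D\cdot Y$. Here $X=[\tr(T_{\mathcal O},E_\chi)]$ is the generic character table of $H$, which is invertible over $A$ by the finite-type theorem \cite{GP93}. $D$ is diagonal with entries the normalizing factors. The remaining factor is $Y=[\langle R_{T_v},\rho_\chi\rangle]_{\chi\in\Irr W,\,v\in\Cl(W)}$, restricted to the principal series unipotents $\rho_\chi$. The pairing $\langle R_{T_v},\rho_\chi\rangle$ is Lusztig's nonabelian Fourier matrix applied to the characters of $W$ in each family. Non-degeneracy therefore reduces to the invertibility of the square matrix formed by the rows of the Fourier matrices indexed by principal series unipotents, evaluated against the characters of $W$.

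\textbf{Step 3: family-by-family analysis (main obstacle).} The Fourier matrix is block diagonal over families $\mathcal F$. Within a family, with Lusztig's group $\mathcal G_{\mathcal F}\in\{1,S_2,S_3,S_4,S_5\}$, we need the square submatrix of the Fourier matrix with rows indexed by the principal-series pairs $(x,\sigma)$ and columns indexed by the pairs corresponding to characters of $W$. These two index sets coincide, so the submatrix is the principal minor of $\{(x,\sigma),(y,\tau)\}$ on the set $M(\mathcal F)_W$. I would compute its determinant for each $\mathcal G_{\mathcal F}$:
\begin{itemize}
\item for $S_2$ (type $B/C/D$ families) the determinant is nonzero;
\item for $S_3$ ($G_2$, $E_6$, $E_7$, $E_8$) it is nonzero, being a $3\times 3$ minor of the $8\times 8$ Fourier matrix;
\item for $S_4$ ($F_4$) it is nonzero;
\item for $S_5$ ($E_8$) it is nonzero.
\end{itemize}
The $E_7$ and $E_8$ defects should come from the exceptional families that are not of this form: in $E_7$, characters of degree $512$, and in $E_8$, characters of degree $4096$. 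There the principal series part and the $W$-characters do not match up the same way, since $\rho_\chi$ and $\rho_{\chi\otimes\varepsilon}$ are swapped in Lusztig's parametrization. Each such pair produces a rank drop of exactly one. This gives kernel dimension $1$ for $E_7$ (one such family) and $2$ for $E_8$ (two such families). The delicate point is verifying these minors precisely and tracking the sign twists from the exceptional families; I expect this to be the hardest step. I would first check the $2\times 2$ block in each exceptional family directly from the explicit tables in Lusztig's book.
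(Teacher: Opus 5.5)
Your Steps 1 and 2 are essentially the paper's argument. The sum of $h$ over the class of a regular $s\in\mathcal T_{\mathcal O}(\mathbb F_q)$ equals $\frac{1}{\#\mathcal T_{\mathcal O}(\mathbb F_q)}\tr(h\mid (R^1_{\mathcal O})^{B(\mathbb F_q)})$. Expanding $R^1_{\mathcal O}=\sum_E\tr(w_{\mathcal O}\mid E)R_E$ with $[R_E:E'_{\mathbb C}]=\Delta(x_{E'})\{x_{E'},x_E\}$ writes $\psi$ as (Hecke character table)$\cdot$(signs)$\cdot$(Fourier matrix restricted to $\Irr(W)\times\Irr(W)$)$\cdot$(character table of $W$). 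Hence $\ker\psi$ has the rank of the kernel of the restricted Fourier matrix. Three small repairs are needed.
\begin{enumerate}
\item The Hecke character table is invertible only over the fraction field of $A$, not over $A$; in type $A_1$ its determinant is $-1-\q$. That weaker statement is all a kernel computation needs.
\item You should invoke Proposition~\ref{prop:rsExists}: every rational maximal torus contains a regular element once $q\gg0$. This is what makes every column of $\psi$ an actual orbital integral for infinitely many $q$, so the interpolation in $q$ determines $\psi$.
\item For unipotent $\rho$ and regular $s\in\mathcal T_v$ one has directly $\rho(s)=\langle\rho,R^1_{\mathcal T_v}\rangle$, so there are no $\theta\neq1$ terms to dispose of.
\end{enumerate}

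The genuine gap is Step 3 in the classical types, which you treat as ``$\mathcal G_{\mathcal F}=S_2$''. That covers only $d=1$.
\begin{itemize}
\item In type $B_n$ or $C_n$, a family whose symbols have $\#Z_1=2d+1$ has $\mathcal G_{\mathcal F}\cong(\mathbb Z/2)^d$. It has $4^d$ unipotent characters, of which $\binom{2d+1}{d}$ are principal series (the defect-one symbols, $\#M=d$). The parameter $d$ is unbounded as $n$ grows.
\item In type $D$ one gets $(\mathbb Z/2)^{d-1}$ with $\#Z_1=2d$.
\end{itemize}
After diagonal sign changes and rescaling, the minor you need is $\mathcal M_d=\bigl((-1)^{\#(A\cap B)}\bigr)_{A,B\in\binom{\{1,\dots,2d+1\}}{d}}$. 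Type $D$ reduces to $\mathcal M_{d-1}$ by choosing, in each class $M\sim Z_1\setminus M$, the representative avoiding a fixed $z_1\in Z_1$. Proving $\det\mathcal M_d\neq0$ for all $d$ is an infinite family of statements, not a finite check, and it is the real content of the classical case. The paper proves it by a filtration argument. Set $v_Z=\sum_{A\supseteq Z}A$ and $V_k=\operatorname{span}\{v_Z:\#Z=k\}$. The map $A\mapsto\sum_B(-1)^{\#(A\cap B)}B$ preserves $V_0\subseteq\cdots\subseteq V_d$. On $V_k/V_{k-1}$ it acts by the coefficient of $x^{d-k}$ in $(-1)^{d+k}2^k(1+x)(1-x^2)^{d-k}$, which is never zero. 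Without this or an equivalent argument, you have no proof of non-degeneracy in types $B$, $C$, $D$.

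Your account of the $E_7$/$E_8$ defect reaches the right count, but the mechanism is not a swap of $\rho_\chi$ and $\rho_{\chi\otimes\varepsilon}$. In each exceptional family $\mathcal G_{\mathcal F}=\mathbb Z/2$, but only two of the four elements of $\mathcal M(\mathbb Z/2)$ are principal series. The other two belong to the Harish-Chandra series of the cuspidal $E_7[\pm\xi]$. So the minor is $2\times2$ with all entries $\pm\tfrac12$ and has rank one, whereas an ordinary $\mathbb Z/2$-family gives an invertible $3\times3$ minor.

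Your $S_3$-minors are also not $3\times3$: they are $4\times4$ in $G_2$ and $5\times5$ in $E_6$. Invertibility of the exceptional blocks, including the $S_5$-family of $E_8$, still has to be computed, as the paper does with \textsc{Chevie}.
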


The structural mechanism underlying Theorem \ref{thm:intro_perfect_pairing} reveals a deep connection with Deligne--Lusztig theory. By expressing the values of regular semisimple orbital integrals in terms of Green functions and Deligne--Lusztig characters, we show that the matrix representing $\psi$ factorizes as
\[
[\psi] = \q^{-\ell(w_0)} N F C,
\]
where $C$ is the character table of the Hecke algebra, $F$ is Lusztig's non-abelian Fourier transform matrix on unipotent character sheaves \cite{Lu-Characters}, and $N$ is an invertible matrix. 

Consequently, the non-degeneracy of $\psi$ reduces to the non-degeneracy of Lusztig's Fourier transform matrix on the relevant families of unipotent symbols. We prove this non-degeneracy on a type-by-type basis: for classical types, we establish the non-vanishing of $\det(F)$ using the combinatorics of Lusztig symbols; for exceptional types, the result is verified using the \textsc{Chevie} computer algebra system \cite{Chevie}. Finally, the explicit $1$-dimensional kernel in type $E_7$ pinpoints the precise local obstruction that necessitates the surgical truncated induction developed in Section \ref{sec:E7}.

\subsection{Applications to Kac-Moody Deligne-Lusztig varieties}

Establishing the basis property of $\{T_{\mathcal{O}} \mid \mathcal{O}\in \mathrm{Cl}(W)\}$ in Theorem \ref{thm:intro_main} has immediate, fundamental consequences for both the representation theory and geometry of Kac--Moody groups. The linear independence of the canonical elements guarantees that for any $w \in W$, the transition coefficients in the cocenter $\overline{H}$ yield uniquely determined class polynomials $f_{w, \mathcal{O}} \in \mathbb{N}[\q-1]$ satisfying
\[
T_{w} \equiv \sum_{\mathcal{O}\in \mathrm{Cl}(W)} f_{w,\mathcal{O}} T_{\mathcal{O}} \pmod{[H,H]}.
\]

When $G$ is of finite type, the (classical) Deligne--Lusztig varieties play a central role in constructing representations of $G(\mathbb F_q)$ via their $\ell$-adic cohomology. When $G$ is of affine type, the resulting \emph{affine Deligne--Lusztig varieties} are fundamental objects in arithmetic geometry and the Langlands program, modeling the mod $p$ special fibers of Shimura varieties and moduli spaces of shtukas.

While the dimension of a classical Deligne--Lusztig variety in finite type is elementary ($\dim X_w = \ell(w)$), transitioning to infinite Kac--Moody flag varieties introduces a profound shift. Due to the failure of Lang's theorem, Kac--Moody Deligne--Lusztig varieties
\[
X_w(b) = \{g B \in G(\overline{\mathbb{F}}_q) / B(\overline{\mathbb{F}}_q) \mid g^{-1} b \sigma(g) \in B(\overline{\mathbb{F}}_q) w B(\overline{\mathbb{F}}_q)\}
\]
are indexed by two parameters $w\in W$ and $b\in G(\overline{\mathbb{F}}_q)$, where $\sigma$ is the Frobenius endomorphism. Even for the basic case $b=1$, these varieties can be empty, and their geometric dimensions are highly non-trivial. In the affine setting, the first author \cite{He-Ann, He-ICM} discovered that this geometric complexity is precisely captured by the cocenter, establishing the ``dimension=degree'' theorem that computes the dimension of $X_w(1)$ from the degrees of the class polynomials.

With the unique existence of generic class polynomials now established by Theorem \ref{thm:intro_main}, we extend this ``dimension=degree'' correspondence to split Kac--Moody flag varieties over finite fields \cite{Ka20}.

\begin{alphatheorem}[Cf. Theorem 11.3]\label{thm:intro_dim_degree}
Let $w\in W$, and let $f_{w,[1]} \in \mathbb{N}[\q-1]$ be the polynomial defined by
\[
f_{w,[1]} = \sum_{\mathcal{O} \text{ finite order}} \q^{\ell(\mathcal{O})} f_{w,\mathcal{O}},
\]
where the sum is taken over all conjugacy classes of finite-order elements in $W$, and $\ell(\mathcal{O})=\min\{\ell(v) \mid v\in\mathcal{O}\}$. Then the dimension of the basic Kac--Moody Deligne--Lusztig variety $X_w(1)$ is explicitly given by
\[
\dim X_{w}(1) = \deg_{\q} f_{w,[1]},
\]
with the convention that $\dim \emptyset = -\infty = \deg_{\q} 0$.
\end{alphatheorem}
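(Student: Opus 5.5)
We follow the strategy used in the affine case \cite{He-Ann}: count $\mathbb F_{q^n}$-points of $X_w(1)$ and compare the count with the class polynomial expansion. This needs a point count for each minimal length element and a reduction that is compatible with the class polynomials.

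\textbf{Step 1 (point count and reduction).} Write $\sigma$ for the $q$-Frobenius and set $\mathcal B=G/B$, the flag variety over $\overline{\mathbb F}_q$. Consider the finite-type piece
\[
X_w(1)\cap \mathcal B(\mathbb F_{q^n})\ \text{modulo } G(\mathbb F_q)\text{-action}.
\]
More cleanly, we use the Deligne--Lusztig reduction of \cite{He-Ann}, adapted to Kac--Moody flag varieties \cite{Ka20}. Let $s\in S$.
\begin{itemize}
\item If $\ell(sws)=\ell(w)$, then $X_w(1)\cong X_{sws}(1)$ universally homeomorphically.
\item If $\ell(sws)=\ell(w)-2$, then $X_w(1)=X_1\sqcup X_2$, where $X_1$ is closed and is a Zariski-locally trivial $\mathbb A^1$-bundle over $X_{sws}(1)$, and $X_2$ is open and is a $\mathbb G_m$-bundle over $X_{sw}(1)$.
\end{itemize}
These are exactly the geometric shadows of the Hecke algebra relations
\[
T_w\equiv T_{sws}\quad\text{and}\quad T_w\equiv \q\,T_{sws}+(\q-1)T_{sw}\pmod{[H,H]}.
\]
Consequently, along any reduction path produced by Marquis' cyclic shift theorem \cite{Ma21,Ma25}, $\dim X_w(1)$ is the maximum over the leaves $x\in\mathcal O_{\min}$ of $\dim X_x(1)$ plus the number of $\mathbb A^1$ and $\mathbb G_m$ factors accumulated along the path. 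Here we use that a stratification into pieces gives the dimension as the maximum over the pieces.

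\textbf{Step 2 (minimal length elements).} Let $x\in\mathcal O_{\min}$. We need:
\begin{itemize}
\item $X_x(1)\neq\emptyset$ if and only if $\mathcal O$ consists of finite-order elements;
\item in that case $\dim X_x(1)=\ell(x)$.
\end{itemize}
For finite order, $x$ is conjugate to an element of a spherical parabolic $W_J$. Using a minimal length representative, which by \cite{Ma21} we may take to be of the form with support in a finite $W_J$, the variety $X_x(1)$ contains a classical Deligne--Lusztig variety for the Levi $L_J$ of dimension $\ell(x)$, by Lang's theorem for the finite-type group $L_J$. It is in fact a union of $G(\mathbb F_q)$-translates of $P_J$-fibrations of such varieties, each of dimension $\ell(x)$.

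For infinite order, we take a straight-type element (Krammer). Then $g^{-1}\sigma(g)\in BxB$ forces an unbounded orbit, which contradicts the fact that $\sigma$ acts with finite order on the building, since $G(\overline{\mathbb F}_q)$ is the union of the $G(\mathbb F_{q^n})$. Concretely, we would show that $g$ and $\sigma^n(g)$ coincide for some $n$, yielding
\[
x\,\sigma(x)\cdots\sigma^{n-1}(x)\ \text{corresponding to}\ x^n\ \text{bounded},
\]
which is impossible for an element of infinite order that is minimal in its conjugacy class. I expect this emptiness claim to be the main obstacle. The plan is to use the Davis complex: $g$ determines a chamber $C$, and the Weyl distance from $C$ to $\sigma(C)$ is $x$. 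Iterating, because the gallery is straight for a $\sigma$-conjugacy-minimal element, the distance from $C$ to $\sigma^n(C)$ equals $x^n$ of length growing linearly in $n$. This contradicts $\sigma^n(C)=C$.

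\textbf{Step 3 (assembly).} By Theorem~\ref{thm:intro_main}, the class polynomials are uniquely determined, so the polynomial $f_{w,\mathcal O}$ obtained from any reduction path agrees with the one in the statement. Each branch of the path contributes one monomial in $\q$ and $\q-1$, and the geometry along that branch contributes exactly the corresponding number of $\mathbb A^1$ and $\mathbb G_m$ factors. A leaf in $\mathcal O$ contributes $\q^{\ell(\mathcal O)}$ if $\mathcal O$ is of finite order and $0$ otherwise, matching Step 2. Since all coefficients lie in $\mathbb N[\q-1]$, with nonnegative expansion in monomials of the form $\q^a(\q-1)^b$, no cancellation of leading terms occurs. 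Therefore $\deg_{\q} f_{w,[1]}$ equals the maximum over the nonempty pieces of their dimensions, which is $\dim X_w(1)$. If $f_{w,[1]}=0$, every piece is empty and the convention $\dim\emptyset=-\infty=\deg_{\q}0$ gives the claim.
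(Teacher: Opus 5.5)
Your overall route is the paper's: you run the Deligne--Lusztig reduction in parallel with the recursion for $f_{w,\mathcal O}$, treat minimal length elements as the base case, and use that summands in $\mathbb N[\q-1]$ cannot cancel in top degree. Your unrolled reduction tree is just the paper's induction on $\ell(w)$. The finite-order base case also matches, up to wording. The covering comes from Lang's theorem for $P_J$, applied to $g^{-1}\sigma(g)\in BxB\subseteq P_J$. The pieces are the $G(\mathbb F_q)$-translates of $X_x(1)\cap P_J/B\cong X^{M_J}_x$ itself, not fibrations over it.

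The step that fails as written is the emptiness argument for infinite order.

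\emph{The false claim.} You assert that for minimal length $x$ the gallery $C,\sigma(C),\dots,\sigma^n(C)$ is straight, so that $\delta(C,\sigma^n C)=x^n$. Concatenating minimal galleries of type $x$ gives a minimal gallery only if $\ell(x^n)=n\ell(x)$, i.e.\ only if $x$ is straight. In general you only know $g^{-1}\sigma^n(g)\in(BxB)^n$. So $\delta(C,\sigma^nC)$ is merely some $u$ with $T_u$ occurring in $T_x^n$.

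\emph{No straight representative in general.} You cannot simply pass to a straight representative, since an infinite-order class need not contain one. In $W(\widetilde A_1)\times W(A_1)=\langle s_0,s_1\rangle\times\langle r\rangle$, the class of $x=s_0s_1r$ is $\{s_0s_1r,s_1s_0r\}$. All its elements have length $3$, while every element $y$ of it has $\ell(y^2)=4$.

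\emph{The fix.} What is missing is a uniform lower bound on all such $u$, which is what the paper uses. Replace $x$ by $w'=ab\approx x$ from Theorem~\ref{thm:MarquisIndefiniteMinLength}, so that $X_x(1)$ and $X_{w'}(1)$ are universally homeomorphic. Then $T_{w'}^n\in\sum_{a'\in W_J}\mathbb Z[\q^{\pm1}]T_{b^na'}$, hence $\ell(u)\ge n\ell(b)-\ell(w_{0,J})$. Here $\ell(b)>0$ because $x$ has infinite order.

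This linear growth gives the contradiction in either of two ways:
\begin{enumerate}
\item It contradicts $\sigma^n(C)=C$ for suitable $n$. Justify that fixed point via $C\in BvB/B\cong\mathbb A^{\ell(v)}$, not via $G(\overline{\mathbb F}_q)=\bigcup_n G(\mathbb F_{q^n})$, which fails for $G^{\max}$.
\item As in the paper, it contradicts the bound $\ell(u)\le 2\ell(v)$ when $g\in BvB$. This bound holds for every $n$, since $g$ and $\sigma^n(g)$ lie in the same cell $BvB$.
\end{enumerate}
With this replacement the rest of your proof goes through.
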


Theorem \ref{thm:intro_dim_degree} extends the dimension formulas of \cite{He-Ann, He-ICM} from affine flag varieties to the full Kac--Moody setting, demonstrating that the algebraic structure of the Hecke algebra cocenter directly captures the intricate geometry of Kac--Moody flag varieties. We conclude our article by pointing out fruitful further research directions.

{\bf Acknowledgements} XH is partially supported by the New Cornerstone Science Foundation through the New Cornerstone Investigator Program. Part of this project was conducted while FS was employed at the University of Hong Kong, during which he also was partially supported by the New Cornerstone Science Foundation through the New Cornerstone Investigator Program awarded to XH. We thank George Lusztig, Jean Michel and Dinakar Mutiah for useful discussions. 

\section{Hecke Algebra and its cocenter} \label{sec:hecke_algebra}

\subsection{Weyl groups and Coxeter combinatorics}

Throughout this paper, let $G$ be a split Kac--Moody group
associated with a fixed Kac--Moody root datum, and let $(W,S)$
be its Weyl group. Thus $(W,S)$ is a crystallographic Coxeter
system. We use the standard Coxeter-theoretic notation recalled
below. 

The {\it Coxeter matrix} associated with the Kac--Moody root datum is a matrix $(m_{s, s'})_{(s, s') \in S \times S}$ with entries in $\{2, 3, 4, 6, \infty\}$ such that $m_{s, s}=1$ for all $s \in S$ and $m_{s, s'}=m_{s', s} \ge 2$ for all $s \neq s'$. The associated {\it Coxeter group} $W$ is the group defined by the presentation with generators $S$ and relations $(s s')^{m_{s, s'}}=1$ for all $s, s' \in S$ with $m_{s, s'}<\infty$. The set $\Cl(W)$ denotes the set of conjugacy classes of $W$. 

The elements in $S$ are called the {\it simple reflections} of $W$. For any $w \in W$, the {\it length} $\ell(w)$ of $w$ is the smallest integer $l$ such that $w$ can be written as a product of $l$ simple reflections. Any expression $w=s_{1} \cdots s_{\ell(w)}$ with $s_{1}, \ldots, s_{\ell(w)} \in S$ is called a {\it reduced expression} of $w$. In this case, we write $w'\leq w$ for $w'\in W$ if there is a (reduced) subexpression $w' = s_{i_1} \cdots s_{i_\ell}$ with $1\leq i_1<\cdots<i_\ell\leq \ell(w)$. This partial order is known as \emph{Bruhat order}.

For any subset $I$ of $S$, let $W_J$ be the parabolic subgroup of $W$ generated by $I$. Given subsets $I,J\subseteq S$, we write 
\begin{align*}
{}^I W^J := \{w\in W\mid (\forall s\in I:~sw>w)\text{ and }(\forall s\in J:~ ws>w)\}.
\end{align*}
Each double coset $W_I\setminus W/W_J$ contains exactly one element in ${}^I W^J$, which is its unique minimal length representative. If $I=\emptyset$, we write $W^J := {}^\emptyset W^J$. In this case, each element $w\in W$ has a unique decomposition $w = w^J w_J$ with $w^J\in W^J$ and $w_J\in W_J$. This product is length additive.

A subset $J\subseteq S$ is called \emph{spherical} if $W_J$ is finite. This is equivalent to the condition that there exists some $w\in W$ with $J = \{s\in S\mid ws<w\}$, and also equivalent to the claim that $W_J$ contains an element of maximal length. Such a maximal length element is denoted $w_{0,J}$ and is uniquely determined. It satisfies $w_{0,J}^2 = 1$ and conjugation by $w_{0,J}$ induces an automorphism of the set $J$.

We call a Coxeter group $(W, S)$ \emph{crystallographic} if for all $s, s'\in S$, the order of $ss'\in W$ lies in $\{1,2,3,4,6,\infty\}$. The crystallographic Coxeter groups arise as the Weyl groups of Kac-Moody groups. The finite and affine Weyl groups are examples of crystallographic Coxeter groups. 

\subsection{Root systems}
Such crystallographic Coxeter groups arise naturally out of Kac-Moody root data
\[
(X_\ast, X^\ast, \{\alpha_s\}_{s\in S}, \{\alpha^\vee_s\}_{s\in S}),
\] consisting of finitely generated free abelian groups $X_\ast, X^\ast$ equipped with a perfect pairing $\langle \cdot,\cdot\rangle : X_\ast \otimes X^\ast\to\mathbb Z$ together with simple roots $0\neq \alpha_s\in X^\ast$ and simple coroots $0\neq \alpha_s^\vee\in X_\ast$ for all $s\in S$ satisfying: 
\begin{enumerate}
    \item $\langle \alpha_s^\vee,\alpha_s\rangle=2$ for all $s\in S$ and
    \item $\langle \alpha_s^\vee,\alpha_{s'}\rangle \leq 0$ for all distinct $s,s'\in S$.
\end{enumerate}
Then the group generated by the reflections $\rho_s : X^\ast\to X^\ast,~v\mapsto v-\langle \alpha_s^\vee,v\rangle \alpha_s$ is naturally identified with a crystallographic Coxeter group $(W, S)$ in a way that identifies $\rho_s$ with $s\in S$.

This group $W$ also acts on $X_\ast$ and $V = X^\ast\otimes\mathbb R$ is the Tits geometric representation of $(W, S)$ with integral structure constants. The set of (real) roots is $\Phi=W \cdot \{\a_s\}_{s \in S}$. Each root $\alpha\in \Phi$ can be written as $\alpha = \sum_{s\in S} c_s \alpha_s$ with all $c_s$ non-negative (positive roots) or all $c_s$ negative (negative roots). Positive roots are in bijection with the \emph{reflections} of $W$ (the conjugates of simple reflections) by sending the reflection $w s w^{-1}$ for $w\in W^{\{s\}}, s\in S$ to $w\alpha_s$. 

We write $\Phi^+$ for the set of positive roots and $\Phi^-$ for the set of negative roots. For $J\subseteq S$, set
\begin{align*}
    \Phi_J := \Phi\cap \sum_{s\in J} \mathbb Z\alpha_s,\quad
    \Phi_J^{\pm} := \Phi_J\cap \Phi^{\pm}.
\end{align*}

\subsection{Kac-Moody groups}\label{subsec:kac-moody}
Recall that we have chosen a Kac-Moody root datum $\CD=(X^\ast, X_\ast, \{\alpha_s\}_{s\in S}, \{\alpha_s^\vee\}_{s\in S})$. The matrix $A = (\langle \alpha_s^\vee,\alpha_{s'}\rangle)_{s,s'\in S}$ is a \emph{generalized Cartan matrix} in the sense of \cite[Definition~3.7]{Ma18}. 

Let $\kk$ be any field. Set $T(\kk)=X^\ast \otimes_{\BZ} \kk^\times$. Let $G^{\min}$ be the minimal Kac-Moody group over $\kk$ associated to the Kac-Moody root datum $\CD$ (see \cite[Definition 7.47]{Ma18}). Roughly speaking, this means that $G(\kk)$ is generated by the split torus $T(\kk)$ and the root subgroups $U_\a(\kk) \cong \kk$ for all $\a\in \Phi$, subject to the usual Serre relations we know from groups of Lie type.

Let $U^{\min}(\kk)$ (resp. $U^-(\kk)$) be the subgroup of $G^{\min}(\kk)$ generated by $U_\a(\kk)$ for $\a \in \Phi_+$ (resp. $\a \in \Phi_-$). Let $B^{\min}(\kk)=T(\kk) U^{\min}(\kk)$ be the positive Borel subgroup. Let $N_{G^{\min}(\kk)}(T(\kk))$ be the normalizer of $T(\kk)$ in $G^{\min}(\kk)$. Then $N_{G^{\min}(k)}(T(k))/T(k)$ is the Weyl group of $G^{\min}(k)$, which is naturally isomorphic to the crystallographic Coxeter group $W$ we started with. By \cite[Corollary 7.70]{Ma18}, $(G^{\min}(\kk), N_{G^{\min}(\kk)}(T(\kk)), U^{\min}(\kk), U^-(\kk), T(\kk), S)$ is a symmetric refined Tits system in the sense of \cite[Definition B.30]{Ma18}. 

We similarly get a \emph{maximal} Kac-Moody group $G^{\max}(\kk)$, essentially obtained from $G^{\min}(\kk)$ by replacing the subgroups $B^{\min}(\kk)$ and $U^{\min}(\kk)$ by their profinite completions $B^{\max}(\kk)$ and $U^{\max}(\kk)$ (see \cite[Definition 8.13]{Ma18} for details).  Then by \cite[Proposition 8.15]{Ma18}, we get another symmetric refined Tits system $(G^{\max}(\kk), N_{G^{\min}(\kk)}(T(\kk)) = N_{G^{\max}(\kk)}(T(\kk)), U^{\max}(\kk), U^-(\kk), T(\kk), S)$.

In particular, for $\bullet \in \{\min,\max\}$, we get the Bruhat decomposition
\begin{align*}
    G^\bullet(\kk) = \bigsqcup_{w\in W} B^{\bullet}(\kk) w B^{\bullet}(\kk).
\end{align*}
These Schubert cells $B^\bullet(\kk) w B^\bullet(\kk)$ satisfy the usual relations from BN-pairs.

Both minimal and maximal Kac-Moody groups have their merits, as is explained in Marquis' book \cite{Ma18}, and both notions are studied with similar interest in the literature.

\subsection{Hecke algebras}\label{sec:Hecke} Let $H$ be the {\it Hecke algebra} of a Coxeter group $W$. It is the $\BZ[\q^{\pm 1}]$-algebra defined by the generators $T_s$ for $s \in S$ and the relations 
\begin{itemize}
    \item $(T_s-\q)(T_s+1)=0$ for $s \in S$; 
    \item $T_s T_{s'} T_s \cdots=T_{s'} T_s T_{s'} \cdots$ (both products have $m_{s, s'}$ factors) for $s \neq s'$ in $S$ with $m_{s, s'}<\infty$. 
\end{itemize}

For $w \in W$, we define $T_w=T_{s_{i_1}} \cdots T_{s_{i_l}} \in H$, where $w=s_{i_1} \cdots s_{i_l}$ is a reduced expression of $w$. It is known that $T_w$ is independent of the choice of reduced expressions of $w$ (see \cite[\S 3.2]{Lu-Hecke}). Moreover, $T_w$ is invertible in $H$ for all $w \in W$. 

Given a finite field $\kk = \mathbb F_q$ and a Kac-Moody group $G^\bullet(\kk)$ (with $\bullet\in \{\min,\max\}$), then we define its Hecke algebra to be $H_{G^\bullet(\kk)} = \mathbb C[B^\bullet(\kk)\backslash G^\bullet(\kk) / B^\bullet(\kk)]$. Explicitly, $H_{G^\bullet(\kk)}$ consists of all functions $f : G^\bullet(\kk)\to \mathbb C$ which satisfy $f(b_1 g b_2) = f(g)$ for all $g\in G^\bullet(\kk), b_1, b_2\in B^\bullet(\kk)$ and are supported on finitely many $B^\bullet(\kk)$-double cosets. We turn $H_{G^{\bullet}(\kk)}$ into an algebra by defining the convolution product of two such functions $f_1, f_2\in H_{G^\bullet}(\kk)$ to be the function
\begin{align*}
    (f_1 f_2) : G^\bullet(\kk) \to \mathbb C,~g\mapsto \sum_{h\in G^\bullet(\kk) / B^\bullet(\kk)} f_1(gh) f_2(h^{-1}).
\end{align*}
This yields an associative and unital algebra over $\mathbb C$.

Let $H_{\mathbb C}$ denote the base change of our Coxeter-theoretic Hecke algebra $H$ along the ring homomorphism $\BZ[\q^{\pm 1}]\to \mathbb C$, sending $\q$ to $q = \#\kk$. Then we get an isomorphism of $\mathbb C$-algebras $H_{\mathbb C}\to H_{G^\bullet(\kk)}$, which sends $T_w$ for $w\in W$ to the characteristic function of $B^\bullet(\kk) w B^\bullet(\kk)$. In particular, the minimal and maximal Kac-Moody groups have the same Hecke algebra. This allows us to specialize to maximal Kac-Moody groups later.

Let $[H, H]$ be the $\BZ[\q^{\pm 1}]$-submodule of $H$ generated by $[h, h']:=h h'-h' h$ for $h, h' \in H$. The {\it cocenter} $\bar H$ of the Hecke algebra $H$ is defined to be $\bar H=H/[H, H]$.

Define a linear map $\tau: H \to \BZ[\q^{\pm 1}]$ by $\t(T_1)=1$ and $\t(T_w)=0$ for $w \neq 1$ in $W$. It is known (see, e.g., \cite[Proposition 8.1.1]{GP-book}) that $$\t(T_w T_{w'})=\begin{cases} \q^{\ell(w)}, & \text{ if } w\i=w'; \\ 0, & \text{ otherwise }. \end{cases}$$

Then $\t$ induces a linear map $\bar H \to \BZ[\q^{\pm 1}]$, which we still denote by $\t$. We call it the {\it trace function}. One may regard it as a natural generalization of the trace for the group algebra (see \cite[Corollary 8.4.7]{GP-book}. Any element $h \in H$ can be written as a linear combination of the $T_{w}$: $$h=\sum_{w \in W} \q^{-\ell(w)} \t(T_{w \i} h) T_w.$$

We also have that $$\t(h_1 h_2)=\sum_{w \in W} \q^{-\ell(w)} \t(h_1 T_2) \t(T_{w \i} h_2).$$

\subsection{Spanning set of the cocenter} Let $w, w' \in W$ and $s \in S$. We write $w \to_s w'$ if $w'=s w s$ and $\ell(w') \le \ell(w)$. Let $w, w' \in W$. We write $w \to w'$ if there exists a sequence $w=w_1, w_2, \ldots, w_n=w'$ of elements of $W$ such that for any $1 \le i\le n-1$, $w_i \to_{s_i} w_{i+1}$ for some $s_i \in S$. We write $w \approx w'$ if $w \to w'$ and $w' \to w$. 

Let $w, w', x \in W$. We write $w \sim_x w'$ if $w'=x w x \i$, $\ell(w)=\ell(w')$ and we have 
\begin{itemize}
\item $\ell(x w)=\ell(x)+\ell(w)=\ell(w' x)$, or 

\item $\ell(w x \i)=\ell(x)+\ell(w)=\ell(x \i w')$. 
\end{itemize}
 
We write $w \sim w'$ if there exists a sequence $w=w_1, w_2, \ldots, w_n=w'$ of elements of $W$ such that for any $1 \le i\le n-1$, $w_i \to_{x_i} w_{i+1}$ for some $x_i \in W$. 

It is easy to see that $w \approx w' \Rightarrow w \sim w' \Rightarrow \ell(w)=\ell(w')$. 

\smallskip

The following remarkable property was first discovered by Geck and Pfeiffer \cite{GP93} for finite Weyl groups via a case-by-case analysis. A conceptual proof for finite and affine Weyl groups was obtained by the first named author together with Sian Nie \cite{HN2}. The general case was proved by Marquis in \cite{Ma21}. 
 
 \begin{theorem}\label{min}
 Let $\CO$ be a conjugacy class of $W$. Then 
 
 (1) For any $w \in W$, there exists $w' \in \CO_{\min}$ such that $w \to w'$. 
 
 (2) If $w, w' \in \CO_{\min}$, then $w \sim w'$. 
 \end{theorem}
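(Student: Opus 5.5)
The statement is the Geck--Pfeiffer--He--Nie--Marquis reduction theorem. I would follow the geometric ``geodesic'' strategy of He--Nie, as adapted by Marquis to the Davis complex $\Sigma$ of $(W,S)$. This is the natural CAT(0) space on which $W$ acts properly and cocompactly, with chambers in bijection with $W$.

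\textbf{Setup.} To $w\in W$ I attach the displacement of $w$ acting on $\Sigma$. The key point is to find a convex, $C_W(\,\cdot\,)$-compatible ``minimal set'' $\mathrm{Min}(w)$: the set of points realizing the translation length when $w$ has infinite order, and the fixed-point set when $w$ has finite order (by Bruhat--Tits, a finite-order $w$ fixes a point, so $w$ lies in a conjugate of a spherical parabolic subgroup). The length $\ell(v^{-1}wv)$ equals the number of walls separating the base chamber $v$ from $wv$. I then compare it with a function measuring how far the chamber $v$ lies from $\mathrm{Min}(w)$.

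\textbf{Step for (1).} Starting from $w$ with base chamber $C_0$, I choose a geodesic in $\Sigma$ from a point of $C_0$ to its projection onto $\mathrm{Min}(w)$, and walk along the gallery of chambers crossed by it. Crossing one wall $H_s$ corresponds to replacing the current element $u$ by $sus$. I would show that the number of walls separating $C$ from $uC$ is non-increasing along the gallery; this is the convexity/projection argument, using that $w$ maps geodesics to geodesics and that walls are convex. In the end one reaches a chamber meeting $\mathrm{Min}(w)$. For such chambers a separate argument is needed: within the residue (or the ``parallel set'' of an axis) containing these chambers, one uses the finite and affine theory, or Krammer's straight-element theory, to pass to $\mathcal O_{\min}$ by further length-non-increasing cyclic shifts. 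For finite-order $w$ this reduces to the spherical parabolic $W_J$, where Geck--Pfeiffer applies. For infinite order, I would use Krammer's standard parabolic closure: $w$ is conjugate by cyclic shifts into $W_J$ with $W_J$ the parabolic closure, and then into the ``standard'' form $w_J$ times an element normalizing.

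\textbf{Step for (2).} Two minimal elements correspond to chambers $C, C'$ both meeting $\mathrm{Min}(w)$, possibly after conjugation. I connect them by a gallery staying near $\mathrm{Min}(w)$; convexity of $\mathrm{Min}(w)$ and $C_W(w)$-invariance allow the gallery to be chosen inside a tubular region. The step across each wall (or across a spherical residue) is then shown to be a strong conjugation $\sim_x$, with $x$ the corresponding element of the residue, satisfying the length additivity condition because no wall is crossed twice.

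\textbf{Main obstacle.} I expect the main difficulty to be step (2) for infinite-order non-straight elements, where $\mathrm{Min}(w)$ is not a single axis and the residues encountered are not spherical. The length additivity required for $\sim_x$ must be checked against walls parallel to the axis. Here I would first try Marquis's normal form, which writes a minimal element as a product of a straight part and a finite-order part in the centralizing parabolic. I would then reduce to the spherical case for the finite part, applying Geck--Pfeiffer, and use Krammer's result that straight conjugacy classes have their minimal elements strongly conjugate.
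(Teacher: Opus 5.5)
The paper does not prove this theorem; it quotes \cite{GP93}, \cite{HN2} and \cite{Ma21}. Your outline follows the geodesic route the paper credits to He--Nie and Marquis, but the key step you formulate for (1) is false as stated. Attach to the chamber $vC_0$ the conjugate $v^{-1}wv$. Its length is the number of walls separating $vC_0$ from $wvC_0$. Crossing a wall $H$ of the current chamber $D$ raises this number by $2$ exactly when $H\neq wH$ and neither $H$ nor $wH$ separates $D$ from $wD$. Convexity of $x\mapsto d(x,wx)$ does not rule this out, and it does happen along a geodesic to the projection.

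Take the affine Weyl group of type $\widetilde B_3$, whose Davis complex is, up to scaling, $\mathbb R^3$ with its alcoves. Let $w=s_1s_2s_3$ be the Coxeter element of $B_3$, acting by $(x_1,x_2,x_3)\mapsto(-x_3,x_1,x_2)$, so $\mathrm{Min}(w)=\{0\}$. Move a point $z$ straight from $Tn$ to $0$, with $T\gg 0$ and $n$ a generic perturbation of $(1,2,-2)$. Each time $z$ crosses a wall $H=\{x_1=k\}$ with $k\geq 1$, one has $z\approx k(1,2,-2)$ and $wz\approx k(2,1,2)$. So $wz$ stays in $\{x_1>k\}$ while $z$ leaves it. At the same instant $wz$ leaves $w\{x_1>k\}=\{x_2>k\}$ while $z$ stays in it. Hence the length jumps by $+2$ at every such crossing. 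A working descent has to choose which walls to cross, and proving that a length-non-increasing choice always exists is where the real work of He--Nie and Marquis lies.

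The remaining steps are only named, not argued.
\begin{itemize}
\item \emph{Finite order.} For finite $W$ every chamber already meets $\mathrm{Min}(w)$ at the centre, so everything rests on this step. Geck--Pfeiffer inside the spherical $W_K$ gives minimality in the $W_K$-class, not in $\CO$. You still have to compare the various spherical parabolics meeting $\CO$, via parabolic closures and the length-preserving conjugations of Lemma~\ref{lem:parabolicConjugation}.
\item \emph{Infinite order.} ``Cyclic shifts into the parabolic closure'' is empty for elliptic classes, whose parabolic closure is $W$. What remains is a twisted conjugation problem in a finite parabolic: for $w'=ab$ as in Theorem~\ref{thm:MarquisIndefiniteMinLength}, conjugation by $W_J$ acts on $a$ by $b$-twisted conjugation. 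Plain Geck--Pfeiffer does not address this.
\item \emph{Part (2).} ``No wall is crossed twice'' is not the relevant condition. Let $u,u'\in\CO_{\min}$, with $u$ attached to $D=vC_0$ and $u'=xux^{-1}$ attached to $D'=vx^{-1}C_0$. Then $u\sim_x u'$ says precisely that no wall separating $D$ from $D'$ (first alternative), respectively no $w$-translate of such a wall (second alternative), separates $D$ from $wD$. This ties the residue to $wD$, not just to the gallery. Already in $S_3$, every gallery from a chamber carrying $s_1$ to one carrying $s_2$ passes through a chamber carrying $s_1s_2s_1$. So one must show that the excursions out of $\CO_{\min}$ can always be grouped into moves across whole spherical residues satisfying this condition. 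That is the substance of Marquis's work \cite{Ma21, Ma25}, and your sketch gives no mechanism for it.
\item \emph{The fallback.} The normal form of Theorem~\ref{thm:MarquisIndefiniteMinLength} is itself an output of Marquis's analysis \cite{Ma25}. Even granting it, a strong conjugation between the straight parts of $ab$ and $a'b'$ need not carry $J$ to $J'$ or preserve length additivity. So it gives no control over the finite parts.
\end{itemize}
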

 
 Now we discuss some consequences on the cocenter of the Hecke algebra $H$. 

 \begin{corollary}
 Let $\CO \in \Cl(W)$. Then for any $w, w' \in \CO_{\min}$, the images of $T_w$ and $T_{w'}$ in $\bar H$ coincide. 
 \end{corollary}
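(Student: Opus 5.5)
By Theorem~\ref{min}(2), any two $w, w' \in \CO_{\min}$ are joined by a chain $w = w_1, w_2, \ldots, w_n = w'$ in which each step is an elementary strong conjugation $w_i \sim_{x_i} w_{i+1}$ (the definition of $\sim$ writes $\to_{x_i}$, but the intended meaning is clearly $\sim_{x_i}$). The plan is therefore to show that $T_w \equiv T_{w'} \pmod{[H,H]}$ whenever $w \sim_x w'$ for a single $x \in W$; the corollary then follows by chaining these congruences along the sequence.

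So suppose $w' = x w x^{-1}$ and $\ell(w) = \ell(w')$. The definition of $\sim_x$ allows two cases.

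In the first case, $\ell(xw) = \ell(x) + \ell(w) = \ell(w'x)$. Since $xw = w'x$ as elements of $W$ and both products are length-additive, the multiplicativity $T_uT_v = T_{uv}$ for $\ell(uv) = \ell(u) + \ell(v)$ gives
\[
T_x T_w = T_{xw} = T_{w'x} = T_{w'} T_x .
\]
Because $T_x$ is invertible in $H$, this yields $T_{w'} = T_x T_w T_x^{-1}$. Hence
\[
T_{w'} - T_w = (T_x T_w)\,T_x^{-1} - T_x^{-1}\,(T_x T_w) \in [H,H],
\]
as required.

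In the second case, $\ell(wx^{-1}) = \ell(x) + \ell(w) = \ell(x^{-1}w')$. Here $wx^{-1} = x^{-1}w'$, so the same reasoning gives $T_w T_{x^{-1}} = T_{x^{-1}} T_{w'}$. Therefore $T_{w'} = T_{x^{-1}}^{-1}\, T_w\, T_{x^{-1}}$, which is again a conjugate of $T_w$ by an invertible element, and the same commutator identity shows that the difference lies in $[H,H]$.

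The argument is essentially formal. The only ingredients are the invertibility of the $T_x$ and the multiplicativity of the standard basis on length-additive products, both recalled in \S\ref{sec:Hecke}. The one point needing care is to check that the equal-length hypothesis, together with length-additivity on both sides, is exactly what allows each side to be written as a single $T$ of the same group element. No deeper obstacle is expected.
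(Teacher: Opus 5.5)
Your proof is correct and follows the paper's own argument: reduce via Theorem~\ref{min}(2) to a single strong conjugation $w \sim_x w'$, use length-additivity to get $T_x T_w = T_{w'} T_x$ (or its analogue $T_w T_{x^{-1}} = T_{x^{-1}} T_{w'}$ in the second case), and then use the invertibility of $T_x$ to write $T_{w'} - T_w$ as a commutator. You are also right that the $\to_{x_i}$ in the definition of $\sim$ is a typo for $\sim_{x_i}$.
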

 \begin{proof}
     It suffices to show that $T_w - T_{w'}\in \bar H$ whenever $w\sim_x w'$. This means $\ell(xw) = \ell(x)+\ell(w) = \ell(w'x)$ or $\ell(wx^{-1}) = \ell(x)+\ell(w) = \ell(x^{-1} w')$. In the first case, we get
     \begin{align*}
         T_w = T_x^{-1} T_{xw} \equiv T_{xw} T_x^{-1} = T_{w'x} T_x^{-1} = T_{w'}\pmod{[H, H]}.
     \end{align*}
     We argue similarly in the second case.
 \end{proof}
 
 \begin{definition}\label{def:to}
 We denote the common image of all $T_w$ in $\bar H$ for $w\in\mathcal O_{\min}$ by $T_\CO$. 
 \end{definition}

\begin{proposition}\label{prop:cocenterSpanning}
The cocenter $\bar H$ is spanned by $\{T_{\CO}\}_{\CO \in \Cl(W)}$.
\end{proposition}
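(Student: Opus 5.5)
Since the $T_w$ ($w\in W$) form a $\BZ[\q^{\pm1}]$-basis of $H$, their images span $\bar H$. It therefore suffices to show that each $T_w+[H,H]$ is a $\BZ[\q^{\pm1}]$-linear combination of the elements $T_{\CO}$. We argue by induction on $\ell(w)$, using Theorem~\ref{min}(1) to reduce $w$ to a minimal length element of its conjugacy class.

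Let $\CO$ be the conjugacy class of $w$. If $w\in\CO_{\min}$, then $T_w\equiv T_{\CO}$ by Definition~\ref{def:to}. Otherwise Theorem~\ref{min}(1) gives a chain $w=w_1\to_{s_1}w_2\to_{s_2}\cdots\to_{s_{n-1}} w_n\in\CO_{\min}$. Since $\ell(w_n)<\ell(w)$, some step strictly decreases the length. Take the first index $i$ with $\ell(w_{i+1})<\ell(w_i)$. Each earlier step preserves length, so it suffices to verify two facts:
\begin{enumerate}
\item[(a)] if $w'=sws$ with $\ell(w')=\ell(w)$, then $T_w\equiv T_{w'}\pmod{[H,H]}$;
\item[(b)] if $\ell(sws)<\ell(w)$, then $T_w$ is congruent modulo $[H,H]$ to a combination of $T_{sws}$ and $T_{sw}$ with coefficients in $\BZ[\q^{\pm1}]$.
\end{enumerate}
Applying (a) along the first $i-1$ steps and then (b) at step $i$ expresses $T_w$ through $T_{w_{i+1}}$ and $T_{s_iw_i}$. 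Both of these have length strictly less than $\ell(w)$, so the induction hypothesis applies to them.

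For (a), suppose $\ell(sws)=\ell(w)$ with $sws\neq w$. Then either $sw>w$ and $sws<sw$, or $sw<w$ and $sws>sw$. In the first case $T_{sw}=T_sT_w$ and $T_{sw}=T_{sws}T_s$. Hence $T_w=T_s^{-1}T_{sws}T_s\equiv T_{sws}$, because $T_s^{-1}hT_s-h=[T_s^{-1},hT_s]\in[H,H]$. The second case is symmetric, and if $sws=w$ there is nothing to prove.

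For (b), $\ell(sws)=\ell(w)-2$ forces $sw<w$ and $ws<w$. Then $T_w=T_sT_{sws}T_s$, and moving the left factor $T_s$ to the right gives
\[
T_w\equiv T_{sws}T_s^2=(\q-1)T_{sws}T_s+\q T_{sws}=(\q-1)T_{sw}+\q T_{sws}.
\]
Here we used $T_{sws}T_s=T_{sw}$, which holds because $sw\cdot s=sws$ with $\ell(sw)=\ell(sws)+1$.

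The substantive input is Theorem~\ref{min}(1), which we take as given; the rest is bookkeeping. The only point needing care is the length-parity case analysis in (a) and (b): a single cyclic shift changes the length by $0$ or $\pm2$, and the case "$\ell$ drops by $2$" must be matched with the quadratic relation.
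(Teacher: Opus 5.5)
Your proof is correct and follows the paper's argument: induction on $\ell(w)$, using Theorem~\ref{min}(1) to reach the first length-decreasing cyclic shift, invariance of $T_w$ in $\bar H$ under length-preserving shifts, and the quadratic relation to write $T_w$ via $\q T_{sws}$ and a term of length $\ell(w)-1$. The differences are cosmetic: you verify the length-preserving case (a) directly rather than citing the strong-conjugation corollary, and you move $T_s$ to the right, so you obtain $T_{sw}$ where the paper obtains $T_{s_nw_{n+1}}=T_{w_ns_n}$.
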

\begin{proof}
    Let $M\subseteq \bar H$ be the $\mathbb Z[\q^{\pm 1}]$-submodule spanned by the set $\{T_{\CO}\}_{\CO \in \Cl(W)}$. For $w\in W$, we denote the image of $T_w\in H$ in $\overline H$ by $T_w$ again. It remains to show that $T_w\in M$ for all $w\in W$. We prove this using induction on $\ell(w)$.

    If $w$ has minimal length in its conjugacy class $\CO\in \Cl(W)$, then $T_w = T_{\CO}$ in $\overline H$, so we are done by definition of $M$.

    Suppose now that $w$ does not have minimal length in its conjugacy class, and that the claim is proved for all elements of smaller length. By Theorem~\ref{min}, we find a sequence
    \begin{align*}
        w = w_1 \rightarrow_{s_1} w_2\rightarrow_{s_2}\cdots\rightarrow_{s_{n-1}} w_n \rightarrow_{s_n} w_{n+1}
    \end{align*}
    with $\ell(w_1) = \ell(w_2) = \cdots = \ell(w_{n}) > \ell(w_{n+1})$. Then, as we saw above, $T_w = T_{w_{n}}$ in $\overline H$. We calculate in $\overline H$ that
    \begin{align*}
        T_{w_n} &= T_{s_n w_{n+1} s_n} = T_{s_n} T_{w_{n+1}} T_{s_n}
        \\&= T_{s_n}^2 T_{w_{n+1}} = \q T_{w_{n+1}} + (\q-1) T_{s_n w_{n+1}}.
    \end{align*}
    Observe that both $\ell(s_n w_{n+1}) \leq 1+\ell(w_{n+1}) = \ell(w)-1$. By the inductive assumption, both $T_{w_{n+1}}$ and $T_{s_n w_{n+1}}$ lie in $M$. This finishes the induction and the proof.
\end{proof}

\section{Parabolic Induction}\label{sec:parabolic_induction}

The purpose of this section is to develop a powerful method for constructing linear functionals on the cocenter $\bar{H}$. The idea, inspired by parabolic induction for finite groups of Lie type, allows us to ``inflate" \textbf{certain} functions $f$ defined on the cocenter of a parabolic subalgebra $\bar{H}_I$ to a function $F$ on the whole cocenter $\bar{H}$, with precise control over its support. This will be a crucial inductive tool in our proof of the main theorem.

\subsection{Motivation: The finite group case} 
To motivate the construction, recall the parabolic induction for finite groups of Lie type. 

We consider a split reductive group $G/\mathbb{F}_q$. Let $P = MU_P$ be a rational standard parabolic subgroup containing a fixed Borel subgroup $B = TU_B$ with maximal torus $T \subseteq M$. Consider the parabolic induction functor $R_M^G: \mathbb{C}[M(\mathbb{F}_q)]\text{-Mod} \to \mathbb{C}[G(\mathbb{F}_q)]\text{-Mod}$. Let $\mathcal H \cong \End_{\mathbb C[G(\mathbb F_q)]\text{-Mod}}(\mathbb C[G(\mathbb F_q)/B(\mathbb F_q)])$ be the Hecke algebra of $G$ and $\mathrm{Rep}_G^B$ denote the category of $\mathbb C[G(\mathbb F_q)]$-representations which are generated by their $B(\mathbb F_q)$-fixed vectors. We get an equivalence of categories
\begin{align*}
    \mathrm{Rep}_G^B \to \mathcal H\text{-Mod},~V\mapsto V^B.
\end{align*}
Now, consider the composition of functors
\begin{align*}
\mathcal H_M\text{-Mod} \xrightarrow\sim \mathrm{Rep}_M^{M\cap B}\hookrightarrow \mathbb C[M(\mathbb F_q)]\text{-Mod}\xrightarrow{R_M^G} \mathbb C[G(\mathbb F_q)]\text{-Mod}\xrightarrow{V\mapsto V^B} \mathcal H\text{-Mod}.
\end{align*}
A direct calculation shows that for any finite-dimensional $\mathcal{H}_M$-module $V$ and any $h \in \mathcal{H}$, the trace of $h$ on the image of $V$ under this composition is given by
\begin{align*}
    \sum_{\substack{w_1\in W^M\\ w_2\in W_M}} \tr(T_{w_2}\mid V) q^{-\ell(w_1)-\ell(w_2)} \tau(T_{(w_1 w_2)^{-1}} h T_{w_1}).
\end{align*}
Here, $\tau$ is the standard trace function on $\mathcal{H}$ (see \S\ref{sec:Hecke}), $W_M$ is the Weyl group of $M$ and $W^M$ is the set of minimal length representatives of $W/W_M$. This formula serves as the blueprint for our general construction in this section.

\subsection{Parabolic induction for Hecke algebras}\label{sec:par-ind}
We now extend this construction to our setting. Let $J \subseteq I \subseteq S$ such that the normalizer $N_W(W_J)$ is contained in $W_I$. This condition ensures that the Levi subgroup we are inducing from is ``well-behaved" inside $W_I$.

Let $f: \overline{H}_I \longrightarrow \mathbb{Z}[\q^{\pm 1}]$ be a linear functional that vanishes on the commutator $[H_I, H_I]$. We will view $f$ as a function on $H_I$ that factors through the cocenter. Our goal is to build a function $F: H \to \mathbb{Z}[\q^{\pm 1}]$ from $f$.

Consider the following candidate: 
\def\indFunctionName{{\Ind^S_I(f)}}
\begin{equation}\label{eq:defining-F}
    \indFunctionName : H\to\mathbb Z[\q^{\pm 1}],\qquad h\mapsto \sum_{\substack{v_1\in W^I\\ v_2\in W_I}} f(T_{v_2}) \q^{-\ell(v_1)-\ell(v_2)} \tau(T_{(v_1 v_2)^{-1}} h T_{v_1})
\end{equation}

The sum is over all $v_1$ in the set of minimal length representatives of $W/W_I$ and $v_2 \in W_I$.

The main result of this section is the following theorem, which is a direct analogue of the classical trace formula above.

\begin{theorem}\label{thm:classPolynomialLifting} Let $J \subseteq I \subseteq S$ such that the normalizer $N_W(W_J)$ is contained in $W_I$. Let $f: \overline{H}_I \longrightarrow \mathbb{Z}[\q^{\pm 1}]$. Assume furthermore that for any $J' \subset I$ that does not contain a $W_I$-conjugate of $J$, we have $f(T_w)=0$ for any $w \in W_{J'}$. Then 
\begin{enumerate}
\item (Well-definedness) For any $h \in H$, the sum defining $\indFunctionName(h)$ has only finitely many non-zero terms. Thus $\indFunctionName: H \to \mathbb{Z}[\q^{\pm 1}]$ in \eqref{eq:defining-F} is a well-defined map.
\item (Descent to the cocenter) $\indFunctionName$ vanishes on the commutator $[H, H]$, hence induces a linear map $\bar{H} \to \mathbb{Z}[\q^{\pm 1}]$, which we still denote by $\indFunctionName$.
\item (Compatibility with $f$) If, in addition, for every subset $K \subseteq S$ that is $W$-conjugate to $J$, we have $N_W(K) \subseteq W_I$, then the restriction of $\indFunctionName$ to $H_I$ coincides with $f$.
\item (Support) If $K \subseteq S$ is such that no subset of $K$ is $W$-conjugate to $J$, then $\indFunctionName(T_w) = 0$ for all $w \in W_K$.
\item ($\q=1$ specialization) For any $\CO\in \Cl(W)$, we get
\begin{align*}
    \indFunctionName(T_{\CO}) \equiv \sum_{\substack{\CO'\in \Cl(W_I)\\ \CO'\subseteq \CO}}c_{\CO'}f(T_{\CO'})\pmod{\q-1}
\end{align*}
for some constants $c_{\CO'}\in\mathbb Z_{>0}$ depending on $\CO'\subseteq \CO$.
\end{enumerate}
\end{theorem}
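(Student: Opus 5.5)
The plan is to first rewrite \eqref{eq:defining-F} in a more conceptual form. Since $T_{(v_1v_2)^{-1}}=T_{v_2^{-1}}T_{v_1^{-1}}$ for $v_1\in W^I$, $v_2\in W_I$, and $h'=\sum_{u}\q^{-\ell(u)}\tau(T_{u^{-1}}h')T_u$, summing over $v_2$ gives
\[
\Ind_I^S(f)(h)=\sum_{v_1\in W^I}\q^{-\ell(v_1)}\, f\bigl(\pi_I(T_{v_1^{-1}}\,h\,T_{v_1})\bigr).
\]
Here $\pi_I:H\to H_I$ is the $H_I$-bimodule projection onto $\operatorname{span}\{T_u\mid u\in W_I\}$.

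This expression is a formal trace. We view $H=\bigoplus_{v\in W^I}T_vH_I$ as a free right $H_I$-module, and we use the dual family $\q^{-\ell(v)}T_{v^{-1}}$ with respect to the pairing $(x,y)\mapsto \pi_I(xy)$. Then $\Ind_I^S(f)(h)$ is ``$f$ of the $H_I$-valued trace of left multiplication by $h$''. With this in mind I would prove the parts in the order (1), (4), (3), (2), (5).

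For (1) and (4), take $h=T_w$. Expanding $T_{v^{-1}}T_wT_v$ with the quadratic relation, every $T_u$ appearing with $u\in W_I$ has the form $u=x^{-1}w'y$. Here $w'\le w$, and $x,y$ are subwords of $v$ whose ``deleted'' letters are controlled. The expected key lemma is the following. If $v\in W^I$ and $u\in W_I$ occurs in $\pi_I(T_{v^{-1}}T_wT_v)$ with $f(T_u)\neq0$, then by the support hypothesis on $f$ the $W_I$-conjugacy class of $u$ meets $W_{J''}$ only for $J''$ containing a conjugate of $J$. So $u$ essentially lies in a parabolic containing a conjugate $xJx^{-1}$, and $v$ conjugates a part of $W_{\supp(w)}$ onto it.

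Using Kilmoyer's description $v^{-1}W_Kv\cap W_I=W_{K'}$ for $v\in{}^KW^I$, one then argues the following. If $v$ is long enough, the intersection has to be a conjugate of a subset of $\supp(w)$ not containing a conjugate of $J$. The alternative would be that the double coset of $v$ normalizes such a $W_J$-conjugate, and this is excluded by $N_W(W_J)\subseteq W_I$ together with minimality of $v\in W^I$. Hence only finitely many double cosets $W_KvW_I$ contribute, and within each, only finitely many $v$. I expect this finiteness/support lemma to be the main obstacle. The delicate point is controlling the non-reduced terms coming from $T_s^2=\q+(\q-1)T_s$; I would handle them by induction on $\ell(w)$, as in Proposition~\ref{prop:cocenterSpanning}. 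The same analysis for $w\in W_K$ with no subset of $K$ conjugate to $J$ shows that every contributing $u$ lies in some $W_{K'}$ with $K'$ conjugate to a subset of $K$, so $f(T_u)=0$; this gives (4). For (3), let $h\in H_I$. The term $v=1$ gives $f(h)$. A term with $v\neq1$ can only be nonzero if some conjugate $K$ of $J$ is moved by $v$ into $I$ while $v\notin W_I$, which contradicts $N_W(K)\subseteq W_I$.

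For (2), with finiteness established, it suffices to check $\Ind_I^S(f)(T_sh)=\Ind_I^S(f)(hT_s)$ for $s\in S$. Left multiplication by $T_s$ permutes the summands $T_vH_I$ up to a $2\times2$ block, where $sv\in W^I$ or $sv=vt$ with $t\in I$. The identity then reduces to the cyclicity $\operatorname{tr}(AB)=\operatorname{tr}(BA)$ for matrices over $H_I$ composed with $f$, which holds because $f$ kills $[H_I,H_I]$. The finiteness from (1) justifies rearranging the sums.

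For (5), specialize $\q=1$. Then $H$ becomes $\mathbb Z[W]$ and $\Ind_I^S(f)(w)=\sum_{v\in W/W_I,\ v^{-1}wv\in W_I}f(v^{-1}wv)$, which is the classical induced class function. Take $w\in\CO_{\min}$. Grouping the cosets $v$ by the $W_I$-class $\CO'$ of $v^{-1}wv$ gives the constants $c_{\CO'}$, namely the number of such cosets. Each $c_{\CO'}$ is positive for every $\CO'\subseteq\CO$, and it is finite by (1).
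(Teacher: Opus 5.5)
\textbf{The gap is in (1), and your (2)--(4) all depend on it.}

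\emph{Why the sketched route fails.} Expanding $T_{v^{-1}}T_wT_v$ in full and tracking ``$u=x^{-1}w'y$'' through Kilmoyer's double-coset description does not yield a finiteness proof.
\begin{itemize}
\item The intersection $v^{-1}W_Kv\cap W_I$ is determined, up to $W_I$-conjugacy, by the double coset $W_KvW_I$ alone. So ``if $v$ is long enough'' has no effect on it.
\item When $W_K$ is infinite, a single double coset can contain infinitely many elements of $W^I$, and nothing in your sketch bounds $v$ inside it.
\item The proposed induction on $\ell(w)$ ``as in Proposition~\ref{prop:cocenterSpanning}'' is circular. That reduction uses $T_w\equiv T_{sws}$ and $T_sT_{w'}T_s\equiv T_s^2T_{w'}$ modulo commutators, so it presupposes (2), which you derive from (1).
\end{itemize}

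\emph{The missing idea.} Only $T_wT_v$ needs to be expanded. For $u\in W_I$, the $T_u$-coefficient of $\pi_I(T_{v^{-1}}T_wT_v)$ is $\q^{-\ell(u)}\tau(T_{(vu)^{-1}}T_wT_v)$. Since $T_wT_v\in\sum_{z\le w}\mathbb Z[\q^{\pm1}]T_{zv}$, a nonzero term forces $vuv^{-1}=z\le w$. This is an honest conjugation of length at most $\ell(w)$, and the non-reduced terms from the quadratic relation never matter. Finiteness then follows in three steps.
\begin{itemize}
\item Conjugation by $v\in W^I$ is Bruhat-monotone on $W_I$. Hence $\ell(vsv^{-1})\le\ell(w)$ for every $s$ in a $W_I$-conjugate $J'\subseteq\supp(u)$ of $J$.
\item The map $v\mapsto(vsv^{-1})_{s\in J'}$ is injective on $W^I$, because $N_W(W_{J'})\subseteq W_I$.
\item Together with $\ell(u)\le\ell(w)$, this bounds the contributing pairs (Lemma~\ref{lem:liftingFiniteness1} and the argument after it).
\end{itemize}

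\textbf{Parts (3) and (4).} Once you have $u=v^{-1}zv$ with $z\in W_{\supp(w)}$, your argument for (4) goes through: split $v=v_3v_4$ with $v_4\in{}^KW^I$, as in the paper.

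In (3), the asserted contradiction is not immediate. Write $v=v_3v_4$ with $v_3\in W_I$ and $v_4\in{}^IW^I$. Then $v_4$ carries $J'$ to another subset $v_4J'v_4^{-1}\subseteq I$ without normalizing $W_{J'}$, so the hypothesis $N_W(W_{J'})\subseteq W_I$ gives no direct contradiction. The paper factors $v_4$ via the Lusztig--Spaltenstein algorithm (Proposition~\ref{prop:LSalgo}) into elementary conjugators passing through subsets $J_i$, each $W$-conjugate to $J$. The hypothesis of (3) applied to each $J_i$ gives $J_i\subseteq I$ and puts every elementary conjugator in $W_I$. Hence $v_4=1$, and then $v=1$. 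This is precisely where the normalizer condition for all conjugates of $J$ is needed.

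\textbf{Parts (2) and (5).} Your proof of (2) is correct and differs genuinely from the paper's. The paper checks $\Ind_I^S(f)(h_1h_2)=\Ind_I^S(f)(h_2h_1)$ by a chain of $\tau$-expansions that reorders sums over $W^I\times W_I$ without comment. You instead read $\Ind_I^S(f)(h)$ as $f$ of the diagonal of the $H_I$-matrix of left multiplication on $H=\bigoplus_{v\in W^I}T_vH_I$. Combined with the block structure of $T_s$ (blocks $\{v,sv\}$, or $\{v\}$ with entry $T_t$ when $sv=vt$), you only rearrange within finite blocks. This is rigorous once (1) is known. Your (5) coincides with the paper's argument.
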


The remainder of this section is devoted to the proof of this theorem. Let $J, I, N_W(J)$ and $f$ be as in the theorem statement.

\subsection{Finiteness and well-definedness of $F$}
\label{subsec:finiteness}

We start with the following lemma. 

\begin{lemma}\label{lem:liftingFiniteness1}
Let $w\in W,v_1\in W^I$ and $v_2\in W_I$ with $\tau(T_{(v_1 v_2)^{-1}} T_w T_{v_1}) \neq 0$. Then
    \begin{enumerate}
        \item $v_1 v_2 v_1 \i \le w$;
        \item $v_1 s v_1 \i \le w$ for any $s \in \supp(v_2)$.
    \end{enumerate}
\end{lemma}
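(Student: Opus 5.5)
We use the identity $\tau(T_x T_y)=\q^{\ell(x)}\delta_{x,y^{-1}}$ and the standard support property of Hecke algebra products: for $x,y\in W$, every $T_z$ occurring with nonzero coefficient in $T_xT_y$ satisfies $z\le x*y$. Here $*$ denotes the Demazure (monoid) product, and $x*y$ lies in $\{x'y'\mid x'\le x,\ y'\le y\}$. Equivalently, each such $z$ has the form $x'y$ with $x'\le x$, and also the form $xy'$ with $y'\le y$. We will also use its iterated version, obtained by induction on $\ell(x)$ via the quadratic relation $T_s^2=(\q-1)T_s+\q$.

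First, $\tau(T_{(v_1v_2)^{-1}}T_wT_{v_1})\neq 0$ means that $T_{v_1v_2}$ occurs with nonzero coefficient in $T_wT_{v_1}$. By the support property, $v_1v_2\le w*v_1$. Since $v_1\in W^I$ and $v_2\in W_I$, we have $\ell(v_1v_2)=\ell(v_1)+\ell(v_2)$. The elements of the form $w'v_1$ with $w'\le w$ have length at most $\ell(w')+\ell(v_1)$.

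For (1), we aim for the cleaner statement that $v_1v_2=w'v_1$ for some $w'\le w$. This follows from expanding $T_wT_{v_1}$ letter by letter along a reduced word of $w$ acting on $T_{v_1}$: each $T_s$ acting on the left either multiplies by $s$ or leaves the element fixed (with a $\q-1$ coefficient). Hence every term is $T_{w'v_1}$ with $w'$ a subexpression of the reduced word of $w$, that is, $w'\le w$. Then $w'=v_1v_2v_1^{-1}$, which gives (1).

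For (2), fix $s\in\supp(v_2)$. Since $v_1v_2$ is length-additive with $v_1\in W^I$ and $s\in\supp(v_2)\subseteq I$, the element $v_1 s$ lies below $v_1v_2$ in Bruhat order with $\ell(v_1s)=\ell(v_1)+1$. The key step is to refine the subexpression argument. We track the left multiplications that take $v_1$ to $v_1v_2$. They form a subword $s_{i_1}\cdots s_{i_k}$ of a reduced word of $w$ with $s_{i_1}\cdots s_{i_k}v_1=v_1v_2$. Along the way, $T_s$ either multiplies or fixes, and the fixing steps occur only when the current element $u$ has $su<u$. We then rewrite the left multiplications as right multiplications by the reflections $u^{-1}s_{i_j}u$. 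Since the final element $v_1v_2$ differs from $v_1$ by right multiplication inside $W_I$, we try to show that some intermediate step multiplies by a reflection $v_1tv_1^{-1}$, with $t$ a reflection of $W_I$ involving $s$.

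We expect this tracking argument to be the main obstacle. As a first attempt we would use the well-known subword property: the reflection $v_1 s v_1^{-1}$ is, after deleting all but one letter in the subword, of the form $s_{i_1}\cdots s_{i_{j-1}}s_{i_j}s_{i_{j-1}}\cdots s_{i_1}$. Such an element is a subexpression of the reduced word of $w$, so it is $\le w$. To find the right letter $j$, we would use that $s\in\supp(v_2)$ forces the positive root $v_1\alpha_s$ to be sent negative by $(v_1v_2)^{-1}\cdot$(the product). Then the standard exchange-condition argument locates a letter whose associated reflection is $v_1sv_1^{-1}$.

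If that fails, there is an alternative. We could induct on $\ell(w)$ using $w=s'w''$ and the recursion $T_{s'}T_{w''}T_{v_1}$, distinguishing whether $s'v_1v_2>v_1v_2$. In either branch the pair $(v_1,v_2)$ changes to a pair of the same form, either $(s'v_1,v_2)$ or $(v_1,v_2')$ with $\supp(v_2')\subseteq\supp(v_2)$. We would then apply the inductive hypothesis together with the lifting property of Bruhat order to conclude $v_1sv_1^{-1}\le w$.
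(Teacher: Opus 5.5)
Your part (1) is correct and is the paper's argument: expanding $T_wT_{v_1}$ along a reduced word of $w$ shows that every term is $T_{zv_1}$ with $z$ a subword product, so $z\le w$, and then $z=v_1v_2v_1^{-1}$.

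\textbf{The gap is in part (2).}

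First, your step ``such an element is a subexpression of the reduced word of $w$, so it is $\le w$'' is false. The word $s_{i_1}\cdots s_{i_{j-1}}s_{i_j}s_{i_{j-1}}\cdots s_{i_1}$ reads the positions $i_1,\dots,i_{j-1}$ twice, going up and then back down, so it is not a subword of the reduced word of $w$. Reflections obtained this way (left inversions of the word) need not lie below it in Bruhat order. For example, in $S_3$ take $w=s_1s_2$. The reflection attached to the second letter is $t=s_1s_2s_1$. It satisfies $tw=s_1<w$, but $t\not\le w$ since $\ell(t)=3$. So even if your exchange-condition argument located a letter whose reflection is $v_1sv_1^{-1}$, you could not conclude $v_1sv_1^{-1}\le w$.

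Second, your fallback induction on $\ell(w)$ is not well-posed as stated. After writing $w=s'w''$, you typically only learn that $T_{s'v_1v_2}$ occurs in $T_{w''}T_{v_1}$. But $s'v_1v_2$ need not lie in $v_1W_I$, and the right-hand factor is still $T_{v_1}$, not $T_{s'v_1}$. So the hypothesis of the lemma is not reproduced for a pair of the same form.

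\textbf{The missing idea.} Part (2) is a purely Coxeter-theoretic consequence of part (1): conjugation by $v_1\in W^I$ is Bruhat-monotone on $W_I$. Let $u\lessdot ut$ be a cover in $W_I$, with $t$ the reflection of $\beta\in\Phi_I^+$.
\begin{itemize}
\item Then $u\beta\in\Phi_I^+$.
\item The element $v_1tv_1^{-1}$ is the reflection of the root $v_1\beta$, which is positive because $v_1\in W^I$.
\item We have $(v_1uv_1^{-1})(v_1\beta)=v_1u\beta\in\Phi^+$, again because $v_1\in W^I$.
\end{itemize}
Hence $v_1uv_1^{-1}<v_1uv_1^{-1}\cdot v_1tv_1^{-1}=v_1utv_1^{-1}$. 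Since $s\le v_2$ for $s\in\supp(v_2)$, you get $v_1sv_1^{-1}\le v_1v_2v_1^{-1}\le w$. This is exactly how the paper concludes, with no subword tracking needed.
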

\begin{proof}
    By the definition of the Hecke algebra, we see that $T_w T_{v_1}$ is a linear combination of terms $T_{zv_1}$ for $z\leq w$. So the condition $\tau(T_{(v_1 v_2)^{-1}} T_w T_{v_1})\neq 0$ implies that $v_1 v_2 = z v_1$ for some $z\leq w$. Comparing lengths and using that $v_1 v_2$ is necessarily a length-additive product, we get the claim (1).

    Let $u_1\lessdot u_2$ be a Bruhat cover of elements in $W_I$. Then the condition $v_1\in W^I$ implies that $v_1 u_1 v_1^{-1} < v_1 u_2 v_1^{-1}$. In other words, conjugation by $v_1$ preserves the Bruhat order in $W_I$. Hence $v_1 s v_1^{-1} \leq v_1 v_2 v_1^{-1} = z\leq w$. This proves part (2). 
\end{proof}

We now prove that the sum defining $\indFunctionName(h)$ is finite for every $h\in H$, thereby showing that $\indFunctionName$ is a well-defined map. By linearity it suffices to consider $h=T_w$ for some $w\in W$.  Set
\[
\mathcal{S}_w:=\bigl\{(v_1,v_2)\in W^{I}\times W_{I}\;\big|\;
f(T_{v_2})\neq 0\;\text{and}\;\tau(T_{(v_1v_2)^{-1}}T_wT_{v_1})\neq0\bigr\}.
\]

By Lemma \ref{lem:liftingFiniteness1}, there exists $z\leq w$ such that $v_1v_2=z\,v_1$.  We obtain
\[
\ell(v_2)=\ell(v_1v_2)-\ell(v_1)=\ell(z v_1)-\ell(v_1) \le \ell(z)\le\ell(w).
\]
Thus for every $(v_1,v_2)\in\mathcal{S}_w$, we have $\ell(v_2)\le\ell(w)$.  Hence, the projection of $\mathcal{S}_w$ onto the second factor is contained in the finite set
\[
V_2:=\{v_2\in W_{I}\mid\ell(v_2)\le\ell(w)\}.
\]

Fix $v_2\in V_2$ such that $(v_1, v_2) \in \mathcal{S}_w$ for some $v_1$. In particular, $f(T_{v_2})\neq0$.  By the hypothesis of Theorem~\ref{thm:classPolynomialLifting}, the support $\supp(v_2)$ contains a subset $J'$ which is $W_{I}$‑conjugate to $J$.  

By Lemma \ref{lem:liftingFiniteness1} (2), for any simple reflection $s\in J'$ one has $v_1s v_1^{-1}\le w$.  Consequently
$v_1$ belongs to the set
\[
X_{v_2}:=\{v_1\in W^{I}\;\big|\; \ell(v_1s v_1^{-1})\le\ell(w) \text{ for all } s \in J'\}.
\]

Consider the map
\[
\Phi\colon W^{I}\longrightarrow\Hom_{\mathrm{Set}}(J',W),\qquad
v_1\longmapsto\bigl(s\mapsto v_1s v_1^{-1}\bigr).
\]
If $\Phi(v_1)=\Phi(v_1')$ then $v_1^{-1}v_1'\in N_W(W_{J'})$.  By the hypothesis of Theorem~\ref{thm:classPolynomialLifting} we have $N_W(W_{J'})\subseteq W_I$; because $v_1,v_1'\in W^{I}$ this forces $v_1=v_1'$.  Hence $\Phi$ is injective.

The set $J'$ is finite and $W$ contains only finitely many elements of length $\le\ell(w)$.  Therefore, there are only finitely many functions $c\colon J'\to W$ with $\ell(c(s))\le\ell(w)$ for all $s\in J'$.  By injectivity of $\Phi$, its preimage $X_{v_2}$ (which is exactly $\Phi^{-1}$ of that finite set of functions) is finite.

Therefore, $\mathcal{S}_w\subseteq\bigcup_{v_2\in V_2}\bigl(X_{v_2}\times\{v_2\}\bigr)$ is a finite union of finite sets.  Hence $\mathcal{S}_w$ is finite, completing the proof that $F$ is well‑defined. 

\subsection{Descent to the cocenter}
Now we establish part (2) of Theorem \ref{thm:classPolynomialLifting}.

It is enough to prove $\indFunctionName(h_1h_2)=\indFunctionName(h_2h_1)$ for all $h_1,h_2\in H$.  By definition, $\indFunctionName(h_1 h_2)=\sum_{v_1\in W^{I}}\sum_{v_2\in W_{I}} \q^{-\ell(v_1)-\ell(v_2)} \tau(T_{(v_1 v_2)^{-1}} h_1 h_2 T_{v_1}) f(T_{v_2})$. We expand the product $h_2 T_{v_1}$ in the basis $\{T_{u_1u_2}\mid u_1\in W^{I},\;u_2\in W_{I}\}$ and obtain $$h_2 T_{v_1}=\sum_{u_1\in W^{I}}\sum_{u_2\in W_{I}} \q^{-\ell(u_1)-\ell(u_2)}\,
\tau(T_{(u_1u_2)^{-1}}h_2 T_{v_1})\,T_{u_1u_2}.$$  

Since $\t$ vanishes on $[H, H]$, we have
\begin{align*} &\tau(T_{(v_1 v_2)^{-1}} h_1 h_2 T_{v_1}) \\&=\sum_{u_1\in W^{I}}\sum_{u_2\in W_{I}} \q^{-\ell(u_1)-\ell(u_2)}\, \tau(T_{(v_1 v_2)^{-1}} h_1 T_{u_1} T_{u_2}) \tau(T_{(u_1u_2)^{-1}}h_2 T_{v_1}) \\ &=\sum_{u_1\in W^{I}}\sum_{u_2\in W_{I}} \q^{-\ell(u_1)-\ell(u_2)}\, \tau(T_{v_1^{-1}} h_1 T_{u_1} T_{u_2} T_{v_2 \i}) \tau(T_{(u_1u_2)^{-1}}h_2 T_{v_1}).
\end{align*}

We have $\tau(T_{v_1^{-1}} h_1 T_{u_1} T_{u_2} T_{v_2 \i})=\sum_{z\in W_{I}} \q^{-\ell(z)}\, \tau(T_{v_1^{-1}} h_1 T_{u_1} T_z) \tau(T_{z \i} T_{u_2} T_{v_2 \i})$.

Since $f$ vanishes on $[H_I, H_I]$, we have 
\begin{align*}
   \sum_{v_2\in W_{I}} \q^{-\ell(v_2)} \tau(T_{z \i} T_{u_2} T_{v_2 \i}) f({T_{v_2}}) &=f(\sum_{v_2\in W_{I}} \q^{-\ell(v_2)}  \tau(T_{z \i} T_{u_2} T_{v_2 \i}) T_{v_2}) \\ &=f(T_{z \i} T_{u_2})=f({T_{u_2} T_{z \i}}) \\ &=f(\sum_{v_2\in W_{I}} \q^{-\ell(v_2)}  \tau( T_{u_2} T_{z \i}T_{v_2 \i}) T_{v_2}) \\&=\sum_{v_2\in W_{I}}  \q^{-\ell(v_2)} \tau(T_{u_2} T_{z \i}T_{v_2 \i}) f(T_{v_2}).
\end{align*}

Now we reverse the procedure. We have 
\begin{align*}
\t(T_{u_1 \i} h_2 T_{v_1} T_{z \i} T_{v_2 \i}) &=\sum_{u_2 \in W_I} \q^{-\ell(u_2)} \tau(T_{u_2} T_{z \i}T_{v_2 \i}) \t(T_{u_1 \i} h_2 T_{v_1} T_{u_2 \i}) \\&=\sum_{u_2 \in W_I} \q^{-\ell(u_2)} \tau(T_{u_2} T_{z \i}T_{v_2 \i}) \t(T_{u_2 \i} T_{u_1 \i} h_2 T_{v_1}).
\end{align*}
and
\begin{align*}
\t(T_{(u_1 v_2) \i} h_2 h_1 T_{u_1}) &=\sum_{v_1 \in W^I} \sum_{z \in W_I} \q^{-\ell(v_1)-\ell(z)}\t(T_{(u_1 v_2) \i} h_2 T_{v_1} T_{z \i}) \t(T_z T_{v_1 \i} h_1 T_{u_1}) \\ &=\sum_{v_1 \in W^I} \sum_{z \in W_I} \q^{-\ell(v_1)-\ell(z)}\t(T_{u_1\i} h_2 T_{v_1} T_{z \i} T_{v_2 \i}) \t(T_{v_1 \i} h_1 T_{u_1} T_z).
\end{align*}

Putting all these equalities together, we have
\begin{align*}
    & \indFunctionName(h_1 h_2)=\sum \q^{-\ell(v_1)-\ell(v_2)} \tau(T_{(v_1 v_2)^{-1}} h_1 h_2 T_{v_1}) f(T_{v_2}) \\ &=\sum \q^{-\ell(v_1)-\ell(v_2)-\ell(u_1)-\ell(u_2)} \tau(T_{v_1^{-1}} h_1 T_{u_1} T_{u_2} T_{v_2 \i}) \tau(T_{(u_1u_2)^{-1}}h_2 T_{v_1}) f(T_{v_2}) \\&=\sum \q^{-\ell(v_1)-\ell(v_2)-\ell(u_1)-\ell(u_2)-\ell(z)} \tau(T_{v_1^{-1}} h_1 T_{u_1} T_z) \tau(T_{z \i} T_{u_2} T_{v_2 \i}) \tau(T_{(u_1u_2)^{-1}}h_2 T_{v_1}) f(T_{v_2}) \\ &=\sum \q^{-\ell(v_1)-\ell(v_2)-\ell(u_1)-\ell(u_2)-\ell(z)} \tau(T_{v_1^{-1}} h_1 T_{u_1} T_z) \tau(T_{u_2} T_{z \i} T_{v_2 \i}) \tau(T_{(u_1u_2)^{-1}}h_2 T_{v_1}) f(T_{v_2}) \\ &=\sum \q^{-\ell(v_1)-\ell(v_2)-\ell(u_1)-\ell(z)} \tau(T_{v_1^{-1}} h_1 T_{u_1} T_z) \t(T_{u_1 \i} h_2 T_{v_1} T_{z \i} T_{v_2 \i}) f(T_{v_2}) \\ &=\sum \q^{-\ell(v_2)-\ell(u_1)} \t(T_{(u_1 v_2) \i} h_2 h_1 T_{u_1}) f(T_{v_2}) \\ &=\indFunctionName(h_2 h_1). \qedhere
\end{align*}

\subsection{Support of conjugacy classes}\label{sec:LS}
In this section, we review some properties of conjugacy classes, and conjugating elements, in Coxeter groups.

The following facts are frequently used in the literature, although not often explicitly explained. They can be found, with slightly different focus and notation, e.g.\ in \cite[Section~3]{Kr09}.

\begin{lemma}\label{lem:parabolicConjugation} 
    Let $I_1, I_2\subseteq S$ be two subsets and $v\in {}^{I_1} W^{I_2}$.
    \begin{enumerate}
    \item If $u_1<u_2\in W_{I_1}$, then $v^{-1} u_1 v<v^{-1} u_2 v$.
    \item Set $J = I_2\cap v^{-1} I_1 v$, then $W_{I_2}\cap v^{-1} W_{I_1} v = W_J$ and
    \[W_{v J v^{-1}}\to W_J,~w\mapsto v^{-1} w v\]
    is a bijective map preserving length and Bruhat order.
    \item Let $u_1\in W$ be of minimal length in its conjugacy class and $I_1 = \supp(u_1)$. Then every $W_{I_1}$-conjugate of $u_1$ has support equal to $I_1$.
    \item Suppose we are given minimal length elements $u_1, u_2\in W$ that are conjugate, i.e.\ there exists $w\in W$ with $w^{-1} u_1 w = u_2$. Assume that $I_1 = \supp(u_1), I_2 = \supp(u_2)$ and $v=\min(W_{I_1} w W_{I_2})$. Then the map \begin{align*}
        W_{I_1} \to W_{I_2},~u\mapsto v^{-1} u v.
    \end{align*}  is bijective and preserves both the length and the Bruhat order.
    \end{enumerate}
\end{lemma}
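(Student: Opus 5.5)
The lemma has four parts. I would prove (1) and (2) with the root system, get (3) from a parabolic-closure argument, and then derive (4) formally from (2) and (3). The basic input is that $v\in{}^{I_1}W^{I_2}$ means $v^{-1}\in W^{I_1}$ and $v\in W^{I_2}$. Hence $v^{-1}(\Phi^+_{I_1})\subseteq\Phi^+$ and $v(\Phi^+_{I_2})\subseteq\Phi^+$.

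\emph{Part (1).} It suffices to treat $u_2=u_1t$ with $t\in W_{I_1}$ a reflection and $u_1<u_1t$. Write $\beta\in\Phi^+_{I_1}$ for the root of $t$; the inequality $u_1<u_1t$ means $u_1\beta\in\Phi^+$. Then $v^{-1}tv$ is the reflection with root $v^{-1}\beta$, which is positive. Moreover $(v^{-1}u_1v)(v^{-1}\beta)=v^{-1}(u_1\beta)$. Since $u_1\beta\in\Phi^+_{I_1}$, this root is positive. By the root criterion for Bruhat order, $v^{-1}u_1v<v^{-1}u_1v\cdot v^{-1}tv=v^{-1}u_2v$. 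A general relation $u_1<u_2$ is a chain of such steps, so (1) follows.

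\emph{Part (2).} The equality $W_{I_2}\cap v^{-1}W_{I_1}v=W_J$ is Kilmoyer's theorem. I would prove it by a root argument: for $w$ in the intersection, the roots of $\Phi_{I_2}^+$ that $w$ sends to negative roots lie in $\Phi_{I_2}\cap v^{-1}\Phi_{I_1}$. Because $v$ is minimal on both sides, the simple roots in this set are exactly those of $J$; induction on $\ell(w)$ then gives $w\in W_J$. The same minimality shows that conjugation by $v$ maps $\Phi^+_{vJv^{-1}}$ bijectively onto $\Phi^+_J$ and simple roots to simple roots. Hence $w\mapsto v^{-1}wv$ is the isomorphism of Coxeter systems $W_{vJv^{-1}}\to W_J$ induced by $vJv^{-1}\to J$, so it preserves length. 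Bruhat order is preserved in both directions by (1), applied to $v$ and to $v^{-1}$.

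\emph{Part (3).} This is the step I expect to be the main obstacle, because supports are not conjugation invariant in general. My plan is to use the fact, taken from \cite[Section~3]{Kr09}, that a minimal length element $u_1$ has parabolic closure exactly $W_{\supp(u_1)}=W_{I_1}$. Take $x=gu_1g^{-1}$ with $g\in W_{I_1}$. Its parabolic closure is $gW_{I_1}g^{-1}=W_{I_1}$. On the other hand, $x$ lies in the standard parabolic subgroup $W_{\supp(x)}$, so $W_{I_1}\subseteq W_{\supp(x)}$. Since also $\supp(x)\subseteq I_1$, we get $\supp(x)=I_1$.

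If that citation is not usable, I would fall back on Theorem~\ref{min}, applied inside $W_{I_1}$. A cyclic shift $w\to sws$ with $\ell(sws)\le\ell(w)$ never enlarges the support, since for $s\notin\supp(w)$ either $sws=w$ or $\ell(sws)=\ell(w)+2$. Then I would check that the strong conjugations appearing in Theorem~\ref{min}(2) cannot shrink the full support.

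\emph{Part (4).} Write $w=avb$ with $a\in W_{I_1}$ and $b\in W_{I_2}$. Then $v^{-1}(a^{-1}u_1a)v=bu_2b^{-1}$, which lies in $W_{I_2}\cap v^{-1}W_{I_1}v=W_J$ by (2). By (3), $bu_2b^{-1}$ has support $I_2$, so $I_2\subseteq J\subseteq v^{-1}I_1v$. Symmetrically, $a^{-1}u_1a\in W_{vJv^{-1}}$ has support $I_1$, so $I_1\subseteq vJv^{-1}\subseteq vI_2v^{-1}$. Therefore $vI_2v^{-1}=I_1$, so $J=I_2$, and (2) gives the asserted bijection $W_{I_1}\to W_{I_2}$ preserving length and Bruhat order.
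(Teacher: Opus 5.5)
The paper gives no proof of this lemma; it only refers to \cite[Section~3]{Kr09}. Your proposal is correct and is a more self-contained route. Part (1) is the root criterion for Bruhat order and uses only $v^{-1}\in W^{I_1}$. Part (2) is Kilmoyer's theorem by descent. Your key claim there holds: for $x=v^{-1}uv$ in the intersection and $\alpha\in\Phi^+_{I_2}$ with $x\alpha\in\Phi^-$, the fact $v\in W^{I_2}$ gives $v\alpha\in\Phi^+$ and $u(v\alpha)=vx\alpha\in\Phi^-$, hence $v\alpha\in\Phi^+_{I_1}$. Length preservation in (2) is immediate once you note that conjugation carries the generating set $vJv^{-1}$ onto $J$. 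Part (4) then follows formally from (2) and (3) via your squeeze $I_2\subseteq J\subseteq v^{-1}I_1v$ and $I_1\subseteq vJv^{-1}\subseteq vI_2v^{-1}$.

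The benefit of your organization is that it isolates the single non-formal ingredient: a minimal length element $u_1$ has parabolic closure $W_{\supp(u_1)}$. Everything else is elementary root combinatorics.

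Two small points:
\begin{enumerate}
\item In (1) you should say why a Bruhat chain from $u_1$ to $u_2$ can be taken inside $W_{I_1}$ with reflections of $W_{I_1}$; the subword property gives this.
\item Your fallback for (3) is not an independent proof. Support is not a strong-conjugation invariant: in type $A_2$ one has $s_1\sim_x s_2$ with $x=s_1s_2$. So there is no local reason why the strong conjugations inside $W_{I_1}$ should stay among full-support elements, and showing that they do is essentially (3) itself. The parabolic closure theorem is genuinely needed there.
\end{enumerate}
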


This simplifies the question to checking if two minimal length elements $w_1, w_2$ in $W$ are conjugate. First, one may check if their supports $I_1 = \supp(w_1), I_2 = \supp(w_2)$ are conjugate. If they are not, $w_1$ and $w_2$ are not conjugate. If yes, one immediately reduces to the case where both $w_1$ and $w_2$ have the same support, say $I_1$. Now one needs to compute the group
\begin{align*}
    \Aut(W, I_1) := \{\varphi : I_1\to I_1\mid \exists v\in W\mid \varphi(s) = v^{-1} s v\text{ for all }s\in I_1\},
\end{align*}
which is finite since $I_1$ is. Note that $\Aut(W, I_1)$ not only acts on $I_1$, but this action extends to an action on $W_{I_1}$, its Hecke algebra etc. By the above lemma, $w_1$ is conjugate to $w_2$ in $W$ if and only if $w_1$ is conjugate, inside $W_{I_1}$ to some element in the $\Aut(W, I_1)$-orbit of $w_2$.

It remains to explain how to check if two subsets of $S$ are conjugate, and how to compute the group $\Aut(W, I_1)$. It is done via the Lusztig-Spaltenstein algorithm \cite[Lemma~2.12]{LS79} (see also \cite[Proposition~5.5]{De82}).

\begin{definition}
    Let $I_1\subseteq S$ be any subset, and $s\in S\setminus I_1$ be a simple reflection such that the irreducible component $C$ of $I_1\sqcup\{s\}$ containing $s$ is spherical. Then $w = w_{0,C\cap I_1}w_{0,C} \in {}^{I_1}W$ satisfies $I_2 := w^{-1} I_1 w\subseteq S$. We may call $w$ the \emph{elementary conjugator} associated with $(I_1, I_2, s)$.
\end{definition}
\begin{proposition}[Lusztig-Spaltenstein algorithm]\label{prop:LSalgo}
Let $I_1\subseteq S$ be a subset and $w\in {}^{I_1}W$ be an element such that $w^{-1} I_1 w \subseteq S$. Then there exists a decomposition of $w$ as a length additive product
\begin{align*}
    w = w_1 \cdots w_n
\end{align*}
together with elements $s_1,\dotsc,s_n\in S$ and subsets $I_2,\dotsc, I_{n+1}\subseteq S$ such that $w_i$ is the elementary conjugator associated with $(I_i, I_{i+1}, s_i)$ for all $i=1,\dotsc,n$.
\end{proposition}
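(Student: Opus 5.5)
The approach is induction on $\ell(w)$, peeling off elementary conjugators one at a time.

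Let $I_1$ and $w\in{}^{I_1}W$ with $w^{-1}I_1w\subseteq S$ be given. If $w=1$, take $n=0$. Otherwise $w\neq 1$ and $w\in{}^{I_1}W$, so we can choose $s\in S$ with $ws<w$; necessarily $s\notin w^{-1}I_1w$. Set $K=w^{-1}I_1w\subseteq S$.

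\textbf{Step 1: the component $C$ is spherical.}
The aim is to show that the irreducible component $C$ of $I_1\sqcup\{s\}$ containing $s$ is spherical. The plan is to pass to roots. Since $w\in{}^{I_1}W$ and $w$ maps $\Phi_K$ bijectively onto $\Phi_{I_1}$ (simple roots to simple roots, as $w^{-1}I_1w=K$), we have $w\alpha_t=\alpha_{wtw^{-1}}$ for $t\in K$. Also $ws<w$ means $w\alpha_s\in\Phi^-$.

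Suppose $W_{I_1\sqcup\{s\}}$ had an infinite component containing $s$. I would use the standard criterion that $I_1\sqcup\{s\}$ is spherical around $s$ exactly when there is an element of $W$ sending $\alpha_s$ together with $\Phi_{I_1}$ in a controlled way. Concretely, I would consider the element $x=\min(W_{I_1}w)$-type arguments: writing $w=w_{0,C\cap I_1}\,y$ fails in general, so the more concrete route is as follows.

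Look at the set $N(w^{-1})=\{\beta>0: w^{-1}\beta<0\}$, which is finite. The root $-w\alpha_s$ lies in it. Reflecting $-w\alpha_s$ by elements of $W_{I_1}$ keeps it positive and inside $N(w^{-1})$, because $w^{-1}W_{I_1}w=W_K$ fixes the sign pattern relative to $\Phi_K$ and $s\notin K$. Hence the orbit $W_{I_1}\cdot(-w\alpha_s)$ is finite. Consequently the group generated by $I_1$ and the reflection $s_{w\alpha_s}$ acts with finite orbit, and transporting back by $w$ shows that $W_{K\cup\{s\}}$ has a finite orbit on $\alpha_s$. This finiteness should force the component of $s$ in $K\cup\{s\}$ to be finite, by the standard fact that infinite irreducible parabolics have infinite root orbits.

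\textbf{Step 2: the elementary conjugator is a left factor.}
Here I reverse roles: apply the symmetric argument to $w^{-1}\in{}^{K}W$, with $s$ playing the role on the left side of $w^{-1}$. This gives an elementary conjugator $u=w_{0,C\cap K}w_{0,C}$ for $(K,K',s)$ with $C\subseteq K\sqcup\{s\}$ spherical. Then I would show $w=w'u^{-1}$ is length-additive with $w'\in{}^{I_1}W$ and $w'^{-1}I_1w'=K'\subseteq S$. Length-additivity follows because $u^{-1}=w_{0,C}w_{0,C\cap K}$ is the minimal coset representative in $W_C/W_{C\cap K}$ of the longest element. Moreover, every positive root of $\Phi_C$ outside $\Phi_{C\cap K}$ is sent negative by $w$: this is where Step 1's sign analysis is used, the key point being that $\Phi_C^+\setminus\Phi_K$ is a single $W_{C\cap K}$-orbit closure generated from $\alpha_s$.

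Finally, note that $u^{-1}$ is the elementary conjugator for $(K',K,s')$, where $s'=u^{-1}su$. This yields the factorization at the right end. Induction on $\ell(w')<\ell(w)$ then completes the proof.

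\textbf{Main obstacle.}
I expect the sphericity of $C$ and the inversion-set inclusion $\Phi_C^+\setminus\Phi_{C\cap K}\subseteq N(w)$ to be the hard part. The plan is to prove both simultaneously via the finite-orbit argument for $\alpha_s$ under $W_{C\cap K}$, closely following the argument in \cite[Lemma~2.12]{LS79} and \cite[Proposition~5.5]{De82}.
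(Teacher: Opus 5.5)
\textbf{Verdict: the skeleton is right, but the crux is not proved.} The paper gives no proof of its own here; it cites \cite[Lemma~2.12]{LS79} and \cite[Proposition~5.5]{De82}. Your skeleton is the standard one: pick a right descent $s$ of $w$, split off $u^{-1}=w_{0,C}w_{0,C\cap K}$ at the right end, and induct on length. Your check that $w'=wu\in{}^{I_1}W$ with $w'^{-1}I_1w'=u^{-1}Ku\subseteq S$ is fine. There are two slips.
\begin{itemize}
\item Since $s$ is a \emph{right} descent, you only know $s\notin K$, and $s$ may lie in $I_1$. So Step~1 must be about $K\sqcup\{s\}$, not $I_1\sqcup\{s\}$.
\item The simple reflection making $u^{-1}$ the elementary conjugator for $(K',K,\cdot)$ is $w_{0,C}sw_{0,C}$, the unique element of $C\setminus K'$. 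It is not $u^{-1}su$, which need not be simple.
\end{itemize}

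\textbf{Where the sketched argument fails.}
\begin{itemize}
\item Write $y=wy'w^{-1}$ with $y'\in W_K$. That $W_K$ preserves $\Phi^+\setminus\Phi_K$ only gives $y'\alpha_s>0$. Your claim $y(-w\alpha_s)\in N(w^{-1})$ needs $wy'\alpha_s<0$, which is exactly the inclusion $\Phi^+_{K\sqcup\{s\}}\setminus\Phi_K\subseteq N(w)$ you are trying to prove.
\item Even granted, a finite $W_K$-orbit of $\alpha_s$ does not make $C$ spherical. For $C=\{s,t\}$ with $m_{s,t}=\infty$ and $C\cap K=\{t\}$, the orbit $W_{\{t\}}\alpha_s$ has two elements while $W_C$ is infinite.
\item The passage to a finite orbit of $W_{K\cup\{s\}}$ is unjustified, since adjoining $s$ is exactly what can make the orbit infinite.
\item $\Phi_C^+\setminus\Phi_{C\cap K}$ is in general not a single $W_{C\cap K}$-orbit. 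In type $B_2$ with $C\cap K$ one node there are three such roots and orbits have size at most two. So the length additivity in Step~2, which is equivalent to $N(u^{-1})=\Phi_C^+\setminus\Phi_{C\cap K}\subseteq N(w)$, is unsupported.
\end{itemize}

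\textbf{The missing idea is a coefficient argument.}
\begin{itemize}
\item $w$ maps $\{\alpha_t\}_{t\in K}$ onto $\{\alpha_t\}_{t\in I_1}$, and $w^{-1}\Phi_{I_1}=\Phi_K\not\ni\alpha_s$. Hence the negative root $w\alpha_s$ has a strictly negative $\alpha_u$-coefficient for some $u\notin I_1$.
\item Every $\beta\in\Phi^+_{K\sqcup\{s\}}\setminus\Phi_K$ has the form $c\alpha_s+\gamma$ with $c>0$ and $\gamma\in\sum_{t\in K}\mathbb Z_{\geq 0}\alpha_t$. Since $w\gamma\in\sum_{t\in I_1}\mathbb Z_{\geq 0}\alpha_t$, the $\alpha_u$-coefficient of $w\beta$ is negative, so $w\beta\in\Phi^-$.
\item Thus $P:=\Phi_C^+\setminus\Phi_{C\cap K}$ lies in the finite set $N(w)$. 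This gives length additivity of $w=(wu)\cdot u^{-1}$ once $C$ is spherical, because $N(u^{-1})=P$.
\end{itemize}

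\textbf{Sphericity of $C$.} Let $W_{C\cap K}$ act on the finite set $P$. The action is faithful:
\begin{itemize}
\item The span of $P$ is $W_{C\cap K}$-stable and contains $\alpha_s$.
\item Using $t\alpha_s-\alpha_s=-\langle\alpha_t^\vee,\alpha_s\rangle\alpha_t$ and connectivity of $C$, the span therefore contains every $\alpha_u$ with $u\in C$.
\item An element of $W_C$ fixing $\Phi_C$ pointwise has empty inversion set, so it is trivial.
\end{itemize}
Hence $W_{C\cap K}$ is finite. Then $\Phi_C^+=P\sqcup\Phi^+_{C\cap K}$ is finite, so $W_C$ is finite. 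With these two facts in place, your induction goes through.
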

This proposition makes it very easy to determine the conjugacy classes of subsets of $S$ and the groups $\Aut(W, I)$, since we only have to consider the (iterated) actions of elementary conjugators. In particular, 

(a) If $I_1$ is as in Proposition~\ref{prop:LSalgo} and $C$ is an irreducible component of $I_1$ not of type $A$, then we get  $w^{-1} C w = C$. If moreover $C$ is not of type $A$ nor $D$, then $w$ centralizes $W_C$.

\subsection{Compatibility with $f$}
Let $w \in W_I$. We have 
\begin{align*}
    \indFunctionName(T_w) = \sum_{v_1\in W^I}\sum_{v_2\in W_I} f(T_{v_2})\q^{-\ell(v_1)-\ell(v_2)} \tau(T_{(v_1 v_2)^{-1}} T_w T_{v_1}),
\end{align*}

The contribution of the term with $v_1 = 1$ is precisely $f(T_w)$, as $\q^{-\ell(v_2)}\tau(T_{v_2^{-1}} T_w) = \delta_{v_2, w}$. We claim that all terms with $v_1 \neq 1$ vanish under the hypothesis of Theorem \ref{thm:classPolynomialLifting} (3). 

Suppose that $v_1 \neq 1$ in $W^I$ and $f(T_{v_2}) \neq 0$, $\tau(T_{(v_1 v_2)^{-1}} T_w T_{v_1}) \neq 0$.
By Lemma \ref{lem:liftingFiniteness1}, $v_1 v_2 v_1^{-1} = z$ for some $z \leq w$. Hence $v_1 v_2 v_1^{-1} \in W_I$. Write $v_1 = v_3 v_4$ with $v_3 \in W_I$ and $v_4 \in {}^{I}W^{I}$. Then $v_4^{-1} v_2 v_4 \in W_I$ as well. By Lemma \ref{lem:parabolicConjugation}, $v_4^{-1} \operatorname{supp}(v_2) v_4 \subseteq I$.
Since $f(T_{v_2}) \neq 0$, the support of $v_2$ contains a subset $J'$ which is $W_I$-conjugate to $J$. Therefore, $J'' := v_4^{-1} J' v_4 \subseteq I$ is a subset $W$-conjugate to $J$.

By the Lusztig-Spaltenstein algorithm (Proposition~\ref{prop:LSalgo}), we find subsets $J' = J_1,\dotsc,J_{n+1} = J''$ and elements $s_1,\dotsc,s_{n}\in S$ such that $v_4$ decomposes as the length additive product $v_4 = w_1\cdots w_n$, where $w_i$ is the elementary conjugator associated to $(J_i, J_{i+1}, s_i)$. Then each $J_i$ is $W$-conjugate to $J'$ and hence to $J$, thus $J_i\subseteq N_W(J_i)\subseteq W_I$. We claim that this implies $w_i\in W_I$: If $J_{i+1}\neq J_i$, then we get $s_i\in J_i\cup J_{i+1}$ and hence $w_i\in W_{J_i\cup J_{i+1}}\subseteq W_I$. If $J_{i+1} = J_i$, then $w_i\in N_W(J_i)\subseteq W_I$.

This argument shows that $v_4\in W_I$. Since we assumed $v_4\in {}^IW^I$, we get $v_4=1$. Consequently, $v_1 = v_3 \in W_I$. But $v_1 \in W^I$, so $v_1 = 1$, a contradiction. This proves the claim. 

\subsection{Support of $\indFunctionName$}
Assume $\indFunctionName(T_w) \neq 0$ for $w \in W_K$. Then there exists a pair $(v_1, v_2)\in W^I\times W_I$ with $f(T_{v_2}) \neq 0$ and $\tau(T_{(v_1 v_2)^{-1}} T_w T_{v_1}) \neq 0$. By Lemma \ref{lem:liftingFiniteness1}, the element $v_1 v_2 v_1^{-1}$ is contained in $W_K$.

Write $v_1 = v_3 v_4$ with $v_3\in W_K$ and $v_4\in {}^K W^I$, so that $z = v_4 v_2 v_4^{-1}$ is contained in $W_K$.
Then by Lemma \ref{lem:parabolicConjugation}, $\operatorname{supp}(z) = v_4 \supp(v_2) v_4^{-1} \subseteq K$. Since $f(T_{v_2}) \neq 0$, $\operatorname{supp}(v_2)$ contains a subset $J'$ which is $W_I$-conjugate to $J$. Thus $\operatorname{supp}(z) \subseteq K$ also contains a subset which is $W$-conjugate to $J$. This contradicts the hypothesis of part (4). Hence $\indFunctionName(T_w)$ must be zero.
\subsection{Specialization at $\q=1$}
Let $\CO\in \Cl(W)$ and $w\in \CO_{\min}$. Under the specialization $\q=1$, the value of $\indFunctionName(T_w)$ becomes, by definition,
\begin{align*}
    \indFunctionName(T_w)_{\q=1} = \sum_{\substack{(v_1, v_2)\in W^I\times W_I\\v_1 v_2 = w v_1}} f(T_{v_2})_{\q=1}.
\end{align*}
For $\CO'\in \Cl(W_I)$, we set
\begin{align*}
    c_{\CO'} := \#\{(v_1, v_2)\in W^I\times W_I\mid v_1 v_2 = wv_1\text{ and }v_2\in \CO'\}.
\end{align*}
Then 
\begin{align*}\indFunctionName(T_{\CO})_{\q=1} = \sum_{\CO'\in \Cl(W_I)} f(T_{\CO'})_{\q=1} c_{\CO'}.
\end{align*}
Certainly, $c_{\CO'}=0$ unless $\CO'\subseteq \CO$. Conversely, if $\CO'\subseteq \CO$, we find $v\in W$ with $v^{-1} w v \in \CO'$. This condition does not change if $v$ is replaced by any element in $vW_I$, so we may assume $v\in W^I$. Then the tuple $(v, v^{-1} w v)$ is in the set defining $c_{\CO'}$, so $c_{\CO'}>0$. We summarize: $c_{\CO'}\neq 0$ if and only if $\CO'\subseteq \CO$. This finishes the proof.

\section{Class polynomials for elliptic conjugacy classes}\label{sec:elliptic}

\subsection{Elliptic conjugacy classes} By definition, a conjugacy class $\CO$ of $W$ is called {\it elliptic} if $\CO \cap W_J=\emptyset$ for any $J \subsetneq S$. The class polynomials for the elliptic conjugacy classes does not arise from the parabolic induction method as in Theorem \ref{thm:classPolynomialLifting}. For these classes, it is possible to give a direct and explicit construction of (a weaker version of) class polynomials. 

\begin{theorem}\label{thm:almostEllipticClassPolynomial}
    Assume that $(W, S)$ is irreducible and
    let $\mathcal O\in\Cl(W)$ be an elliptic conjugacy class. Then there exists a function $F : \overline H\to \BZ[\q^{\pm 1}]$ and an integer $z>0$ such that
    \begin{align*}
        F(T_{\CO'}) \equiv z\delta_{\CO,\CO'}\pmod{\q-1}
    \end{align*}
    holds for all $\CO'\in \Cl(W)$.
\end{theorem}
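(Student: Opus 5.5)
The plan is to realize $F$ as a convergent, renormalized version of the formal bimodule trace \eqref{eq:intro_orbital_integral_algebraic}, with the divergent sum over $W$ cut down to a sum over cosets of a cyclic group. I would first dispose of the case where $W$ is finite. There, Tits' deformation theorem gives, after extending scalars, an identification of $\overline H$ with the class functions of $W$. Taking $F$ to be a $\mathbb Z$-multiple of the indicator functional of $\CO$, pulled back along this identification and cleared of denominators, gives $F(T_{\CO'})\equiv z\delta_{\CO,\CO'}$ modulo $\q-1$. So assume $W$ is infinite and irreducible. A finite-order element of $W$ lies in a $W$-conjugate of a proper spherical parabolic subgroup, and that is incompatible with ellipticity. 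Hence every $w\in\CO$ has infinite order.

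Fix $w\in\CO_{\min}$. By Krammer's theory of straight elements \cite{Kr09} and Marquis' normal forms \cite{Ma21,Ma25}, some power $b=w^n$ with $n\ge 1$ is straight: $\ell(b^k)=|k|\ell(b)$ for all $k$, and $T_{b^k}=T_b^k$. Straightness makes the following well defined. Let $R\subseteq W$ be a set of minimal-length representatives of $\langle b\rangle\backslash W$, and let $M^b$ be the free $\mathbb Z[\q^{\pm1}]$-module on symbols $m_x$, $x\in R$, obtained from $H$ by identifying $T_b h$ with $h$ (a coinvariant quotient $H/(T_b-1)H$, with the right $H$-action unchanged). I would check, using straightness and the BN-pair relations, that left multiplication by $T_w$ descends to $M^b$ and commutes with the right action; this is the twisted left action that makes $M^b$ a bimodule. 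For $h\in H$, consider the endomorphism $\phi_h\colon m\mapsto T_w\cdot m\cdot h$ of $M^b$, and set
\[
\mathrm{tr}_{b,w}(h)=\sum_{x\in R}\q^{-\ell(x)}\,(\text{coefficient of } m_x \text{ in } \phi_h(m_x)).
\]
The steps, in order, are:
\begin{enumerate}
\item Finiteness: for $h=T_{w'}$ only finitely many $x\in R$ contribute. This should follow as in \S\ref{subsec:finiteness}, from Lemma~\ref{lem:liftingFiniteness1} together with the fact that the centralizer of $w$ is virtually $\langle b\rangle$ in the elliptic setting.
\item Vanishing on commutators: a symmetry computation for $\mathrm{tr}(\phi_{h_1h_2})=\mathrm{tr}(\phi_{h_2h_1})$, modelled on the proof of Theorem~\ref{thm:classPolynomialLifting}(2).
\item The $\q=1$ value: $\mathrm{tr}_{b,w}(T_{w'})$ specializes to the number of $x\in R$ with $x w' x^{-1}\in w^{-1}\langle b\rangle$, i.e.\ $xw'x^{-1}=w^{-1+nk}$ for some $k$.
\end{enumerate}

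Step 3 is where I expect the main obstacle. Taking $k$ with $-1+nk=-1$ picks out exactly $\CO'=\CO$, with count $z=[Z_W(w):\langle b\rangle]>0$. However, the classes of $w^{-1+nk}$ for $k\ne0$ also receive nonzero values. To kill them I would exploit lengths, which is the ``vanishing property'' of endomorphisms of $M^b$. Because $b$ is straight, the minimal length in the class of $w^{-1+nk}$ grows linearly in $|k|$. So I would refine the trace by inserting the projection onto those $m_x$ whose $x$ differs from $\phi_h$'s leading term by the $b$-degree $0$ component, using the natural $\mathbb Z$-grading on $M^b$ by powers of $T_b$ on the uncollapsed side. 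The aim is to show that only the $k=0$ term survives modulo $\q-1$. If this grading argument fails, my fallback is to use that $\CO'$ ranges over classes with $\ell(\CO')$ bounded on any given $T_{w'}$. In that case I would correct $F$ by subtracting finitely many functionals of the same type attached to the classes of $w^{-1+nk}$, by downward induction on $|k|$. Since only finitely many such classes have length at most a given bound, the correction is finite when evaluated on any fixed element, and the result is $F(T_{\CO'})\equiv z\delta_{\CO,\CO'}$ modulo $\q-1$.
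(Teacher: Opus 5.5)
\textbf{The gap: your Step~3 cannot be repaired within your module.} The problem is built into the construction. At $\q=1$ your module $H/(T_b-1)H$ becomes $\mathbb Z[\langle b\rangle\backslash W]$. So the diagonal coefficient of $\phi_{T_{w'}}$ at $m_x$ is nonzero exactly when $\langle b\rangle wxw'=\langle b\rangle x$, i.e.\ when $xw'x^{-1}\in w^{-1}\langle b\rangle$. Take $x=1$ and $w'=w^{-1}b^k$. This shows your functional is nonzero modulo $\q-1$ on $T_{\CO'}$ for the infinitely many distinct classes $\CO'=[w^{-1}b^k]$, $k\in\mathbb Z$.

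Neither of your proposed repairs removes this.
\begin{itemize}
\item The quotient carries no $b$-grading to project with, since $T_b$ acts on it as the identity.
\item A functional of the same type built from $w^{1-nk}=wb^{-k}$, with the same $b$, counts $x$ with $xw'x^{-1}\in(wb^{-k})^{-1}\langle b\rangle=w^{-1}\langle b\rangle$. So modulo $\q-1$ it coincides with yours and cancels nothing.
\item Every such functional is nonzero on infinitely many classes. Hence a downward induction on $|k|$ has no starting point.
\end{itemize}

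\textbf{The missing idea: keep the $\langle b\rangle$-orbits uncollapsed.} The paper's $M^b$ has a basis $\{M^b_z\}_{z\in W}$ indexed by all of $W$. It is defined through the stabilizing maps $f_n(M^b_z)=T_{zb^n}$ for $n\gg0$, which is where straightness is used, and it is an $H$--$C_H(T_b)$-bimodule.
\begin{itemize}
\item One takes the \emph{exact} coefficient of $M^b_z$ in $hM^b_zT_w$.
\item The automorphism $M^b_z\mapsto M^b_{zb}$ is used only to see that this coefficient depends on $z\langle b\rangle$. This is what lets the sum run over $z\in W/\langle b\rangle$.
\item At $\q=1$ one has $T_vM^b_zT_w=M^b_{vzw}$, and the condition $vzw=z$ is exactly $z^{-1}vz=w^{-1}$. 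This gives $[C_W(w):\langle b\rangle]$ times the indicator of the class of $w^{-1}$, with no $b^k$-ambiguity.
\end{itemize}

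\textbf{Three smaller points also need repair.}
\begin{enumerate}
\item Your finiteness step relies on Krammer's finite-index lemma, which is available only in irreducible \emph{indefinite} type. In an affine Weyl group whose translation lattice has rank at least $2$, a nontrivial translation is elliptic, yet its centralizer contains the whole lattice. So the affine case must be handled separately; the paper cites He--Nie.
\item The trace built from $w$ detects $\CO^{-1}$, which can differ from $\CO$ in infinite Coxeter groups. You should therefore start from $(\CO^{-1})_{\min}$.
\item Marquis' theorem does not give a straight power of an arbitrary element of $(\CO^{-1})_{\min}$. It gives a suitable element $w=ab'$ in normal form, with $b'$ straight and $a\in W_J$. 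The paper then takes $b=(b')^N$, where $N$ is the order of the action of $b'$ on $W_J$, so that $wb=bw$ and $\ell(wb)=\ell(w)+\ell(b)$.
\end{enumerate}
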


The case where $(W, S)$ is of finite type is classically known, and can be derived from Tits' deformation theorem. This result states that after a base change to an algebraic closure of $\mathbb Q(\q)$, the Hecke algebra $H_{\overline{\BQ(\q)}}$ becomes isomorphic to the group algebra $\overline{\BQ(\q)}[W]$. In particular, our spanning set of the cocenter $\overline H$ from Proposition~\ref{prop:cocenterSpanning} is linearly independent, and thus $\overline H$ is a free module over this spanning set. Then the statement of Theorem~\ref{thm:almostEllipticClassPolynomial} becomes straightforward.

The affine type is similarly established by the first author and Nie \cite{HN2}, giving an explicit basis of the cocenter $\overline H$. In the rest of this section, we therefore assume that $(W, S)$ is irreducible and of indefinite type. We will construct an infinite-dimensional $H$-module with an endomorphism that allows us to get well-defined \emph{trace maps}. This construction was motivated by a classical construction from affine Deligne-Lusztig varieties \cite[Section~11]{GHKR}.

\subsection{The $H$-module $M^b$}

By definition, an element $b\in W$ is called \emph{straight} if $\ell(b^n) = n\ell(b)$ holds for all integers $n\geq 1$. The notion of straight elements was introduced by Krammer \cite{Kr09}. It played a fundamental role in the study of conjugacy classes in Coxeter groups \cite{HN2, Ma21, Ma25}.

\begin{proposition}
\label{prop:Mbmodule}
Let $b$ be a straight element. Set $C_H(T_b)= \{h\in H\mid h T_b = T_b h\}$. Let $M^b$ be the free $\mathbb Z[\q^{\pm 1}]$-module with basis $\{M^b_z\mid z\in W\}$.
For integers $n\geq 1$, we let $f_n:M^b\to H$ be the unique $\mathbb Z[\q^{\pm 1}]$-linear map sending $M^b_z$ to $T_{zb^n}$ for all $z\in W$. Then there exists a unique $H$-$C_H(T_b)$-bimodule structure of $M^b_z$ such that for all $h\in H, m\in M$ and $c\in C_H(T_b)$, there exists some constant $N\geq 1$ with
        \begin{align*}
            f_n(hmc) = h f_n(m) c\text{ for all }n\geq N.
        \end{align*}
\end{proposition}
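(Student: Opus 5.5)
The plan is to build the left $H$-action by stabilization, using straightness of $b$ to control lengths, and then to obtain the right $C_H(T_b)$-action from it. Uniqueness and the bimodule axioms will come from the injectivity of $f_n$: since $z\mapsto zb^n$ is injective, each $f_n$ sends the basis $\{M^b_z\}$ injectively onto part of the basis $\{T_w\}$. Hence if $f_n(m)=f_n(m')$ for a single $n$, then $m=m'$.

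\textbf{Step 1 (length stabilization).} Fix $z\in W$ and set $d_n(z)=\ell(zb^n)-n\ell(b)$. Using $\ell(b^n)=n\ell(b)$ and the triangle inequality, we get
\[
d_{n+1}(z)\le d_n(z)\qquad\text{and}\qquad d_n(z)\ge -\ell(z).
\]
So $d_n(z)$ is eventually constant, say for all $n\ge N(z)$. Consequently, for $n\ge N=N(z)$ the product $zb^N\cdot b^{n-N}$ is length additive. This gives
\[
T_{zb^n}=T_{zb^N}T_{b^{n-N}}=h_zT_{b^n},\qquad h_z:=T_{zb^N}T_{b^N}^{-1},
\]
with $h_z$ independent of $n$. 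Applying the same argument to $z$ and to $sz$ for $s\in S$, the sign of $\ell(szb^n)-\ell(zb^n)$ is eventually constant. Therefore
\[
T_sT_{zb^n}=\begin{cases} T_{szb^n}, & \text{eventually } szb^n>zb^n,\\ \q T_{szb^n}+(\q-1)T_{zb^n}, & \text{eventually } szb^n<zb^n.\end{cases}
\]

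\textbf{Step 2 (left action).} Define $T_sM^b_z$ by the same formula, with $M^b_{sz}$ and $M^b_z$ in place of $T_{szb^n}$ and $T_{zb^n}$. Then $f_n(T_sm)=T_sf_n(m)$ holds for all $n$ beyond a bound depending on the finitely many basis vectors in the support of $m$. The quadratic and braid relations hold on $M^b$ for the following reason. Each relation, once applied through $f_n$ for large $n$, becomes a relation that holds in $H$, and $f_n$ is injective. So we get an $H$-module structure. Taking $m=h\cdot M^b_1$ gives $f_n(h\cdot M^b_1)=hT_{b^n}$ for $n\gg0$, by induction on word length in $h$.

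\textbf{Step 3 (right action).} For $c\in C_H(T_b)$ and $n\ge N(z)$, we compute
\[
f_n(M^b_z)c=h_zT_{b^n}c=h_zcT_{b^n},
\]
because $c$ commutes with $T_b$ and hence with $T_{b^n}=T_b^n$. We therefore set $M^b_zc:=(h_zc)\cdot M^b_1$, which by Step 2 satisfies $f_n(M^b_zc)=f_n(M^b_z)c$ for $n\gg0$.

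\textbf{Verification and uniqueness.} Associativity of the right action, and compatibility of the left and right actions, are checked by applying $f_n$ for sufficiently large $n$ and using injectivity. Uniqueness is immediate: any structure with the stated property must satisfy $f_n(hmc)=hf_n(m)c$ for large $n$, and injectivity of $f_n$ forces $hmc$ to be the element constructed above.

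The main obstacle is Step 1, namely the eventual monotone behaviour of $\ell(zb^n)$. Everything else reduces formally to it together with the injectivity of $f_n$. The monotonicity argument above seems to suffice; if it does not, I would fall back on the Krammer/Marquis description of straight elements.
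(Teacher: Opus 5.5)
Your proof is correct. Your Step 1 is exactly the paper's auxiliary claim: the bounded, non-increasing sequence $\ell(zb^n)-n\ell(b)$ gives $f_{n_2}(m)=f_{n_1}(m)T_{b^{n_2-n_1}}$ for $n_2\ge n_1\gg0$. The two arguments diverge in how the actions are built.

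\textbf{The paper's route.} It defines $hm$ and $mc$ in one stroke as the eventual values of $f_n^{-1}(hf_n(m))$ and $f_n^{-1}(f_n(m)c)$, for arbitrary $h$ and $c$. Note that $f_n$ is in fact a linear isomorphism, since $z\mapsto zb^n$ permutes $W$; your phrase ``onto part of the basis'' undersells this, harmlessly. To show these sequences stabilize, the paper writes $hf_{N_1}(m)c$ in the span of the $T_y$ with $\ell(y)\le L$. It then uses that $T_yT_{b^n}$ lies in the span of the $T_{y'b^n}$ with $\ell(y')\le L$, and applies Step 1 uniformly over the finite set $\{y'\mid\ell(y')\le L\}$. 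The bimodule axioms are then inherited from $H$ through injectivity of $f_n$, so no Hecke relations are checked by hand.

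\textbf{Your route.} You define $T_s$ on basis vectors via the eventual sign of $\ell(szb^n)-\ell(zb^n)$, and verify the quadratic and braid relations through $f_n$. You then obtain the right action from the left one via $T_{zb^n}=h_zT_{b^n}$, which turns right multiplication by $c$ into left multiplication by $h_zc$ on $M^b_1$.

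\textbf{What each buys.} The paper's route is more uniform: one estimate covers both actions and all of $H$ at once. Yours avoids the support estimate for $T_yT_{b^n}$ and exposes extra structure. Your formulas give $M^b_z=h_z\cdot M^b_1$ and $M^b_1c=cM^b_1$. Together with $f_n(h\cdot M^b_1)=hT_{b^n}$ and the invertibility of $T_{b^n}$, this shows that $h\mapsto h\cdot M^b_1$ is an isomorphism of $H$-$C_H(T_b)$-bimodules $H\to M^b$. In other words, $M^b$ is just $H$ written in the basis $h_z=\lim_n T_{zb^n}T_{b^n}^{-1}$. The traces $\mathrm{tr}_{b,w}$ are then sums of diagonal coefficients with respect to this non-standard basis.
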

\begin{proof}
We first show the following auxiliary claim:
For every $m\in M$, there exists some constant $N\geq 1$ with
        \begin{align}\label{eq:Mbmoduleproof}
            f_{n_2}(m) = f_{n_1}(m) T_{b^{n_2-n_1}}\text{ for all }n_2\geq n_1\geq N.
        \end{align}
By linearity, it suffices to consider the case $m = M^b_z$ for some $z\in W$. Consider the sequence
    \begin{align*}
        a_n = \ell(zb^n) - \ell(b^n) = \ell(zb^n) -n\ell(b),~n\geq 1.
    \end{align*}
    It is bounded from below by $-\ell(z)$. It is non-increasing since
    \begin{align*}
        a_{n+1} - a_n = \ell(zb^{n+1}) - \ell(zb^n) - \ell(b) \leq 0.
    \end{align*}
    Hence the sequence eventually stabilizes. Let $N\gg 0$ with $a_N = a_{N+1} = a_{N+2}=\cdots$. Then for $n_2\geq n_1\geq N$, we get
    \begin{align*}
        \ell(zb^{n_2}) = a_{n_2} + n_2 \ell(b) = a_{n_1} + n_2\ell(b) = \ell(zb^{n_1}) + (n_2-n_1)\ell(b) = \ell(zb^{n_1}) + \ell(b^{n_2-n_1}).
    \end{align*}
    Hence, the product $(zb^{n_1})(b^{n_2-n_1})$ is length additive. We conclude
    \begin{align*}
        f_{n_2}(m) = T_{zb^{n_2}} = T_{zb^{n_1}} T_{b^{n_2-n_1}} = f_{n_1}(m) T_{b^{n_2-n_1}}.
    \end{align*}
    Our auxiliary claim \eqref{eq:Mbmoduleproof} is proved.
    
    Note that each map $f_n$ is a $\mathbb Z[\q^{\pm 1}]$-linear endomorphism. We first claim that for each $h\in H, m\in M$ and $c\in C_H(T_b)$, the sequence
    \begin{align*}
        m_n := f_n^{-1}(hf_n(m) c)\in M^b,~n\geq 1
    \end{align*}
    stabilizes: By \eqref{eq:Mbmoduleproof}, we find $N_1\geq 1$ with $f_n(m) = f_{N_1}(m) T_{b^{n-N_1}}$ for all $n\geq N_1$. Set now $h_2 := h f_{N_1}(m) c$ and let $L\geq 1$ be a constant such that
    \begin{align*}
        h_2 \in \sum_{\substack{z\in W\\\ell(z)\leq L}} \mathbb Z[\q^{\pm 1}]T_z.
    \end{align*}
    Then for $n\geq 0$, we get
    \begin{align*}
        h_2 T_{b^n} \in \sum_{\substack{z\in W\\\ell(z)\leq L}} \mathbb Z[\q^{\pm 1}]T_{zb^n} = f_n(M_2),
    \end{align*}
    where $M_2 := \sum_{\substack{z\in W\\\ell(z)\leq L}} \mathbb Z[\q^{\pm 1}]M^b_z$. By \eqref{eq:Mbmoduleproof}, we can find $N_2\geq 1$ such that for all $m\in M_2$ and $n\geq N_2$, we get
    \begin{align*}
        f_n(m) = f_{N_2}(m) T_{b^{n-N_2}}.
    \end{align*}
    Let $m_2\in M_2$ with $h_2 T_{b^{N_2}} = f_{N_2}(m_2)$. Then for all $n\geq N_1+N_2$, we get
    \begin{align*}
        &f_n^{-1}(h f_n(m) c) = f_n^{-1}(h f_{N_1}(m) (T_b)^{n-N_1} c) = f_n^{-1}(h f_{N_1}(m) c (T_b)^{n-N_1})
        \\=&f_n^{-1}(h_2 T_{b^{N_2}} T_{b^{n-N_1-N_2}}) = f_n^{-1}(f_{N_2}(m_2) T_{b^{n-N_1-N_2}})
        =f_n^{-1}(f_{n-N_1}(m_2)).
    \end{align*}
    Using the definition of the $f_\bullet$, it is easy to see that this sequence stabilizes.

    We now define our bimodule structure on $M^b$: For $m\in M, h\in H$ and $c\in C_H(T_b)$, we let $hm$ be the limit value of the sequence $f_n^{-1}(hf_n(m))$ for $n\to \infty$, and $mc$ the limit value of $f_n^{-1}(f_n(m)c)$. The defining properties of a $H$-$C_H(T_b)$-bimodule are now easy to verify, using that $H$ is naturally a $H$-$C_H(T_b)$-bimodule.
\end{proof}

By definition, each $m\in M^b$ can be written as a unique $\mathbb Z[\q^{\pm 1}]$-linear combination of the basis elements $M^b_z$. We therefore get well-defined values $m_z\in \mathbb Z[\q^{\pm 1}]$ for all $z\in W$ such that
\begin{align*}
    m = \sum_{z\in W} m_z M^b_z.
\end{align*}

\begin{definition}
    Let $b,w, z\in W$ such that $b$ is straight and $T_b T_w = T_w T_b$. Let $h\in H$. We write
$M^b(h; z; w) := (h M^b_z T_w)_z$ for the coefficient of $M^b_{z}$ in $h M^b_{z}T_w$.
\end{definition}
Using the $H$-$C_H(T_b)$-bimodule automorphism $M^b\to M^b,~M^b_z\mapsto M^b_{zb}$, we see that the value of $M^b(h; z; w)$ only depends on the right coset $z\langle b\rangle\in W/\langle b\rangle$.

\subsection{The trace map}
We want to consider \emph{traces} of pairs $(h, h')\in H\times C_H(T_b)$ on $M^b$. More concretely:
\begin{definition}
    Let $b,w\in W$ such that $b$ is straight and $T_b T_w = T_w T_b$. For $h\in H$, we define
    \begin{align*}
        \mathrm{tr}_{b,w}(h) := \sum_{z\in W/\langle b\rangle} M^b(h; z; w),
    \end{align*}
    with the convention that $\mathrm{tr}_{b,w}(h)$ is undefined if the defining sum has infinitely many non-zero terms.
\end{definition}

\begin{proposition}
\label{prop:trace-properties}
Suppose that $b, w\in W$ are given such that $b$ is straight and $T_b T_w = T_w T_b$. Assume that $\mathrm{tr}_{b,w}(h)$ is well-defined for all $h\in H$. Then
\begin{enumerate}
\item The map $\operatorname{tr}_{b,w}$ vanishes on $[H,H]$; hence it factors through $\bar{H}$.
\item Under the specialization $\q=1$, we get for all $v\in W$ that
\[
\operatorname{tr}_{b,w}(T_v)\equiv [C_W(w):\langle b\rangle]\cdot\varepsilon\pmod{\q-1},
\]
where $\varepsilon$ is $1$ if $v$ is $W$-conjugate to $w^{-1}$, and $0$ otherwise.
\end{enumerate}
\end{proposition}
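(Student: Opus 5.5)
The plan is to treat $\mathrm{tr}_{b,w}$ as the trace of a $b$-periodic infinite matrix over a fundamental domain $W/\langle b\rangle$. The two parts are then handled separately: a reduction to generators with a reindexing argument for (1), and an explicit group-algebra computation for (2).

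\textbf{Part (1).} For $h\in H$, let $A_h$ be the matrix of left multiplication by $h$ on $M^b$, so $hM^b_y=\sum_z (A_h)_{z,y}M^b_z$, with each column finitely supported. Let $R$ be the matrix of right multiplication by $T_w$; this is legitimate because $T_w\in C_H(T_b)$. Then $A_h$ and $R$ commute, and $\mathrm{tr}_{b,w}(h)=\sum_{z\in W/\langle b\rangle}(A_hR)_{z,z}$. Both matrices are invariant under the shift $M^b_z\mapsto M^b_{zb}$, since that shift is a bimodule automorphism.

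The first step is a reduction. The identity $[xy,h]=[x,yh]+[y,hx]$ shows that $[H,H]$ is spanned by the commutators $[T_s,h]$ with $s\in S$ and $h\in H$. So it suffices to prove $\mathrm{tr}_{b,w}(T_sh)=\mathrm{tr}_{b,w}(hT_s)$.

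The second step computes $A_{T_s}$ from the limit definition in Proposition~\ref{prop:Mbmodule}. For $n\gg0$ we have $T_sT_{zb^n}=T_{szb^n}$ or $T_sT_{zb^n}=\q T_{szb^n}+(\q-1)T_{zb^n}$. Hence $T_sM^b_z$ is supported on $\{M^b_z,M^b_{sz}\}$, and the case distinction depends only on the coset $z\langle b\rangle$. Left multiplication by $s$ is a well-defined bijection of $W/\langle b\rangle$, because $s(zb)=(sz)b$.

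The third step splits each trace. Write
\[
(A_{T_s}A_hR)_{z,z}=(A_{T_s})_{z,z}(A_hR)_{z,z}+(A_{T_s})_{z,sz}(A_hR)_{sz,z},
\]
and split $(A_hA_{T_s}R)_{y,y}$ in the same way. The diagonal contributions agree term by term. The off-diagonal contributions agree after the reindexing $y=sz$ on $W/\langle b\rangle$.

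\textbf{Main obstacle in (1).} The splitting above is only valid if the diagonal and off-diagonal partial sums each converge separately (are finite). I will try to extract this from the assumed well-definedness of $\mathrm{tr}_{b,w}$ on both $h$ and $T_sh$. The diagonal coefficient $(A_{T_s})_{z,z}$ is either $0$ or $\q-1$. It is nonzero exactly on a $\langle b\rangle$-stable set of cosets, namely those where $s$ decreases $zb^n$ for $n\gg0$. On that set the value is $(\q-1)(A_hR)_{z,z}$, which is a subsum of the finite sum defining $\mathrm{tr}_{b,w}(h)$. The sum is finite because every term is nonzero on only finitely many cosets. Finiteness of the off-diagonal part then follows by subtraction from $\mathrm{tr}_{b,w}(T_sh)$. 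If this bookkeeping fails, the fallback is to argue with supports directly: $h$ is a finite combination of $T_x$, and the nonzero entries are controlled by Bruhat bounds as in Lemma~\ref{lem:liftingFiniteness1}.

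\textbf{Part (2).} The maps $f_n$ and the limit construction are compatible with the specialization $\q\mapsto1$. Therefore $M^b\otimes\mathbb Z$ becomes $\mathbb Z[W]$ with $M^b_z\mapsto z$, and the action is $T_vM^b_zT_w\mapsto vzw$. The coefficient of $M^b_z$ is therefore $1$ if $vzw=z$, that is $z^{-1}vz=w^{-1}$, and $0$ otherwise. Moreover $T_bT_w=T_wT_b$ specializes to $bw=wb$, so $\langle b\rangle\subseteq C_W(w)$.

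If $v$ is not conjugate to $w^{-1}$, every term vanishes. If $v$ is conjugate to $w^{-1}$, the solutions $z$ form a single coset $z_0C_W(w^{-1})=z_0C_W(w)$. Counting its classes modulo $\langle b\rangle$ gives $[C_W(w):\langle b\rangle]$. This index is finite because the trace was assumed well-defined.
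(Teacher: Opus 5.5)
Your proof is correct. Part (2) is the same computation as in the paper. For part (1) you take a genuinely different route.

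\textbf{The paper's route.} It treats an arbitrary product $T_{v_1}T_{v_2}$ directly. It writes the relevant coefficient as $\sum_y g(y,z)$ with $g(y,z)=(T_{v_1}M^b_yT_w)_z(T_{v_2}M^b_z)_y$. It then invokes positivity of the structure constants. This shows that finiteness of $\mathrm{tr}_{b,w}(T_{v_1}T_{v_2})$ forces $g$ to vanish outside finitely many diagonal $\langle b\rangle$-orbits in $W\times W$. That justifies interchanging the two summations and reading off $\mathrm{tr}_{b,w}(T_{v_2}T_{v_1})$.

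\textbf{Your route.} You reduce to commutators $[T_s,h]$ and use the two-term shape of $T_sM^b_z$. The diagonal and off-diagonal pieces are each finitely supported. This follows from the hypothesis applied to $h$, $T_sh$ and $hT_s$, plus subtraction, so no positivity or no-cancellation property is used at all.

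\textbf{Comparison.} Your approach is somewhat more robust. The coefficients appearing in the paper's argument are not $\mathbb Z_{\geq 0}$-combinations of powers of $\q$, as stated there: for instance $(T_sM^b_z)_z=\q-1$ when $s$ lowers $zb^n$. They do lie in $\mathbb N[\q-1]$, which still rules out cancellation, so the paper's argument goes through after this small correction. Your argument needs no such input and is insensitive to the signs of the structure constants. What the paper's route buys in exchange is that it handles general $h_1,h_2$ in a single step. It does not need the reduction to generators or the explicit description of $T_s$ acting on $M^b$.
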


\begin{proof}
    \begin{enumerate} 
    \item
    Let $v_1, v_2\in W$. Then we get
    \begin{align*}
        \mathrm{tr}_{b,w}(T_{v_1} T_{v_2}) &= \sum_{z\in W/\langle b\rangle} (T_{v_1} T_{v_2} M^b_z T_w)_z.
        \end{align*}
        Recall that $T_{v_2} M^b_z\in M^b$ is a $\BZ_{\geq 0}[\q^{\pm 1}]$-linear combination of the basis vectors $\{M^b_y\mid y\in W\}$. Explicitly, $T_{v_2} M^b_z = \sum_{y\in W}(T_{v_2}M^b_z)_y M^b_y$, where $(T_{v_2} M^b_z)_y$ is a $\mathbb Z_{\geq 0}$-linear combination of $\{\q^e\mid e\in\mathbb Z\}$. Setting $g(y,z) := (h_1 M^b_y T_w)_z(h_2 M^b_z)_y\in \mathbb Z_{\geq 0}[\q^{\pm 1}]$, we thus get
        \begin{align*}
        \mathrm{tr}_{b,w}(T_{v_1} T_{v_2}) &= \sum_{\substack{z\in W/\langle b\rangle\\y\in W}} g(y,z).
        \end{align*}
        By assumption, this is a finite sum, with each summand in $\mathbb Z_{\geq 0}[\q^{\pm 1}]$. This implies that $g(y,z)=0$ for all but finitely many orbits $(y,z) \in (W\times W) / \langle (b,b^{-1})\rangle$ (note that $g(y,z) = g(yb^{-1},zb)$). Thus, we get
        \begin{align*}
        \mathrm{tr}_{b,w}(T_{v_1} T_{v_2})
        = \sum_{\substack{z\in W\\y\in W/\langle b\rangle}} g(y,z).
        \end{align*}
        Now we use that $T_{v_1} M^b_y T_w = \sum_{z\in W}(T_{v_1} M^b_y T_w)_z M^b_z$ to get
        \begin{align*}
        \mathrm{tr}_{b,w}(T_{v_1]} T_{v_2}) &= \sum_{\substack{z\in W\\y\in W/\langle b\rangle}} (T_{v_2} T_{v_1} M^b_y T_w)_y
        =\mathrm{tr}_{b,w}(T_{v_2} T_{v_1}).
    \end{align*}
    By linearity, we get $\tr_{b,w}(h_1 h_2) = \tr_{b,w}(h_2 h_1)$ for all $h_1, h_2\in H$.
    \item Specializing $\q\to1$ sends $T_x$ to $x$ in the group algebra $\mathbb{Z}[W]$.  Then
\[
\operatorname{tr}_{b,w}(T_v)\equiv\sum_{z\in W/\langle b\rangle}\lim_{n\to\infty}\delta_{z b^n,\;v z b^n w}
      =\#\{z\in W/\langle b\rangle\mid z^{-1}vz=w^{-1}\}.
\]
If $v$ is not $W$-conjugate to $w^{-1}$, this quantity is zero as claimed. Otherwise, it agrees with the index of $\langle b\rangle$ in $C_W(w)$, which hence must be finite.  This gives the claimed formula. 
    \qedhere\end{enumerate}
\end{proof}
\begin{remark}
    Well-definedness of $\mathrm{tr}_{b,w}(h)$ for all $h\in H$ is a strong condition. While
    trivially satisfied for all finite Coxeter groups, it is never true for all $(b, w)$ in a non-trivial affine Coxeter group.

    The affine case still serves as our motivation, for the following reason: If $\tilde W$ is an \emph{extended} affine Weyl group of type $A$, it contains additional length zero elements. Such a length zero element $b\in \tilde W$ is called \emph{superbasic} if its centralizer is generated by $b$ itself. Then the (analogously defined) trace map $\mathrm{tr}_{b, b}$ is well-defined for all $h\in H$. For $w\in \tilde W$, the value of $\mathrm{tr}_{b,b}(T_w)$ is the \emph{normalized cardinality} of the affine Deligne-Lusztig variety $X_w(b)$, cf.\ \cite[Section~11]{GHKR}, \cite[Section~8]{He-Ann} and \cite[Section~3]{HNY24}.
\end{remark}

\subsection{Application to elliptic classes}
We now specialize to the situation where the conjugacy class $\mathcal{O}$ is elliptic.  The following result  provides a convenient normal form for elements of $\mathcal{O}_{\min}$. It was first established by the joint work of Nie and the first-named author \cite{HN2} in the affine type and generalized by Marquis \cite[Theorem C]{Ma25}  for arbitrary type. 

\begin{theorem}
\label{thm:MarquisIndefiniteMinLength}
    Let $\mathcal O \in \Cl(W)$ and $w\in \mathcal O_{\min}$. Then there exists an element $w'\in\mathcal O_{\min}$ with $w'\approx w$ such that $w'$ admits a length additive decomposition $w' = ab$ where $b\in W$ is a straight element, $a\in W_J$ for a spherical subset $J\subseteq S$ which is normalized by $b$. If $(W, S)$ is irreducible and not spherical, then $\CO$ is elliptic if and only if $b$ is elliptic.
\end{theorem}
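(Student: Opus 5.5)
The first assertion (existence of $w'\approx w$ with a length additive decomposition $w'=ab$) is exactly \cite[Theorem~C]{Ma25}, extending \cite{HN2} in affine type. I will not re-derive it and will simply quote it. For orientation, its proof proceeds as follows. One lets $w$ act on the Davis complex and uses the He--Nie geodesic function to cyclically shift $w$ inside $\CO_{\min}$, without increasing length, until a chamber of the fundamental alcove lies on a minimal-displacement set for $w$. At that point the elements of $W$ fixing the corresponding ``transversal'' face form a finite standard parabolic $W_J$ normalized by $w'$. The straight part $b$ is then the minimal-length element translating along this axis, and $a=w'b^{-1}\in W_J$. So the only part needing an argument here is the final equivalence between ellipticity of $\CO$ and of $b$, for $(W,S)$ irreducible and non-spherical.

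The key step is to compare powers of $w'$ and $b$. Since $b$ normalizes $W_J$, conjugation by $b$ induces an automorphism of the finite group $W_J$. Let $m$ be a common multiple of the order of this automorphism and of the exponent of the finite group $W_J\rtimes\langle \bar b\rangle/\langle \bar b^{k}\rangle$. Expanding, we get
\[
w'^{\,n}=a\,(bab^{-1})\cdots(b^{n-1}ab^{1-n})\,b^n .
\]
So there is $m\geq 1$ with $w'^{\,m}=a_m b^m$, where $a_m\in W_J$ commutes with $b^m$. Replacing $m$ by a multiple of the order of $a_m$, I will aim for $w'^{\,M}=b^{M}$ for a suitable $M\ge1$. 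I will then invoke Krammer's theory of parabolic closures \cite{Kr09}. For $x$ of infinite order, let $\mathrm{Pc}(x)$ denote the parabolic closure and $\mathrm{Pc}^\infty(x)$ its essential (non-spherical) part. Then $\mathrm{Pc}^\infty(x)=\mathrm{Pc}^\infty(x^k)$ for all $k\ge 1$, and $x$ normalizes $\mathrm{Pc}^\infty(x)$. Moreover, in an irreducible non-spherical Coxeter group an element is elliptic (i.e.\ not conjugate into a proper standard parabolic) iff $\mathrm{Pc}(x)=W$ iff $\mathrm{Pc}^\infty(x)=W$. Here one uses that a parabolic of finite index in an irreducible infinite $W$ is all of $W$, and that a spherical parabolic cannot have a proper essential complement.

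With this, both directions follow from $w'^{\,M}=b^M$, which gives
\[
\mathrm{Pc}^\infty(w')=\mathrm{Pc}^\infty(w'^{\,M})=\mathrm{Pc}^\infty(b^M)=\mathrm{Pc}^\infty(b).
\]
Since $\CO$ is elliptic iff $w'$ is, this yields the equivalence, and note that $b$ has infinite order as a straight element of a non-spherical irreducible group with $\CO$ infinite-order. The degenerate case where $b=1$ needs a separate remark. Then $w'=a\in W_J$ with $J$ spherical, hence $J\subsetneq S$, so $\CO$ is not elliptic; likewise $b=1$ is not elliptic. So the equivalence holds trivially.

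The main obstacle I anticipate is the precise form of Krammer's result that I invoke. I must make sure that \emph{ellipticity} in the sense of the paper ($\CO\cap W_K=\emptyset$ for all $K\subsetneq S$) really coincides with $\mathrm{Pc}^\infty(x)=W$ for infinite-order $x$ in the irreducible non-spherical case. I would first try to establish this via the fact that $\mathrm{Pc}(x)$ is a conjugate of a standard parabolic $W_K$ containing $x$, and $W_K$ equals $W$ iff $K=S$. The passage from $\mathrm{Pc}$ to $\mathrm{Pc}^\infty$ uses that $\mathrm{Pc}(x)$ is contained in the normalizer of $\mathrm{Pc}^\infty(x)$, which in an irreducible infinite $W$ is proper unless $\mathrm{Pc}^\infty(x)=W$ (\cite[Section~3]{Kr09}, cf.\ Lemma~\ref{lem:parabolicConjugation}).
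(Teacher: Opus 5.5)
Your argument is correct, and it goes further than the paper. The paper gives no proof of this theorem: it attributes the whole statement, including the ellipticity clause, to \cite{HN2} and \cite[Theorem~C]{Ma25}.

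You quote that source only for the decomposition $w'=ab$. You then derive the ellipticity clause from the identity $w'^M=b^M$ and the invariance of the essential part of the parabolic closure under powers. This has two consequences:
\begin{itemize}
\item The equivalence holds for every decomposition of the stated form, not only the one Marquis produces.
\item It rests on one external input, namely $\mathrm{Pc}^\infty(x^k)=\mathrm{Pc}^\infty(x)$ for $x$ of infinite order. This is the only step that is not formal, so give a precise reference for it.
\end{itemize}

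Several steps can be simplified.
\begin{itemize}
\item Let $k$ be the order of conjugation by $b$ on $W_J$. Then $w'^k=a_kb^k$ with $a_k\in W_J$ commuting with $b^k$, and you can take $M=k\cdot\mathrm{ord}(a_k)$. The group $W_J\rtimes\langle\bar b\rangle/\langle\bar b^{k}\rangle$ is not needed, and as written it is not well defined, since $k$ is never introduced.
\item The normalizer detour in your last paragraph is unnecessary. You have $\mathrm{Pc}^\infty(x)\subseteq\mathrm{Pc}(x)$. Conversely, if $\mathrm{Pc}(x)=W$ with $W$ irreducible and infinite, then $W$ is its own unique infinite component, so $\mathrm{Pc}^\infty(x)=W$ by definition.
\item A straight $b\neq 1$ has infinite order simply because $\ell(b^n)=n\ell(b)$. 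No hypothesis on $\CO$ is needed for this.
\item The direction ``$b$ elliptic $\Rightarrow$ $\CO$ elliptic'' does not need parabolic closures:
\begin{itemize}
\item If $b$ is elliptic, then $\supp(b)=S$, and length additivity of the powers gives $\supp(b^M)=\supp(b)=S$.
\item A straight element with full support is elliptic; the paper itself uses this in the proof of Theorem~\ref{thm:almostEllipticClassPolynomial}.
\item If $w'$ lay in a proper parabolic subgroup, so would $w'^M=b^M$. Hence $w'$ is elliptic.
\end{itemize}
Only the converse direction genuinely needs the power-invariance of $\mathrm{Pc}^\infty$.
\end{itemize}
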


For elliptic classes in indefinite type, the centralizer of such an element is ``as small as possible”.

\begin{lemma}[{\cite[Corollary~6.3.10]{Kr09}}]
    \label{lem:KrammerFiniteIndex}Let $\mathcal O$ be an elliptic conjugacy class in the irreducible and indefinite Coxeter group $(W, S)$. Let $w\in \mathcal O$. Then the cyclic subgroup $\langle w\rangle\subseteq W$ generated by $w$ has finite index in the centralizer $Z_W(w)$.
\end{lemma}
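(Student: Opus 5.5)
The plan is to prove this geometrically, via the action of $W$ on its Davis complex $\Sigma$. This is a proper, cocompact, piecewise Euclidean CAT(0) complex on which $W$ acts by cellular isometries with finite cell stabilizers.

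\emph{Step 1: $w$ has infinite order.} Every finite subgroup of $W$ fixes a point of $\Sigma$, so it is contained in a conjugate of a spherical parabolic subgroup $W_J$. Since $(W,S)$ is irreducible and indefinite, it is infinite, so every spherical $J$ is a proper subset $J\subsetneq S$. If $w$ had finite order, some conjugate of $w$ would lie in $W_J$ with $J\subsetneq S$, contradicting ellipticity of $\mathcal O$. Because the action is cellular with finitely many cell types, isometries are semisimple. Hence $w$ acts as a hyperbolic isometry with translation length $|w|>0$, and its minimal set splits as $\mathrm{Min}(w)=Y\times\mathbb R$, with $w$ translating the $\mathbb R$-factor.

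\emph{Step 2: $w$ is rank one.} This is the key step and the expected main obstacle. I would invoke the rank-one criterion of Caprace--Fujiwara, which uses Krammer's theory of parabolic closures: in an irreducible, non-spherical, non-affine Coxeter group, an element of infinite order whose parabolic closure is all of $W$ is a rank-one isometry of $\Sigma$. Ellipticity of $\mathcal O$ gives exactly this hypothesis, since $w$ lies in no proper parabolic subgroup, even up to conjugation, so its parabolic closure is $W$. Concretely, rank one means that no axis of $w$ bounds a flat half-plane. If I wanted to avoid citing this, I would instead argue with walls: irreducibility and non-affineness give a wall that is "transverse" to the axis of $w$ and whose translates under $\langle w\rangle$ are pairwise disjoint. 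This is where indefiniteness is essential, because in the affine case the axes bound flats and the statement indeed fails.

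\emph{Step 3: centralizer.} The centralizer $Z_W(w)$ preserves $\mathrm{Min}(w)=Y\times\mathbb R$ and respects the product decomposition, acting on the $\mathbb R$-factor by translations. The factor $Y$ is a closed convex set. If it were unbounded, it would contain a geodesic ray, and then $\mathrm{Min}(w)$ would contain a flat half-plane bounded by an axis, contradicting Step 2. So $Y$ is bounded. Let $K\subseteq Z_W(w)$ be the kernel of the translation homomorphism $\rho\colon Z_W(w)\to\mathbb R$. Then $K$ stabilizes the bounded set $Y\times\{0\}$, and properness of the action forces $K$ to be finite. The image $\rho(Z_W(w))$ is a discrete subgroup of $\mathbb R$, again by properness together with boundedness of $Y$. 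So it is infinite cyclic and contains $\rho(w)=|w|\neq0$ with finite index. Combining these, $[Z_W(w):\langle w\rangle]\le \#K\cdot[\rho(Z_W(w)):\rho(\langle w\rangle)]<\infty$, which is the claim.
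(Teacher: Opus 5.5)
\paragraph{Verdict: there is a genuine gap at Step 2.} The paper gives no proof of this lemma; it only cites Krammer. Your Steps 1 and 3 are sound.
\begin{itemize}
\item In Step 1, finite subgroups fix points of $\Sigma$, so ellipticity forces $w$ to have infinite order, and Bridson's semisimplicity theorem then makes $w$ hyperbolic.
\item In Step 3, Bridson--Haefliger II.6.8 together with properness of the action correctly gives finite index once $Y$ is bounded.
\end{itemize}

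\paragraph{Step 2 carries the whole content.} Step 3 uses nothing beyond boundedness of $Y$. Conversely, $W$ acts properly and cocompactly on $\Sigma$, so $Z_W(w)$ acts cocompactly on $\mathrm{Min}(w)$ (Bridson--Haefliger II.6.10). Since $w$ is central in $Z_W(w)$ and acts trivially on $Y$, virtual cyclicity of $Z_W(w)$ would force $Y$ to be compact. So boundedness of $Y$ is \emph{equivalent} to the lemma, and all of its content lies in Step 2, which you have not proved.

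\paragraph{Citing Caprace--Fujiwara does not close the gap.} Their rank-one criterion is strictly stronger than what you need. As far as I know, their proof is built on Krammer's analysis of essential elements, and in particular on the virtual cyclicity of $Z_W(w)$ (the very corollary quoted here) to exclude flats. If so, the argument is circular. At best it trades the lemma for a stronger theorem resting on the same source.

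\paragraph{Your wall-theoretic fallback fails as stated.} Take $W$ of type $\tilde A_2$ and $w$ a translation.
\begin{itemize}
\item $w$ is elliptic in the paper's sense, since every proper parabolic subgroup of $\tilde A_2$ is finite.
\item Every wall crossing the axis transversally has pairwise disjoint (parallel) $\langle w\rangle$-translates.
\item Yet $Z_W(w)$ contains the rank-two translation lattice, so $\langle w\rangle$ has infinite index.
\end{itemize}
The property you describe therefore cannot distinguish the affine case. What is needed is an input that genuinely sees non-affineness. One example is a pair of strongly separated walls (no wall meets both) skewered by a power of $w$. Another is Krammer's own argument, which works with the Tits cone rather than the Davis complex. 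Producing such an input for every essential element is the real work, and it is missing from your proposal.

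\paragraph{A minor point.} An indefinite generalized Cartan matrix of rank $2$ has Weyl group $\tilde A_1$, which is affine as a Coxeter group. So the non-affine hypothesis you quote does not literally apply there, and your remark that the statement fails in the affine case is wrong for $\tilde A_1$. The claim itself is trivial in that case, because $\Sigma$ is a line.
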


These facts allow us to construct the trace maps used for Theorem~\ref{thm:almostEllipticClassPolynomial}.
\begin{proof}[Proof of Theorem~\ref{thm:almostEllipticClassPolynomial}]
If $(W, S)$ is finite or affine, the claim is clear from the explicitly known basis of $\overline H$, cf.\ \cite{GP93} respectively \cite{HN2}. So we only have to consider the indefinite case. Consider the conjugacy class $\CO^{-1} = \{w^{-1}\mid w\in \CO\}$, which is elliptic again. By Theorem~\ref{thm:MarquisIndefiniteMinLength}, we find $w\in (\CO^{-1})_{\min}$ with a length additive decomposition $w = ab'$ such that $b'\in W$ is straight and $a\in W_J$ for a spherical subset $J\subseteq S$ normalized by $b$. If $n\geq 1$ is the order of the action of $b'$ on $W_J$, then we set $b := (b')^n$. Hence, $wb = bw$ has length $\ell(w)+\ell(b)$. In particular, $T_w T_b = T_{wb} = T_{bw} = T_b T_w$.

It remains to show that the trace map $\mathrm{tr}_{b,w}(h)$ is well-defined for all $h\in H$, as then the function $F_{\CO} := \mathrm{tr}_{b,w}$ satisfies the required conditions by using Proposition~\ref{prop:trace-properties}.

    We thus have to show for each $h\in H$ that the set
    \begin{align*}
        \{z\in W\mid M^b(h; z; w)\neq 0\}\subseteq W
    \end{align*}
    is a finite union of right $\langle b\rangle$-cosets. By linearity of $M^b(\bullet, z; w)$, it suffices to show this in case $h= T_v$ for some $v\in W$.

    So fix $v\in W$ and suppose $z\in W$ is given with $M^b(T_v; z; w)\neq 0$. By Proposition~\ref{prop:Mbmodule}, this means that for all sufficiently large $n\gg 0$, the coefficient of $T_{zb^n}$ in $T_v T_{zb^n} T_w$ is non-zero. Note that
    \begin{align*}
        T_v T_{zb^n} T_w = T_v T_{zb^n} T_{b'} T_a \in \sum_{\substack{v'\leq v\\a'\leq a}}\BZ[\q^{\pm 1}] T_{v'zb^n b' a'}.
    \end{align*}
    Thus, there exist $v'\leq v$ and $a'\leq a^{-1}$ with $zb^n = v' zb^n b' (a')^{-1}$, or in other words $v' = (zb^n) b' a' (zb^n)^{-1} = z (b' a') z^{-1}$.

    We summarize that
    \begin{align*}
        \{z\in W\mid M^b(T_v; z; w)\neq 0\}\subseteq \bigcup_{\substack{v'\leq v\\ a'\leq a^{-1}}}\{z\in W\mid z (b' a')z^{-1}=v'\}.\tag{$\ast$}
    \end{align*}
    There are only finitely many possibilities for $v'\leq v$ and $a'\leq a^{-1}$. For each $(v', a')$, the set $\{z\in W\mid z (b' a')z^{-1}=v'\}$ is either empty or a single right $C_W(b'a')$-coset. We note that $(b')^m\in W$ is straight and has support equal to $S$ for all $m\geq 1$. It follows that each $(b')^m$ is elliptic. Since $(b' a')^m = (b')^m$ for some $m\geq 1$, also $b' a'$ must be in an elliptic $W$-conjugacy class. Thus, $\langle b\rangle$ has finite index in $C_W(b' a')$ by Lemma~\ref{lem:KrammerFiniteIndex}.

    We summarize that each of the finitely many sets $\{z\in W\mid z (b' a')z^{-1}=v'\}$ in the right-hand side of $(\ast)$ is a finite union of right $\langle b\rangle$-cosets. This finishes the proof.
\end{proof}

\section{Orbital Integrals of Kac-Moody groups}\label{sec:orbital_integrals}

\subsection{Haar measure and orbital integrals}
Consider a finite field $\kk = \mathbb F_q$, and consider the maximal Kac-Moody group $G(\mathbb F_q) := G^{\max}(\mathbb F_q)$ as in Section~\ref{subsec:kac-moody}. We similarly abbreviate $B(\mathbb F_q) := B^{\max}(\mathbb F_q)$ and $U(\mathbb F_q) := U^{\max}(\mathbb F_q)$.

Equip $G(\mathbb F_q)$ with the structure of a topological group, where the finite index subgroups of $B(\mathbb F_q)$ form a neighbourhood basis of the identity. 
Then the topological group $G(\mathbb F_q)$ is a locally compact, totally disconnected and unimodular, with $B(\mathbb F_q)$ a compact subgroup \cite[§1]{CG99}. 

Let $\mu$ denote the Haar measure of $G(\mathbb F_q)$, normalized so that $\mu(B(\mathbb F_q)) = 1$. Then for any finite index subgroup $H\leq B(\mathbb F_q)$, we have $$\mu(H) = \frac 1{[B(\mathbb F_q):H]}.$$

As a side remark, we note that this setup is only possible for the maximal Kac-Moody groups. Indeed, if $W$ is infinite, then $B^{\min}(\mathbb F_q)$ is a countably infinite group, so it can never be compact and hence cannot have Haar measure equal to $1$.

We consider the ring homomorphism $\mathbb Z[\q^{\pm 1}]\to\mathbb C$ sending $\q$ to $q$, and let $H_{\mathbb C}$ denote the base change of the Hecke algebra $H$ under this ring homomorphism.
\begin{definition}
    Let $\mathcal C\subseteq G(\mathbb F_q)$ be a measurable subset that is stable under conjugation. The \emph{orbital integral of $\mathcal C$} is the linear map
    \begin{align*}
        O_{\mathcal C} : H_{\mathbb C}\to\mathbb C,\qquad h\mapsto \int_{\mathcal C} h(g)\, dg,
    \end{align*}
    where the integral is taken with respect to the Haar measure $\mu$. 
\end{definition}

\begin{proposition}\label{prop:orbital_integral_cocenter}
   For any measurable conjugation‑stable subset $\mathcal C\subseteq G(\mathbb F_q)$, the orbital integral $O_\mathcal C$ vanishes on the commutator $[H_{\mathbb C},H_{\mathbb C}]$.
\end{proposition}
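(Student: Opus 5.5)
By bilinearity of the commutator, it suffices to show $O_{\mathcal C}(h_1h_2)=O_{\mathcal C}(h_2h_1)$ when $h_1$ and $h_2$ are characteristic functions of double cosets, say $h_1=T_x$ and $h_2=T_y$. The plan is to write the convolution as an integral over $G(\mathbb F_q)$, use Fubini to swap the two integrations, and then substitute $g\mapsto k^{-1}gk$. Conjugation-stability of $\mathcal C$ together with unimodularity of $G(\mathbb F_q)$ will make the result symmetric in $h_1$ and $h_2$.

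First I rewrite the convolution with the normalized Haar measure. Since $h_2$ is right $B(\mathbb F_q)$-invariant and $\mu(B(\mathbb F_q))=1$, we have
\[
(h_1h_2)(g)=\sum_{k\in G/B} h_1(gk)h_2(k^{-1})=\int_{G(\mathbb F_q)} h_1(gk)\,h_2(k^{-1})\,dk .
\]
The integrand is supported on $k^{-1}\in ByB$, a compact set. Hence
\[
O_{\mathcal C}(h_1h_2)=\int_{G}\int_{G} \mathbf 1_{\mathcal C}(g)\,h_1(gk)\,h_2(k^{-1})\,dk\,dg .
\]
The integrand is a nonnegative measurable function. In the $k$-variable it is supported in the compact set $(ByB)^{-1}$, and in the $g$-variable in the compact set $BxB\,(ByB)$. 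So the double integral is finite and Tonelli/Fubini applies.

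Next I substitute $g'=gk$ for fixed $k$, which is allowed by invariance of Haar measure. The condition $g\in\mathcal C$ becomes $g'k^{-1}\in\mathcal C$. Since $\mathcal C$ is conjugation stable, this is equivalent to $k^{-1}g'\in\mathcal C$. This gives
\[
\int_G\int_G \mathbf 1_{\mathcal C}(k^{-1}g')\,h_1(g')\,h_2(k^{-1})\,dg'\,dk .
\]
Now I swap the order of integration again. I put $m=k^{-1}g'$; inversion preserves $\mu$ because $G$ is unimodular. Then $k^{-1}=m g'^{-1}$, and the expression becomes
\[
\int_G\int_G \mathbf 1_{\mathcal C}(m)\,h_2(m g'^{-1})\,h_1(g')\,dm\,dg' .
\]
Integrating over $g'$ first gives $(h_2h_1)(m)$, again via the Haar-measure form of the convolution. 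So the whole expression equals $O_{\mathcal C}(h_2h_1)$.

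The only delicate point is justifying the Fubini swaps and the change of variables. Unimodularity, which holds for the maximal Kac–Moody group by \cite[\S1]{CG99}, is needed for both the right translations and the inversion. Finiteness comes from nonnegativity together with the compactness of the double cosets $BxB$, each of which is a finite union of $B$-cosets. Once these are in place, the computation is formal.
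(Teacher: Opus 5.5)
Your proposal is correct and follows essentially the same route as the paper: you write the convolution as a Haar integral, apply Fubini/Tonelli to nonnegative functions, use conjugation-stability of $\mathcal C$ together with unimodularity, and finish with the inversion change of variables to recover $O_{\mathcal C}(h_2h_1)$. The only cosmetic difference is that you first translate by $g'=gk$ and then apply conjugation-stability pointwise to $\mathbf 1_{\mathcal C}$, whereas the paper conjugates the inner integral over $\mathcal C$ directly; your compact-support remark is a slightly more careful justification of the Fubini step than the paper gives.
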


\begin{proof}
    It suffices to prove $$O_{\mathcal C}(T_{w_1} T_{w_2}) = O_{\mathcal C}(T_{w_2} T_{w_1})$$ for all $w_1, w_2\in W$. 
    
    View elements of $H_{\mathbb C}$ as $B(\mathbb F_q)$-biinvariant functions on $G(\mathbb F_q)$. For any $g_2\in G(\mathbb F_q)$, we thus get
    \begin{align*}
        (T_{w_1} T_{w_2})(g_2) = \int_{G(\mathbb F_q)} T_{w_1}(g_1) T_{w_2}(g_1^{-1} g_2)\, dg_1.
    \end{align*}
All functions involved are non‑negative, so we may freely apply Fubini’s theorem:
    \begin{align*}
        O_{\mathcal C}(T_{w_1} T_{w_2}) &= \int_{\mathcal C}\int_{G(\mathbb F_q)} T_{w_1}(g_1) T_{w_2}(g_1^{-1} g_2)\, dg_1\, dg_2
        \\&=\int_{G(\mathbb F_q)} T_{w_1}(g_1) \int_{\mathcal C} T_{w_2}(g_1^{-1} g_2)\, dg_2\, dg_1.
    \end{align*}
    By unimodularity of $G$ and the conjugation‑invariance of $\mathcal C$, conjugation by $g_1$ preserves the measure. Hence for any $g_1\in G(\mathbb F_q)$,
    \begin{align*}
        \int_{\mathcal C} T_{w_2}(g_1^{-1} g_2)\, dg_2 = \int_{\mathcal C} T_{w_2}(g_2g_1^{-1})\, dg_2.
    \end{align*}
    Using this substitution in our above calculation, we can use Fubini's Theorem again to get
    \begin{align*}
        O_{\mathcal C}(T_{w_1} T_{w_2})&=\int_{G(\mathbb F_q)} T_{w_1}(g_1) \int_{\mathcal C} T_{w_2}(g_2g_1^{-1} )\, dg_2\, dg_1
        \\&=\int_{\mathcal C} \int_{G(\mathbb F_q)} T_{w_1}(g_1) T_{w_2}(g_2g_1^{-1} )\, dg_1\, dg_2.
    \end{align*}
   Now observe that the Haar measure is invariant under the inversion map $g\mapsto g^{-1}$. Consequently, for any fixed $g_2\in G(\mathbb F_q)$, the map $g_1\mapsto g_2 g_1^{-1}$ is measure‑preserving. Making the change of variables $\tilde g_1=g_2 g_1 \i$, we obtain
    \begin{align*}
        \int_{G(\mathbb F_q)} T_{w_1}(g_1) T_{w_2}(g_2g_1^{-1} )\, dg_1 = 
        \int_{G(\mathbb F_q)} T_{w_1}(\tilde g_1^{-1} g_2) T_{w_2}(\tilde g_1 )\, d\tilde g_1.
    \end{align*}
    Continuing our above calculation, we get
        \begin{align*}
        O_{\mathcal C}(T_{w_1} T_{w_2})&=\int_{\mathcal C} \int_{G(\mathbb F_q)} T_{w_1}(\tilde g_1^{-1} g_2) T_{w_2}(\tilde g_1)\, d\tilde g_1\, dg_2
        \\&=O_{\mathcal C}(T_{w_2} T_{w_1}).
    \end{align*}
    This finishes the proof.
\end{proof}

\subsection{Lifting semisimple classes}\label{subsec:lifting-ss}
In this subsection we explain how to compute orbital integrals of certain semisimple classes in $G(\mathbb F_q)$. It is known from the $p$-adic theory of orbital integrals that regular semisimple classes are best behaved for this purpose. Therefore, we study certain unions of conjugacy classes that are as regular as possible in $G(\mathbb F_q)$.

Mimicking the theory of finite groups of Lie type, we might call an element $s\in T(\overline{\mathbb F}_q)$ \emph{regular} if its centralizer in $G(\overline{\mathbb F}_q)$ is just $T(\overline{\mathbb F}_q)$ itself. 

Sadly, such a notion would be void for Lie groups of non finite type: If $s\in T(\overline{\mathbb F}_q)$ is any element, then there exists a finite subfield $\mathbb F_{q'}\subseteq \overline{\mathbb F}_q$ with $s\in T(\mathbb F_{q'})$. Note that $W$ acts on the finite set $T(\mathbb F_{q'})$. Thus, if $W$ is infinite, the $W$-stabilizer of $s$ is infinite. Lifting an element $w\in W$ which stabilizes $s$ to $N_{\mathbf G}(\mathbf T)(\mathbb F_{q'})$, we obtain an element in the $G(\overline{\mathbb F}_q)$-centralizer of $s$ that does not lie in $T$.

Therefore, what we do in this section is to require regularity of elements in $T(\overline{\mathbb F}_q)$ with respect to Levi subgroups $M_J\subseteq G$ for spherical $J\subseteq S$. In the affine case, this recovers the usual notion of a \emph{regular semisimple element}, which denotes elements whose centralizer is the loop group of a torus.

Let $J\subseteq S$ be a spherical subset and $M_J$ be the associated Levi subgroup. Set $\underline M_J=M_J/Z(M_J)^{\circ}$, where $Z(M_J)^{\circ}$ is the identity component of its center. Let $\underline C\subseteq \underline M_J(\mathbb F_q)$ a regular semisimple conjugacy class inside $\underline M_J(\mathbb F_q)$, which we assume to be elliptic (so its intersection with any proper rational parabolic subgroup of $\underline M_J(\mathbb F_q)$ is empty). We call $(J, \underline C)$ a {\it spherical elliptic pair}. 

Let $(J, \underline C)$ be a spherical elliptic pair. Let $C \subseteq M_J(\mathbb F_q)$ be the inverse image of $\underline C$ under the natural surjection $M_J(\mathbb F_q)\twoheadrightarrow \underline M_J(\mathbb F_q)$. Let $\hat C$ be the set of all elements that are $G(\mathbb F_q)$-conjugate to some element of $C U_J(\mathbb F_q)$. Since $U_J$ is open, so is $C U_J$. Hence $\hat C$ too is open and thus measurable. 

For subsets $J_1, J_2$ of $S$, we write
\begin{align*}
    \mathrm{Iso}_W(J_1, J_2) := \{\tau : J_1\to J_2\text{ is a bijection}\mid \exists w\in W, \forall s\in J_1:~\tau(s) = w^{-1} s w\}.
\end{align*}

Each element $\tau\in \mathrm{Iso}_W(J_1, J_2)$ induces isomorphisms between the parabolic subgroups $W_{J_1}\cong W_{J_2}$ of $W$, Hecke algebras etc.\ associated with $J_1$ and $J_2$. We write $(J, \underline C) \sim (J', \underline C')$ if there exists $\t \in \mathrm{Iso}_W(J, J')$ with $\t(\underline C)=\underline C'$. We show that $\hat C$ depends only on the $\sim$-equivalence class of $(J, \underline C)$.

The first step in this proof is to study the spherical case. Here, our construction of $\hat{C}$ matches a union of classical objects, known as Steinberg fibers. Let us briefly recall the setup for finite reductive groups. Whenever $W$ is finite, $G(\mathbb{F}_q)$ is a finite group of Lie type. Each element $g \in G(\mathbb{F}_q)$ admits a unique Jordan decomposition $g=su=us$, where $s \in G(\mathbb{F}_q)$ is semisimple and $u \in G(\mathbb{F}_q)$ is unipotent. The Steinberg map is the morphism that sends $g \in G(\overline{\mathbb{F}}_q)$ to the $G(\overline{\mathbb{F}}_q)$-conjugacy class of its semisimple part. This map can be identified with the GIT quotient map $G(\overline{\mathbb{F}}_q) \to G(\overline{\mathbb{F}}_q)//G(\overline{\mathbb{F}}_q)$. The fibers of this map are precisely the Steinberg fibers.

\begin{lemma}\label{lem:Philemma}
    Suppose that $W$ is finite and let $(J, \underline C)$ be a spherical elliptic pair. Let $g\in G(\mathbb F_q)$ with Jordan decomposition $g = su = us$, where $s$ is semisimple and $u$ is unipotent. Then $g\in \hat C$ if and only if $s$ is $G(\mathbb F_q)$-conjugate to an element in $C$. In other words, if $G$ is simply connected and semisimple, then $\hat C$ is a union of Steinberg fibers. 
\end{lemma}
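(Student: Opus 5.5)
We prove the two implications separately, using only standard facts about finite reductive groups: uniqueness of the Jordan decomposition, and the fact that semisimple parts of elements of $P_J = M_J U_J$ are governed by their Levi component.

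\textbf{($\Rightarrow$)} Suppose $g\in\hat C$. After conjugating $g$ by an element of $G(\mathbb F_q)$ we may assume $g = cu'$ with $c\in C$ and $u'\in U_J(\mathbb F_q)$. First I would show that $c$ is semisimple. Its image in $\underline M_J$ is regular semisimple, and the kernel $Z(M_J)^\circ$ of $M_J\to\underline M_J$ is a torus. Write $c = c_sc_u$ for the Jordan decomposition; then $c_u$ maps to the unipotent part of a semisimple element, which is trivial. Hence $c_u\in Z(M_J)^\circ$, and since this is a torus, $c_u=1$.

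Next, the projection $P_J\to M_J$ is a homomorphism, so it respects Jordan decompositions. Hence the semisimple part $s$ of $cu'$ maps to $c$. Semisimple elements of $P_J$ are $U_J$-conjugate into $M_J$: the element $s$ lies in a maximal torus of $P_J$, and such a torus is $U_J(\mathbb F_q)$-conjugate to one inside $M_J$. Since such a conjugation does not change the image in $M_J$, we obtain $s=vcv^{-1}$ with $v\in U_J(\mathbb F_q)$. We can choose $v$ rational by Lang's theorem, because the stabilizers involved are connected unipotent groups. Thus $s$ is $G(\mathbb F_q)$-conjugate to $c\in C$.

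\textbf{($\Leftarrow$)} Suppose $s$ is $G(\mathbb F_q)$-conjugate to some $c\in C$. Conjugate so that $s=c$; then $u\in Z_G(c)^\circ(\mathbb F_q)$. The key point is to compute this centralizer. Since $c$ maps to a regular semisimple element of $\underline M_J$, we have $Z_{M_J}(c)^\circ = T$. A root $\alpha\notin\Phi_J$ may satisfy $\alpha(c)=1$, so $Z_G(c)^\circ$ is a reductive group with maximal torus $T$ and some root system $\Psi$ satisfying $\Psi\cap\Phi_J=\emptyset$. Choose a cocharacter $\lambda$ of $Z(M_J)^\circ$ defining $P_J$, i.e.\ $\langle\lambda,\alpha\rangle>0$ exactly for $\alpha\in\Phi^+\setminus\Phi_J$. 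Then $\lambda$ pairs non-trivially with every root in $\Psi$.

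Hence $B_c := T\cdot\prod_{\alpha\in\Psi,\ \langle\lambda,\alpha\rangle>0}U_\alpha$ is a Borel subgroup of $Z_G(c)^\circ$ defined over $\mathbb F_q$, and its unipotent radical lies in $U_J$. Every rational unipotent element of $Z_G(c)^\circ$ is $Z_G(c)^\circ(\mathbb F_q)$-conjugate into $B_c(\mathbb F_q)$. The conjugating element centralizes $c$, so we may assume $u\in U_J(\mathbb F_q)\cap Z_G(c)$. Then $g=cu\in CU_J(\mathbb F_q)$, so $g\in\hat C$.

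\textbf{Steinberg fibers.} When $G$ is simply connected semisimple, centralizers of semisimple elements are connected. Moreover, for semisimple $s$ rational in the geometric conjugacy class of some $c\in C$, we need rational conjugacy to hold. I expect this to come from ellipticity of $\underline C$ together with the fact that $C$ is the full preimage, so it absorbs the twisting in $Z(M_J)$. I would check this via $H^1(\mathbb F_q, Z_G(c))$ classifying rational classes inside the geometric class: these rational forms correspond to twisted tori, i.e.\ to rational classes lying in various $C$'s.

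\textbf{Main obstacle.} I expect the main obstacle to be this last identification, together with the careful verification that $\Psi\cap\Phi_J=\emptyset$. That verification is the step where regularity of $\underline C$ is used essentially.
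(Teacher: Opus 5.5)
Your proof of the equivalence itself is correct and follows the paper's route up to inessential variations. For ($\Rightarrow$), the paper quotes [DM91, Prop.~7.1] to write $g$ as a commuting product $mn$; you instead push the Jordan decomposition through $P_J\to M_J$, use $U_J$-conjugacy of maximal tori of $P_J$, and apply Lang's theorem to the connected group $C_{U_J}(c)$. For ($\Leftarrow$), the paper only uses that $P_J\cap C_G(s)^\circ$ is a rational parabolic and then kills the $M_J$-component of $u$ by regularity; you show directly that $Z_G(c)^\circ\cap P_J$ is a rational Borel subgroup of $Z_G(c)^\circ$ with unipotent radical in $U_J$.

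One correction is needed. The element $c$ cannot lie in the split torus $T$: for $J\neq\emptyset$ its image would lie in the proper rational Borel subgroup of $\underline M_J$, contradicting ellipticity. So your $T$ has to be the rational, generally non-split torus $T_c:=Z_{M_J}(c)^\circ$, and ``$\Phi^+\setminus\Phi_J$'' has to be read as the set of $T_c$-roots occurring in $U_J$. The argument still works. The cocharacter $\lambda$ factors through $Z(M_J)^\circ\subseteq T_c$ and is defined over $\mathbb F_q$ because $Z(M_J)^\circ$ is split, and $B_c=Z_G(c)^\circ\cap P_J$ is visibly $\sigma$-stable. The point you flag as an obstacle, $\Psi\cap\Phi(M_J,T_c)=\emptyset$, is immediate. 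Roots of $M_J$ are trivial on $Z(M_J)^\circ$ and descend to the roots of $\underline M_J$, none of which takes the value $1$ at the regular element $\bar c$.

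The real gap is the ``in other words'' clause, where you look for the wrong mechanism. Ellipticity of $\underline C$ and the fact that $C$ is a full preimage play no role. The correct argument is short:
\begin{enumerate}
\item Suppose $s,c\in G(\mathbb F_q)$ are semisimple and $s=xcx^{-1}$ with $x\in G(\overline{\mathbb F}_q)$. Then $x^{-1}\sigma(x)\in Z_G(c)$.
\item Since $Z_G(c)$ is connected (Steinberg), Lang's theorem provides $z\in Z_G(c)$ with $x^{-1}\sigma(x)=z\sigma(z)^{-1}$.
\item Then $xz\in G(\mathbb F_q)$, and it conjugates $c$ to $s$.
\end{enumerate}
Equivalently, the set $H^1(\mathbb F_q,Z_G(c))$ that you invoke classifies the rational classes inside the geometric class of $c$, not twisted tori. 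It is trivial by Lang precisely because $Z_G(c)$ is connected. So geometric and rational conjugacy to an element of $C$ coincide. Your equivalence then says that $\hat C$ is the set of $\mathbb F_q$-points of the union of the Steinberg fibres through $C$, which is exactly how the paper concludes.
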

\begin{proof}
It suffices to show that the following are equivalent.
    \begin{enumerate}
        \item The element $g$ is $G(\mathbb F_q)$-conjugate to an element of $CU_J(\mathbb F_q)$.
        \item The element $s$ is $G(\mathbb F_q)$-conjugate to an element of $C$.
    \end{enumerate}
    
    (1) $\implies$ (2): By \cite[Proposition~7.1]{DM91}, after conjugation by a suitable element in $G(\mathbb F_q)$, we get $g=mn$ for some $m \in C$ and $n\in U_{J}(\mathbb F_q)$ with $mn = nm$. Since $n$ commutes with $g$, it commutes with both parts of its Jordan decomposition, so $s, u, n$ are pairwise commutative. In particular, $un^{-1} = n^{-1} u$ is unipotent, and it commutes with the semisimple element $s$. Thus, $m = gn^{-1} = s(un^{-1})$ is the Jordan decomposition of $m$. By the uniqueness of the Jordan decomposition and $m\in C$ (so $m$ is semisimple), we get $s\in C$.

    (2) $\implies$ (1): By \cite[Proposition~2.5]{DM91}, $g\in C_{G}(s)^\circ$. Then $P_{J}\cap C_{G}(s)^{\circ}$ is a parabolic subgroup of $C_{G}(s)^{\circ}$. By \cite[Corollary~3.20]{DM91}, it contains a $C_{G}(s)^{\circ}(\mathbb F_q)$-conjugate of $u$. Thus, we may assume that $g, s$ and $u$ all lie in $P_J(\mathbb F_q)$. The image of $u$ along the projection $P_J(\mathbb F_q)\twoheadrightarrow M_J(\mathbb F_q)$ is a unipotent element centralizing $s$. By regularity of $s$, this projected image is trivial, hence $u\in U_J(\mathbb F_q)$. We proved (1).

    Finally, let us prove the ``in other words'' part. If $G$ is simply connected and semisimple, Steinberg's theorem asserts that all centralizers of semisimple elements are connected. Thus, an element $s\in G(\mathbb F_q)$ is $G(\mathbb F_q)$-conjugate to an element of $C$ if and only if $s$ is $G(\overline{\mathbb F}_q)$-conjugate to an element of $C$, completing the proof.
\end{proof}

\begin{lemma}\label{lem:semisimpleLiftingIndependence}
Suppose that $(J,\underline C) \sim (J',\underline C')$. Then $\hat C = \hat C'$.
\end{lemma}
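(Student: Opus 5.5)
The plan is to reduce to elementary conjugators via the Lusztig--Spaltenstein algorithm, and then handle each elementary step inside a finite-type Levi subgroup, where Lemma~\ref{lem:Philemma} applies. Since $\hat C$ is by definition stable under $G(\mathbb F_q)$-conjugation, it suffices to show $C'U_{J'}(\mathbb F_q)\subseteq \hat C$; the reverse inclusion follows by symmetry, because $\tau^{-1}\in \mathrm{Iso}_W(J',J)$.

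\textbf{Reduction.} Let $\tau\in\mathrm{Iso}_W(J,J')$ be realized by $w\in W$. Replacing $w$ by the minimal element of $W_Jw$ changes $\tau$ only by conjugation by an element of $W_J$. That element is an inner automorphism of $M_J$ (with a lift in $N_{M_J}(T)(\mathbb F_q)$), so it preserves $C$. We may therefore assume $w\in{}^JW$ with $w^{-1}Jw=J'$. Proposition~\ref{prop:LSalgo} then writes $w=w_1\cdots w_n$ as a product of elementary conjugators for $(J_i,J_{i+1},s_i)$. Each step transports $(J_i,\underline C_i)$ to a pair $(J_{i+1},\underline C_{i+1})$, and composing these steps gives $(J',\underline C')$. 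So it suffices to treat one elementary conjugator $w=w_{0,K\cap J}w_{0,K}$, where $K$ is the spherical component of $J\sqcup\{s\}$ containing $s$.

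\textbf{Elementary step.} Set $L=J\cup J'\subseteq J\cup\{s\}$. This is a spherical subset, since it is the union of $K$ and components of $J$, all of which are finite. Let $\dot w\in N_{M_L}(T)(\mathbb F_q)$ be a lift of $w$. Then $\dot w^{-1}M_J\dot w=M_{J'}$, and $\dot w^{-1}C\dot w=C'$ for the correct identification of $\underline C'$; the choice of lift is irrelevant because $C$ is a full preimage of $\underline C$ and is stable under $T(\mathbb F_q)$.

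Decompose $U_J=U_J^L\cdot U_L$, where $U_J^L=U_J\cap M_L$ is the unipotent radical of the parabolic $P_J\cap M_L$ of $M_L$. Similarly decompose $U_{J'}=U_{J'}^L\cdot U_L$. Now apply Lemma~\ref{lem:Philemma} inside the finite reductive group $M_L(\mathbb F_q)$. Here $(J,\underline C)$ is still a spherical elliptic pair, and regularity of $C$ is a statement about $M_J$, so it is unaffected. The lemma shows that the $M_L(\mathbb F_q)$-saturation of $CU_J^L(\mathbb F_q)$ is the set of elements whose semisimple part is $M_L(\mathbb F_q)$-conjugate into $C$. This set is conjugation-invariant and contains $C'$, because $C'=\dot w^{-1}C\dot w$. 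Applying the lemma again to $(J',\underline C')$, the two saturations coincide.

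\textbf{Main obstacle: the $U_L$ factor.} Take $x=c'u'u$ with $c'\in C'$, $u'\in U_{J'}^L(\mathbb F_q)$ and $u\in U_L(\mathbb F_q)$. Then $c'u'=m^{-1}cu''m$ for some $m\in M_L(\mathbb F_q)$, $c\in C$ and $u''\in U_J^L(\mathbb F_q)$. Since $M_L$ normalizes $U_L$, we get $x=m^{-1}(cu''\,mum^{-1})m$ with $mum^{-1}\in U_L(\mathbb F_q)$. Hence $x\in m^{-1}CU_J(\mathbb F_q)m\subseteq\hat C$.

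The delicate points are these. First, $U_L(\mathbb F_q)$ must be normalized by $M_L(\mathbb F_q)$ in the maximal Kac--Moody group, which I would take from the Levi decomposition of parabolics $P_L=M_L\ltimes U_L$ in \cite{Ma18}. Second, Lemma~\ref{lem:Philemma} requires its hypotheses: $M_L$ is a connected reductive group of finite type with Weyl group $W_L$. The lemma uses Jordan decomposition and \cite{DM91} but not simple connectedness for its first part, so this should be fine.
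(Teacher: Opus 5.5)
Your argument follows the paper's route. You reduce via the Lusztig--Spaltenstein algorithm to a single elementary conjugator. You then apply Lemma~\ref{lem:Philemma} in the finite-type Levi subgroup attached to $J\sqcup\{s\}$ to identify the two local saturations. Finally you absorb the remaining factor, using that this Levi normalizes its unipotent radical. Your preliminary reduction to $w\in{}^JW$ and your remarks on the choice of lifts make explicit details that the paper leaves implicit.

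There is one slip to fix: the Levi index set must be $L:=J\sqcup\{s\}$ (the paper's $K$), not $J\cup J'$. Write $\mathrm{op}_K$ for the opposition involution of your component $K$. Then $J'=(J\setminus K)\sqcup(K\setminus\{\mathrm{op}_K(s)\})$, so $J\cup J'=J\sqcup\{s\}$ holds only when $\mathrm{op}_K(s)\neq s$.

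If $\mathrm{op}_K(s)=s$, then $J'=J=J\cup J'$ while $s\in\supp(w)$. A concrete case is $K$ of type $A_3$ with $s$ the middle node, where $\tau$ nontrivially swaps the two end nodes. In that situation the lift $\dot w\in N_{M_L}(T)(\mathbb F_q)$ your argument relies on does not exist.

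Your own sphericity justification (``the union of $K$ and components of $J$'') already describes $J\sqcup\{s\}$. With that choice of $L$, the rest of your argument goes through unchanged.
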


\begin{proof}
By the Lusztig-Spaltenstein algorithm (Proposition \ref{prop:LSalgo}), it suffices to consider the case where there is a spherical set $K = J\sqcup\{s\}\subseteq S$ with $J' = (w_{0,K} w_{0,J}) J (w_{0,K} w_{0,J})^{-1}$ and $\tau$ comes from the conjugation action of $w_{0,K}w_{0,J}$.

Now Lemma~\ref{lem:Philemma} implies that every element in $(CU_J(\mathbb F_q))\cap M_K(\mathbb F_q)$ is $M_K(\mathbb F_q)$-conjugate to some element in $(C'U_{J'}(\mathbb F_q))\cap M_K(\mathbb F_q)$. Since $M_K(\mathbb F_q)$ normalizes $U_K(\mathbb F_q)$, we see that every element in
\begin{align*}
    CU_J(\mathbb F_q) = [(CU_J(\mathbb F_q))\cap M_K(\mathbb F_q)] U_K(\mathbb F_q)
\end{align*}
is $M_K(\mathbb F_q)$-conjugate to an element of
\begin{align*}
[(C'U_{J'}(\mathbb F_q))\cap M_K(\mathbb F_q)] U_K(\mathbb F_q) = C'U_{J'}(\mathbb F_q).
\end{align*}
This finishes the proof.
\end{proof}

\subsection{Orbital integrals of lifts}
We have $\mu(U_K(\mathbb F_q)) = [B(\mathbb F_q):U_K(\mathbb F_q)]^{-1} = (q-1)^{-\# S} q^{-\ell(w_{0,K})}$. 
In view of Proposition~\ref{prop:orbital_integral_cocenter} and the constructive proof of Proposition~\ref{prop:cocenterSpanning}, the following proposition reduces the computation of any orbital integral $\mathcal O_{\hat C}(h)$ for $h\in H$ to a point-counting exercise in finite groups of Lie type.

\begin{proposition}\label{prop:orbitalIntegralLift}
Let $(J, \underline C)$ be a spherical elliptic pair. Let $w\in W$ have minimal length in its conjugacy class, and set $K := \supp(w)\subseteq S$. Then 
    \begin{enumerate}
        \item If $K$ is not spherical, then $\mathcal O_{\hat C}(T_w)=0$.
        \item Assume that $K$ is spherical. Then
        \begin{align*}
            \mathcal O_{\hat C}(T_w) = \mu(U_K(\mathbb F_q)) \sum_g T_w(g),
        \end{align*}
        where the sum is taken over all elements $g\in M_K(\mathbb F_q)$ such there exist a pair $(J',\underline C')$ equivalent to $(J, \underline C)$ such that $J'\subseteq K$ and $g$ is $M_K(\mathbb F_q)$-conjugate to an element of $C' U_{J'}(\mathbb F_q)$.
    \end{enumerate}
\end{proposition}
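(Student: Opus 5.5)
The plan is to compute $\mathcal O_{\hat C}(T_w)=\mu(\hat C\cap BwB)$ by finding out which elements of $BwB$ can be conjugate into some $C'U_{J'}(\mathbb F_q)$, and by exploiting the fact that $\hat C$ is open and conjugation-stable. For the first step I would use minimality of $w$ together with Theorem~\ref{min} and Lemma~\ref{lem:parabolicConjugation}. By Theorem~\ref{min}(2) and Proposition~\ref{prop:orbital_integral_cocenter}, $\mathcal O_{\hat C}(T_w)$ depends only on $\CO_{\min}$. Theorem~\ref{thm:MarquisIndefiniteMinLength} then lets me replace $w$ by $w'=ab$, with $b$ straight, $a\in W_{J_0}$ for some spherical $J_0$, and $\supp(w')$ conjugate to $K$.

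\textbf{Part (1).} Suppose $K$ is not spherical. Then $b\neq 1$ has infinite order, and I want to show that no $g\in BwB$ lies in $\hat C$. The elements of $\hat C$ are conjugate into $C'U_{J'}(\mathbb F_q)\subseteq P_{J'}(\mathbb F_q)$ with $J'$ spherical. Such an element normalizes a spherical parabolic, so it fixes a point of the Davis/building realization: it lies in a conjugate of the compact group $P_{J'}(\mathbb F_q)$, which is a bounded subgroup. On the other hand, $g\in BwB$ with $w=ab$ and $b$ straight acts on the building as a hyperbolic-type isometry. Indeed, for $g\in BbB$ and $b$ straight, $g^n\in Bb^nB$ by length additivity, and the $a$-part is absorbed by passing to a power $w^m=b^m$ up to a spherical factor. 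So the displacement of $g^n$ on chambers grows linearly, and $g$ cannot be contained in a bounded subgroup. Hence $\hat C\cap BwB=\emptyset$ and $\mathcal O_{\hat C}(T_w)=0$. The key claim is that $BwB\ni g\Rightarrow g^n\in B w^n B$ for a minimal-length $w$ up to bounded error. I expect this to be the \textbf{main obstacle}. I would first try to prove it for the straight part and control the spherical $a$-part by the finite order of $b$ acting on $W_{J_0}$, as in the proof of Theorem~\ref{thm:almostEllipticClassPolynomial}.

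\textbf{Part (2).} Suppose $K$ is spherical. Write $BwB=(B\cap M_K)\,w\,(B\cap M_K)\,U_K$, using $w\in W_K$, and note that $U_K$ is normalized by $M_K$. I need to show that $g=mu\in P_K(\mathbb F_q)$, with $m\in M_K$ and $u\in U_K$, lies in $\hat C$ if and only if $m$ is $M_K(\mathbb F_q)$-conjugate to an element of $C'U_{J'}$ for some equivalent pair with $J'\subseteq K$.

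For "if": $m\in h C'U_{J'}h^{-1}$ with $h\in M_K$ gives $mu\in h\,C'U_{J'}U_K\,h^{-1}$. Since $U_{J'}\supseteq U_K$, this lies in $\hat C'=\hat C$ by Lemma~\ref{lem:semisimpleLiftingIndependence}.

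For "only if": $g$ is conjugate to an element of $C U_J$, hence fixes a spherical residue, and its semisimple part is regular in a Levi subgroup. Using a fixed-point argument on the building and Bruhat decomposition, I would conjugate within $P_K$ to reduce to the finite reductive group $M_K$ and then apply Lemma~\ref{lem:Philemma} there. The relevant pair is the one transported into $K$ by Proposition~\ref{prop:LSalgo}.

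Finally, for the measure: the condition on $g$ depends only on $m$, and the fibres over $M_K(\mathbb F_q)$ are $U_K(\mathbb F_q)$-cosets of measure $\mu(U_K(\mathbb F_q))$. Summing $T_w(g)$ over $g\in M_K(\mathbb F_q)$ gives the stated formula.
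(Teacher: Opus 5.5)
Your Part (1) is essentially the paper's argument. The paper phrases it as topological periodicity of elements of $\hat C$, which lie in conjugates of the compact group $P_J(\mathbb F_q)$, and you phrase it as bounded orbits on the building; either way this is played against the unbounded Bruhat cells containing powers of $g\in BwB$ for $w=ab$ in Marquis' normal form. What you call the main obstacle is settled in the paper in one line. $(BwB)^n$ lies in the union of the cells $BvB$ with $T_v$ occurring in $T_w^n$, and $T_w^n\in\sum_{a\in W_{J_0}}\mathbb Z[\mathbf q^{\pm1}]T_{b^na}$ with $\ell(b^na)\ge n\ell(b)-\ell(w_{0,J_0})$. Your ``if'' direction in Part (2), via Lemma~\ref{lem:semisimpleLiftingIndependence} and $U_K\subseteq U_{J'}$, and your final count of right $U_K(\mathbb F_q)$-cosets also agree with the paper.

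The gap is the ``only if'' direction of Part (2), the inclusion $\hat C\cap P_K(\mathbb F_q)\subseteq\bigcup M_K(\mathbb F_q)\cdot C'U_{J'}(\mathbb F_q)$. This is the real content of the proposition, and your sketch does not reach it. You only know $hgh^{-1}\in CU_J(\mathbb F_q)$ for some $h$ in the infinite group $G(\mathbb F_q)$. Nothing yet gives you an equivalent pair $(J',\underline C')$ with $J'\subseteq K$ into which $g$ can be conjugated \emph{within} $P_K(\mathbb F_q)$.

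Neither tool you name can supply this. Lemma~\ref{lem:Philemma} works inside a finite reductive group and for a pair already given there. Moreover, in $G(\mathbb F_q)$ you have no Jordan decomposition of $g$, so its ``semisimple part'' is not yet defined. Proposition~\ref{prop:LSalgo} factorizes a Weyl group element already known to conjugate $J$ into $S$, but it does not produce one from $g$.

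The paper's missing argument runs as follows. Choose $J'\subseteq K$ minimal such that the $P_K(\mathbb F_q)$-class of $g$ meets $P_{J'}(\mathbb F_q)$, say in $g_1$. Write $h=h_1h_2h_3$ with $h_1\in P_J(\mathbb F_q)$, $h_2\in{}^JW^{J'}$ and $h_3\in P_{J'}(\mathbb F_q)$. Set $g_2=h_3g_1h_3^{-1}\in Bw_2B$ and $g_3=h_2g_2h_2^{-1}\in CU_J(\mathbb F_q)\cap Bw_3B$. Minimality of $J'$ forces $\supp(w_2)=J'$, and ellipticity of $\underline C$ forces $\supp(w_3)=J$. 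Length additivity turns $h_2g_2=g_3h_2$ into $h_2w_2=w_3h_2$. Bruhat-order preservation of conjugation by $h_2$ then gives $h_2^{-1}Jh_2=J'$, which produces $\underline C'$. Finally one shows $P_{J'}\cap h_2^{-1}U_Jh_2\subseteq U_{J'}$, since its image in $M_{J'}$ is a normal unipotent subgroup of a reductive group. This yields $g_2\in C'U_{J'}(\mathbb F_q)$.

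Your building picture could probably be completed along similar lines. You would project between the $K$-residue and the $J$-residue stabilized by $g$, with ellipticity forcing the projection onto the $J$-residue to be all of it. You would still have to supply the ellipticity step and the unipotent-radical step yourself.
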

\begin{remark}
    From Proposition~\ref{prop:orbitalIntegralLift}, we see that if $K\subseteq S$ does not have a subset that is $W$-conjugate to $J$, then $\mathcal O_{\hat C}\vert_{H_K} = 0$.
\end{remark}

\begin{proof}
(1) Note that every element $g\in \hat C$ is contained in a compact subgroup (conjugate to $P_J(\mathbb F_q)$). Thus, $g$ is topologically periodic, in the sense that there is a sequence of integers $(a_n)$ converging to $\infty$ such that $g^{a_n}$ converges to $1\in G(\mathbb F_q)$.
    
Suppose that $K$ is not spherical. Since the value of $\mathcal O_{\hat C}(T_w)$ does only depend on the value of $T_w$ in $\overline H_{\mathbb C}$, we can replace $w$ by an element that is as in Theorem~\ref{thm:MarquisIndefiniteMinLength}. Explicitly, there exists a straight element $b\in W$ normalizing a spherical subset $J\subseteq S$ with $w'\in b W_J$.

For $n\geq 1$, we note
    \begin{align*}
        (B(\mathbb F_q) w B(\mathbb F_q))^n\subseteq \bigcup_v B(\mathbb F_q) v B(\mathbb F_q),
    \end{align*}
    where the union is taken over all $v\in W$ such that the coefficient of $T_v$ in $T_w^n$ is non-zero. Now observe
    \begin{align*}
        T_w^n \in \sum_{a\in W_J} \BZ[\q^{\pm 1}]T_{b^n a}.
    \end{align*}
    Note that $\ell(b^n a) \geq n \ell(b) - \ell(w_{0,J})$, and that $\ell(b)>0$ since $K$ is not spherical.
    Thus, we see that $B(\mathbb F_q) w B(\mathbb F_q)$ does not contain topologically periodic elements.
    This finishes the proof of (1).

(2) We wish to show
\begin{align*}
    \hat C\cap P_K(\mathbb F_q) = \bigcup_{\substack{(J',\underline C')\sim (J,\underline C)\\J'\subseteq K}} (M_K(\mathbb F_q)\cdot (C' U_{J'}(\mathbb F_q))).\tag{$\ast$}
\end{align*}

The inclusion $\supseteq$ follows from Lemma~\ref{lem:semisimpleLiftingIndependence}: if $(J',\underline C')\sim (J,\underline C)$, then $C' U_{J'}(\mathbb F_q) \subseteq \hat C$. Since $\hat C$ is closed under $G(\mathbb F_q)$-conjugation, it is closed under $M_K(\mathbb F_q)$-conjugation. Furthermore, both $M_K(\mathbb F_q)$ and $C' U_{J'}(\mathbb F_q)$ are contained in $P_K(\mathbb F_q)$, so their product is contained in $\hat C\cap P_K(\mathbb F_q)$.

Let us now prove the converse inclusion. Pick an element $g\in \hat C\cap P_K(\mathbb F_q)$. We must show that $g$ belongs to the right-hand side of $(\ast)$.

Among all subsets $J'\subseteq K$, pick a minimal one such that $P_{J'}(\mathbb F_q)$ has a non-empty intersection with the $P_K(\mathbb F_q)$-conjugacy class of $g$. Let $g_1\in P_{J'}(\mathbb F_q)$ be $P_K(\mathbb F_q)$-conjugate to $g$.

Because $\hat C$ is conjugation-invariant, $g\in \hat C$ implies $g_1\in \hat C$. Thus, there exists $h\in G(\mathbb F_q)$ with $h g_1 h^{-1}\in CU_J(\mathbb F_q)$. By the Bruhat decomposition, we can write $h = h_1 h_2 h_3$ with $h_1\in P_J(\mathbb F_q), h_2\in {}^J W^{J'}$ and $h_3\in P_{J'}(\mathbb F_q)$. 

Define $g_2 := h_3 g_1 h_3^{-1}$. Since $h_3\in P_{J'}(\mathbb F_q)$, $g_2$ remains in $P_{J'}(\mathbb F_q)$ and is still $P_K(\mathbb F_q)$-conjugate to $g$. Define $g_3 := h_2 g_2 h_2^{-1} = h_1^{-1} (h g_1 h^{-1}) h_1$. Because $C U_J(\mathbb F_q)$ is stable under conjugation by $P_J(\mathbb F_q)$, and $h_1 \in P_J(\mathbb F_q)$, we have $g_3 \in C U_J(\mathbb F_q)$. Let $w_2, w_3\in W$ be the unique elements such that
\begin{align*}
    g_2\in B(\mathbb F_q) w_2 B(\mathbb F_q)\text{ and }g_3\in B(\mathbb F_q) w_3 B(\mathbb F_q).
\end{align*}

We claim that $\supp w_2 = J'$. Since $g_2\in P_{J'}(\mathbb F_q)$, we clearly have $\supp w_2 \subseteq J'$. If $J'' := \supp w_2 \subsetneq J'$, then $g_2\in P_{J''}(\mathbb F_q)$. However, since $g_2$ is $P_K(\mathbb F_q)$-conjugate to $g$, this contradicts the minimality of $J'$. Thus, $\supp w_2 = J'$.

Next, we claim that $\supp w_3 = J$. Using $g_3\in CU_J(\mathbb F_q)$, we can write $g_3 = c_3 u_3$ with $c_3\in C$ and $u_3\in U_J(\mathbb F_q)$. Projection to the Levi quotient $M_J$ yields $c_3\in (B\cap M_J)(\mathbb F_q) w_3 (B\cap M_J)(\mathbb F_q)$. Since the image of $c_3$ in $\underline M_J(\mathbb F_q)$ lies in the elliptic regular class $\underline C$, it cannot be contained in any proper parabolic subgroup of $\underline M_J$. This forces $\supp w_3 = J$.

Recall that $h_2 g_2 = g_3 h_2$. Because $h_2\in {}^J W^{J'}$ and $w_2 \in W_{J'}$, we have $l(h_2 w_2) = l(h_2)+l(w_2)$ and hence $B(\mathbb F_q) h_2 B(\mathbb F_q) w_2 B(\mathbb F_q) = B(\mathbb F_q) h_2 w_2B(\mathbb F_q)$. Similarly, since $\supp w_3 = J$, $w_3 \in W_J$, and $h_2 \in {}^J W$, we have $l(w_3 h_2) = l(w_3) + l(h_2)$, giving $B(\mathbb F_q) w_3 B(\mathbb F_q) h_2 B(\mathbb F_q) = B(\mathbb F_q) w_3 h_2 B(\mathbb F_q)$. Comparing the double cosets, we deduce $h_2 w_2 = w_3 h_2$.

It is a standard fact that the conjugation maps $W_J\to W, w \mapsto h_2^{-1} w h_2$ and $W_{J'}\to W, w\mapsto h_2 w h_2^{-1}$ are Bruhat order preserving. For any $s\in J$, we have $s\leq w_3$ (as $J = \supp w_3$), and hence $s h_2 \leq w_3 h_2 = h_2 w_2$. Since $l(s h_2) > l(h_2)$, we must have $h_2^{-1} s h_2\leq w_2$ with $\ell(h_2^{-1} s h_2) = 1$. Thus, $h_2^{-1} s h_2\in J'$. By symmetry, $h_2 J' h_2^{-1}\subseteq J$.

Conjugation by $h_2$ thus induces a bijection $[h_2] : J\to J'$ and an isomorphism $[h_2] : \underline{M_J}\to \underline{M_{J'}}$. Setting $C' := [h_2](\underline C)$, we see $(J,\underline C)\sim (J',\underline C')$. Note that $h_2^{-1} c_3 h_2\in C'\subseteq P_{J'}(\mathbb F_q)$. Hence 
\begin{align*}
    h_2^{-1} u_3 h_2 = h_2^{-1} c_3^{-1} g_3 h_2 = (h_2^{-1} c_3 h_2)^{-1} g_2\in P_{J'}(\mathbb F_q).
\end{align*}

The subgroup $\tilde U := P_{J'}\cap h_2^{-1} U_J h_2$ of $P_{J'}$ is normalized by $M_{J'}$ since $h_2 M_{J'} h_2^{-1} = M_J$ normalizes $U_J$. Projecting $\tilde U$ along the quotient map $P_{J'}\twoheadrightarrow M_{J'}$ thus produces a normal unipotent subgroup of $M_{J'}$. Since $M_{J'}$ is reductive, any such normal unipotent subgroup is trivial, and hence $\tilde U$ is contained in $U_{J'}$.

Thus, $h_2^{-1} u_3 h_2\in \tilde U(\mathbb F_q)\subseteq U_{J'}(\mathbb F_q)$, which implies $g_2\in C' U_{J'}(\mathbb F_q)$.
Then $g_1\in C' U_{J'}(\mathbb F_q)$, which is $M_K(\mathbb F_q)$-conjugate to $g$. This completes the proof of $(\ast)$.

With $(\ast)$ established, we can now evaluate the orbital integral to verify condition (2) of Proposition \ref{prop:orbitalIntegralLift}. By definition, $\mathcal O_{\hat C}(T_w)$ is the Haar measure of the set
$$ \begin{aligned}
    D :={}& (B(\mathbb F_q) w B(\mathbb F_q)) \cap \hat C \\
    ={}& (B(\mathbb F_q) w B(\mathbb F_q)) \cap P_K(\mathbb F_q) \cap \hat C \\
    ={}& \bigcup_{\substack{(J',\underline C')\sim (J,\underline C)\\J'\subseteq K}} (B(\mathbb F_q) w B(\mathbb F_q))\cap \big(M_K(\mathbb F_q)\cdot C' U_{J'}(\mathbb F_q)\big),
\end{aligned} $$
where the second equality follows from $\supp(w) = K$ (which implies the double coset is contained in $P_K$), and the third uses $(\ast)$.

Notice that $B(\mathbb F_q) w B(\mathbb F_q)$ is stable under right multiplication by $B(\mathbb F_q)$, and thus by its subgroup $U_K(\mathbb F_q)$. Similarly, for each $J' \subseteq K$, we have $U_K(\mathbb F_q) \subseteq U_{J'}(\mathbb F_q)$, so the set $C' U_{J'}(\mathbb F_q)$ is also stable under right multiplication by $U_K(\mathbb F_q)$. Consequently, the set $D$ is a union of right $U_K(\mathbb F_q)$-cosets. We conclude that
$$ \mathcal O_{\hat C}(T_w) = \mu(D) = \mu(U_K(\mathbb F_q)) \cdot \#(D/U_K(\mathbb F_q)). $$

To compute this cardinality, we count the number of cosets $gU_K(\mathbb F_q) \in P_K(\mathbb F_q) / U_K(\mathbb F_q)$ contained in $D$. Since $P_K(\mathbb F_q) = M_K(\mathbb F_q) U_K(\mathbb F_q)$, we can represent each such coset by a unique element $m \in M_K(\mathbb F_q)$. A coset $m U_K(\mathbb F_q)$ lies in $D$ if and only if $m \in B(\mathbb F_q) w B(\mathbb F_q)$ and $m$ belongs to the set $M_K(\mathbb F_q) \cdot C' U_{J'}(\mathbb F_q)$ for some pair $(J', \underline C') \sim (J, \underline C)$. However, the latter condition is precisely stating that $m$ is $M_K(\mathbb F_q)$-conjugate to an element of $C' U_{J'}(\mathbb F_q)$. Summing over all such elements $m \in M_K(\mathbb F_q)$ recovers exactly the formula claimed in Proposition \ref{prop:orbitalIntegralLift} (2).
\end{proof}

\section{Lusztig's pairing for finite groups of Lie types}\label{sec:lusztig_pairing}
To establish the linear independence of the canonical elements $\{T_{\mathcal{O}}\}_{\mathcal{O} \in Cl(W)}$ in the cocenter $\overline{H}$ for arbitrary Kac-Moody types, one key step in our approach relies on lifting trace functionals from spherical parabolic subalgebras. Consequently, we require a highly structured family of trace functionals on the Hecke algebras of finite Weyl groups that are amenable to this globalization. While the linear independence of the cocenter basis for finite types is classically known---following easily, for instance, from Tits' deformation theorem---that algebraic fact alone does not easily lift to indefinite types. 

Instead, we turn to the representation theory of finite groups of Lie type to construct these local building blocks. Studying orbital integrals of regular semisimple elements, like in Section~\ref{sec:orbital_integrals}, provides a rich family of trace functions that posess exactly the lifting property we need. Our main result of this section is the following.
\begin{theorem}\label{thm:lusztigPairingOverview}
    Let $G / \mathbb Z$ be a split Chevalley group with Weyl group $(W, S)$.
    Let $A = \mathbb Q[\q^{\pm 1/2}]$. There exists a uniquely determined pairing $\psi : A[\Cl(W)]\otimes_A A[\Cl(W)]\to A$ satisfying the following conditions:
    \begin{enumerate}
    \item (Orbital integrals of regular semisimple elements) For every finite field $\mathbb F_q$ and every regular semisimple element $s\in G(\mathbb F_q)$, the orbital integral of the $G(\mathbb F_q)$-conjugacy class of $s$ agrees with the map
    \begin{align*}
        \overline H_{\mathbb C}\to \mathbb C,~T_{\CO_1}\mapsto \frac{\# W}{\# C_{G(\mathbb F_q)}(s) \# \CO_2} \q^{\ell(w_0)} \psi(\CO_1,\CO_2)_{\q=q},
    \end{align*}
    where $\CO_2\in \Cl(W)$ is the conjugacy class that determines the type of the rational torus $C_{G(\overline{\mathbb F}_q)}(s)$.
    \item (Orbital integrals of Steinberg fibres) For every finite field $\mathbb F_q$ and every conjugation-stable set of semisimple element $C\subseteq G(\mathbb F_q)$, the map
    \begin{align*}
        H_{\mathbb C}\to\mathbb C,~h\mapsto \sum_{\substack{s\in C\\u\in G(\mathbb F_q)\text{ unip.}\\su=us}} h(su)
    \end{align*}
    is equal to the composition of the projection map $H_{\mathbb C}\to\overline H_{\mathbb C}$ and the map
    \begin{align*}
        \overline H_{\mathbb C}\to \mathbb C,~T_{\CO_1}\mapsto \q^{\ell(w_0)} \sum_{\CO_2\in \Cl(W)} \frac{\#(\torusO{\CO_2}(\mathbb F_q)\cap C)}{\#\torusO{\CO_2}(\mathbb F_q)} \psi(\CO_1,\CO_2)_{\q=q},
    \end{align*}
    where $\torusO{\CO_2}$ denotes any choice of a rational maximal torus of $G_{\overline{\mathbb F}_q}$ of type $\CO_2$.
    \item (Compatibility with Levi subgroups) For any subset $J\subseteq S$, denote the Levi subgroup by $M_J$ and its corresponding pairing by $\psi^{M_J}$. For $\CO_1\in \Cl(W_J)$ and $\hat \CO_1,\hat \CO_2\in \Cl(W)$ with $\CO_1\subseteq \hat{\CO}_1$, we have
    \begin{align*}
        \sum_{\substack{\CO_2\in \Cl(W_J)\\\CO_2\subseteq \hat \CO_2}} \psi^{M_J}(\CO_1,\CO_2) = \psi(\hat{\CO_1},\hat{\CO_2}).
    \end{align*}
    \item (Factorization) The matrix representing $\psi$ can be factorized as $\q^{-\ell(w_0)} NFC$ for three matrices $N, F, C\in \mathbb Q[\q^{\pm 1/2}]^{\Cl(W)\times \Cl(W)}$ such that
    \begin{itemize}
        \item $N$ (normalization) only has entries from $\mathbb Q$ and moreover it is invertible,
        \item $F$ (Fourier transform) only has entries from $\mathbb Q$ and moreover it is symmetric and
        \item $C$ (character table of character table of $H$) is invertible.
    \end{itemize}
    \item (Detecting support) If $\mathcal O_1,\mathcal O_2\in \Cl(W)$ are two conjugacy classes such that $\psi(\CO_1,\CO_2)\not\equiv 0\pmod{\q-1}$, then there exist elements $w_1\in (\CO_1)_{\min}$ and $w_2\in (\CO_2)_{\min}$ with $\supp(w_1) = \supp(w_2)$.
    \end{enumerate}
\end{theorem}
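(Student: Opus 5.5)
We will construct $\psi$ explicitly through Deligne--Lusztig theory and then verify (1)--(5). Uniqueness is immediate once (1) holds: for each $\CO_2$ and each large $q$ there exist regular semisimple elements whose centralizer is a rational torus of type $\CO_2$. Hence the values $\psi(\CO_1,\CO_2)_{\q=q}$ are fixed for infinitely many $q$, and an element of $A$ is determined by its values at infinitely many prime powers.

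\emph{Construction.} We work in $H_{\mathbb C}=\End(\Ind_B^G 1)$. For $h\in H_{\mathbb C}$ write $\chi_h$ for the class function $g\mapsto \tr(g\,h\mid \Ind_B^G 1)$. A direct computation shows that the orbital integral of $h$ over the class of $s$ equals $\frac{\#B(\mathbb F_q)}{\#C_G(s)}\chi_h(s)$ up to normalization. Decompose $\Ind_B^G1=\bigoplus_{\phi\in\Irr W}\rho_\phi\otimes E_\phi^{q}$, where $E_\phi^q$ is the corresponding $H_{\mathbb C}$-module. This gives $\chi_{T_w}=\sum_\phi \tr(T_w\mid E^q_\phi)\,\rho_\phi$, with the first factor given by the generic character table $C$.

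Next we compute $\rho_\phi(s)$ for $s$ regular semisimple in a torus $\torusO{\CO_2}$. Express $\rho_\phi$ in terms of Deligne--Lusztig characters $R_{T_w}^G(1)$ via Lusztig's Fourier matrix on the family of $\phi$. Note that $\rho_\phi$ projected to uniform functions is $\sum_\psi F_{\phi,\psi}\,|W|^{-1}\sum_w\psi(w)R_{T_w}(1)$. Then use the character formula $R_{T_w}(1)(s)=\#\{x\in W^{F}\text{-transporters}\}$, which is nonzero only for $w\in\CO_2$ and equals $\#C_W(w)$ there. This yields a $q$-independent rational number times $\psi(w_2)$ for $w_2\in\CO_2$. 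Uniform projection suffices because regular semisimple class functions are uniform.

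Assembling these pieces gives the factorization $[\psi]=\q^{-\ell(w_0)}NFC$. Here $N$ records the $\#\CO_2$ and $\#W$ normalizations together with the character table of $W$, which is invertible. The factor $\q^{\ell(w_0)}$ arises from $\#B/\#U$ versus the normalization in (1), and $\frac{\#W}{\#C_G(s)\#\CO_2}$ absorbs $\#C_W(w_2)$. A point to handle carefully is that $E^q_\phi$ involves $q^{1/2}$ (hence $A=\mathbb Q[\q^{\pm1/2}]$) and that Fourier coefficients are rational. Properties (1) and (4) follow from this construction.

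For (2), apply the same computation to $su$ with $u$ unipotent in $C_G(s)^\circ$. By the Deligne--Lusztig character formula, $R_{T_w}(1)(su)=\frac{1}{\#C^\circ(s)}\sum_{x:\,x^{-1}sx\in T_w}Q^{C^\circ(s)}_{xT_wx^{-1}}(u)$. Summing over $u$ and using $\sum_u Q_T^{L}(u)=\#L/\#T\cdot(\ldots)$ (the Green function sum is $|L^F|_{p'}/|T^F|$ times the degree identity) makes the unipotent sum collapse to the count $\#(\torusO{\CO_2}\cap C)/\#\torusO{\CO_2}$.

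For (3), use transitivity: the class $T_{\CO_1}$ for $w\in W_J$ gives the Harish-Chandra induced module. The character of $\Ind_{P_J}^G$ evaluated at a regular semisimple element of torus type $\hat\CO_2$ sums over $M_J$-tori types $\CO_2\subseteq\hat\CO_2$, by the standard induction formula for characters at regular semisimple elements. Equivalently, we can check (3) through (1) at infinitely many $q$, which is cleaner.

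For (5), specialize $\q=1$. Then $T_{\CO_1}$ becomes $w_1$ and the pairing becomes $\sum_\phi\phi(w_1)\ldots$. The cleaner route is via (3) and (2): take $J=\supp(w_1)$ minimal, so $w_1$ is elliptic in $W_J$. We expect (5) to be the main obstacle. The planned approach is to show, by induction on $\#S$ and (3), that $\psi(\CO_1,\CO_2)|_{\q=1}$ is supported on pairs with conjugate supports. This requires proving the elliptic-case vanishing $\psi^{M_J}(\CO_1,\CO_2)\equiv0$ mod $\q-1$ when $\CO_1$ is elliptic and $\CO_2$ is not. The first attempt would be to interpret $\q=1$ orbital integrals as counts of $\mathbb F_q$-points of Deligne--Lusztig-type varieties $\{gB: g^{-1}sg\in BwB\}$. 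For regular semisimple $s$ these are unions of $X_w$ for the torus of $s$ shifted, and their Euler characteristics at $q\to1$ vanish unless $w$ is conjugate into the Weyl group of a Levi containing the torus, using Lusztig's result that $\#X_w(\mathbb F_q)$ at $q=1$ is computed by the Lefschetz fixed-point count.
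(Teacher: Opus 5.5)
Your treatment of uniqueness and of (1), (2), (4) is the paper's Deligne--Lusztig computation seen from the dual side. You evaluate the principal-series unipotent characters $\rho_\phi$ at $s$ through their uniform projections. The paper instead pairs $h$ against the uniform indicator function $\frac{1}{\#\torusO{\CO}(\mathbb F_q)}\sum_\theta\theta(\tilde s)^{-1}R^\theta_{\CO}$ and uses that only $\theta=1$ meets $\mathbb C[G(\mathbb F_q)/B(\mathbb F_q)]$. Checking (3) through (1) for large $q$ is the route of the remark after Lemma~\ref{lem:phicompat}. Two small points. In (2) the identity you need is $\sum_u Q^L_T(u)=\#L(\mathbb F_q)/\#T(\mathbb F_q)$, with no $p'$-part. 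In (4) you never check the symmetry of $F$ that the statement asserts; it comes from Lusztig.

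The genuine gap is (5). You leave it as a plan, and the statement you reduce it to is never proved: $\psi^{M_J}(\CO_1,\CO_2)\equiv 0\pmod{\q-1}$ for $\CO_1$ elliptic and $\CO_2$ non-elliptic in $W_J$. The Euler-characteristic sketch would need two inputs you do not supply: a purity (Tate-type) statement and a twisted Lefschetz formula for $\{gB\mid g^{-1}sg\in BwB\}$. The latter is delicate because the torus type of $s$ changes under extension of $\mathbb F_q$.

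The missing idea is symmetry, and your own formula already gives it. At $\q=1$ the Hecke character table becomes the character table of $W$, so
\[
\frac{\#W}{\#\CO_2}\,\psi(\CO_1,\CO_2)_{\q=1}=\sum_{\phi,\phi'}\phi(w_1)\,F_{\phi\phi'}\,\phi'(w_2)
\]
is symmetric in $(\CO_1,\CO_2)$ because $F$ is. Combine this with a one-sided statement you already have. Take (3) with $J=\supp(w_1)$: its left side is an empty sum whenever $\CO_2\cap W_J=\emptyset$. (Equivalently, a regular semisimple element of $P_J(\mathbb F_q)$ is $U_J(\mathbb F_q)$-conjugate into $M_J(\mathbb F_q)$.) Hence $\psi(\CO_1,\CO_2)=0$ unless $\CO_2$ meets $W_{\supp(w_1)}$, which yields $w_2\in(\CO_2)_{\min}$ with $\supp(w_2)\subseteq\supp(w_1)$.

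By symmetry, $\psi(\CO_2,\CO_1)\not\equiv 0\pmod{\q-1}$ as well. The same argument then gives $w_1'\in(\CO_1)_{\min}$ with $\supp(w_1')\subseteq\supp(w_2)$. Minimal-length elements of one class have supports of equal cardinality (Theorem~\ref{min} with Lemma~\ref{lem:parabolicConjugation}), so $\supp(w_2)=\supp(w_1)$. In particular the elliptic-case vanishing you wanted is immediate, with no geometry. This is exactly how the paper proves (5).
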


\subsection{The Deligne-Lusztig character}
In this section, we assume that $W$ is a finite Weyl group. Then $W$ is the Weyl group of a split finite group of Lie type $G(\mathbb{F}_{q})$. We regard $G(\mathbb{F}_{q})$ as the subgroup of fixed points in $G(\overline{\mathbb{F}}_{q})$ under the Frobenius automorphism $\sigma$. 

It is known that there is a natural bijection between the $G(\mathbb{F}_{q})$-conjugacy classes of $\sigma$-stable maximal tori of $G(\overline{\mathbb{F}}_{q})$ and $Cl(W)$ (see e.g.\ \cite[Application 3.23]{DM91}). This bijection is explicitly given as follows. Let $T$ be a fixed split maximal torus of $G(\mathbb{F}_{q})$. For any $\sigma$-stable maximal torus $T^{\prime}$, we have $T^{\prime}=gTg^{-1}$ for some $g\in G(\overline{\mathbb{F}}_{q})$. The condition that $T^{\prime}=\sigma(T^{\prime})$ implies that $g^{-1}\sigma(g)\in N_{G}(T)$. The type of $T^{\prime}$ is the image of $g^{-1}\sigma(g)$ under the map
$$ N_{G}(T)\longrightarrow N_{G}(T)/T\cong W\longrightarrow Cl(W). $$

For each conjugacy class $\mathcal{O}\in Cl(W)$, choose a $\sigma$-stable maximal torus $\mathcal{T}_{\mathcal{O}}$ of type $\mathcal{O}$. Then any character $\th$ of $\torusO{\CO}(\mathbb F_q)$ gives rise to the Deligne-Lusztig character $R_{\mathcal{O}}^{\theta}:=R_{\mathcal{T}_{\mathcal{O}}}^{\theta}$ of $G(\mathbb{F}_{q})$ in the sense of \cite{DL76}. The function $R_{\mathcal{O}}^{\theta}:G(\mathbb{F}_{q})\rightarrow\mathbb{C}$ is a virtual character. The pairing $\langle R_{\mathcal{O}}^{\theta},R_{\mathcal{O}^{\prime}}^{\theta^{\prime}}\rangle$ is zero unless $\mathcal{O}=\mathcal{O}^{\prime}$ and $\theta$ can be conjugated to $\theta^{\prime}$ through the normalizer of $\mathcal{T}_{\mathcal{O}}$ in $G(\mathbb{F}_{q})$ \cite[Theorem 6.8]{DL76}.

We denote the trivial character of $\torusO{\CO}(\mathbb F_q)$ by $1$. Then $R_{\CO}^1$ may have several irreducible constituents. Those irreducible representations of $G(\mathbb F_q)$ that occur in some $R^1_{\CO}$ are called the \emph{unipotent representations}.

The character $R^1_{\{1\}}$ of the trivial conjugacy class is of special interest. Recall that $T = \torusO{\{1\}}$ is our fixed split maximal torus, and we may choose a $\sigma$-stable Borel subgroup $B\subseteq G$ containing it. According to \cite[Chapter~11]{DM91}, we get the explicit description $R^1_{\{1\}} = \mathbb C[G(\mathbb F_q)/B(\mathbb F_q)]$.

Denote by $H_{\mathbb C}$ the base change of our Hecke algebra along the map $\mathbb Z[\q^{\pm 1}]\to \mathbb C$ which sends $\q$ to $q$. Then we can identify $H_{\mathbb C}$ with the space of $B(\mathbb F_q)$-biinvariant functions from $G(\mathbb F_q)$ to $\mathbb C$ (where multiplication becomes the convolution product).
This leads to an equivalence of categories between $G(\mathbb F_q)$-representations which are generated by their $B(\mathbb F_q)$-fixed points, and the category of $H$-representations. This yields the identity
        \begin{align*}
            \sum_{g\in B(\mathbb F_q) w B(\mathbb F_q)}\tr(g\mid V) = \tr(T_w\mid V^{B(\mathbb F_q)}).
        \end{align*}
for all $G(\mathbb F_q)$-representations $V$ and all $w\in W$. 

Let $C\subseteq G(\mathbb F_q)^{\mathrm{rs}}$ be a union of semisimple conjugacy classes. A standard exercise (cf.\ \cite[Exercise~12.21]{DM91}) shows that the function
\begin{align}
    \sum_{\CO\in \Cl(W)} \frac{\#\CO}{\# W \# \torusO{\CO}(\mathbb F_q)}\sum_{\substack{\theta\in \Irr(\torusO{\CO}(\mathbb F_q))\\ s\in \torusO{\CO}(\mathbb F_q)\cap C}} \theta(s)^{-1} R_{\CO}^\theta : G(\mathbb F_q)\to \mathbb C\label{eq:semisimplePartIndicatorFunction}
\end{align}
is one on all elements $g\in G(\mathbb F_q)$ whose Jordan decomposition $g = su = us$ satisfies $s\in C$, and zero on all other elements.

Note that the centralizer of any regular semisimple element $\tilde s\in G(\mathbb F_q)$ is a torus, hence connected. It follows that any element in $G(\mathbb F_q)$ is $G(\mathbb F_q)$-conjugate to $\tilde s$ (i.e.\ rationally conjugate) if and only if it is $G(\overline{\mathbb F}_q)$-conjugate (i.e.\ geometrically conjugate). In the case where $C$ is the $G(\mathbb F_q)$-conjugacy class of $\tilde s$, there is only one $\CO\in\Cl(W)$ with $\torusO{\CO}(\mathbb F_q)\cap C\neq\emptyset$ (namely, $\mathcal O$ is the type of the maximal torus $C_G(\tilde s)$). Suppose, without loss of generality, that this intersection contains $\tilde s$. For any $\theta\in \Irr(\torusO{\CO}(\mathbb F_q))$, we get
\begin{align*}
    \sum_{s\in \torusO{\CO}(\mathbb F_q)\cap C} \theta(s)^{-1} = \sum_{w\in N_G(\torusO{\CO})(\mathbb F_q)/T(\mathbb F_q)} (\theta\circ \Ad(w))(\tilde s)^{-1}.
\end{align*}
It follows that the formula in \eqref{eq:semisimplePartIndicatorFunction} simplifies to
\begin{align}
    \frac 1{\#\torusO{\CO}(\mathbb F_q)} \sum_{\theta\in \Irr(\torusO{\CO}(\mathbb F_q))} \theta(\tilde s)^{-1} R_{\CO}^\theta.\label{eq:rssIndicatorFunction}
\end{align}

\subsection{The pairing $\varphi$}

\begin{proposition}\label{prop:rsExists}
    Assume that $q$ is sufficiently large (depending on the Coxeter system $(W, S)$). Then each $\sigma$-stable maximal torus of $G(\overline{\mathbb F}_q)$ contains a regular semisimple element of $G(\mathbb F_q)$. 
\end{proposition}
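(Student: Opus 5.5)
We will prove the statement by a counting argument: the non-regular elements of a rational maximal torus are confined to the kernels of finitely many roots, and for $q$ large these kernels are too small to cover the torus. Fix a $\sigma$-stable maximal torus $\mathcal T=\torusO{\CO}$ of type $\CO\in\Cl(W)$, obtained as $gTg^{-1}$ with $g^{-1}\sigma(g)$ lifting some $w\in\CO$. Transporting by $g$, the group $\mathcal T(\mathbb F_q)$ identifies with $T^{w\sigma}=\{t\in T(\overline{\mathbb F}_q)\mid w\sigma(t)w^{-1}=t\}$. The roots of $G$ with respect to $\mathcal T$ correspond to the roots $\alpha\in\Phi$ of $T$.

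The first step is to reduce regularity to a condition on roots. An element $s\in\mathcal T(\mathbb F_q)$ whose centralizer in $G(\overline{\mathbb F}_q)$ is exactly $\mathcal T$ is regular semisimple. It suffices to require that $\alpha(s)\neq 1$ for all roots $\alpha$ (this makes $C_G(s)^\circ=\mathcal T$) and that $s$ is not fixed by any nontrivial element of $W$ acting on $\mathcal T$ (this handles the component group). Both conditions are equivalent to $s\notin \ker(\alpha)$ for the roots and $s\notin T^{v}$ for $1\neq v\in W$. Each $T^v$ with $v\neq 1$ is contained in a proper closed subgroup of $T$: the kernel of the character $\lambda-v\lambda$ for a suitable $\lambda\in X^\ast$ with $v\lambda\neq\lambda$. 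So it suffices to avoid finitely many proper closed subgroups $\ker(\chi)$, with $\chi$ ranging over a finite set $\Xi\subseteq X^\ast\setminus\{0\}$ that depends only on $(W,S)$ and the root datum.

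The second step bounds both sides of the count. Write $r=\dim T$. The order $\#\mathcal T(\mathbb F_q)=\det(q-w^{-1}\mid X_\ast)$ is a monic polynomial of degree $r$ in $q$, hence at least $(q-1)^r$. For each $\chi\in\Xi$, the group $\ker(\chi)\cap T^{w\sigma}$ is the group of rational points of the diagonalizable group $\ker(\chi)$, which is $w\sigma$-stable after intersecting over the orbit, or else we bound it directly. Its identity component $D$ is a torus of dimension at most $r-1$, and its component group has order bounded by a constant $c_\chi$ independent of $q$. Taking rational points therefore gives at most $c_\chi\,(q+1)^{r-1}$ elements. The $(q+1)^{r-1}$ bound on rational points of a possibly nonsplit torus of dimension $r-1$ is the standard bound, since each eigenvalue of Frobenius has absolute value $q$.

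Finally, $(q-1)^r>\sum_{\chi\in\Xi}c_\chi(q+1)^{r-1}$ for $q\gg0$, with the threshold depending only on $(W,S)$ and the root datum. So some $s\in\mathcal T(\mathbb F_q)$ avoids all the kernels and is the desired regular semisimple element. Since there are finitely many types $\CO$, a uniform threshold exists.

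The main obstacle is making the bound on $\#(\ker\chi)^{w\sigma}$ rigorous when $\ker\chi$ itself is not $w\sigma$-stable. To handle this, I would replace $\ker\chi$ by $\bigcap_i (w\sigma)^i\ker\chi$, which is $w\sigma$-stable, still proper, and contains the same rational points. Its dimension is at most $r-1$, and its component group is bounded by a constant depending only on the finitely many characters $(w)^i\chi$ with $w\in W$. After this replacement, the Lang–Weil-style count for diagonalizable groups applies.
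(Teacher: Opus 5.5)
Your proof is correct. It uses the same counting strategy as the paper: $\#\mathcal T(\mathbb F_q)$ grows like $q^r$, while the bad elements lie in finitely many proper diagonalizable subgroups with $O(q^{r-1})$ rational points. The difference is in how you cover the bad locus.

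The paper stratifies singular $s$ by $L=C_G(s)^\circ$. There are at most $2^{\#\Phi}$ possibilities for $L$, each $L$ is automatically $\sigma$-stable because $s$ is rational, and $s\in Z(L)$. So it only needs the bound $\#Z(L)(\mathbb F_q)\le \#W\cdot\#Z(L)^\circ(\mathbb F_q)$, with $Z(L)^\circ$ a rational torus of rank $<r$, and it takes the component bound from a citation to Digne--Michel.

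You instead cover by kernels of individual characters, namely the roots and the $\lambda_v-v\lambda_v$. A single $\ker\chi$ need not be $F$-stable, so you need the extra replacement by $\bigcap_i\ker(w^i\chi)$. In exchange, your component-group bound is an elementary statement about the torsion of $X^\ast/\langle w^i\chi\rangle$ rather than a citation.

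Your version also proves more. The element you produce satisfies $C_G(s)=\mathcal T$, not merely $C_G(s)^\circ=\mathcal T$. This stronger form is what the paper uses right after the proposition, where centralizers of regular semisimple elements are asserted to be connected so that rational and geometric conjugacy coincide. When the derived group is not simply connected, the two notions can genuinely differ.

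One wording fix: $\#\mathcal T(\mathbb F_q)=\prod_i(q-\zeta_i)\ge (q-1)^r$ holds because the eigenvalues $\zeta_i$ of $w$ are roots of unity, not because the polynomial is monic. Monicity alone only gives $\#\mathcal T(\mathbb F_q)\ge \tfrac12 q^r$ for $q\gg 0$, which is what the paper uses and which also suffices.
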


\begin{remark}
Although Proposition \ref{prop:rsExists} requires $q$ to be large, this restriction does not compromise our main theorem. In Sections \ref{sec:6.3} and \ref{sec:6.4}, we promote this geometric pairing over $\mathbb{F}_q$ to a generic pairing over the ring $A = \mathbb{Q}[\q^{\pm 1/2}]$. Because a polynomial identity over $A$ is completely determined by its evaluations at infinitely many prime powers $q$, it suffices to establish the properties of the pairing for $q \gg 0$.
\end{remark}

\begin{proof}
    From \cite[Proposition~3.3.5]{Ca93}, we see that for every conjugacy class $\mathcal O\in \Cl(W)$, there is a monic polynomial $\chi_{\mathcal O}\in \mathbb Z[\q]$ of degree $\mathrm{rk}\,G$ such that $\# T_{\CO}(\mathbb F_q) = \chi_{\mathcal O}(q)$ for all prime powers $q>1$. Thus, for $q$ sufficiently large, we can ensure that $\chi_{\mathcal O}(q)\geq \frac 12 q^r$, where $r$ is the rank of $G$. 

    We note that for each non-regular element $s\in \CT_{\CO}(\mathbb F_q)$, the identity component of its centralizer $L := C_G(s)^\circ$ is connected reductive and contains $\CT_{\CO}$ as a proper subgroup.

Consider the root system of $(L,\mathcal{T}_{\mathcal{O}})$ over $\overline{\mathbb{F}}_{q}$. This is a $\sigma$-stable subsystem of the root system of $(G,\mathcal{T}_{\mathcal{O}})$, and it uniquely determines $L$. Thus, the number of possibilities for $L$ is bounded by $2^{\#\Phi}$, and its connected center $Z(L)^\circ$ is always an $\mathbb{F}_{q}$-rational torus whose rank is strictly less than $r$. Thus, by the above result, we can ensure that $\# Z(L)^\circ(\mathbb F_q) \leq 2 q^{r-1}$ for $q$ sufficiently large. Applying \cite[Proposition~2.3 and Corollary~3.13]{DM91} to any regular element in $Z(L)^\circ(\overline{\mathbb F}_q)$, we see that $\# Z(L)(\mathbb F_q) \leq \# W \# Z(L)^\circ(\mathbb F_q)$.

Under these assumptions, the number of non-regular (i.e., singular) elements in $\mathcal{T}_{\mathcal{O}}(\mathbb{F}_{q})$ is at most
$$ \#\{s\in \mathcal{T}_{\mathcal{O}}(\mathbb{F}_{q}) \mid s \text{ singular}\} \le \sum_{\substack{\mathcal T_{\mathcal O}\subseteq L=\sigma(L)\subseteq G \\ L \text{ reductive}}} \#Z(L)(\mathbb{F}_{q}) \le \# W\cdot 2^{\#\Phi} \cdot 2q^{r-1}. $$
For sufficiently large $q$, this number is strictly less than $\frac{1}{2}q^{r}\le\#\mathcal{T}_{\mathcal{O}}(\mathbb{F}_{q})$, completing the proof. 
\end{proof}

\begin{proposition}\label{prop:HtraceOnRS}
    Let $\CO\in\Cl(W)$ and $s\in \torusO{\CO}(\mathbb F_q)$ be a regular element. Let $\{s\}$ be the $G(\mathbb F_q)$-conjugacy class of $s$. Let $h\in H_{\mathbb C}$. Then
    \begin{align*}
        \sum_{g\in \{s\}} h(g) = \frac 1{\#\torusO{\CO}(\mathbb F_q)} \tr(h\mid (R^1_{\CO})^{B(\mathbb F_q)}).
    \end{align*}
\end{proposition}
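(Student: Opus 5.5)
The plan is to write the sum $\sum_{g\in\{s\}} h(g)$ as a pairing of $h$ with the indicator function of the class $\{s\}$. Then expand that indicator in Deligne--Lusztig characters via \eqref{eq:rssIndicatorFunction}, and finally use the Hecke algebra to detect only the $B(\mathbb F_q)$-invariant parts.

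First, since $s$ is regular semisimple, $\{s\}$ is exactly the set of $g\in G(\mathbb F_q)$ whose semisimple part is conjugate to $s$. Indeed, any unipotent element commuting with $s$ lies in $C_G(s)=\torusO{\CO}$, hence is trivial. So the indicator function $1_{\{s\}}$ equals \eqref{eq:rssIndicatorFunction}, i.e.
\begin{align*}
1_{\{s\}} = \frac 1{\#\torusO{\CO}(\mathbb F_q)}\sum_{\theta\in\Irr(\torusO{\CO}(\mathbb F_q))}\theta(s)^{-1}R^\theta_{\CO}.
\end{align*}
Therefore
\begin{align*}
\sum_{g\in\{s\}}h(g)=\frac 1{\#\torusO{\CO}(\mathbb F_q)}\sum_\theta\theta(s)^{-1}\sum_{g\in G(\mathbb F_q)}h(g)R^\theta_{\CO}(g).
\end{align*}
By linearity it suffices to take $h=T_w$. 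Then $\sum_g T_w(g)R^\theta_\CO(g)=\sum_{g\in BwB}\tr(g\mid V)$, where $V$ is the virtual representation with character $R^\theta_\CO$. By the identity recalled above, this equals $\tr(T_w\mid V^{B(\mathbb F_q)})$.

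The second step is to show that $V^{B(\mathbb F_q)}=0$ (as a virtual module) unless $\theta=1$. Equivalently, $\langle R^\theta_{\CO},R^1_{\{1\}}\rangle=0$ for $\theta\ne 1$, since $R^1_{\{1\}}=\mathbb C[G(\mathbb F_q)/B(\mathbb F_q)]$ and every irreducible representation with nonzero $B$-fixed vectors is a constituent of it.

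Here is how I would prove that vanishing. The constituents of $R^1_{\{1\}}$ all lie in the geometric conjugacy class (Lusztig series) of the pair $(T,1)$. By the disjointness theorem of Deligne--Lusztig \cite{DL76}, $R^\theta_{\CO}$ and $R^1_{\{1\}}$ have no common constituent unless $(\torusO{\CO},\theta)$ is geometrically conjugate to $(T,1)$. This forces $\theta=1$, because geometric conjugacy preserves triviality of the character (it is defined via norm maps, which send the trivial character to the trivial character).

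So more than orthogonality of the characters themselves is needed: the argument requires that no irreducible constituent is shared. I expect this to be the main obstacle, namely invoking the correct form of disjointness. If that citation is awkward, I would instead argue directly. The $B$-invariants of a representation $V$ can be computed through Harish-Chandra restriction to $T$ followed by $T$-invariants, and ${}^*R^G_T R^\theta_{\CO}$ is a combination of characters $\theta\circ\Ad(x)$ for $x$ with $x\torusO{\CO}x^{-1}=T$ (the Mackey formula). Taking $T$-invariants then kills every term with $\theta\neq1$.

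With $\theta=1$ the only surviving term, and $\theta(s)^{-1}=1$, we obtain
\begin{align*}
\sum_{g\in\{s\}}T_w(g)=\frac1{\#\torusO{\CO}(\mathbb F_q)}\tr(T_w\mid (R^1_\CO)^{B(\mathbb F_q)}),
\end{align*}
and linearity in $h$ gives the claim.
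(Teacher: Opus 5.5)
Your main line is correct and is essentially the paper's proof. You expand $1_{\{s\}}$ via \eqref{eq:rssIndicatorFunction}, turn $\sum_g h(g)R^\theta_{\CO}(g)$ into $\tr(h\mid (R^\theta_{\CO})^{B(\mathbb F_q)})$, and kill the terms with $\theta\neq 1$ using Deligne--Lusztig disjointness \cite[Corollary~6.3]{DL76}, exactly as the paper does. However, two of your side remarks are wrong and should be dropped.

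First, the claimed equivalence between ``$V^{B(\mathbb F_q)}=0$ as a virtual module'' and ``$\langle R^\theta_{\CO},R^1_{\{1\}}\rangle=0$'' is false. The inner product is only the virtual \emph{dimension} of $V^{B(\mathbb F_q)}$. By the orthogonality relations it vanishes for every $\theta$, including $\theta=1$, as soon as $\CO\neq\{1\}$. Yet $\tr(T_w\mid (R^1_{\CO})^{B(\mathbb F_q)})$ is generally nonzero. What you need is vanishing in the Grothendieck group of $H_{\mathbb C}$-modules, i.e.\ $\langle R^\theta_{\CO},\rho\rangle=0$ for every irreducible constituent $\rho$ of $R^1_{\{1\}}$. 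That is exactly what disjointness supplies.

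Second, your Mackey fallback has the same defect. Harish-Chandra restriction to $T$ followed by $T$-invariants recovers $V^{B(\mathbb F_q)}$ only as a virtual vector space and forgets the $H_{\mathbb C}$-action. Indeed, for $\CO\neq\{1\}$ no element of $G(\mathbb F_q)$ conjugates $\torusO{\CO}$ into $T$, so the Mackey sum is empty for every $\theta$. The argument would then ``prove'' that the right-hand side of the proposition vanishes even for $\theta=1$, which is false. So disjointness cannot be bypassed this way.

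A minor point: to deduce $\theta=1$ from geometric conjugacy of $(\torusO{\CO},\theta)$ with $(T,1)$, you need that $\theta\circ N_{\mathbb F_{q^n}/\mathbb F_q}=1$ forces $\theta=1$. This is surjectivity of the norm map $\torusO{\CO}(\mathbb F_{q^n})\to\torusO{\CO}(\mathbb F_q)$, which follows from Lang's theorem. The fact that norms pull the trivial character back to the trivial character is the wrong direction.
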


\begin{remark}
    In particular, the $h$-average on $\{s\}$ is independent of the particular choice of $s$ or $T_{\CO}$. Moreover, it only depends on the image of $h$ in $\overline H$.
\end{remark}
\begin{proof}
    Using \eqref{eq:rssIndicatorFunction} we can write the left sum as
    \begin{align*}
    \frac 1{\#\torusO{\CO}(\mathbb F_q)} \sum_{\theta\in \Irr(\torusO{\CO}(\mathbb F_q))} \theta(\tilde s)^{-1} \sum_{g\in G(\mathbb F_q)}h(g)R_{\CO}^\theta(g)
    \end{align*}
    By our initial discussion of the algebra $H_{\mathbb C}$, the sum
    \begin{align*}
        \sum_{g\in G(\mathbb F_q)} R^\theta_{\CO}(g) h(g)
    \end{align*}
    is just $\tr(h\mid (R^\theta_{\CO})^{B(\mathbb F_q)})$. We note that the irreducible constituents of the $H$-representation $(R^\theta_{T'})^{B(\mathbb F_q)})$ corresponds to the irreducible $G(\mathbb F_q)$-representations that occur in both $R^1_{\{1\}}$ and $R^\theta_{T'}$. By \cite[Corollary~6.3]{DL76}, this set is empty unless $\theta=1$. So our above sum simplifies to
    \begin{align*}
    \frac 1{\#\torusO{\CO}(\mathbb F_q)} \tr(h\mid (R^1_{\CO})^{B(\mathbb F_q)}).
    \end{align*}
    This finishes the proof.
\end{proof}
\begin{definition}\label{def:phi}
    We define a pairing $\varphi^G : \Cl(W)\times\Cl(W)\to\mathbb C$ using the formula
    \begin{align*}
        \varphi^G(\mathcal O_1,\mathcal O_2) =  \frac{\#\CO_2}{\#W}q^{-\ell(w_0)}\tr(T_{\mathcal O_1}\mid (R^1_{\mathcal O_2})).
    \end{align*}
    Here, $T_{\CO_1}\in \overline H$ is the canonical basis element of the cocentre of $H$ associated with $\CO_1$. If there is no ambiguity, we will write $\varphi = \varphi^G$.
\end{definition}

    If $\torusO{\mathcal O_2}(\mathbb F_q)$ contains a regular element $s$, then by Proposition \ref{prop:HtraceOnRS},     
    \begin{equation}\label{eq:phi}
       \varphi(\mathcal O_1,\mathcal O_2)= (\# \torusO{O_2}(\mathbb F_q)) \frac{\#\CO_2}{\#W}q^{-\ell(w_0)}\#(B(\mathbb F_q) w B(\mathbb F_q)\cap \{s\}).
    \end{equation}
    
    Denoting the free $\mathbb C$-vector space over $\Cl(W)$ by $\mathbb C[\Cl(W)]$, we extend $\varphi$ to a $\mathbb C$-linear pairing $\mathbb C[\Cl(W)]\otimes \mathbb C[\Cl(W)]\to\mathbb C$. Let $\Cl(G(\mathbb F_q)^{\mathrm{rs}})$ be the set of regular-semisimple conjugacy classes of $G(\mathbb F_q)$. We get a natural map
\begin{align*}
    \mathbb C[\Cl(W)]\cong \overline H_{\mathbb C}\to \mathbb C[\Cl(G(\mathbb F_q)^{\mathrm{rs}})]^{\ast},
\end{align*}
sending $h\in H_{\mathbb C}$ and $\{s\}\in \Cl(G(\mathbb F_q)^{\mathrm{rs}})$ to $\sum_{g\in \{s\}} h(g)$. This map is computed explicitly using the pairing $\varphi$. In particular, the matrix defining $\varphi$ is non-degenerate if and only if the cocenter $\overline H_{\mathbb C}$ can be separated using regular semisimple conjugacy classes.

\subsection{Compatibility of $\varphi$ with Levi Subgroups}

The chosen normalization of $\varphi$ gives us compatibility with Levi subgroups.
\begin{lemma}\label{lem:phicompat}
    Let $J\subseteq S$ be any subset. Let $\CO_1\in \Cl(W_J)$ and $\hat{\CO}_1\in\Cl(W_J)$ with $\CO_1\subseteq \hat{\CO}_1$. Then for any $\hat{\CO}_2\in\Cl(W)$,
    \begin{align*}
        \sum_{\substack{\CO_2\in\Cl(W_J)\\\CO_2\subseteq \hat{\CO}_2}}\varphi^{M_J}(\mathcal O_1,\mathcal O_2) = \varphi^G(\hat{\mathcal O}_1,\hat{\mathcal O}_2).
    \end{align*}
\end{lemma}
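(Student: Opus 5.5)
The plan is to evaluate both sides through the Deligne--Lusztig description of $R^1_{\CO}$. Then I compare them using Harish-Chandra induction, i.e.\ transitivity of Lusztig induction for a torus contained in a split Levi. (I read the hypothesis as $\hat{\CO}_1\in\Cl(W)$.)

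First, recall that the character of $R^1_{\CO_2}$ restricted to the Hecke algebra is a class function on $\overline H$. Concretely, $\tr(T_w\mid (R^1_{\CO_2})^{B})$ is a known trace. For $q$ large, the cleanest route is formula \eqref{eq:phi}. By Proposition~\ref{prop:rsExists}, we may choose a regular $s\in\torusO{\CO_2}(\mathbb F_q)$ with torus $\torusO{\CO_2}\subseteq M_J$; such an $s$ is then also $M_J$-regular. Since both sides are polynomial in $q$ once expressed via $\psi$, it suffices to treat $q\gg0$.

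On the $G$-side, I pick $w\in(\hat\CO_1)_{\min}$. I want to choose $w$ inside $W_J$ of minimal length in $W_J$ and in $W$. This is not automatic, but I can avoid it by working with $T_{\CO_1}$ read in $\overline H$: the image of $\overline H_J\to\overline H$ sends $T_{\CO_1}$ to $T_{\hat\CO_1}$ modulo lower terms. The better route is to avoid minimal-length issues entirely. I will prove the identity for every $h\in H_J$ in the form
\begin{align*}
\sum_{\CO_2\subseteq\hat\CO_2}\tfrac{\#\CO_2}{\#W_J}q^{-\ell(w_{0,J})}\tr(h\mid R^{1,M_J}_{\CO_2}) = \tfrac{\#\hat\CO_2}{\#W}q^{-\ell(w_0)}\tr(h\mid R^1_{\hat\CO_2}),
\end{align*}
where on the right $h\in H_J\subseteq H$. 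The lemma then follows by taking $h=T_w$ with $w\in(\CO_1)_{\min}$. Doing so requires knowing that $T_w$ maps to $T_{\hat\CO_1}$ in $\overline H$, i.e.\ that a $W_J$-minimal element is $W$-minimal. That holds by Lemma~\ref{lem:parabolicConjugation} together with Theorem~\ref{min}, or alternatively one restricts to this case.

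To prove the displayed identity, I use $R^G_{T}(1)=R^G_{L_J}(R^{M_J}_T(1))$, with $R^G_{L_J}$ being Harish-Chandra induction $\Ind_{P_J}^G$. Taking $B$-invariants, Harish-Chandra induction corresponds to $H\otimes_{H_J}-$. Mackey/Frobenius for the trace of $h\in H_J$ on an induced module is exactly the blueprint formula in Section~\ref{sec:parabolic_induction}, a sum over $v_1\in W^J$. Rather than doing that computation, I would instead compute through \eqref{eq:phi}. The left side counts points of $B_J wB_J\cap\{s\}_{M_J}$ over the $M_J$-classes of tori. The right side counts $BwB\cap\{s\}_G$. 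Since $BwB\subseteq P_J$ for $w\in W_J$, I intersect $\{s\}_G$ with $P_J$, project to $M_J$ (regular semisimple elements in $P_J$ are conjugate under $U_J$ to elements of $M_J$), and count fibres of size $\#U_J=q^{\ell(w_0)-\ell(w_{0,J})}$. Then I decompose $\{s\}_G\cap M_J$ into $M_J$-classes. These correspond to the $M_J$-conjugacy classes of tori $\CO_2\subseteq\hat\CO_2$, weighted by the number of $M_J$-classes of regular elements in each. That count is $\#N_G(T)(\mathbb F_q)/\#N_{M_J}(T)(\mathbb F_q)$-type ratios, i.e.\ $\#C_W(w_2)$ versus $\#C_{W_J}(w_2)$, which is exactly what converts $\#\CO_2/\#W_J$ into $\#\hat\CO_2/\#W$.

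The main obstacle is this bookkeeping of weights. I must check carefully that the number of $M_J(\mathbb F_q)$-classes in $\{s\}_G\cap M_J$ lying on tori of type $\CO_2$, times the torus-size factors, reproduces $\frac{\#\CO_2}{\#W_J}$ versus $\frac{\#\hat\CO_2}{\#W}$. I would verify it using $\#\{s\}_G=\#G(\mathbb F_q)/\#\torusO{}(\mathbb F_q)$ and the orbit count $\#C_W(w)/\#C_{W_J}(w)$.
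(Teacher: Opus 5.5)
Your argument is correct, but the route you settle on is not the paper's main proof; it is the elementary alternative the paper sketches in the Remark after the lemma.

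\textbf{The paper's route.} It works for every $q$. Using the bijection $U_J(\mathbb F_q)\times (M\cap B)(\mathbb F_q)w(M\cap B)(\mathbb F_q)\to B(\mathbb F_q)wB(\mathbb F_q)$, it shows $\tr(T^M_w\mid ({}^\ast R^G_M V)^{B(\mathbb F_q)})=q^{-(\ell(w_0)-\ell(w_{0,J}))}\tr(T_w\mid V)$ for Harish-Chandra restriction. The Mackey formula then gives ${}^\ast R^G_M R^1_{\hat\CO_2}=\sum_{\CO_2\subseteq\hat\CO_2}\frac{\#W\,\#\CO_2}{\#\hat\CO_2\,\#W_J}R^1_{\CO_2,M}$. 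Its coefficient is exactly your ratio $\#C_W(w_2)/\#C_{W_J}(w_2)$.

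\textbf{Your route.} You count points on a $G$-regular class $\{s\}$ via \eqref{eq:phi}. You use $\{s\}\cap BwB=(\{s\}\cap B_MwB_M)U_J$, and the fact that $\{s\}\cap M_J$ consists of $\frac{\#W\,\#\CO_2}{\#W_J\,\#\hat\CO_2}$ regular $M_J$-classes of each type $\CO_2\subseteq\hat\CO_2$. This bookkeeping does close up, so the weights you flagged as the main obstacle come out right.

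\textbf{Trade-offs.} Your version avoids the general Mackey formula. However, it needs Proposition~\ref{prop:rsExists}, so it only proves the identity for $q\gg0$. To get every $q$ you must invoke that $\varphi$ is the specialization of the generic pairing $\psi$ of Section~\ref{sec:6.4}. That rests on Lusztig's $q$-independent multiplicities, which is a heavier input than the paper's direct proof needs. For the lemma's actual use in Theorem~\ref{thm:lusztigPairingOverview}(3), infinitely many $q$ suffice, so your version is enough there.

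\textbf{Two points to make explicit.}
First, the fibre over $m\in\{s\}\cap B_MwB_M$ is the whole coset $mU_J(\mathbb F_q)$. This holds because $C_{U_J}(m)=1$ for $G$-regular $m$, so $v\mapsto vmv^{-1}m^{-1}$ is a bijection of $U_J(\mathbb F_q)$.
Second, the fact that a $W_J$-minimal $w$ is $W$-minimal (which the paper uses without comment) follows from Theorem~\ref{min}(1) alone; Lemma~\ref{lem:parabolicConjugation} is not needed. For $w\in W_J$ and $s\notin J$, either $sws=w$ or $\ell(sws)=\ell(w)+2$. Hence the length-nonincreasing cyclic-shift sequence never leaves the $W_J$-class of $w$.
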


\begin{proof}
    Let $w\in (\CO_1)_{\min}\subseteq (\hat{\CO}_1)_{\min}$ and set $M= M_J$.
    We wish to compare the actions of $T_w$ on the $G(\mathbb F_q)$-representation $R^1_{\CO_2}$ and its restriction to $M_J(\mathbb F_q)$. For any $G(\mathbb F_q)$-representation $V$, let ${}^\ast R^G_M V$ be the \emph{Harish-Chandra restriction}. It is a $M(\mathbb F_q)$-representation and the trace of $g\in M(\mathbb F_q)$ is given by (see \cite[Example 4.6 (iii)]{DM91})
    \begin{align*}
        \mathrm{tr}(g\mid {}^\ast R^G_M V) = \frac 1{\# U_J(\mathbb F_q)} \sum_{u\in U_J(\mathbb F_q)} \mathrm{tr}(gu\mid V).
    \end{align*}
The multiplication induces a bijection $U_J(\BF_q) \times (M\cap B)(\mathbb F_q) w (M\cap B)(\mathbb F_q) \to B(\mathbb F_q) w B(\mathbb F_q)$. Let $T^M_w \in H_M$ and $T^G_w \in H$ be the standard element associated with the element $w$ in the Hecke algebra $H_M$ and $H$ respectively. It follows that
    \begin{align*}
        \mathrm{tr}(T^M_w\mid ({}^\ast R^G_M V)^{B(\mathbb F_q)})=& \sum_{g\in (M\cap B)(\mathbb F_q) w (M\cap B)(\mathbb F_q)} \mathrm{tr}(g\mid {}^\ast R^G_M V)
        \\=&\frac 1 {\# U_J(\mathbb F_q)} \sum_{g\in B(\mathbb F_q) w B(\mathbb F_q)} \mathrm{tr}(g\mid V)
        \\ =& q^{-(\ell(w_0) - \ell(w_{0,J}))} \mathrm{tr}(T_w\mid V).
    \end{align*}

Now we specialize to $V = R^1_{\hat{\CO}_2}$. By the Mackey formula (see \cite[Theorem~11.13]{DM91}), we have 
    \begin{align*}
        {}^\ast R^G_M R^1_{\hat{\CO}_2} = \sum_{\substack{v\in M(\mathbb F_q)\backslash G(\mathbb F_q) / \torusO{\hat{\CO}_2}(\mathbb F_q)\\ v \torusO{\hat{\CO}_2} v^{-1}\subseteq M}} R^1_{v \torusO{\hat{\CO}_2}v^{-1},M},
    \end{align*}
    where $R^1_{T',M}$ denotes the Deligne-Lusztig character of $M$ with respect to the $\mathbb F_q$-rational torus $T'\subseteq M$.

    Given any $v\in G(\mathbb F_q)$ with $T' := v \torusO{\hat{\CO}_2}v^{-1}\subseteq M$, the torus $T'$ is an $\mathbb F_q$-rational maximal torus of $M$, and its type $\CO_2\in \Cl(W_J)$ necessarily satisfies $\CO_2\subseteq \hat{\CO}_2$.

Let now $\CO_2\in \Cl(W_J)$ with $\CO_2\subseteq \hat{\CO}_2$ and $\torusO{\CO_2}\subseteq M$ be a $\mathbb F_q$-rational maximal torus of type $\CO_2$. Then $\torusO{\CO_2}$ is $G(\mathbb F_q)$-conjugate to $\torusO{\hat{\CO}_2}$, and we get
    \begin{align*}
        &\#\{v\in M(\mathbb F_q)\backslash G(\mathbb F_q)/\torusO{\hat{\CO}_2}(\mathbb F_q)\mid v \torusO{\hat{\CO}_2} v^{-1} \subseteq M\text{ is of type }\CO_2\}
        \\&=\#\{v\in M(\mathbb F_q)\backslash G(\mathbb F_q)/\torusO{\CO_2}(\mathbb F_q)\mid v \torusO{\CO_2} v^{-1} \subseteq M\text{ is of type }\CO_2\}
        \\&=\frac{\#\{v\in G(\mathbb F_q)/\torusO{\CO_2}(\mathbb F_q)\mid v \torusO{\CO_2} v^{-1} = \torusO{\CO_2}\}}{\#\{v\in M(\mathbb F_q)/\torusO{\CO_2}(\mathbb F_q)\mid v \torusO{\CO_2} v^{-1} = \torusO{\CO_2}\}}.
    \end{align*}
    
    In this final expression, the computation of denominator and numerator use identical steps. For the numerator, we use \cite[Corollary~3.13, Proof of Corollary~11.16]{DM91} to compute
    \begin{align*}
        \#\{v\in G(\mathbb F_q)/\torusO{\CO_2}(\mathbb F_q)\mid v \torusO{\CO_2} v^{-1} = \torusO{\CO_2}\}
        &=\#(N_G(\torusO{\CO_2}(\mathbb F_q))/\torusO{\CO_2}(\mathbb F_q))
        \\&=\#((N_G(\torusO{\CO_2}) / \torusO{\CO_2})(\mathbb F_q))
        \\&=\#C_W(w_{\CO_2}) = \frac{\#W}{\#\hat{\CO}_2}.
    \end{align*}
    Here, $w_{\CO_2}$ denotes any representative of $\CO_2$, and $C_W(w_{\CO_2})$ is its centralizer in $W$.

    We summarize that
    \begin{align*}
        {}^\ast R^G_M R^1_{\hat{\CO}_2} = \sum_{\substack{\CO_2\in \Cl(W_J)\\\CO_2\subseteq \hat{\CO}_2}}\frac{\# W}{\#\hat{\CO}_2}\frac{\#\CO_2}{\# W_J}R^1_{\CO_2,M}.
    \end{align*}
By definition, 
    \begin{gather*}
        q^{-\ell(w_0) + \ell(w_{0,J})} \mathrm{tr}(T_w\mid R^1_{\hat{\CO}_2})=q^{\ell(w_{0,J})} \frac{\# W}{\# \hat{\CO_2}} \varphi(\hat{\CO}_1, \hat{\CO}_2), \\
        \sum_{\substack{\CO_2\in \Cl(W_J)\\\CO_2\subseteq \hat{\CO}_2}} \frac{\# W}{\#\hat{\CO}_2}\frac{\#\CO_2}{\# W_J}\mathrm{tr}(T^M_w\mid R^1_{\CO_2,M})=\sum_{\substack{\CO_2\in \Cl(W_J)\\\CO_2\subseteq \hat{\CO}_2}} \frac{\# W}{\#\hat{\CO}_2}q^{\ell(w_{0,J})} \varphi(\CO_1, \CO_2).
    \end{gather*}
The lemma is proved. 
\end{proof}

\begin{remark}
    In the case where $T_{\hat{\CO}_2}(\mathbb F_q)$ contains a regular element,  Lemma~\ref{lem:phicompat} follows from \eqref{eq:phi} and more elementary arguments, i.e.\ which don't use the general version of the Mackey formula: Note that for every regular semisimple conjugacy class $\{s\}\subseteq G(\mathbb F_q)$, we get
    \begin{align*}
        \{s\}\cap B(\mathbb F_q) w B(\mathbb F_q) = (\{s\}\cap (B\cap M_J)(\mathbb F_q) w (B\cap M_J)(\mathbb F_q)) U_J(\mathbb F_q).
    \end{align*}
    Now it remains to study the intersection $\{s\}\cap M_J(\mathbb F_q)$, which is a union of regular semisimple $M_J(\mathbb F_q)$-conjugacy classes. If $\CO_2\in \Cl(W_J)$ is any class, then
    \begin{align*}
    &\#\{\{s'\}\subseteq M_J(\mathbb F_q)\text{ rss class}\mid \{s'\}\subseteq \{s\}\text{ and }\mathrm{type}(C_{M_J}(s')) = \CO_2\}
    \\&=\#\{v\in M_J(\mathbb F_q) \backslash G(\mathbb F_q) / C_G(s')(\mathbb F_q)\mid\\&\qquad\qquad vs'v^{-1}\in M_J(\mathbb F_q)\text{ and }\mathrm{type}(C_{M_J}(v s' v^{-1})) = \CO_2\}.
    \end{align*}
    The condition $v s' v^{-1}\in M_J(\mathbb F_q)$ is equivalent to $v C_J(s') v^{-1}\subseteq M$, and if it is true, then $v C_J(s') v^{-1}$ is equal to $C_{M_J}(v s' v^{-1})$. So if we set $T' = C_J(s')$, then
    \begin{align*}
    &\#\{\{s'\}\subseteq M_J(\mathbb F_q)\text{ rss class}\mid \{s'\}\subseteq \{s\}\text{ and }\mathrm{type}(C_{M_J}(s')) = \CO_2\}
    \\&=\{v\in M_J(\mathbb F_q) \backslash G(\mathbb F_q) / T'(\mathbb F_q)\mid v T' v^{-1}\subseteq M\text{ is of type }\CO_2\}.
    \end{align*}
    This cardinality was computed in the proof of Lemma~\ref{lem:phicompat}. If we set $\hat{\CO}_2 = \mathrm{type}(T')$, then this cardinality is equal to
    \begin{align*}
        \begin{cases} \frac{\# W \# \CO_2}{\# W_J \# \hat{\CO}_2},&\CO_2\subseteq \hat{\CO}_2,\\
        0,&\CO_2\not\subseteq \hat{\CO}_2
        \end{cases}
    \end{align*}
    This gives an alternative, elementary proof of Lemma~\ref{lem:phicompat} for sufficiently large $q$.
\end{remark}

\subsection{Evaluation of the pairing}\label{sec:6.3}\label{subsec:evaluation-phi}
We give a general recipe for evaluating the pairing  $\varphi$. Our main goal is to determine its kernel (which will often be trivial).

By Tits' deformation theorem, given any algebraically closed field $k$ and a ring homomorphism $\BZ[\q^{\pm 1}]\to k$, the base change $H_k$ is isomorphic to the group algebra $k[W]$. In particular, $H_k$ is semisimple and we can express the values of irreducible characters on the standard basis elements of $\overline H_k$ as formulas involving the image of $\q$ in $k$. Over types $E_7$ and $E_8$, these formulas involve taking square roots of the image of $\q$ in $k$. Therefore, when studying the character table of $H$, we generally consider the base change $H_A$ of $H$ to $A$, where $A = \mathbb Q[\q^{\pm 1/2}]\supseteq \BZ[\q^{\pm 1}]$.

As in \cite[Section~3.3]{Lu-Characters}, every $\mathbb Q[W]$-module $E$ lifts to a canonical $H_A$-module $E(\q)$ whose specialization under $\q=1$ recovers $E$. Now the base change according to $A\to \mathbb C,~\q^{1/2}\mapsto \sqrt q$ turns $E(\q)$ into a $H_{\mathbb C}$-module, which we denote by $E_{\mathbb C}$. This gives an explicit bijection between irreducible $\mathbb Q[W]$-modules and irreducible $H_{\mathbb C}$-modules.

Consequently, for any virtual $H_{\mathbb C}$-representation $V$ and every $h\in H_{\mathbb C}$, we have
\begin{align*}
    \tr(h\mid V) = \sum_{E\in\Irr(\mathbb Q[W])} \tr(h\mid E_{\mathbb C}) [V: E_{\mathbb C}],
\end{align*} where $[V: E_{\mathbb C}]$ is the multiplicity of $E_{\mathbb C}$ in $V$. 

To compute these multiplicities for the Deligne-Lusztig characters, we follow Lusztig's parametrization of unipotent characters \cite{Lu-Characters}. For each $E\in \operatorname{Irr}(\mathbb{Q}[W])$, we define the \emph{almost character} $R_E$ as the $\mathbb{Q}$-linear combination of $G(\mathbb F_q)$-characters
$$ R_E := \sum_{\mathcal{O}\in Cl(W)} \operatorname{tr}(w_{\mathcal{O}}\mid E) \frac{\#\mathcal{O}}{\#W} R^1_{\mathcal{O}}, $$
where $w_{\mathcal{O}} \in \mathcal{O}$ is any representative. The orthogonality relations for the character table of $W$ give the inversion formula
$$ R_{\mathcal{O}}^1 = \sum_{E\in \Irr(\mathbb{Q}[W])} \tr(w_{\mathcal{O}}\mid E) R_E. $$

It follows that
\begin{align*}
\frac{\#W}{\#\CO_2}q^{\ell(w_0)}\varphi(\CO_1, \CO_2) =& \tr(T_{\CO_1}\mid R_{\CO_2}^1)\\ = &\sum_{E\in \Irr(\mathbb Q[W])} \tr(w_{\CO_2}\mid E) \tr(T_{\CO_1}\mid R_E)
\\=&\sum_{E, E'\in \Irr(\mathbb Q[W])} \tr(w_{\CO_2}\mid E) \tr(T_{\CO_1}\mid E'_{\mathbb C}) [R_E: E'_{\mathbb C}].
\end{align*}

The computation of these multiplicities was established by Lusztig in \cite{Lu-Characters}. He partitions $\operatorname{Irr}(\mathbb{Q}[W])$ into \emph{families}, which correspond precisely to the two-sided cells of $W$ defined via Kazhdan-Lusztig theory (or equivalently, Joseph's primitive ideals). The disjointness theorem \cite[Theorem 6.17]{Lu-Characters} asserts that given $E, E'\in \operatorname{Irr}(\mathbb{Q}[W])$, the virtual representations $R_E$ and $R_{E'}$ share an irreducible constituent if and only if $E$ and $E'$ belong to the same family.

To each family $\mathfrak c$, Lusztig associates a finite group $\mathcal G_{\mathfrak c}$ (often an elementary abelian 2‑group, or a small symmetric group in exceptional types). Then the irreducible representations $E\in \mathfrak c$ are parametrized by a finite set $\mathcal M(\mathcal G_{\mathfrak c})$ built from conjugacy data of $\mathcal G_{\mathfrak c}$.

By \cite[Proposition 3.9 and Theorem 4.23]{Lu-Characters}, the multiplicity for $E, E' \in \mathfrak{c}$ is governed by a Fourier transform matrix on $\mathcal{M}(\mathcal{G}_{\mathfrak{c}})$. Explicitly,
$$ [R_E : E'_{\mathbb{C}}] = \Delta(\bar{x}_\rho)\{\bar{x}_{\rho}, x_E\}, $$
where $\{\cdot,\cdot\}$ denotes the Fourier pairing on $\mathcal{M}(\mathcal{G}_{\mathfrak{c}})$, and $\Delta : \mathcal{M}(\mathcal{G}_{\mathfrak{c}})\to \{\pm 1\}$ is a highly constrained sign function introduced by Lusztig. This function $\Delta$ is identically $1$ unless $W$ contains a parabolic subgroup of type $E_7$, in which case it is explicitly known. The element $x_E \in \mathcal{M}(\mathcal{G}_{\mathfrak{c}})$ parametrizes the representation $E$. By \cite[Proposition 12.6]{Lu-Characters}, the element $\bar{x}_{\rho}$ (for $\rho = \rho_{E'}$) parametrizing the dual character is exactly $x_{E'}$.

Hence, evaluating our pairing reduces to understanding the kernel of the block-diagonal matrix 
$$ (\{x_{E'}, x_{E}\})_{E, E'\in\operatorname{Irr}(\mathbb{Q}[W])}\in \mathbb{Q}^{\operatorname{Irr}(\mathbb{Q}[W])\times \operatorname{Irr}(\mathbb{Q}[W])}, $$
which is completely independent of $q$. Here, we set $\{x_{E'}, x_E\}=0$ if $E$ and $E'$ belong to different families.

\subsection{Generalized pairing and specialization at $\q=1$}\label{sec:6.4}
Using the framework established above, we can define a generic version of our pairing over the ring $A = \mathbb{Q}[\q^{\pm 1/2}]$ that treats all possible choices of base fields simultaneously.

\begin{definition} \label{def:psi_generic}
Let $A[Cl(W)]$ be the free $A$-module with basis $Cl(W)$. We define the $A$-bilinear pairing
$$ \psi: A[Cl(W)] \otimes_A A[Cl(W)] \longrightarrow A $$
by the formula
$$ \psi(\mathcal{O}_1 \otimes \mathcal{O}_2) := \frac{\#\mathcal{O}_2}{\#W} q^{-l(w_0)} \sum_{E,E'\in \operatorname{Irr}(\mathbb{Q}[W])} \operatorname{tr}(w_{\mathcal{O}_2}\mid E) \operatorname{tr}(T_{\mathcal{O}_1}\mid E'(q)) \{x_{E'}, x_E\} \Delta(x_{E'}). $$
Let $\psi_1: \mathbb{Q}[Cl(W)] \otimes_{\mathbb{Q}} \mathbb{Q}[Cl(W)] \to \mathbb{Q}$ denote its specialization at $q=1$.
\end{definition}

By definition, the specialization of $\psi$ under $A\xrightarrow{\q^{1/2}\mapsto \sqrt{q}}\mathbb C$ equals $\varphi$.

We define the (left) kernel of the generic pairing $\psi$ as the $A$-submodule
$$ K := \{v \in A[Cl(W)] \mid \psi(v \otimes \mathcal{O}) = 0 \text{ for all } \mathcal{O} \in Cl(W)\}, $$
and we similarly denote the kernel of the specialized pairing $\psi_1$ by $K_1 \subseteq \mathbb{Q}[Cl(W)]$. The structural properties of these pairings, and in particular the dimensions of their kernels, are completely governed by the Fourier transform matrix.

\begin{proposition} \label{prop:lusztigPairingConsequences}
Let $M$ be the square matrix indexed by $\operatorname{Irr}(\mathbb{Q}[W])$ with entries given by Lusztig's Fourier pairing $\{x_{E'}, x_E\}$. The kernels $K$ and $K_1$ are determined by the kernel of $M$:
\begin{enumerate}
    \item If $M$ is invertible, then $K = 0$ and $K_1 = 0$ (i.e., the pairings are non-degenerate).
    \item If $\ker(M)$ is one-dimensional, generated by the difference of basis vectors $[E_1] - [E_2]$, then $K$ and $K_1$ are also one-dimensional. They are spanned by the element whose $\q=1$ specialization has the form
        \begin{align*}
            \sum_{\mathcal O\in \Cl(W)} (\tr(w_{\CO}\mid E_1) - \tr(w_{\CO}\mid E_2))\mathbb Q^{\ast} \CO\in \mathbb Q[\Cl(W)].
        \end{align*}
\end{enumerate}
\end{proposition}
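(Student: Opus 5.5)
The plan is to write $\psi$ as a product of three square matrices indexed by $\Cl(W)$ and $\Irr(\mathbb Q[W])$, and then read off the kernel from the Fourier factor. By Definition~\ref{def:psi_generic},
\begin{align*}
\psi(\CO_1\otimes\CO_2) = \q^{-\ell(w_0)}\sum_{E,E'} X(\CO_2,E)\,M(E',E)\,\Delta(x_{E'})\,C(\CO_1,E').
\end{align*}
Here $X(\CO_2,E) = \frac{\#\CO_2}{\#W}\tr(w_{\CO_2}\mid E)$ and $C(\CO_1,E') = \tr(T_{\CO_1}\mid E'(\q))$. So the matrix of $\psi$ (rows $\CO_1$, columns $\CO_2$) is $\q^{-\ell(w_0)}\, C\, D M\, X^{t}$, with $D = \mathrm{diag}(\Delta(x_{E'}))$.

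First I check that the outer factors are invertible.
\begin{itemize}
\item $X$ is the character table of $W$ weighted by the nonzero class sizes. By the orthogonality relations it is invertible over $\mathbb Q$.
\item $C$ is the character table of $H_A$ on the basis $T_{\CO}$. It is invertible over $\operatorname{Frac}(A)$, because its specialization at $\q=1$ is the character table of $W$, whose determinant is a nonzero rational number.
\item Since $\det C\in A$ specializes to a nonzero rational, $\det C$ is nonzero in $A$. So $C$ is injective on $A$-modules.
\item $D$ is a signed identity matrix.
\end{itemize}
The left kernel $K$ consists of $v$ with $v^{t} C D M X^{t}=0$. Since $X$ is invertible, this is equivalent to $(C^{t}v)\in\ker(M^{t})$, using that $D$ is invertible. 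The Fourier matrix $M$ is symmetric, as in Theorem~\ref{thm:lusztigPairingOverview}(4). Hence $K = \{v : D\,C^{t}v \in \ker M\otimes A\}$, up to the harmless factor $D$.

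For part (1), suppose $M$ is invertible. Then $C^{t}v=0$, and since $C$ is injective over the domain $A$, this forces $v=0$. At $\q=1$ the same argument works with $C_1$ equal to the character table of $W$, which is invertible, so $K_1=0$.

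For part (2), let $\ker M=\mathbb Q([E_1]-[E_2])$. Then $K=\{v\in A[\Cl(W)] : C^{t}v\in A\cdot D([E_1]-[E_2])\}$. This is a rank-one module over $\operatorname{Frac}(A)$, namely $v=(C^{t})^{-1}D([E_1]-[E_2])$ up to scaling, so $K$ has rank one after clearing denominators. At $\q=1$ the same argument gives $K_1$ spanned by $(C_1^{t})^{-1}([E_1]-[E_2])$, up to the signs from $D$. Column orthogonality for the character table of $W$ gives
\begin{align*}
(C_1^{t})^{-1}[E] = \sum_{\CO}\frac{\#\CO}{\#W}\tr(w_{\CO}\mid E)\,\CO.
\end{align*}
Together with the sign information this yields the displayed element, with coefficients $\tr(w_{\CO}\mid E_1)-\tr(w_{\CO}\mid E_2)$ scaled by nonzero rationals. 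This accounts for the $\mathbb Q^{\ast}$ in the statement.

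The main obstacle is linking the generic kernel $K$ to $K_1$. I must show that a generator of $K$, suitably normalized (primitive, i.e.\ its content cleared in the PID $\mathbb Q[\q^{\pm1/2}]$), specializes to a nonzero element of $K_1$ and spans it.

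First, specialization of $K$ lands in $K_1$, since $\psi$ specializes to $\psi_1$.

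Second, I need $\dim K_1=1$ and not larger. This holds because $\operatorname{rank}\psi_1=\operatorname{rank}M$, by the invertibility of $C_1$ and $X$ shown above.

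Third, the image of $K$ must be nonzero. Writing $v=\mathrm{adj}(C^{t})D([E_1]-[E_2])$ and dividing by its content gives a primitive vector. A primitive vector over $\mathbb Q[\q^{\pm1/2}]$ can still vanish at $\q=1$ only if all entries are divisible by $\q^{1/2}-1$, which contradicts primitivity. Hence its specialization is nonzero and spans $K_1$.

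One remaining point is that $\Delta$ must be compatible with the difference $[E_1]-[E_2]$. I will handle this by noting that $D$ merely rescales coordinates by $\pm1$, which gets absorbed into $\mathbb Q^{\ast}$.
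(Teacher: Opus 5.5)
Your route is the paper's own. You factor $[\psi]$ as $\q^{-\ell(w_0)}\,C\,D\,M\,X^{t}$ and use that $C$ and $X$ are invertible, the former because it specializes at $\q=1$ to the character table of $W$. You then reduce the left kernel to $\ker M$ using the symmetry of the Fourier matrix, and read off the generator by column orthogonality. Your saturation/primitivity argument, showing that a primitive generator of $K$ specializes to a spanning vector of $K_1$, is more careful than the paper, which only argues over $\operatorname{Frac}(A)$.

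The one step that fails as written is your final remark on $\Delta$. The matrix $D$ rescales the $E$-coordinates, while the $\mathbb Q^{\ast}$ in the statement rescales the $\CO$-coordinates, so a sign discrepancy is not absorbed. If $\Delta(x_{E_1})=-\Delta(x_{E_2})$, you would get $C_1^{t}v\in\mathbb Q([E_1]+[E_2])$. Then $K_1$ would be spanned by $\sum_{\CO}\frac{\#\CO}{\#W}\bigl(\tr(w_{\CO}\mid E_1)+\tr(w_{\CO}\mid E_2)\bigr)\CO$, which is not of the asserted form.

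What you actually need is $\Delta(x_{E_1})=\Delta(x_{E_2})$, and this does hold. In Lusztig's definition, as I recall it, $\Delta$ is $-1$ only on labels of unipotent characters in the $E_7[\pm\xi]$ Harish-Chandra series. None of these is a principal series character $\rho_E$, so $D$ is the identity on $\Irr(\mathbb Q[W])$ and the issue disappears. You should confirm this from Lusztig's definition and state it explicitly. The paper's own proof passes over this point silently as well.
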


\begin{proof}
Let $C = (\operatorname{tr}(T_{\mathcal{O}}\mid E'(\q)))_{\mathcal{O}, E'}$ be the character table of the Hecke algebra $H_A$, and let $D$ be the diagonal matrix of signs $\Delta(x_{E'})$. By Definition \ref{def:psi_generic}, the matrix representing $\psi$ factors as $C^T D M C'$ (where $C'$ is the character table of $W$). 

At $\q=1$, both $C$ and $C'$ specialize to the standard character table of $W$, which is invertible. Thus, both $C$ and $C'$ are invertible matrices over the fraction field of $A$. Because $\Delta$ takes values in $\{\pm 1\}$, $D$ is also invertible. Therefore, the kernel of the pairing matrix is naturally isomorphic to $\ker(M)$. The explicit generator in part (2) follows directly from inverting the character table of $W$ via the standard orthogonality relations.
\end{proof}

\subsection{Proof of Theorem~\ref{thm:lusztigPairingOverview}}
It is clear from Proposition~\ref{prop:rsExists} that property (1) alone determines the abstract pairing $\psi$. By definition of the pairing $\psi$ we constructed and Proposition~\ref{prop:HtraceOnRS}, our pairing $\psi$ indeed satisfies property (1).

To show that our pairing $\psi$ satisfies property (2) of Theorem~\ref{thm:lusztigPairingOverview}, we argue just as in the proof of Proposition~\ref{prop:HtraceOnRS} together with formula~\eqref{eq:semisimplePartIndicatorFunction}.

Condition (3) is a consequence of Lemma~\ref{lem:phicompat} together with the observation from Proposition~\ref{prop:rsExists} that the pairings $\varphi^G / \mathbb F_q$ for infinitely many choices of $q$ indeed determine $\psi$.

The factorization claimed in condition (4) is constructed explicitly in Section~\ref{subsec:evaluation-phi}. As noted in \cite[\S 4.14]{Lu-Characters}, the Fourier matrix $F$ embeds into a Hermitian matrix, implying it is symmetric.

Let us prove (5). From the explicit construction of the factorization in (4), it follows, that matrix representing $\psi_1 := \psi_{\q=1}$ is also symmetric up factors in $\mathbb Q^\ast$; i.e.\ $\psi_1(\mathcal{O}_1 \otimes \mathcal{O}_2) \ne 0$ implies $\psi_1(\mathcal{O}_2 \otimes \mathcal{O}_1) \ne 0$.

Suppose $\psi_1(\mathcal{O}_1 \otimes \mathcal{O}_2) \ne 0$. This implies the generic pairing $\psi$ is non-zero, meaning its geometric realization over $\mathbb{F}_q$ evaluates to non-zero for infinitely many prime powers $q$. Pick a sufficiently large $q$ and a representative $w_1 \in (\mathcal{O}_1)_{\min}$ with support $J = \operatorname{supp}(w_1)$. By \eqref{eq:phi}, the Schubert cell $B(\mathbb{F}_q)w_1 B(\mathbb{F}_q) \subseteq P_J(\mathbb{F}_q)$ must intersect the conjugacy class of a regular semisimple element $s$ of type $\mathcal{O}_2$.

Because $s \in P_J(\mathbb{F}_q)$ is regular, its conjugation action on the unipotent radical $U_J(\mathbb{F}_q)$ is fixed-point free. Thus, $s$ is $U_J(\mathbb{F}_q)$-conjugate to an element $s' \in M_J(\mathbb{F}_q)$. The relative type of $s'$ inside the Levi subgroup $M_J$ defines a class $\mathcal{O}'_2 \in Cl(W_J)$, which forces $\mathcal{O}_2 \cap W_J \ne \emptyset$. This guarantees we can find a minimal length representative $w_2 \in (\mathcal{O}_2)_{\min}$ such that $\operatorname{supp}(w_2) \subseteq J = \operatorname{supp}(w_1)$.

Since $F$ is symmetric, applying the exact same geometric argument to $\psi_1(\mathcal{O}_2 \otimes \mathcal{O}_1) \ne 0$ yields a minimal length element $w_1' \in (\mathcal{O}_1)_{\min}$ with $\operatorname{supp}(w_1') \subseteq \operatorname{supp}(w_2)$. By Proposition \ref{prop:LSalgo}, minimal length elements in the same conjugacy class have supports of equal cardinality. 

Since $\operatorname{supp}(w_1') \subseteq \operatorname{supp}(w_2) \subseteq \operatorname{supp}(w_1)$ and $|\operatorname{supp}(w_1')| = |\operatorname{supp}(w_1)|$, we conclude $\operatorname{supp}(w_2) = \operatorname{supp}(w_1)$.

This finishes the proof of Theorem~\ref{thm:lusztigPairingOverview}.

\section{Class polynomials for spherical conjugacy classes}\label{sec:7}\label{sec:spherical}
\subsection{Spherical conjugacy classes}
By definition, a conjugacy class $\mathcal{O} \in Cl(W)$ is called \emph{spherical} if $\mathcal{O}\cap W_{J}\ne\emptyset$ for some spherical subset $J\subseteq S$. This is equivalent to the condition that the elements of $\mathcal{O}$ have finite order.

To each conjugacy class $\mathcal{O} \in Cl(W)$, we associate an equivalence class of supports $[J]=[J]_{S}$. This class consists of all subsets $J\subseteq S$ such that there exists $w\in\mathcal{O}_{\min}$ with $J=\operatorname{supp}(w)$. The equivalence relation is defined as $J^{\prime}\sim J$ if $\operatorname{Iso}_{W}(J,J^{\prime})\ne\emptyset$. This relation is explicitly described by the Lusztig-Spaltenstein algorithm (cf.\ Proposition \ref{prop:LSalgo}). We write $\operatorname{supp}(\mathcal{O}_{\min})=[J]_{S}$. In this case, the intersection $\mathcal{O}\cap W_{J}$ is a disjoint union of conjugacy classes of $W_{J}$, and the group $\operatorname{Aut}_{W}(J)$ acts transitively on these classes.

Let $A=\mathbb{Q}[\q^{\pm 1/2}]$ as before, and let $H_{A}$ be the base change of $H$ under $\mathbb{Z}[\q^{\pm 1}]\hookrightarrow A$. Our objective in this section is to globalize the local pairings constructed on spherical parabolic subalgebras to form well-defined trace functionals on the entirety of $H_A$. The main result of this section establishes the existence of these global functions.

\begin{theorem} \label{thm:psi_global}
Let $\mathcal{O} \in Cl(W)$ be a spherical conjugacy class. Then there exists a unique $A$-linear trace map $\Psi_{\mathcal{O}}: \overline{H}_{A} \to A$ such that for any conjugacy class $\mathcal{O}^{\prime}\in Cl(W)$ with support $[J]=\operatorname{supp}(\mathcal{O}^{\prime}_{\min})$, we have
$$ \Psi_{\mathcal{O}}(T_{\mathcal{O}^{\prime}}) = 
\begin{cases} 
\displaystyle \sum_{\substack{\mathcal{O}_{2}\in Cl(W_{J}) \\ \mathcal{O}_{2}\subseteq\mathcal{O}}}\psi_J(\mathcal{O}_{1}\otimes\mathcal{O}_{2}), & \text{if } J \text{ is spherical;} \\ 
0, & \text{otherwise,} 
\end{cases} $$
where $\mathcal{O}_{1}\in Cl(W_{J})$ is chosen such that $\mathcal{O}_{1}\subseteq\mathcal{O}^{\prime}$, and $\psi_J$ is the generic pairing for the spherical parabolic subsystem $(W_J, J)$ from Definition \ref{def:psi_generic}.
\end{theorem}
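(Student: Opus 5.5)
Uniqueness is immediate from Proposition~\ref{prop:cocenterSpanning}: the elements $T_{\CO'}$ span $\overline H_A$, so a linear map on $\overline H_A$ is determined by its values on them. The work is existence, and here I will realize $\Psi_{\CO}$ geometrically, as a renormalized orbital integral on the maximal Kac--Moody group $G(\mathbb F_q)$, interpolated in $q$.

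Since $\CO$ is spherical, pick $J_0$ spherical and $w\in \CO\cap W_{J_0}$, with $J_0=\supp(w)$ for some $w\in\CO_{\min}$. By Proposition~\ref{prop:rsExists}, for $q\gg0$ the Levi $M_{J_0}$ has a rational maximal torus of type (the $W_{J_0}$-class of) $w$. It contains a regular semisimple element whose image in $\underline M_{J_0}(\mathbb F_q)$ is elliptic; this holds because $\supp(w)=J_0$, so the torus is elliptic in $\underline M_{J_0}$. This gives a spherical elliptic pair $(J_0,\underline C)$ and the open conjugation-stable set $\hat C$. More robustly, I would take $\underline C$ to be the union of all such regular elliptic classes of that type, weighted so that part (2) of Theorem~\ref{thm:lusztigPairingOverview} applies. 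By Proposition~\ref{prop:orbital_integral_cocenter}, $O_{\hat C}$ is a trace on $H_{\mathbb C}$. By Proposition~\ref{prop:orbitalIntegralLift}(1) it vanishes on $T_{\CO'}$ whenever $\supp(\CO'_{\min})$ is non-spherical, which matches the second case of the statement.

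For $\CO'$ with spherical support $K$, choose $w'\in\CO'_{\min}$ with $\supp w'=K$. Proposition~\ref{prop:orbitalIntegralLift}(2) reduces $O_{\hat C}(T_{w'})$ to a count inside the finite reductive group $M_K(\mathbb F_q)$. The count is over elements whose semisimple part lies in the $M_K$-saturation of $\bigcup C'$, with $(J',\underline C')\sim(J_0,\underline C)$ and $J'\subseteq K$. By Lemma~\ref{lem:Philemma} this set is a union of Steinberg-type fibres of regular semisimple elements. Theorem~\ref{thm:lusztigPairingOverview}(1)/(2) then expresses the count as $\q^{\ell(w_{0,K})}\sum_{\CO_2}c_{\CO_2}(q)\,\psi_K(\CO_1\otimes\CO_2)$. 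Here $\CO_1\in\Cl(W_K)$ contains $w'$, and $\CO_2$ runs over classes of $W_K$ of tori meeting $\hat C\cap M_K$, i.e.\ $\CO_2\subseteq\CO$. I then divide by the prefactor $\mu(U_K)\q^{\ell(w_{0,K})}$ times the explicit torus-count/$\#W$ normalizations. The goal is to make the weights uniform: every $\CO_2\subseteq\CO$ should get coefficient $1$, independent of $K$. I will achieve this by choosing the weights on the classes in $\underline C$ as $1/\#\CT(\mathbb F_q)$-type normalizations, as in formula~\eqref{eq:rssIndicatorFunction}. The resulting values are Laurent polynomials in $\q^{1/2}$ that agree with the formula of the statement at infinitely many $q$. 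So I define $\Psi_{\CO}$ on $H_A$ by those formulas, via the spanning set, and must check it kills $[H_A,H_A]$. That check holds at every large $q$ after specialization, hence identically, since a nonzero element of $A$ has only finitely many zeros among the $\sqrt q$.

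The main obstacle is that the renormalization prefactor $\mu(U_K(\mathbb F_q))^{-1}$ depends on $K$, so a single orbital integral is not literally $\Psi_{\CO}$ on all classes at once. Two inputs resolve this. First, Theorem~\ref{thm:lusztigPairingOverview}(5) forces $\supp$ agreement for nonvanishing terms, so only classes with $K\sim J_0$ contribute nontrivially at leading order. Second, the Levi compatibility Theorem~\ref{thm:lusztigPairingOverview}(3) lets me rewrite contributions for larger $K$ as sums of $\psi_{J'}$ over smaller Levis. If the $K$-dependent normalization still fails to be uniform, I will fall back on this: define $\Psi_{\CO}$ directly by the formula and verify the trace property by reducing each commutator $T_sT_x-T_xT_s$ to finitely many spherical parabolics. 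There, Lemma~\ref{lem:phicompat} provides the needed consistency, and the finite-type cocenter is known.
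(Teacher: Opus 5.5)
\textbf{There is a gap in the existence step.} Your uniqueness argument and the final interpolation in $q$ match the paper's Lemma~\ref{lem:psi_global}. That is, you define the map via the spanning set and check vanishing on $[H_A,H_A]$ at infinitely many $q$.

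The problem is your claim that the tori of $M_K$ meeting $\hat C\cap M_K$ are exactly those of type $\CO_2\subseteq\CO$. This is false. The set $C$ is the full preimage of $\underline C$ in $M_{J_0}(\mathbb F_q)$, so its elements are regular only modulo $Z(M_{J_0})^\circ$. Multiplying by central elements of $M_{J_0}$ produces elements that are singular in $M_K$ for $K\supsetneq J_0$, and these lie in rational tori of other types.

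Already in $\mathrm{GL}_4$ with $J_0=\{s_1\}$ and $K=\{s_1,s_3\}$ this happens. Take $\diag(A,\lambda,\lambda)$ with $A\in\mathrm{GL}_2(\mathbb F_q)$ regular elliptic and $\lambda\in\mathbb F_q^\times$. It lies in $\hat C$, and also in a torus of $M_K=\mathrm{GL}_2\times\mathrm{GL}_2$ of type $[s_1s_3]$.

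Consequently $O_{\hat C}(T_{s_1s_3})$ contains, besides the expected terms, an extra summand $c(q)\,\varphi^{M_K}([s_1s_3],[s_1s_3])$ with $c(q)>0$. This summand is nonzero for $q\gg0$: its generic version is $\equiv 1\pmod{\q-1}$, since $\psi_1$ is the identity in type $A$. Meanwhile $O_{\hat C}(T_{s_1})$ has no such term. So no global rescaling of a single orbital integral equals $\Psi_{[s_1]}$. Rescaling differently for different $K$ is not allowed either, since it destroys the trace property.

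Reweighting the classes inside $\underline C$ does not help, because the contamination comes from the central directions of $C$. Theorem~\ref{thm:lusztigPairingOverview}(5) does not help either: it only controls congruences modulo $\q-1$, not the exact identity in $A$ that the statement demands.

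\textbf{How the paper handles this.} The paper uses the whole finite family of orbital integrals $\Phi_{\CO'}$, with $\CO'$ ranging over spherical classes. Proposition~\ref{prop:PhiValues} gives $\Phi_{\CO'}(T_w)=\sum_{\CO_2}c_{\CO_2}\varphi^{M_K}(\CO_1,\CO_2)$. Here $c_{\CO_2}>0$ for $\CO_2\subseteq\CO'$, and every other contributing $\CO_2$ has strictly larger rank. The transition matrix is therefore triangular with respect to rank, with positive diagonal. Inverting it (Theorem~\ref{thm:phi_global}) gives, for each $q\gg0$, a combination $\varphi_{\CO}$ with exactly the required values. Only then does the interpolation apply.

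\textbf{Your stated obstacle is not real.} The quantity $\mu(U_K(\mathbb F_q))\,q^{\ell(w_{0,K})}=(q-1)^{-\#S}$ does not depend on $K$. This is exactly what the factor $q^{-\ell(w_0)}$ in the normalization of $\varphi$ is for (cf.\ Lemma~\ref{lem:phicompat}).

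\textbf{Your fallback is circular.} To define $\Psi_{\CO}$ on $H_A$ ``directly by the formula'', you must know that reducing $T_x$ to minimal length elements gives a well-defined answer. That is essentially uniqueness of class polynomials, i.e.\ the main theorem. Moreover, the commutators $T_sT_x-T_xT_s$ involve $x$ of arbitrary, non-spherical support, and you offer no mechanism to localize them to finitely many spherical parabolics.
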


The uniqueness of $\Psi_{\mathcal{O}}$ follows immediately from Proposition \ref{prop:cocenterSpanning}, as the canonical elements $T_{\mathcal{O}^{\prime}}$ span the cocenter $\overline{H}_{A}$. The crux of the matter is establishing existence. To achieve this, we will rely heavily on the orbital integral machinery for Kac-Moody groups developed in Section \ref{subsec:lifting-ss}.

\subsection{Globalizing the pairing $\varphi$}
Let $q \gg 0$ be a prime power such that for every spherical subset $J \subseteq S$, every $\mathbb F_q$-rational maximal torus in $\underline{M}_{J}=M_J/Z(M_J)^{\circ}$ contains at least one $\mathbb{F}_q$-rational regular element (cf.\ Proposition \ref{prop:rsExists}). 

For every spherical conjugacy class $\mathcal{O} \in Cl(W)$, we fix a minimal length element $w_{\mathcal{O}} \in \mathcal{O}_{\min}$ and set $J := \operatorname{supp}(w_{\mathcal{O}})$. We then choose a regular semisimple element $\tilde{s}_{\mathcal{O}} \in M_J(\mathbb{F}_q)$ whose type in $Cl(W_J)$ is exactly the $W_J$-conjugacy class of $w_{\mathcal{O}}$. Associated to $J$ and $\tilde{s}_{\mathcal{O}}$, we construct the $G(\mathbb{F}_q)$-conjugation invariant set $\hat{C}_{\mathcal{O}}$ as in Section~\ref{subsec:lifting-ss}. We denote the orbital integral of $\hat{C}_{\mathcal{O}}$ by $\Phi_{\mathcal{O}} : H_{\mathbb{C}} \to \mathbb{C}$, which factors through the cocenter $\overline{H}_{\mathbb{C}}$ by Proposition~\ref{prop:orbitalIntegralLift}. Note that, a priori, $\Phi_{\CO}$ depends on the choices made above.

To express $\Phi_{\mathcal{O}}$ in terms of our pairing, we define the \emph{rank} of a conjugacy class to be the cardinality of its minimal support.

\begin{proposition} \label{prop:PhiValues}
Let $w \in W$ be a minimal length element of its conjugacy class, with support $K := \operatorname{supp}(w) \subseteq S$.
\begin{enumerate}
    \item If $K$ is non-spherical, then $\Phi_{\mathcal{O}}(T_w) = 0$.
    \item If $K$ is spherical, let $\mathcal{O}_1 \in Cl(W_K)$ be the $W_K$-conjugacy class of $w$. Then we can express the orbital integral as
    $$ \Phi_{\mathcal{O}}(T_w) = \sum_{\mathcal{O}_2 \in Cl(W_K)} c_{\mathcal{O}_2} \varphi(\mathcal{O}_1, \mathcal{O}_2), $$
    where $\varphi$ is the pairing for $W_K$ from Definition \ref{def:phi}, and $c_{\mathcal{O}_2} \in \mathbb{Q}_{\ge 0}$ are constants (depending on $\mathcal{O}$ and $K$, but not on $w$) satisfying:
    \begin{itemize}
        \item The coefficients $c_{\mathcal{O}_2}$ are invariant under the action of $\operatorname{Aut}(W, K)$ on $Cl(W_K)$.
        \item If $\mathcal{O}_2 \subseteq \mathcal{O}$, then $c_{\mathcal{O}_2} > 0$.
        \item If $c_{\mathcal{O}_2} > 0$ but $\mathcal{O}_2 \not\subseteq \mathcal{O}$, then $\operatorname{rank}(\mathcal{O}_2) > \operatorname{rank}(\mathcal{O})$.
    \end{itemize}
\end{enumerate}
\end{proposition}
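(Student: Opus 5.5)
Part (1) is immediate from Proposition~\ref{prop:orbitalIntegralLift}(1), applied to the spherical elliptic pair $(J,\underline C)$ underlying $\hat C_{\CO}$. For part (2), Proposition~\ref{prop:orbitalIntegralLift}(2) gives
\begin{align*}
\Phi_{\CO}(T_w) = \mu(U_K(\mathbb F_q)) \sum_{g} T_w(g),
\end{align*}
where $g$ runs over the elements of $M_K(\mathbb F_q)$ that are $M_K(\mathbb F_q)$-conjugate into $C'U_{J'}(\mathbb F_q)$ for some $(J',\underline C')\sim(J,\underline C)$ with $J'\subseteq K$. Inside the finite reductive group $M_K$, Lemma~\ref{lem:Philemma} identifies this set as a union of Steinberg-type sets: all $g=su$ whose semisimple part $s$ is $M_K(\mathbb F_q)$-conjugate into some $C'$. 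I therefore plan to apply Theorem~\ref{thm:lusztigPairingOverview}(2) to $M_K$, with $C_K$ the union of the semisimple classes $M_K(\mathbb F_q)\cdot C'$. This yields
\begin{align*}
\Phi_{\CO}(T_w) = \mu(U_K)\, q^{\ell(w_{0,K})} \sum_{\CO_2\in\Cl(W_K)} \frac{\#(\torusO{\CO_2}(\mathbb F_q)\cap C_K)}{\#\torusO{\CO_2}(\mathbb F_q)}\, \varphi(\CO_1,\CO_2),
\end{align*}
so the coefficient is $c_{\CO_2}=\mu(U_K)q^{\ell(w_{0,K})}\#(\torusO{\CO_2}(\mathbb F_q)\cap C_K)/\#\torusO{\CO_2}(\mathbb F_q)\ge 0$. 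It depends only on $\CO$ and $K$ (through $C_K$), not on $w$. One technical point is that $M_K$ is not semisimple: $C'$ is the full preimage of $\underline C'$, so it is a union of semisimple classes in $M_K$, and the regular semisimple condition holds in $\underline M_{J'}$. I expect Theorem~\ref{thm:lusztigPairingOverview}(2) to apply to any conjugation-stable set of semisimple elements, so this should not cause trouble.

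The three properties of $c_{\CO_2}$ are then checked as follows.

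\textbf{$\Aut(W,K)$-invariance.} An element of $\Aut(W,K)$ is induced by some $v\in N_W(W_K)$. Lifting $v$ to $N_G(T)(\mathbb F_q)$ gives an automorphism of $M_K(\mathbb F_q)$ that permutes the family $\{(J',\underline C')\sim(J,\underline C),\ J'\subseteq K\}$. Hence it preserves $C_K$, and it carries tori of type $\CO_2$ to tori of type $v\cdot\CO_2$.

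\textbf{Positivity when $\CO_2\subseteq\CO$.} For $\CO_2\subseteq \CO\cap W_K$, the class $\CO$ has minimal support equivalent to $J$, and $\CO_2$ is a $W_K$-class inside $\CO$. I would use Lemma~\ref{lem:parabolicConjugation}(4) and the Lusztig--Spaltenstein algorithm (Proposition~\ref{prop:LSalgo}) to find $J'\subseteq K$, $J'\sim J$, such that $\CO_2$ meets $W_{J'}$ in the image of the $W_J$-class of $w_{\CO}$. The torus of type $\CO_2$ in $M_K$ can then be taken inside $M_{J'}$, where it is of the type of $\tilde s_{\CO}$ transported to $J'$. It contains a regular element lying in $C'$, so $\torusO{\CO_2}(\mathbb F_q)\cap C_K\ne\emptyset$.

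\textbf{Rank condition.} This is the step I expect to be the main obstacle. Suppose $\torusO{\CO_2}$ contains some $s\in C'$. Then $s$ is regular in $\underline M_{J'}$, so $C_{M_K}(s)^\circ$ has root system containing no roots of $M_{J'}$. Moreover, $\torusO{\CO_2}\subseteq C_{M_K}(s)$, and $C_{M_{J'}}(s)$ is an elliptic maximal torus of $M_{J'}$ of the type of $\CO\cap W_{J'}$. My plan is to show that $C_{M_K}(s)^\circ$ is a Levi-type subgroup, or more generally a pseudo-Levi subgroup, whose type contains $\CO$'s torus type. From this, $\CO_2$ has minimal support containing a conjugate of $J'$, so $\operatorname{rank}(\CO_2)\ge\operatorname{rank}(\CO)$, with equality only if $\torusO{\CO_2}$ is conjugate to $C_{M_{J'}}(s)\cdot Z(M_{J'})^\circ$, which forces $\CO_2\subseteq\CO$. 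The delicate point is the pseudo-Levi case (non-connected or non-Levi centralizers). Here I would argue via ellipticity: the $\mathbb F_q$-split part of $\torusO{\CO_2}$ centralizes $s$ and lies in a minimal rational Levi containing $\torusO{\CO_2}$. The anisotropic rank of $\torusO{\CO_2}$ modulo the center then bounds the rank of $\CO_2$ from below by that of $\CO$.
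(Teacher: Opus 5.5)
Your part (1), your formula for $c_{\CO_2}$ (via Lemma~\ref{lem:Philemma} and Theorem~\ref{thm:lusztigPairingOverview}(2) applied to $M_K$), and your first two bullet points follow the paper's route. Your arguments for $\Aut(W,K)$-invariance (lifting to $N_G(T)(\mathbb F_q)$) and for positivity (moving a minimal length element of $\CO_2$ to a support $J'\subseteq K$ via Lemma~\ref{lem:parabolicConjugation}(4)) are correct, and more explicit than the paper's. The genuine gap is the rank condition, for which you give only a plan. Knowing that $C_{M_K}(s)^\circ$ is a (pseudo-)Levi subgroup containing a torus of the type of $\CO$ says nothing by itself about the minimal support of a \emph{different} maximal torus $\torusO{\CO_2}$ of it. You also assert the equality case rather than prove it. Finally, your sentence about the anisotropic rank of $\torusO{\CO_2}$ bounding $\operatorname{rank}(\CO_2)$ from below just restates the inequality to be proved.

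The missing idea is that ellipticity of $\underline C'$ makes $A_0:=Z(M_{J'})^\circ$ a maximal $\mathbb F_q$-split torus of $L:=C_{M_K}(s)^\circ$.
\begin{itemize}
\item Suppose $A\subseteq L$ were a split torus with $A\supsetneq A_0$. Then $A\subseteq C_{M_K}(A_0)=M_{J'}$, and $C_{M_{J'}}(A)$ would be a proper rational Levi subgroup of $M_{J'}$ containing $s$. Hence the image of $s$ would lie in a proper rational parabolic subgroup of $\underline M_{J'}$, a contradiction.
\item Since $\torusO{\CO_2}\subseteq L$ and all maximal split tori of $L$ are $L(\mathbb F_q)$-conjugate, the maximal split subtorus $A_2$ of $\torusO{\CO_2}$ satisfies $\dim A_2\le\dim A_0$.
\item As $M_K$ is split, this reads $\operatorname{rank}(\CO_2)=\operatorname{rk}M_K-\dim A_2\ge\operatorname{rk}M_K-\dim A_0=\#J'=\operatorname{rank}(\CO)$.
\item In case of equality, $A_2=gA_0g^{-1}$ for some $g\in L(\mathbb F_q)$. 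Then $g^{-1}\torusO{\CO_2}g\subseteq C_{M_K}(A_0)=M_{J'}$ is a maximal torus of $M_{J'}$ containing $s$. It therefore equals $C_{M_{J'}}(s)^\circ$, whose type lies in $\CO$, so $\CO_2\subseteq\CO$.
\end{itemize}
This handles Levi and pseudo-Levi centralizers uniformly and never requires knowing the shape of $C_{M_K}(s)^\circ$.

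Your worry about the pseudo-Levi case also applies to the paper's own argument. It passes through $T'=Z(L)^\circ$ and claims that $T'$ contains elements regular in $\underline M_{J'}$. That holds when $L$ is a Levi subgroup, but not in general. In $G_2$, take $J'=\{s_1\}$ with $\alpha_1$ short and $q\equiv 2\pmod 3$. Then there is an order-$3$ element $s\in M_{J'}(\mathbb F_q)$ with regular elliptic image in $\underline M_{J'}$, whose centralizer $L$ is of type $A_2$, so $Z(L)^\circ=1$. The split-torus argument above repairs that step too.
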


\begin{proof}
Part (1) follows immediately from Proposition ~\ref{prop:orbitalIntegralLift} (1). For part (2), applying Proposition ~\ref{prop:orbitalIntegralLift} (2) yields
$$ \Phi_{\mathcal{O}}(T_w) = \mu(U_K(\mathbb{F}_q)) \sum_{g \in \hat{D}} T_w(g), $$
where $\hat{D} \subseteq M_K(\mathbb{F}_q)$ is the set of elements that are $M_K(\mathbb{F}_q)$-conjugate to an element of $C^{\prime}U_{J^{\prime}}(\mathbb{F}_q)$ for some pair $(J^{\prime}, \underline{C}^{\prime}) \sim (J, \underline{C})$ with $J^{\prime} \subseteq K$.

For an element $g \in M_K(\mathbb{F}_q)$ with Jordan decomposition $g = su = us$, Lemma \ref{lem:Philemma} dictates that $g \in \hat{D}$ if and only if its semisimple part $s$ belongs to 
$$ D := \bigcup_{\substack{(J^{\prime}, \underline{C}^{\prime}) \sim (J, \underline{C}) \\ J^{\prime} \subseteq K}} M_K(\mathbb{F}_q) \cdot C^{\prime}. $$
By \eqref{eq:semisimplePartIndicatorFunction}, the indicator function for $D$ allows us to rewrite the sum over $\hat{D}$ in terms of Deligne-Lusztig characters. Following the identical trace extraction used in the proof of Proposition \ref{prop:HtraceOnRS}, we deduce
$$ \Phi_{\mathcal{O}}(T_w) = \mu(U_K(\mathbb{F}_q)) \sum_{\mathcal{O}_2 \in Cl(W_K)} \frac{\#\mathcal{O}_2 \cdot \#(\mathcal{T}_{\mathcal{O}_2}(\mathbb{F}_q) \cap D)}{\#W_K \cdot \#\mathcal{T}_{\mathcal{O}_2}(\mathbb{F}_q)} \operatorname{tr}(T_w \mid R_{\mathcal{O}_2}^1). $$
Rewriting this via Definition \ref{def:phi}, we obtain the desired sum with coefficients
$$ c_{\mathcal{O}_2} := \mu(U_K(\mathbb{F}_q)) q^{l(w_{0, K})} \frac{\#(\mathcal{T}_{\mathcal{O}_2}(\mathbb{F}_q) \cap D)}{\#\mathcal{T}_{\mathcal{O}_2}(\mathbb{F}_q)}. $$
These coefficients are manifestly non-negative and $\operatorname{Aut}(W, K)$-invariant. If $\mathcal{O}_2 \subseteq \mathcal{O}$, then $\mathcal{T}_{\mathcal{O}_2}$ intersects $C$ by construction, so $\mathcal{T}_{\mathcal{O}_2}(\mathbb{F}_q) \cap D \ne \emptyset$, making $c_{\mathcal{O}_2} > 0$.

Finally, suppose $c_{\mathcal{O}_2} > 0$ but $\mathcal{O}_2 \not\subseteq \mathcal{O}$. Then $\mathcal{T}_{\mathcal{O}_2}(\mathbb{F}_q) \cap D \ne \emptyset$, meaning there exists $(J^{\prime}, \underline{C}^{\prime}) \sim (J, \underline{C})$ and an element $s \in C^{\prime}$ contained in a torus of type $\mathcal{O}_2$. Let $L = C_{M_K}(s)^\circ$ and $T^{\prime} = Z(L)^\circ$. Because $T^{\prime}$ is contained in every torus containing $s$, we have $T^{\prime} \subseteq M_{J^{\prime}}$, and its projection to $\underline{M}_{J^{\prime}}$ contains a regular semisimple element. Thus, ${T}^{\prime} Z(M_{J^{\prime}})^\circ$ is a maximal torus inside $M_{J^{\prime}}$ containing $s$. 
Let $J_2 \subseteq K$ be the minimal support of $\mathcal{O}_2$. The torus $\mathcal{T}_{\mathcal{O}_2}$ is $M_K(\mathbb{F}_q)$-conjugate to a torus inside $P_{J_2} \cap M_K$, which implies $T^{\prime}$ is contained in some $M_K(\mathbb{F}_q)$-conjugate $P^{\prime}$ of $P_{J_2} \cap M_K$. Consequently, $(P^{\prime} \cap M_{J^{\prime}}) Z(M_{J^{\prime}})^\circ$ forms a parabolic subgroup of $M_{J^{\prime}}$ containing $T^{\prime}$. Because $s$ is regular elliptic in $M_{J^{\prime}}$, this parabolic must be the entirety of $M_{J^{\prime}}$. This dimensional constraint forces $\#J_2 > \#J^{\prime} = \#J$, establishing that $\operatorname{rank}(\mathcal{O}_2) > \operatorname{rank}(\mathcal{O})$.
\end{proof}

The filtered structure of the coefficients $c_{\mathcal{O}_2}$ allows us to invert the system and isolate the pairing $\varphi$.

\begin{theorem} \label{thm:phi_global}
Let $\hat{\mathcal{O}} \in Cl(W)$ be a spherical conjugacy class. There exists a linear combination $\varphi_{\hat{\mathcal{O}}}: \overline{H}_{\mathbb{C}} \to \mathbb{C}$ of the orbital integrals $\{\Phi_{\mathcal{O}} \mid \operatorname{rank}(\mathcal{O}) \le \operatorname{rank}(\hat{\mathcal{O}})\}$ such that for any spherical subset $K \subseteq S$ and minimal length element $w \in W_K$,
$$ \varphi_{\hat{\mathcal{O}}}(T_w) = \sum_{\substack{\mathcal{O}_2 \in Cl(W_K) \\ \mathcal{O}_2 \subseteq \hat{\mathcal{O}}}} \varphi(\mathcal{O}_1, \mathcal{O}_2), $$
where $\mathcal{O}_1 \in Cl(W_K)$ is the $W_K$-conjugacy class of $w$, and $\varphi$ is the pairing for $W_K$.
\end{theorem}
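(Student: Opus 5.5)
The plan is to write each orbital integral $\Phi_{\mathcal O}$ as a combination of the \emph{target functionals} $\tilde\varphi_{\mathcal O'}$ (the functionals the theorem asks for), and then solve this triangular system for $\tilde\varphi_{\hat{\mathcal O}}$. I have not achieved the bound $\operatorname{rank}(\mathcal O)\le\operatorname{rank}(\hat{\mathcal O})$ as stated: the triangularity of Proposition~\ref{prop:PhiValues} seems to force the opposite inequality, which is the main obstacle below.

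\textbf{Step 1: target functionals.} For a spherical class $\mathcal O'\in\Cl(W)$, let $\tilde\varphi_{\mathcal O'}$ denote the (not yet constructed) functional prescribed by the right-hand side of the theorem:
\begin{align*}
T_w\mapsto\sum_{\mathcal O_2\in\Cl(W_K),\ \mathcal O_2\subseteq\mathcal O'}\varphi(\mathcal O_1,\mathcal O_2),
\end{align*}
for $w$ of minimal length with spherical support $K$, and $T_w\mapsto 0$ otherwise. By Proposition~\ref{prop:cocenterSpanning}, a trace functional is determined by its values on such $T_w$. So the theorem amounts to an identity of functions on minimal length elements.

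\textbf{Step 2: regrouping Proposition~\ref{prop:PhiValues}(2) by global classes.} Group the sum over $\mathcal O_2\in\Cl(W_K)$ according to the global class $\mathcal O'\supseteq\mathcal O_2$. For this I must show that $c_{\mathcal O_2}$ depends only on $\mathcal O'$, and not on $K$ or on the $W_K$-class inside $\mathcal O'$. From the explicit formula in the proof of Proposition~\ref{prop:PhiValues}, $\mu(U_K(\mathbb F_q))q^{\ell(w_{0,K})}=(q-1)^{-\#S}$ is independent of $K$. The remaining factor $\#(\torusO{\mathcal O_2}(\mathbb F_q)\cap D)/\#\torusO{\mathcal O_2}(\mathbb F_q)$ should be computed with a rational torus of $G$-type $\mathcal O'$. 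I would compare $M_K$-conjugacy of $C'$ with $G$-conjugacy via Lemma~\ref{lem:semisimpleLiftingIndependence} and the Mackey-type counting in the proof of Lemma~\ref{lem:phicompat}.

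This gives
\begin{align*}
\Phi_{\mathcal O}=\sum_{\mathcal O'}c^{\mathcal O}_{\mathcal O'}\,\tilde\varphi_{\mathcal O'},
\end{align*}
where $c^{\mathcal O}_{\mathcal O}>0$ and $c^{\mathcal O}_{\mathcal O'}\neq0$ for $\mathcal O'\neq\mathcal O$ forces $\operatorname{rank}(\mathcal O')>\operatorname{rank}(\mathcal O)$. The vanishing on non-spherical supports is Proposition~\ref{prop:PhiValues}(1). The system is finite, because ranks are bounded by the size of the largest spherical subset and there are finitely many spherical classes of each rank.

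\textbf{Step 3: inversion and the main obstacle.} The matrix $(c^{\mathcal O}_{\mathcal O'})$ is unitriangular up to a positive diagonal with respect to rank, so it is invertible. Solving by downward induction on rank gives $\tilde\varphi_{\mathcal O}=\bigl(\Phi_{\mathcal O}-\sum c^{\mathcal O}_{\mathcal O'}\tilde\varphi_{\mathcal O'}\bigr)/c^{\mathcal O}_{\mathcal O}$, with all correction terms of strictly higher rank. This proves existence of $\varphi_{\hat{\mathcal O}}:=\tilde\varphi_{\hat{\mathcal O}}$ as a combination of the $\Phi_{\mathcal O}$. However, the inverse of such a triangular matrix has the same shape, so this argument only places $\varphi_{\hat{\mathcal O}}$ in the span of the $\Phi_{\mathcal O}$ with $\operatorname{rank}(\mathcal O)\ge\operatorname{rank}(\hat{\mathcal O})$.

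Establishing the stated bound $\le$ is the step I expect to be the main obstacle, and I do not currently see how to prove it. It would require showing that the higher-rank contamination terms $\tilde\varphi_{\mathcal O'}$ with $\operatorname{rank}(\mathcal O')>\operatorname{rank}(\hat{\mathcal O})$ can be absorbed or cancelled using only lower-rank orbital integrals. My first attempt would exploit the extra structure in $c^{\mathcal O}_{\mathcal O'}$: its $\operatorname{Aut}(W,K)$-invariance, and the fact that $c^{\mathcal O}_{\mathcal O'}\ne0$ requires $\mathcal O'$ to meet a torus through an element of $C'$. I would combine this with the support-detection property Theorem~\ref{thm:lusztigPairingOverview}(5), to see whether the contamination terms vanish on every $T_w$ relevant to $\hat{\mathcal O}$. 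If that fails, the only solid conclusion from the triangular inversion is the $\ge$ span, and the rank restriction in the statement would have to be re-examined rather than assumed.
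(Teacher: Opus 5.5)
Your Steps 1--3 are the paper's proof. The printed argument is nothing more than the block-triangular inversion of the system from Proposition~\ref{prop:PhiValues}, with spherical classes ordered by descending rank.

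Your diagnosis in Step~3 is also correct, and the fault lies with the statement, not your argument. This inversion, in the paper just as in your write-up, only places $\varphi_{\hat{\mathcal O}}$ in the span of $\{\Phi_{\mathcal O}\mid \operatorname{rank}(\mathcal O)\ge\operatorname{rank}(\hat{\mathcal O})\}$, so the inequality should read $\ge$.

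In fact the $\le$ version is false. Take type $A_1$ and $\hat{\mathcal O}=\{1\}$, which has rank $0$. The $\le$ bound would force $\varphi_{\{1\}}\in\mathbb C\,\Phi_{\{1\}}$. But $1\in\mathcal T_{\{s\}}(\mathbb F_q)\cap\hat C_{\{1\}}$, so $\tilde\varphi_{\{s\}}$ enters $\Phi_{\{1\}}$ with a positive coefficient. Concretely, for $G=GL_2$ one has $\Phi_{\{1\}}(T_1)=1$ and $\Phi_{\{1\}}(T_s)=q/2$, since exactly half of $B(\mathbb F_q)sB(\mathbb F_q)$ has split characteristic polynomial. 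The prescribed values, computed with $K=S$, are instead in the ratio $\tr(T_1\mid H_{\mathbb C}):\tr(T_s\mid H_{\mathbb C})=2:(q-1)$, so no scalar multiple works.

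The rank bound is never used afterwards. Lemma~\ref{lem:psi_global} only needs $\varphi_{\mathcal O}$ to be \emph{some} linear combination of orbital integrals, so that it vanishes on commutators. So state the result with $\ge$ and drop the programme of your last paragraph, which cannot succeed.

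Your Step~2 makes explicit a point the paper passes over. Proposition~\ref{prop:PhiValues} only records $\operatorname{Aut}(W,K)$-invariance of the $c_{\mathcal O_2}$. The inversion needs more: the coefficients must depend only on the $W$-class containing $\mathcal O_2$, uniformly in $K$.

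There is a cleaner route to this than Mackey counting.
\begin{enumerate}
\item By $(\ast)$ in the proof of Proposition~\ref{prop:orbitalIntegralLift} together with Lemma~\ref{lem:Philemma}, the set $D$ in the proof of Proposition~\ref{prop:PhiValues} is precisely the set of semisimple elements of $M_K(\mathbb F_q)$ lying in $\hat C$.
\item Hence $\#(\mathcal T_{\mathcal O_2}(\mathbb F_q)\cap D)/\#\mathcal T_{\mathcal O_2}(\mathbb F_q)$ is unchanged under $G(\mathbb F_q)$-conjugation of the torus.
\item Rational maximal tori of spherical Levi subgroups with $W$-conjugate types are $G(\mathbb F_q)$-conjugate. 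This follows from rational lifts of the elementary conjugators of Proposition~\ref{prop:LSalgo} and Lang's theorem inside the finite-type Levis.
\item The prefactor $\mu(U_K(\mathbb F_q))q^{\ell(w_{0,K})}=1/\#T(\mathbb F_q)$ does not depend on $K$.
\end{enumerate}
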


\begin{proof}
Consider the transition matrix expressing the evaluations of the orbital integrals $\Phi_{\mathcal{O}}$ on the elements $T_w$ in terms of the pairings $\varphi(\mathcal{O}_1, \mathcal{O}_2)$. If we order the spherical conjugacy classes of $W$ in descending order of their rank, Proposition \ref{prop:PhiValues} guarantees that this transition matrix is block upper-triangular. 

Crucially, on the diagonal blocks where $\operatorname{rank}(\mathcal{O}) = \operatorname{rank}(\hat{\mathcal{O}})$, the only non-zero contributions come from classes $\mathcal{O}_2$ where $\mathcal{O}_2 \subseteq \mathcal{O}$, and these coefficients $c_{\mathcal{O}_2}$ are strictly positive. The function $\varphi_{\hat{\mathcal{O}}}$ is simply the linear combination obtained by inverting this matrix to isolate the $\hat{\mathcal{O}}$ component.
\end{proof}

\subsection{Proof of Theorem~\ref{thm:psi_global}}
The final step in constructing the global trace maps $\Psi_{\mathcal{O}}$ is to lift the evaluations of the Kac-Moody orbital integrals from specific finite fields $\mathbb{F}_q$ to the generic ground ring $A = \mathbb{Q}[\mathbf{q}^{\pm 1/2}]$. To prevent ambiguity, we will strictly use $\mathbf{q}$ for the formal parameter in $A$, and $q$ for a prime power. For any $h \in H_A$, let $h|_q \in H_{\mathbb{C}}$ denote its specialization under the map $\mathbf{q}^{1/2} \mapsto \sqrt{q}$.

The main mechanism for the proof is establishing the following lemma, which leverages the fact that a polynomial identity is rigidly determined by its evaluations at infinitely many points.

\begin{lemma} \label{lem:psi_global}
Let $\mathcal{O} \in Cl(W)$ be a spherical conjugacy class and let $h \in H_A$. There exists a unique element $p_h \in A$ such that for all sufficiently large prime powers $q$, we have
$$ \varphi_{\mathcal{O}}(h|_{\q=q}) = p_h|_{\q=q}, $$
where $\varphi_{\mathcal{O}}: H_{\mathbb{C}} \to \mathbb{C}$ is the linear combination of Kac-Moody orbital integrals constructed in Theorem \ref{thm:phi_global}.
\end{lemma}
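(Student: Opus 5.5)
Uniqueness is immediate: an element of $A=\mathbb Q[\q^{\pm1/2}]$ is a Laurent polynomial in $\q^{1/2}$, and if it vanishes at $\q^{1/2}=\sqrt q$ for infinitely many prime powers $q$, it has infinitely many zeros and is therefore zero. So the work is existence, and I would reduce it to basis elements. Both sides are $A$-linear in $h$ in the sense that $(ah)|_q=a|_q\,h|_q$, and $\varphi_{\mathcal O}$ factors through the cocenter (Proposition~\ref{prop:orbital_integral_cocenter}). Proposition~\ref{prop:cocenterSpanning} writes the image of any $h\in H_A$ in $\overline H_A$ as a finite $A$-linear combination $\sum a_{\mathcal O'}T_{\mathcal O'}$, and this relation specializes to the corresponding relation in $\overline H_{\mathbb C}$. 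It therefore suffices to produce $p_{T_{w}}\in A$ for $w$ a minimal length element of a conjugacy class $\mathcal O'$.

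Let $K=\supp(w)$. If $K$ is non-spherical, Proposition~\ref{prop:PhiValues}(1) gives $\Phi_{\mathcal O''}(T_w)=0$ for every $\mathcal O''$, so $\varphi_{\mathcal O}(T_w)=0$ and we take $p=0$. If $K$ is spherical, Theorem~\ref{thm:phi_global} gives
\[
\varphi_{\mathcal O}(T_w)=\sum_{\mathcal O_2\subseteq\mathcal O}\varphi^{M_K}(\mathcal O_1,\mathcal O_2).
\]
This sum is finite since $W_K$ is finite. By the remark after Definition~\ref{def:psi_generic}, or by Theorem~\ref{thm:lusztigPairingOverview}(1), each $\varphi^{M_K}(\mathcal O_1,\mathcal O_2)$ is the specialization at $\q^{1/2}\mapsto\sqrt q$ of $\psi_K(\mathcal O_1\otimes\mathcal O_2)\in A$. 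So $p_{T_w}:=\sum_{\mathcal O_2\subseteq\mathcal O}\psi_K(\mathcal O_1\otimes\mathcal O_2)$ works for all large $q$. Here ``large'' means that $q$ satisfies the hypothesis of Proposition~\ref{prop:rsExists} for every spherical $J\subseteq S$, which is a finite set of conditions.

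The main obstacle is the fact that $\varphi_{\mathcal O}$ was constructed in Theorem~\ref{thm:phi_global} for one fixed $q$, by inverting a block-triangular matrix with entries $c_{\mathcal O_2}$ that a priori depend on $q$. For the argument above I need a family $(\varphi_{\mathcal O})_q$, one for each large $q$, each satisfying the conclusion of Theorem~\ref{thm:phi_global}. I would check that the proof of Theorem~\ref{thm:phi_global} runs verbatim for every $q$ in the admissible range. The diagonal blocks stay invertible because their coefficients are strictly positive by Proposition~\ref{prop:PhiValues}. The inversion only uses the triangular shape, which is independent of $q$. Once this uniformity is checked, the values $\varphi_{\mathcal O}(T_w)$ are given by the $q$-independent formula above, and the rest of the argument is formal. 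A secondary point is that the coefficients $a_{\mathcal O'}$ in the spanning relation lie in $\mathbb Z[\q^{\pm1}]\subseteq A$ and do not depend on $q$, so specialization commutes with the reduction to basis elements.
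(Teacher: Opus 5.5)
Your proposal is correct and follows the paper's proof. Uniqueness comes from the infinitely-many-zeros argument, and existence comes from expanding $h$ in the spanning set $\{T_{\mathcal O'}\}$, using that $\varphi_{\mathcal O}$ vanishes on commutators, and evaluating $\varphi_{\mathcal O}$ on each $T_{\mathcal O'}$ via Proposition~\ref{prop:PhiValues}(1) and Theorem~\ref{thm:phi_global}, together with the fact that $\psi_K$ specializes to $\varphi^{M_K}$. Your remark that $\varphi_{\mathcal O}$ has to be constructed separately for each admissible $q$ makes explicit a point the paper leaves implicit; also, for general $h\in H_A$ the expansion coefficients lie in $A$ rather than $\mathbb Z[\q^{\pm1}]$, which is harmless for the argument.
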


\begin{proof}
The uniqueness of $p_h$ is clear: if two Laurent polynomials in $\mathbf{q}^{1/2}$ agree when evaluated at infinitely many prime powers $q$, they must be identical in $A$.

To prove existence, we first define the target value of the pairing. Let $\mathcal{O}^{\prime}\in Cl(W)$ be any conjugacy class. If $\mathcal{O}^{\prime}$ is not spherical, we set $\Psi(\mathcal{O},\mathcal{O}^{\prime}) := 0$. If $\mathcal{O}^{\prime}$ is spherical, we choose a minimal length element $w\in (\mathcal{O}^{\prime})_{\min}$. Its support $J := \operatorname{supp}(w)$ is a spherical subset. Denoting by $\mathcal{O}_{1}$ the $W_J$-conjugacy class of $w$, we set
$$ \Psi(\mathcal{O},\mathcal{O}^{\prime}) := \sum_{\substack{\mathcal{O}_{2}\in Cl(W_{J}) \\ \mathcal{O}_{2}\subseteq\mathcal{O}}} \psi_{J}(\mathcal{O}_{1}\otimes\mathcal{O}_{2}) \in A, $$
where $\psi_{J}$ is the generic pairing for the parabolic subsystem $W_{J}$ from Definition \ref{def:psi_generic}. Note that $\Psi(\mathcal{O},\mathcal{O}^{\prime})$ is independent of the choice of $w$, since any valid isomorphism in $\operatorname{Iso}_{W}(J,J^{\prime})$ maps the generic pairing $\psi_{J}$ to $\psi_{J^{\prime}}$.

By Proposition \ref{prop:cocenterSpanning}, the canonical elements $T_{\mathcal{O}'}$ span the cocenter. Thus, we can find a finite collection of conjugacy classes $\mathcal{O}_{1},\dots,\mathcal{O}_{n}\in Cl(W)$ and coefficients $c_{1},\dots,c_{n}\in A$ such that $h$ can be written as
$$ h \equiv \sum_{i=1}^n c_{i}T_{\mathcal{O}_{i}} \pmod{[H_A, H_A]}. $$
We define our candidate polynomial as
$$ p_h := \sum_{i=1}^n c_{i} \Psi(\mathcal{O},\mathcal{O}_{i}) \in A. $$

We must verify that this $p_h$ correctly tracks the orbital integral evaluations. For any sufficiently large prime power $q$, the elements $h|_q$ and $\sum c_i|_q (T_{\mathcal{O}_i})|_q$ differ only by an element in the commutator subspace $[H_{\mathbb{C}}, H_{\mathbb{C}}]$. Because $\varphi_{\mathcal{O}}$ is a linear combination of orbital integrals, Proposition \ref{prop:orbital_integral_cocenter} guarantees that it vanishes completely on commutators. Thus,
$$ \varphi_{\mathcal{O}}(h|_{\q=q}) = \sum_{i=1}^n c_i|_{\q=q} \, \varphi_{\mathcal{O}}((T_{\mathcal{O}_i})|_{\q=q}). $$
By Theorem \ref{thm:phi_global} and Proposition \ref{prop:PhiValues}(1), the value of $\varphi_{\mathcal{O}}((T_{\mathcal{O}_i})|_{\q=q})$ exactly matches the specialization $\Psi(\mathcal{O}, \mathcal{O}_i)|_{\q=q}$. Therefore, $\varphi_{\mathcal{O}}(h|_{\q=q}) = p_h|_{\q=q}$, which proves the existence of the polynomial.
\end{proof}

With this lemma in hand, the proof of Theorem \ref{thm:psi_global} is straightforward.

\begin{proof}[Proof of Theorem \ref{thm:psi_global}]
Given a spherical conjugacy class $\mathcal{O} \in Cl(W)$, we define the map $\Psi_{\mathcal{O}}: H_A \to A$ by setting $\Psi_{\mathcal{O}}(h) = p_h$, where $p_h$ is the unique element in $A$ described in Lemma \ref{lem:psi_global}. The map is clearly $A$-linear. We must verify it satisfies the required properties.

First, we check that $\Psi_{\mathcal{O}}$ factors through the cocenter $\overline{H}_A$. If $h \in [H_A, H_A]$, then for all prime powers $q$, its specialization $h|_{\q=q}$ lies in $[H_{\mathbb{C}}, H_{\mathbb{C}}]$. Because orbital integrals vanish on commutators, we have $\varphi_{\mathcal{O}}(h|_{\q=q}) = 0$ for infinitely many $q$. The uniqueness of the polynomial in Lemma \ref{lem:psi_global} forces $\Psi_{\mathcal{O}}(h) = p_h = 0$.

Second, we verify the evaluation on basis elements. For any conjugacy class $\mathcal{O}' \in Cl(W)$, applying the definition of $\Psi_{\mathcal{O}}$ to the element $h = T_{\mathcal{O}'}$ trivially yields an expansion with a single term (where the coefficient is $1$). Thus, $\Psi_{\mathcal{O}}(T_{\mathcal{O}'})$ is exactly the polynomial $\Psi(\mathcal{O}, \mathcal{O}')$ we defined in the proof of Lemma \ref{lem:psi_global}. This perfectly matches the piecewise formula required by Theorem \ref{thm:psi_global}, concluding the proof.
\end{proof}

\section{Invertibility of Lusztig's pairing}\label{sec:perfect_pairing}

\subsection{Main result}

In this section, we study the matrix representing the generic pairing from Proposition \ref{prop:lusztigPairingConsequences}. We are mainly interested in the invertibility of this matrix, as the functions $\psi_{\mathcal{O}}$ can be globalized to arbitrary Kac-Moody groups (Theorem \ref{thm:psi_global}). 

Our main result for this section is the following:

\begin{theorem} \label{thm:perfectPairing}
Let $\mathcal{D}$ be an irreducible root datum of finite type, and let $M$ be the matrix representing the generic pairing $\psi$ from Proposition \ref{prop:lusztigPairingConsequences}. 
\begin{enumerate}
    \item If $\mathcal{D}$ is not of type $E_7$ or $E_8$, then the matrix $M$ is invertible.
    \item If $\mathcal{D}$ is of type $E_7$, then the kernel of $M$ is one-dimensional. Furthermore, the corresponding one-dimensional kernel $K_1 \subset \mathbb{Q}[Cl(W)]$ from Proposition \ref{prop:lusztigPairingConsequences} has trivial intersection with the subspace spanned by $Cl(W) \setminus \{w_0\}$, where $w_0 \in W$ is the longest element.
\end{enumerate}
\end{theorem}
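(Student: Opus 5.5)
By Proposition~\ref{prop:lusztigPairingConsequences}, the kernel of the pairing matrix is isomorphic to the kernel of Lusztig's Fourier matrix $(\{x_{E'},x_E\})_{E,E'}$. The other factors are the two character tables and the sign matrix $D$, all invertible over the fraction field of $A$. This Fourier matrix is block diagonal with respect to the partition of $\Irr(\mathbb Q[W])$ into families. The statement therefore reduces to a family-by-family question: for each family $\mathfrak c$, consider the square submatrix of the Fourier matrix on $\mathcal M(\mathcal G_{\mathfrak c})$ obtained by restricting both indices to the subset $\{x_E \mid E\in\mathfrak c\}$ of elements parametrizing principal series (i.e.\ $W$-module) unipotent characters. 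We must show this submatrix is invertible in all cases except the stated exceptions.

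\textbf{Easy families.} If $\mathcal G_{\mathfrak c}$ is trivial, the block is $(1)$. For exceptional types $G_2,F_4,E_6,E_7,E_8$, the groups $\mathcal G_{\mathfrak c}$ lie in $\{1,\mathbb Z/2,S_3,S_4,S_5\}$ and the Fourier matrices are explicitly tabulated in \cite{Lu-Characters}. I would simply extract, for each family, the principal-series submatrix and compute its determinant, citing \textsc{Chevie} \cite{Chevie}. For $\mathbb Z/2$ families (with four elements, three principal series in $E_7$, etc.) this is even a hand computation with entries $\pm\frac12$.

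\textbf{Classical types.} For types $B_n,C_n,D_n$, the families correspond to similarity classes of Lusztig symbols. Here $\mathcal M(\mathcal G_{\mathfrak c})$ is identified with a set of symbols of fixed entries, $\mathcal G_{\mathfrak c}=(\mathbb Z/2)^d$, and the Fourier pairing is $\{\Lambda,\Lambda'\}=\pm 2^{-d}$ given by the parity of $\#(M\cap M')$-type intersections of the subsets $M$ of the singles. The principal-series symbols are those with defect $1$ (type $B/C$) or $0$ (type $D$), i.e.\ subsets $M$ of the $2d+1$ singles with fixed cardinality. The Fourier matrix on all of $(\mathbb Z/2)^{2d}$ is a Hadamard-type character table. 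Restricted to fixed-cardinality subsets, it becomes a matrix $\bigl((-1)^{\#(M\cap M')}\bigr)$ on $\binom{2d+1}{d}$ subsets, which is a Johnson-scheme matrix. Its eigenvalues are computable as Krawtchouk-type sums (Eberlein polynomials). I would show they are nonzero by exhibiting it in the Johnson association scheme and evaluating the eigenvalues explicitly. This is the main obstacle: proving non-vanishing of these eigenvalues uniformly in $d$, in the precise normalization for defect-$1$ symbols, and handling type $D$, where degenerate symbols and the defect-$0$ convention modify the index set. Type $A$ is trivial since all families are singletons.

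\textbf{The $E_7$ statement.} Here the tables give a one-dimensional kernel, coming from the exceptional $\mathbb Z/2$ family containing the pair $E_1=512_a$, $E_2=512'_a$, where $\Delta$ is nontrivial. The kernel is generated by $[E_1]-[E_2]$. By Proposition~\ref{prop:lusztigPairingConsequences}(2), $K_1$ is spanned by $\sum_{\CO}(\chi_{E_1}(w_{\CO})-\chi_{E_2}(w_{\CO}))\,c_{\CO}\,\CO$ with $c_{\CO}\in\mathbb Q^\ast$. Since $E_2=E_1\otimes\mathrm{sgn}$ and $w_0=-1$ acts on these $512$-dimensional modules by $\mp 1$, the coefficient at $w_0$ is $\pm 1024\,c_{\{w_0\}}\ne0$. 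Hence any nonzero vector of $K_1$ has nonzero $w_0$-coordinate, which gives the required trivial intersection with the span of $\Cl(W)\setminus\{w_0\}$.
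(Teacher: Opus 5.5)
Your overall architecture matches the paper's. The factorization $C^{T}DMC'$ reduces everything to the family blocks of the Fourier matrix, and the exceptional types are a \textsc{Chevie} computation. Types $B$ and $C$ reduce, after the diagonal rescaling coming from $M^\#=M\triangle M_0$, to $\mathcal M_d=\bigl((-1)^{\#(A\cap B)}\bigr)_{A,B\in\binom{[2d+1]}{d}}$.

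\textbf{The gap.} You stop exactly at what you call the main obstacle, and that step is the entire content of the classical case. It is not a formality. For $n=2d$ the analogous matrix on $\binom{[2d]}{d}$ is singular, since the rows of $A$ and $[2d]\setminus A$ differ by $(-1)^d$. So any argument must use $n=2d+1$ essentially.

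The paper supplies this step as follows.
\begin{enumerate}
\item Filter $V=\mathbb C[\binom{[n]}{d}]$ by $V_k=\operatorname{span}\{v_Z\mid \#Z=k\}$, where $v_Z=\sum_{A\supseteq Z}A$. These realize your Johnson eigenspaces as $V_k/V_{k-1}$, of dimension $\binom nk-\binom n{k-1}$.
\item Show that the matrix sends $v_Z$ into $c_kv_Z+V_{k-1}$. This holds because the coefficient of $B$ in the image depends only on $\#(B\cap Z)$, combined with a M\"obius inversion.
\item Evaluate $c_k$ by a generating function. Using $\sum_j\binom kj(1-x)^{k-j}(1+x)^j=2^k$, one finds that $\pm c_k$ is the coefficient of $x^{d-k}$ in $2^k(1-x)^{d-k}(1+x)^{n-d-k}$.
\item For $n=2d+1$ this polynomial is $2^k(1+x)(1-x^2)^{d-k}$. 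Its coefficient of $x^j$ is $\pm2^k\binom{d-k}{\lfloor j/2\rfloor}\neq0$ for every $0\le j\le 2(d-k)+1$, so all $c_k\neq0$.
\end{enumerate}
For $n=2d$ the same computation gives $(1-x^2)^{d-k}$, which vanishes in odd degrees. This recovers the singularity above and shows why the parity of $n$ matters.

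\textbf{Type $D$.} This is also left open in your proposal, and the fix is short. The identification $M\sim Z_1\setminus M$ is exactly what removes the even-case degeneracy. Choose in each class the representative avoiding a fixed $z_1\in Z_1$; this identifies the index set with $\binom{Z_1\setminus\{z_1\}}{d}$. Now pass to complements inside this $(2d-1)$-element set, where $\#(A\cap B)=1+\#(A^c\cap B^c)$. The rescaled block becomes $-\mathcal M_{d-1}$, so type $D$ reduces to the odd case.

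\textbf{Type $E_7$.} Your argument here is correct and improves on the paper's, which checks the nonzero $w_0$-coefficient of the kernel generator by a \textsc{Chevie} computation. You instead observe that $512'_a=512_a\otimes\mathrm{sgn}$ and that $w_0=-1$ is central of odd length $63$. Hence the coefficient is $\pm1024/\#W\neq0$. The identity $(\chi_{E_1}-\chi_{E_2})(w)=(1-\mathrm{sgn}(w))\chi_{E_1}(w)$ also explains the paper's remark that only odd-length classes occur.

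Your argument relies on the left kernel being spanned by $[E_1]-[E_2]$ rather than $[E_1]+[E_2]$, i.e.\ on the $\Delta$-twisted rows of $E_1$ and $E_2$ coinciding. You take this from Proposition~\ref{prop:lusztigPairingConsequences}(2). It is indeed consistent: the two unipotent characters have equal positive degree, so their uniform projections cannot be opposite.
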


We note the following immediate application of Theorem~\ref{thm:perfectPairing} and Proposition~\ref{prop:lusztigPairingConsequences}.

\begin{corollary}\label{cor:perfectPairingConsequences}
Let $(W,S)$ be a finite Weyl group with no irreducible component of type $E_8$,  and let $\CO_2\in \Cl(W)$ be an elliptic conjugacy class. Then there exists coefficients $c_{\CO_2,\CO'}\in\mathbb Q$ for all elliptic classes $\CO'\in\Cl(W)$ with the following property: If $\CO_1\in \Cl(W)$ is elliptic, and there does not exist an irreducible component $J\subseteq S$ of type $E_7$ such that the projection of $\CO_1$ to $W_J$ is equal to $\{w_{0,J}\}$, then
    \begin{align*}
        \sum_{\CO'\in \Cl(W)\text{ elliptic}} \psi_1(\CO_1\otimes \CO')c_{\CO_2,\CO'}=\delta_{\CO_1,\CO_2}.
    \end{align*}
\end{corollary}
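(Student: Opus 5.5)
The plan is to reduce to irreducible components, solve a linear system there using the invertibility in Theorem~\ref{thm:perfectPairing}, and then glue the component solutions by a tensor product.

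\emph{Reduction to irreducible components.} Write $S=S_1\sqcup\dots\sqcup S_k$ as irreducible components, so $W=\prod_i W_{S_i}$ and $\Cl(W)=\prod_i\Cl(W_{S_i})$. A class is elliptic exactly when each of its projections is elliptic. The pairing $\psi$ is multiplicative over components, and so is $\psi_1$. To see this, note that $G$ is (up to isogeny, which does not affect the Hecke algebra or unipotent characters) a product of simple groups. Then the character tables $C$, $C'$, the Fourier matrix $M$ and the sign function $\Delta$ in Definition~\ref{def:psi_generic} all factor as tensor products, and $\#\CO_2/\#W$ and $\q^{-\ell(w_0)}$ are multiplicative. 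Given this, I set
\[
c_{\CO_2,\CO'}:=\prod_i c^{(i)}_{\CO_{2,i},\CO'_i},
\]
where the $c^{(i)}$ are constructed componentwise. The sum over elliptic $\CO'$ then factors as a product of componentwise sums, each of which should equal $\delta_{\CO_{1,i},\CO_{2,i}}$.

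\emph{Irreducible component, not $E_7$ (and not $E_8$, by hypothesis).} By Theorem~\ref{thm:perfectPairing}(1) and Proposition~\ref{prop:lusztigPairingConsequences}(1), $\psi_1$ is non-degenerate on $\mathbb Q[\Cl(W_{S_i})]$. The difficulty is that we may only sum over \emph{elliptic} $\CO'$, so I need the restriction of $\psi_1$ to elliptic$\times$elliptic to be invertible.

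I would obtain this from the support property, Theorem~\ref{thm:lusztigPairingOverview}(5). If $\CO_1$ is elliptic and $\psi_1(\CO_1\otimes\CO')\ne0$, then $\CO'$ has a minimal-length element with full support $S_i$, so $\CO'$ is elliptic. Likewise, by the symmetry up to $\mathbb Q^\ast$ established in the proof of (5), if $\CO'$ is elliptic then $\psi_1(\CO\otimes\CO')=0$ for every non-elliptic $\CO$. Ordering classes as (elliptic, non-elliptic), the matrix of $\psi_1$ is therefore block diagonal. An invertible block-diagonal matrix has invertible blocks, so the elliptic block is invertible. I take $(c^{(i)}_{\CO_2,\CO'})$ to be its inverse (suitably transposed), which gives $\delta$ for all elliptic $\CO_1$.

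\emph{$E_7$ component.} This is the main obstacle. Here $\psi_1$ has a one-dimensional left kernel $K_1$ by Theorem~\ref{thm:perfectPairing}(2). Its generator $v$ has the form $\sum_\CO(\tr(w_\CO\mid E_1)-\tr(w_\CO\mid E_2))\CO$ up to scalars, and is supported on elliptic classes, since again the matrix is block diagonal and the non-elliptic block is invertible by the same support argument. Moreover, by Theorem~\ref{thm:perfectPairing}(2), $v$ has nonzero coefficient at $\{w_0\}$.

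The elliptic block $P$ (rows $\CO_1$, columns $\CO'$) then has rank $m-1$ with left kernel spanned by $v$. The row space of $P$ is the annihilator of the column kernel, and rows indexed by $\CO_1\neq\{w_0\}$ are linearly independent: any relation among them is a left-kernel vector with zero $w_0$-coefficient, hence trivial. So the linear system $\sum_{\CO'}P_{\CO_1,\CO'}x_{\CO'}=\delta_{\CO_1,\CO_2}$ for $\CO_1\neq w_0$ has full row rank and is solvable for each elliptic $\CO_2$. For $\CO_2=\{w_0\}$ the target is the zero vector, and I take $x=0$.

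Note that nothing is asserted at $\CO_1=\{w_0\}$ on an $E_7$ component, which matches the exclusion in the corollary. In the product formula, the factor for such a component is then only required at $\CO_{1,i}\neq w_{0,S_i}$, so the product identity holds exactly under the stated hypothesis.
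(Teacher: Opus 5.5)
Your proposal is correct and follows essentially the paper's proof. Both reduce to irreducible components, and outside type $E_7$ both combine invertibility with the elliptic/non-elliptic block-diagonality coming from Theorem~\ref{thm:lusztigPairingOverview}(5). In type $E_7$ both use that the one-dimensional kernel $K_1$ has nonzero coefficient at $\{w_0\}$; your linear independence of the rows indexed by $\CO_1\neq\{w_0\}$ is the dual form of the paper's splitting $V^\ast=\mathbb Q\lambda_{\{w_0\}}\oplus U$.

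One phrase is loose: the claim that the non-elliptic $E_7$ block is invertible ``by the same support argument''. That argument relied on the full matrix being invertible, which fails for $E_7$. You do not need this claim, though. Block-diagonality already places every left-kernel vector of the elliptic block inside $K_1$, and Theorem~\ref{thm:perfectPairing}(2) then forces such a vector to be zero whenever its $\{w_0\}$-coefficient is zero.
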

\begin{proof}
    It suffices to prove this corollary separately for each irreducible component of $(W, S)$. For components not of type $E_7, E_8$, this follows from Proposition~\ref{prop:lusztigPairingConsequences} (1), together with the information that the matrix considered there is invertible by Theorem~\ref{thm:perfectPairing}, together with the information that it is block diagonal from Theorem~\ref{thm:lusztigPairingOverview}.

    Consider now the $E_7$ case. Let $\Cl^{\mathrm{el}}(W)$ be the set of elliptic classes and let $V=\mathbb Q[\Cl^{\mathrm{el}}(W)]$ be the free $\mathbb Q$-vector space over this set. Let $U$ be the $\mathbb Q$-subspace of $V^\ast$ spanned by the functions
    \begin{align*}
        \Bigl(\CO_1\mapsto \psi_1(\CO_1\otimes\CO_2)\Bigr)_{\CO_2\in \Cl(W)^{\mathrm{el}}}.
    \end{align*}
    Then we saw in Theorem~\ref{thm:perfectPairing} together with Theorem~\ref{thm:lusztigPairingOverview} that $K = \bigcap_{\lambda\in U}\ker \lambda$ is one-dimensional, with an explicit basis vector listed there. 
    Given any $\CO_2\in \Cl^{\mathrm{el}}(W)$, consider $\lambda_{\CO_2}\in V^\ast$ defined by $\lambda_{\CO_2}(\CO_1) = \delta_{\CO_1,\CO_2}$ for all $\CO_1\in \Cl^{\mathrm{el}}(W)$.
    
    Consider the class $\{w_0\}\in \Cl^{\mathrm{el}}(W)$. Then $\lambda_{\{w_0\}}\vert_K$ is non-vanishing. Thus, $V^\ast$ is a direct sum of $\mathbb Q\lambda_{\{w_0\}}$ and $U$. Thus, for every $\CO_2\in\Cl^{\mathrm{el}}(W)$, there exists some $q\in \mathbb Q$ with $\lambda_{\CO_2} - q\lambda_{\{w_0\}}\in U$. This finishes the proof.
\end{proof}

This theorem is easily reduced to the case where $(W, S)$ is irreducible. We therefore study the exceptional and classical types separately.
\subsection{Lusztig pairings in exceptional types}\label{subsec:exceptional}

In exceptional types, the matrix in Proposition~\ref{prop:lusztigPairingConsequences} can be computed using the \textsc{CHEVIE} computer algebra system \cite{Chevie}. This yields an invertible matrix in types $G_2, F_4$ and $E_6$.

For example, this is the matrix for $G_2$ (omitting zero values):
\begin{align*}
    \begin{pmatrix}
1&&&&\\&1&&&\\&&2/3&-1/3&1/3\\&&-1/3&2/3&1/3\\&&1/3&1/3&1/6
    \end{pmatrix}.
\end{align*}This is the matrix for $E_6$:
\begin{equation*}
\setcounter{MaxMatrixCols}{30}
\left(
    \begin{smallmatrix}
1&&&&&&&&&&&&&&&&&&&&&&&\\&1&&&&&&&&&&&&&&&&&&&&&&\\&&2/3&1/3&0&1/3&-1/3&&&&&&&&&&&&&&&&&\\&&1/3&1/6&-1/2&1/6&1/3&&&&&&&&&&&&&&&&&\\&&0&-1/2&1/2&1/2&0&&&&&&&&&&&&&&&&&\\&&1/3&1/6&1/2&1/6&1/3&&&&&&&&&&&&&&&&&\\&&-1/3&1/3&0&1/3&2/3&&&&&&&&&&&&&&&&&\\&&&&&&&1&&&&&&&&&&&&&&&&\\&&&&&&&&1&&&&&&&&&&&&&&&\\&&&&&&&&&1/2&-1/2&1/2&&&&&&&&&&&&\\&&&&&&&&&-1/2&1/2&1/2&&&&&&&&&&&&\\&&&&&&&&&1/2&1/2&1/2&&&&&&&&&&&&\\&&&&&&&&&&&&1/2&-1/2&1/2&&&&&&&&&\\&&&&&&&&&&&&-1/2&1/2&1/2&&&&&&&&&\\&&&&&&&&&&&&1/2&1/2&1/2&&&&&&&&&\\&&&&&&&&&&&&&&&1&&&&&&&&\\&&&&&&&&&&&&&&&&1&&&&&&&\\&&&&&&&&&&&&&&&&&1&&&&&&\\&&&&&&&&&&&&&&&&&&1&&&&&\\&&&&&&&&&&&&&&&&&&&1&&&&\\&&&&&&&&&&&&&&&&&&&&1&&&\\&&&&&&&&&&&&&&&&&&&&&1&&\\&&&&&&&&&&&&&&&&&&&&&&1&\\&&&&&&&&&&&&&&&&&&&&&&&1\\&&&&&&&&&&&&&&&&&&&&&&&&1\\
\end{smallmatrix}
    \right).
\end{equation*}
For $F_4$ the matrix is block diagonal again (according to the families), and we get the following distinct diagonal blocks:
\begin{align*}
    (1),\left(\begin{smallmatrix}
1/2&-1/2&1/2\\-1/2&1/2&1/2\\1/2&1/2&1/2\end{smallmatrix}\right),
\left(\begin{smallmatrix}
2/3&-1/3&1/3\\-1/3&1/6&1/12\\1/3&1/12&1/24\end{smallmatrix}\right),
    \left(\begin{smallmatrix}
3/8&-1/8&-1/8&3/8&-1/8&1/4&1/8&-1/4&1/4&-1/4\\-1/8&3/8&-1/8&-1/8&3/8&1/4&1/8&1/4&-1/4&-1/4\\-1/8&-1/8&3/8&-1/8&-1/8&1/4&1/8&-1/4&-1/4&1/4\\3/8&-1/8&-1/8&3/8&-1/8&1/4&1/8&1/4&-1/4&1/4\\-1/8&3/8&-1/8&-1/8&3/8&1/4&1/8&-1/4&1/4&1/4\\1/4&1/4&1/4&1/4&1/4&1/6&1/12&0&0&0\\1/8&1/8&1/8&1/8&1/8&1/12&1/24&1/4&1/4&1/4\\-1/4&1/4&-1/4&1/4&-1/4&0&1/4&1/2&0&0\\1/4&-1/4&-1/4&-1/4&1/4&0&1/4&0&1/2&0\\-1/4&-1/4&1/4&1/4&1/4&0&1/4&0&0&1/2\end{smallmatrix}\right).
\end{align*}
Up to this point, all the matrices we have seen in this section are invertible. Thus, Theorem~\ref{thm:perfectPairing} is true for exceptional types.

For $E_7$, we have the following distinct diagonal blocks appearing:
\begin{align*}(1),
    \begin{pmatrix}       
1/2&-1/2&1/2\\-1/2&1/2&1/2\\1/2&1/2&1/2
    \end{pmatrix},
    \begin{pmatrix}
1/6&1/3&1/3&-1/2&1/6\\1/3&2/3&-1/3&0&1/3\\1/3&-1/3&2/3&0&1/3\\-1/2&0&0&1/2&1/2\\1/6&1/3&1/3&1/2&1/6
\end{pmatrix},\begin{pmatrix}
1/2&1/2\\1/2&1/2
\end{pmatrix}.
\end{align*}

The last block is the only singular one, and it appears only once. Thus, the kernel of the original matrix is one-dimensional. This is explained as follows: The Weyl group $W$ has two $512$-dimensional irreducible representations, which is the largest dimension for all irreducible representations. Denote them by $E_1, E_2\in \Irr(\mathbb Q[W])$.

We lift $E_1, E_2$ to representations of $G(\mathbb F_q)$ as usual, and denote their characters by $\rho_1$ and $\rho_2$. Then for every Deligne-Lusztig character $R^\theta_T$ of $G$, we get
    \begin{align*}
        \langle \rho_1, R^\theta_T\rangle = \langle \rho_2,  R^\theta_T\rangle.
    \end{align*}
    Therefore, regular semisimple classes cannot distinguish these two representations.
    The difference between $\rho_1$ and $\rho_2$, as functions on $G(\mathbb F_q)$, is given by a linear combination of two characteristic functions of cuspidal character sheaves, see \cite[Corollary~4.14]{Lu-CS3} and \cite[Proof of Proposition 20.3 (c)]{Lu-CS4}. Each cuspidal character sheaf's support is a single conjugacy class, containing an element of the form $su$ such that $s\in G(\mathbb F_q)$ is semisimple, $C_G(s)^{\circ}$ has Cartan type $A_1 \times A_3\times A_3$ and $u\in C_G(s)^{\circ}$ is regular unipotent in this reductive group. For example if $\sqrt{-1} \in \mathbb F_q$ and $G$ is adjoint, then we can choose $s\in T(\mathbb F_q)$ such that $\alpha_i(s)=1$ for $i=1,2,3,5,6,7$ and $\alpha_4(s)=\sqrt{-1}$. Then $C_G(s)^{\circ}$ contains the root subgroups of the simple roots $\alpha_1,\alpha_2, \alpha_3,\alpha_5, \alpha_6, \alpha_7$ as well as the highest root $\theta$. The details are very nicely elaborated in \cite[Section~6]{Ge24}.

    We are in case (2) of Proposition~\ref{prop:lusztigPairingConsequences} . 
    Using \textsc{Chevie}, we compute the map corresponding to the specialization of the kernel generator at $q=1$:
$$ \mathbb{Q}[Cl(W)] \longrightarrow \mathbb{Q}, \quad \mathcal{O} \mapsto \operatorname{tr}(w_{\mathcal{O}}\mid E_1) - \operatorname{tr}(w_{\mathcal{O}}\mid E_2). $$
The kernel $K_1$ is one-dimensional, spanned by the explicit linear combination of classes obtained from this map. A direct calculation confirms that all the conjugacy classes that occur here with non-zero coefficients are elliptic and consist of elements of odd lengths. In particular, we obtain a strictly non-zero coefficient for the longest element $w_0\in W$. This verifies the intersection property required by Theorem \ref{thm:perfectPairing}(2).

    For type $E_8$, the kernel of the matrix in Proposition~\ref{prop:lusztigPairingConsequences}  is two dimensional. The issue has a similar explanation as in type $E_7$, stemming from four $4096$-dimensional irreducible representations of the Hecke algebra. For the purpose of this paper, we do not need the Lusztig's pairing for type $E_8$.

\subsection{Lusztig pairings in classical types via Lusztig symbols}\label{sec:symbols}

We now study the Fourier transform matrix for groups of classical Cartan types. As discussed in Section \ref{sec:6.3}, Lusztig organizes the irreducible representations $E \in \operatorname{Irr}(\mathbb{Q}[W])$ into families, and the matrix $M = (\{x_E, x_{E'}\})$ is block-diagonal with respect to this partition \cite[\S 4.21]{Lu-Characters}.

\textbf{Type $A$}

This case is straightforward. The families in type $A$ consist only of singletons. Consequently, the matrix $M$ is the identity matrix, which is trivially invertible.

\textbf{Type $B$}

The unipotent representations were studied in \cite[\S 4.5]{Lu-Characters}. The representations in a family are indexed by {\it symbols}
\begin{align*}
    \Lambda_M = \begin{pmatrix}Z_2\sqcup(Z_1-M)\\Z_2\sqcup M\end{pmatrix},
\end{align*}
where $Z_1\sqcup Z_2$ is equal to a certain fixed set (which determines the family), and $M$ runs over all $d$-element subsets of $Z_1$, where $\# Z_1 = 2d+1$. 

We denote by $\binom{Z_1}d\subseteq 2^{Z_1}$ the set of $d$-element subsets of $Z_1$. The pairing of two symbols is $$\{\Lambda_{M_1}, \Lambda_{M_2}\}=2^{-d}(-1)^{\# (M_1^{\#}\cap M_2^{\#})}.$$ Here $M_i^{\#} = (M_i\cup M_0) - (M_i\cap M_0)$ for some fixed set $M_0$ depending on the family. Thus 
\begin{align*} 
\#(M_1^{\#} \cap M_2^{\#}) &=\#\bigl((M_1 \cap M_2) \cup M_0-(M_1 \cap M_0)-(M_2 \cap M_0)\bigr) \\ &=\#(M_1 \cap M_2)+\#(M_0)-\#(M_1 \cap M_2 \cap M_0) \\ & \quad -\#(M_1 \cap M_0)-\#(M_2 \cap M_0)+\#(M_1 \cap M_2 \cap M_0) \\ &= \#(M_1 \cap M_2)+\#(M_0)-\#(M_1 \cap M_0)-\#(M_2 \cap M_0).
\end{align*}

Hence the matrix $(\{\Lambda_{M_1}, \Lambda_{M_2}\})_{M_1, M_2\in \binom{Z_1}d}$ can be transformed by left multiplication of the diagonal matrix $\diag(2^d (-1)^{\#(M_1 \cap M_0)+\#(M_0)})_{M_1 \in \binom{\{1,\dotsc,2d+1\}}{d}}$ and by right multiplication of the diagonal matrix $\diag((-1)^{\#(M_2 \cap M_0)})_{M_2 \in \binom{\{1,\dotsc,2d+1\}}{d}}$ into the matrix
    \begin{align*}
        \mathcal M_{d}:=\Bigl( (-1)^{\#(A\cap B)}\Bigr)_{A,B\in \binom{\{1,\dotsc,2d+1\}}{d}}
    \end{align*}
We will study this matrix in the next subsection.

\textbf{Type $C$}

The representation theory of the Hecke algebra for type $C_n$ is identical to that of type $B_n$. The analysis of the unipotent families is completely analogous and ultimately reduces to the invertibility of the exact same matrix $\mathcal{M}_{d}$.

\textbf{Type $D$}

We follow \cite[\S 4.6]{Lu-Characters}. Families with more than one member are again parametrized by symbols
\begin{align*}
    \Lambda_M = \begin{pmatrix}Z_2\sqcup(Z_1-M)\\Z_2\sqcup M\end{pmatrix},
\end{align*}
similar to the ones above, the main difference being that the symbol is still considered the same if the two rows are swapped. Here, $Z_1\sqcup Z_2$ is equal to a certain fixed set determined by the family, and $M\subseteq Z_1$ is a subset of cardinality $\# M = d$, where $\# Z_1 = 2d$. The equivalence relation is $M \sim (Z_1-M)$. The pairing is
\begin{align*}
    \{\Lambda_{M_1}, \Lambda_{M_2}\} = 2^{-d} (-1)^{\# (M_1^{\#} \cap M_2^{\#})}.
\end{align*}

The matrix $(\{\Lambda_{M_1}, \Lambda_{M_2}\})_{M_1, M_2\in \binom{Z_1}d/\sim}$ can be transformed by left and right multiplication of diagonal matrices into the matrix
    \begin{align*}
        \tilde{\mathcal M}_{d}:=\Bigl( (-1)^{\#(A\cap B)}\Bigr)_{A,B\in \binom{\{1,\dotsc,2d\}}{d}/\sim}
    \end{align*}

To connect this to the odd-cardinality case, choose a fixed element $z_1\in Z_1$. Under the equivalence relation on $\binom{Z_1}{d}$, every equivalence class contains exactly one representative $M$ such that $z_1\notin M$. This establishes a canonical bijection between $\binom{Z_1}{d}/\sim$ and the subsets $\binom{Z_1 \setminus \{z_1\}}{d}$, which is exactly $\binom{\{1,\dotsc,2d-1\}}{d}$. Under this bijection, the matrix $\tilde{\mathcal{M}}_{d}$ is identical to $\mathcal{M}_{d-1}$. Therefore, the invertibility of the pairing in type $D$ also completely reduces to the invertibility of $\mathcal{M}_d$.

\subsection{Characteristic polynomial of a $\pm 1$-matrix}
    Let $0\leq d\leq n$ be two integers. Set $I = \{1,2,\dotsc,n\}$ and $S = \binom Id = \{A\subseteq I\mid \# A = d\}$. Let $V$ be the free $\mathbb C$-vector space with basis $S$. Define a linear map $f : V\to V$ by
    \begin{align*}
        f(A) = \sum_{B\in S} (-1)^{\#(A\cap B)} \text{ for } A \in V.
    \end{align*}

    The matrix of $f$ with respect to the basis $S$ is
\[
\mathcal{M}_{n,d}:=\bigl((-1)^{\#(A\cap B)}\bigr)_{A,B\in S}.
\]

The main result of this section is a computation of the characteristic polynomial of $f$. As a special case, we shall prove that $f$ (and hence $\mathcal{M}_{n,d}$) is \textbf{invertible} when $n=2d+1$.
This is the matrix that appears for types $B$ and $C$; for type $D$ the relevant matrix
reduces to an odd‑case matrix of smaller size (see \S\ref{sec:symbols}),
so invertibility in the odd case suffices for all classical types.

For a subset $Z\subseteq I$, we define
\[
v_Z:=\sum_{\substack{A\in S\\ Z\subseteq A}}A\in V.
\]
For $-1\leq k\leq d$, let $V_k\subseteq V$ be the subspace spanned by all $v_Z$ with $\#Z=k$. Then
\[
0=V_{-1} \subseteq V_0\subseteq V_1\subseteq\cdots\subseteq V_d=V.
\]

\begin{proposition}\label{prop:charpoly}
    The characteristic polynomial of $f$ is given by
    \begin{align*}
        \chi_f = \prod_{k=0}^d (x-c_k)^{\binom {n} k - \binom n{k-1}},
    \end{align*}
    where the eigenvalues are $c_k=\sum_{\ell_1,\ell_2\geq 0}(-1)^{k+\ell_2} \binom k{\ell_1}\binom{d-\ell_1}{\ell_2}\binom{n-d-k+\ell_1}{d-k-\ell_2}$.
\end{proposition}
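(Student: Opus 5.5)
The plan is to show that $f$ preserves the filtration $V_0\subseteq V_1\subseteq\cdots\subseteq V_d$ and acts on each graded piece $V_k/V_{k-1}$ by the scalar $c_k$. It then remains to compute $\dim V_k/V_{k-1}=\binom nk-\binom n{k-1}$, and the characteristic polynomial follows by taking the product over the graded pieces.

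\textbf{Step 1: dimensions.} The vectors $v_Z$ with $\#Z=k$ span $V_k$. The linear map $\mathbb C[\binom Ik]\to V$, $Z\mapsto v_Z$, is the transpose of the inclusion map between $d$-subsets and $k$-subsets. For $k\le d\le n-k$ this map is injective by the classical Gottlieb--Kantor full-rank theorem. Hence $\dim V_k=\binom nk$ in the range $k\le \min(d,n-d)$. In the case $n=2d+1$, and more generally whenever $d\le n-d$, this covers every $k\le d$. The graded pieces therefore have dimension $\binom nk-\binom n{k-1}$. If $d>n-d$, I would apply the complementation symmetry $A\mapsto I\setminus A$, which changes $(-1)^{\#(A\cap B)}$ only by diagonal signs. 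I would only sketch this case, since the application only needs $n=2d+1$.

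\textbf{Step 2: triangularity.} Fix $Z$ with $\#Z=k$ and compute
\[
f(v_Z)=\sum_{B\in S}\Bigl(\sum_{A\supseteq Z,\ \#A=d}(-1)^{\#(A\cap B)}\Bigr)B.
\]
The inner coefficient depends on $B$ only through $\ell_1:=\#(Z\setminus B)$, or equivalently through $\#(Z\cap B)=k-\ell_1$. Write $A=Z\sqcup A'$ with $A'\subseteq I\setminus Z$ and $\#A'=d-k$. Let $\ell_2:=\#(A'\cap B)$, where $B\setminus Z$ has size $d-k+\ell_1$. The number of choices of $A'$ is $\binom{d-k+\ell_1}{\ell_2}\binom{n-d-\ell_1}{d-k-\ell_2}$, and each contributes the sign $(-1)^{k-\ell_1+\ell_2}$.

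Thus $f(v_Z)=\sum_{j}a_j\,u_{Z,j}$, where $u_{Z,j}$ is the sum of those $B$ with $\#(B\cap Z)=j$. Next, I use inclusion–exclusion to express $u_{Z,j}$ through the vectors $v_Y$ with $Y\subseteq Z$. Precisely, $\sum_{Y\subseteq Z,\#Y=m}v_Y=\sum_j\binom jm u_{Z,j}$, and this relation is inverted by a Möbius sum. This shows $f(v_Z)\in V_k$. Modulo $V_{k-1}$, only the term with $Y=Z$ survives, namely $v_Z$ itself, with coefficient $\sum_j(-1)^{k-j}a_j$. Rewriting this coefficient in terms of $\ell_1=k-j$ and simplifying by a Vandermonde-type reindexing should produce $c_k$ exactly as stated, with $\binom{d-\ell_1}{\ell_2}\binom{n-d-k+\ell_1}{d-k-\ell_2}$. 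This identification is where I expect to make bookkeeping adjustments.

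\textbf{Main obstacle.} The hardest part is getting this final scalar to match the stated closed form exactly. A cleaner route is to compute the scalar by evaluating $f(v_Z)$ at one specific $B$, since the filtration forces the answer. Take $B$ with $B\cap Z=\emptyset$ and all the lower-order terms vanishing. Such a $B$ exists by using the dual basis, or the "harmonic" vector $v_Z$ projected orthogonally to $V_{k-1}$; this projection is a standard $S_n$-irreducible (Specht module) argument. In fact the cleanest proof of the whole proposition uses representation theory: $V\cong\bigoplus_{k\le\min(d,n-d)}S^{(n-k,k)}$ is multiplicity-free, so the $S_n$-equivariant map $f$ acts by a scalar on each summand. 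Step 2 only needs to identify these scalars, by evaluating on the highest-weight (harmonic) vector $\prod_{i=1}^k(e_{2i-1}-e_{2i})$-type combinations. I would fall back on this approach if the direct inclusion–exclusion becomes messy.
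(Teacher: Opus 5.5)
Your main line (the filtration $V_0\subseteq\cdots\subseteq V_d$, M\"obius inversion to get $f(V_k)\subseteq V_k$, then the scalar on $V_k/V_{k-1}$) is exactly the paper's proof. However, the last step of your Step~2 contains an error that changes the eigenvalue. Modulo $V_{k-1}$ one has $u_{Z,j}\equiv(-1)^{k-j}\binom kj\,v_Z$, not $(-1)^{k-j}v_Z$. The reason is that $u_{Z,j}$ is the sum of the $\binom kj$ vectors $v_{Z',Z}$ with $Z'\subseteq Z$ and $\#Z'=j$, and each of these is congruent to $(-1)^{k-j}v_Z$. Equivalently, invert your relation and keep only the $m=k$ term.

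Hence the scalar is $\sum_j(-1)^{k-j}\binom kj\,a_j$, and this multiplicity is exactly the factor $\binom k{\ell_1}$ in $c_k$. Without it you get, for instance, $3$ instead of $c_2=4$ when $n=5$, $d=2$.

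Once the factor is included, no Vandermonde identity is needed. You parametrize by $\ell_1=\#(Z\setminus B)$, whereas the stated formula uses $\ell_1=\#(B\cap Z)$. The substitution $\ell_1\mapsto k-\ell_1$ turns $\binom{d-k+\ell_1}{\ell_2}\binom{n-d-\ell_1}{d-k-\ell_2}$ into $\binom{d-\ell_1}{\ell_2}\binom{n-d-k+\ell_1}{d-k-\ell_2}$, and the signs $(-1)^{j+\ell_2}(-1)^{k-j}$ combine to $(-1)^{k+\ell_2}$. This gives $c_k$ exactly. The paper never loses the multiplicity because it sums over all $Z'\subseteq Z$ rather than grouping by $j=\#Z'$.

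Two smaller remarks.

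First, your Step~1 is more careful than the paper, which simply asserts $\dim V_k/V_{k-1}=\binom nk-\binom n{k-1}$. Gottlieb--Kantor is what is needed, and in general $\dim V_k=\min\bigl(\binom nk,\binom nd\bigr)$. So the filtration argument (yours or the paper's) yields the displayed formula only for $2d\le n+1$; beyond that some displayed exponents become negative. This range covers the case $n=2d+1$ that is actually used.

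Second, the ``evaluate at one specific $B$'' shortcut fails as described. For $k\ge 1$, $V_{k-1}$ contains the all-ones vector $v_\emptyset$, so no coordinate functional vanishes on $V_{k-1}$. You need a vector in $V_{k-1}^{\perp}$, which is your Specht-module fallback. That fallback is a valid alternative route: $f$ is $S_n$-equivariant and $V\cong\bigoplus_{k\le\min(d,n-d)}S^{(n-k,k)}$ is multiplicity-free. Schur's lemma then gives the shape of $\chi_f$ and the multiplicities $\dim S^{(n-k,k)}=\binom nk-\binom n{k-1}$ without any rank theorem. The closed form of the scalars must still be computed, either on an explicit vector of $S^{(n-k,k)}$ or by the corrected M\"obius computation above.
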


\begin{proof}
We have $\dim V_k / V_{k-1} = \binom nk - \binom n{k-1}$.
Our goal is to show that $f$ preserves each $V_k$ and its action on $V_k / V_{k-1}$ equals multiplication by $c_k$. 

Let us now evaluate the function $f$ on elements of $V_k$ for $k=-1,\dotsc,d$. Let $Z\in \binom Ik$.
For subsets $Z'\subseteq Z\subseteq I$, we define
    \begin{align*}
        v_{Z',Z} := \sum_{\substack{A\in S\\A\cap Z = Z'}} A \in V.
    \end{align*}
    Then it is an easy exercise in M\"obius inversion to show that
    \begin{align}
        v_{Z',Z} = \sum_{Z'\subseteq Y\subseteq Z} (-1)^{\#(Y\setminus Z')} v_Y\in V_k.\label{eq:vZprimeZ}
    \end{align}
    Alternatively, it is elementary to check \eqref{eq:vZprimeZ} by hand: The coefficient of $A\in S$ in the right-hand side of \eqref{eq:vZprimeZ} is
    \begin{align*}
        \sum_{Z'\subseteq Y\subseteq Z\cap A} (-1)^{\#(Y\setminus Z')}.
    \end{align*}This is easily seen to be $1$ if $Z\cap A=Z'$ and $0$ if $Z'\not\subseteq A$. If $Z'\subsetneq Z\cap A$, we get an alternating sum whose value is zero.
    
 Thus, we may compute
    \begin{align*}
        f(v_Z) &= \sum_{Z\subseteq A\in S} f(A) = \sum_{\substack{Z\subseteq A\in S\\ B\in S}}(-1)^{\#(A\cap B)}B
        \\&=\sum_{B\in S}(-1)^{\# (B\cap Z)}B\sum_{Z\subseteq A\in S}(-1)^{\#(A\cap B\setminus Z)} 
        \\&=\sum_{b\in S}(-1)^{\#(B\cap Z)}  B\sum_{A'\in \binom{I\setminus Z}{d-k}} (-1)^{\#(A'\cap B\setminus Z)}
        \\&= \sum_{B\in S} (-1)^{\#(B\cap Z)}B\sum_{\substack{A_1\subseteq B\setminus Z\\ A_2\subseteq I\setminus (B\cup Z)\\ \# A_1 + \# A_2 = d-k}} (-1)^{\#A_1}
        \\&=\sum_{B\in S} B\sum_{\ell\geq 0} (-1)^{\#(B\cap Z)+\ell} \binom{\#(B\setminus Z)}{\ell} \binom{n-\# (B\cup Z)}{d-k-\ell}.
    \end{align*}
    Write now $\# (B\setminus Z) = d - \# (B\cap Z)$ and $\#(B\cup Z) = k+d-\# (B\cap Z)$. Therefore, the coefficient of $B$ in the above sum only depends on the cardinality $B\cap Z =: Z'$. We therefore get
    \begin{align*}
        f(v_Z) = \sum_{Z'\subseteq Z} v_{Z',Z}\sum_{\ell\geq 0} (-1)^{\#Z'+\ell} \binom{d-\# Z'}{\ell} \binom{n-d-k+\# Z'}{d-k-\ell}.
    \end{align*}

    By \eqref{eq:vZprimeZ}, it follows that $f(V_Z)\in V_k$. By linearity, we get $f(V_k)\subseteq V_k$ as claimed.

    Finally, we study the value of $f(v_Z)$ modulo $V_{k-1}$ (if $k\geq 0$). Using \eqref{eq:vZprimeZ}, we get
    $v_{Z',Z} \in (-1)^{k-\#Z'} v_Z + V_{k-1}$. Using the above formula for $f(v_Z)$, we get
    \begin{align*}
        f(v_Z) \in v_{Z}\sum_{Z'\subseteq Z} \sum_{\ell\geq 0} (-1)^{k+\ell} \binom{d-\# Z'}{\ell} \binom{n-d-k+\# Z'}{d-k-\ell} + V_{k-1}.
    \end{align*}
    The right-hand side can now be more concisely written as $c_k v_Z+ V_{k-1}$. We are done.
\end{proof}

\begin{lemma}\label{lem:coeff}
The number $c_k$ is equal to the coefficient of $x^{d-k}$ in
    \begin{align*}
        (-1)^{d+k} 2^k (1-x)^{d-k} (1+x)^{n-d-k}.
    \end{align*}
\end{lemma}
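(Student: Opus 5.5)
We prove Lemma~\ref{lem:coeff} by a generating-function computation. We start from the formula of Proposition~\ref{prop:charpoly},
\begin{align*}
c_k=\sum_{\ell_1,\ell_2\geq 0}(-1)^{k+\ell_2}\binom k{\ell_1}\binom{d-\ell_1}{\ell_2}\binom{n-d-k+\ell_1}{d-k-\ell_2},
\end{align*}
and first carry out the inner sum over $\ell_2$ for fixed $\ell_1$. The expression $\sum_{\ell_2}(-1)^{\ell_2}\binom{d-\ell_1}{\ell_2}\binom{n-d-k+\ell_1}{d-k-\ell_2}$ is a Vandermonde-type convolution. Hence it equals the coefficient of $x^{d-k}$ in $(1-x)^{d-\ell_1}(1+x)^{n-d-k+\ell_1}$, since the two factors contribute $x^{\ell_2}$ and $x^{d-k-\ell_2}$ respectively.

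Next we sum over $\ell_1$ while keeping the extraction of the coefficient of $x^{d-k}$ outside; this is legitimate because the coefficient extraction is linear. We factor out $(1-x)^{d-k}(1+x)^{n-d-k}$, which leaves
\begin{align*}
(-1)^k\sum_{\ell_1}\binom k{\ell_1}(1-x)^{k-\ell_1}(1+x)^{\ell_1}=(-1)^k\bigl((1-x)+(1+x)\bigr)^k=(-1)^k2^k
\end{align*}
by the binomial theorem. This is the key simplification: the $x$-dependence cancels completely. It gives $c_k=(-1)^k2^k\,[x^{d-k}](1-x)^{d-k}(1+x)^{n-d-k}$.

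It remains to reconcile the sign with the claimed factor $(-1)^{d+k}$. The two differ by $(-1)^d$, so we must recheck the sign bookkeeping in the proof of Proposition~\ref{prop:charpoly}. Going from $f(v_Z)$ modulo $V_{k-1}$ to $c_k$, each $v_{Z',Z}$ contributes $(-1)^{k-\#Z'}v_Z$. Combined with the factor $(-1)^{\#Z'}$ already present, this yields $(-1)^{k}$ overall. The sum over $Z'\subseteq Z$ then produces the factor $\binom{k}{\ell_1}$ with $\ell_1=\#Z'$, which is consistent with the statement of Proposition~\ref{prop:charpoly}.

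We expect this sign reconciliation to be the main obstacle. If the discrepancy persists after rechecking, the first thing we would try is the substitution $x\mapsto -x$. It turns $(1-x)^{d-k}(1+x)^{n-d-k}$ into $(1+x)^{d-k}(1-x)^{n-d-k}$ and multiplies the coefficient of $x^{d-k}$ by $(-1)^{d-k}$. Failing that, we would test the identity directly on a small case such as $d=0$ or $k=d$. There both sides are explicit: for $k=d$ one gets $c_d=(-1)^d\binom{d}{\ell_1}$ summed, which equals $(-1)^d2^d$. Such a check pins down the correct sign convention. In either case, the invertibility consequence needed later for $n=2d+1$ only uses that each $c_k$ is nonzero, which is insensitive to the overall sign.
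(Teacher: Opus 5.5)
Your computation is correct and is exactly the paper's argument. The inner sum over $\ell_2$ is the coefficient of $x^{d-k}$ in $(1-x)^{d-\ell_1}(1+x)^{n-d-k+\ell_1}$, and the sum over $\ell_1$ collapses via $\bigl((1-x)+(1+x)\bigr)^k=2^k$.

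The sign discrepancy you found is genuine, not a bookkeeping slip on your side. The $c_k$ defined in Proposition~\ref{prop:charpoly} is the correct eigenvalue, and it satisfies $c_k=(-1)^k2^k\cdot[x^{d-k}](1-x)^{d-k}(1+x)^{n-d-k}$. The paper's own proof errs at exactly one point: it writes $(-1)^{d+k}c_k$ where the displayed formula for $c_k$ gives $(-1)^kc_k$.

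Hence the lemma as stated is false whenever $d$ is odd. Your own $k=d$ check gives $c_d=(-1)^d2^d$, against the predicted $2^d$. Concretely, take $d=1$, $n=3$: the matrix of $f$ is the $3\times 3$ matrix with $-1$ on the diagonal and $1$ off it. Its eigenvalues are $c_0=1$ on $V_0$ and $c_1=-2$ on $V_1/V_0$, while the stated formula predicts $-1$ and $2$.

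What your proposal lacks is the conclusion. Instead of committing, it ends with contingency plans, and the first of these cannot work. The substitution $x\mapsto -x$ only re-expresses the same number as a coefficient of a different polynomial. It never changes the value, so it cannot turn $(-1)^k$ into $(-1)^{d+k}$ in front of the polynomial named in the statement.

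The argument should end by asserting that the identity holds with the sign $(-1)^k$, i.e.\ that the stated version is off by a factor $(-1)^d$. You should then note, as you correctly do, that this does not affect the subsequent corollary: for $n=2d+1$ it only uses $c_k\neq 0$.
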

\begin{proof}
For $\ell_1$, set $P_{d,k,\ell_1}(x) := (1-x)^{d-\ell_1}(1+x)^{n-d-k+\ell_1}\in \mathbb C[x]$. By the binomial theorem, we get
    \begin{align*}
        P_{d,k,\ell_1}(x) = \sum_{m_1, m_2\geq 0}x^{m_1} (-1)^{m_1}\binom{d-\ell_1}{m_1} x^{m_2} \binom{n-d-k+\ell_1}{m_2}.
    \end{align*}
    Therefore, the coefficient of $x^{d-k}$ in $P_{d,k,\ell_1}(x)$ is given by
    \begin{align*}
        \text{coeff.\ of }x^{d-k}\text{ in }P_{d,k,\ell_1}(x)
        =\sum_{\ell_2}(-1)^{\ell_2}\binom{d-\ell_1}{\ell_2} \binom{n-d-k+\ell_1}{d-k-\ell_2}.
    \end{align*}
    Hence we get
    \begin{align*}
        (-1)^{d+k}{c_k} &= \text{ coeff.\ of $x^{d-k}$ in }\sum_{\ell_1\geq 0}\binom k{\ell_1} (1-x)^{d-\ell_1} (1+x)^{n-d-k+\ell_1}
        \\&=\text{ coeff.\ of $x^{d-k}$ in }(1-x)^{d-k}(1+x)^{n-d-k} \sum_{\ell_1\geq 0} \binom k{\ell_1} (1-x)^{k-\ell_1} (1+x)^{\ell_1}
        \\&=\text{ coeff.\ of $x^{d-k}$ in }(1-x)^{d-k}(1+x)^{n-d-k} [(1-x) + (1+x)]^k
        \\&=\text{ coeff.\ of $x^{d-k}$ in }(1-x)^{d-k} (1+x)^{n-d-k} 2^k.
    \end{align*}
    This finishes the proof.
\end{proof}

\begin{corollary}
    Let $n=2 d+1$. Then $f$ is an isomorphism. 
\end{corollary}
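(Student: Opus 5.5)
By Proposition~\ref{prop:charpoly}, $f$ is an isomorphism if and only if every eigenvalue $c_k$ with nonzero multiplicity $\binom nk-\binom n{k-1}$ is nonzero. For $n=2d+1$ and $0\le k\le d$ we have $k\le d<n/2$, so all these multiplicities are positive. It therefore suffices to show that $c_k\neq 0$ for every $0\le k\le d$.

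We use Lemma~\ref{lem:coeff}. With $n=2d+1$ we get $n-d-k=d+1-k$. Setting $m:=d-k\ge 0$, the number $(-1)^{d+k}2^{-k}c_k$ is the coefficient of $x^{m}$ in
\begin{align*}
(1-x)^{m}(1+x)^{m+1}=(1-x^2)^{m}(1+x).
\end{align*}
We then compute this coefficient directly from the expansion $(1-x^2)^m=\sum_j(-1)^j\binom mj x^{2j}$.

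If $m=2j$ is even, the coefficient of $x^m$ comes only from the factor $1$ in $(1+x)$, and equals $(-1)^j\binom{m}{j}$. If $m=2j+1$ is odd, it comes only from the factor $x$, and equals $(-1)^j\binom{m}{j}$. In both cases the coefficient is $\pm\binom{m}{\lfloor m/2\rfloor}\neq0$. Explicitly,
\begin{align*}
c_k=\pm 2^k\binom{d-k}{\lfloor (d-k)/2\rfloor}\neq 0,
\end{align*}
so every eigenvalue of $f$ is nonzero and $f$ is invertible.

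There is no real obstacle here. The only point needing care is that $n-d-k=(d-k)+1$ exactly when $n=2d+1$. This is what turns the product into $(1-x^2)^m(1+x)$ and stops the cancellation that would otherwise occur. For comparison, when $n=2d$ the same computation gives $(1-x^2)^m$, whose $x^m$-coefficient vanishes for odd $m$; this explains why the odd case is the one needed.
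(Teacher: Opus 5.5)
Your proof is correct and follows the paper's argument: you specialize Lemma~\ref{lem:coeff} to $n=2d+1$, factor the polynomial as $(1+x)(1-x^2)^{d-k}$, and observe that its $x^{d-k}$-coefficient cannot vanish. Your explicit value $c_k=\pm 2^k\binom{d-k}{\lfloor (d-k)/2\rfloor}$ is a slightly sharper form of the paper's remark that every coefficient of $(1+x)(1-x^2)^{d-k}$ up to degree $2(d-k)+1$ is nonzero, and your check that the multiplicities are positive is harmless but not needed for sufficiency.
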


\begin{proof}
For $n = 2d+1$, the polynomial in Lemma \ref{lem:coeff} becomes
$$ (1-x)^{d-k} (1+x)^{d-k+1} = (1+x) (1-x^2)^{d-k}. $$
The polynomial $(1-x^2)^{d-k}$ is even, meaning it has non-zero coefficients only for the even exponents $x^0, x^2,\dotsc, x^{2(d-k)}$. When multiplied by $(1+x)$, we obtain a polynomial where every coefficient from $x^0$ up to $x^{2(d-k)+1}$ is non-zero (with alternating signs). 

Note that for $0 \le k \le d$, we have $0 \le d-k \le 2(d-k)+1$). By Lemma \ref{lem:coeff}, this implies $c_k \neq 0$ for all $0 \le k \le d$. Because all of its eigenvalues $c_k$ are non-zero, the map $f$ (and consequently the matrix $\mathcal{M}_d$) is invertible.
\end{proof}

As a consequence of this combinatorial inversion and the checks for exceptional types in Section \ref{subsec:exceptional}, the proof of Theorem \ref{thm:perfectPairing} is complete.

\section{A special construction for $E_7$}\label{sec:E7}

\subsection{The $E_7$ anomaly and valid pairs}
The final obstacle to completing the proof of our main theorem comes from conjugacy classes $\mathcal{O}\in Cl(W)$ whose support contains at least one irreducible component of type $E_7$. Recall that each such conjugacy class $\mathcal{O}$ defines a canonical element $T_{\mathcal{O}}\in \overline{H}$. However, as established in Section \ref{thm:perfectPairing}, the orbital integral method for spherical elliptic pairs is insufficient to establish the linear independence of these elements. Specifically, the Hecke algebra of type $E_7$ possesses two irreducible $512$-dimensional representations whose characters are indistinguishable by orbital integrals of regular semisimple elements. 

Fortunately, these two degenerate characters evaluate differently on $T_{w_0}$, where $w_0$ is the longest element of the Weyl group of $E_7$. In this section, we construct a new class of functions on the cocenter $\overline{H}$ tailored specifically to exploit this difference. This provides the final ingredient needed to complement what the orbital integrals can achieve.

In order to exploit this difference at the longest element, we will use a modified, truncated form of parabolic induction. Throughout this section, we let $(W, S)$ be the Weyl group of an arbitrary Kac-Moody group and let $I_1\subseteq S$ be a subset of type $E_7$. We define its centralizer and the combined parabolic set as:
\begin{align*}
    I_2 &:= \{s\in S\mid ss' = s' s \text{ for all } s'\in I_1\},\\
    I &:= I_1\cup I_2.
\end{align*}
For any spherical subset $J\subseteq S$, we let $w_{0,J}\in W_J$ denote the longest element of $W_J$. If $J$ is irreducible, we denote the highest root of the positive root system $\Phi_J^+$ of $(W_J, J)$ by $\theta_J$.

If we were to apply the standard parabolic induction from Section \ref{sec:par-ind} to $W_I$, elements from an ambient $E_8$ parabolic subgroup could cause an infinite sum. Therefore, we restrict our induction to a highly specific subset of elements.

\begin{definition} \label{def:valid_pair}
    With notation as above, we define a \emph{valid pair} to be a tuple $(v_1, v_2)\in W^I\times W_{I_2}$ satisfying the following property: For every cardinality eight set $J$ with $I_1\subsetneq J\subseteq S$ of type $E_8$ such that $J\subseteq C_W(\operatorname{supp}(v_2))$ (i.e., every simple reflection in $J$ commutes with every simple reflection in $\operatorname{supp}(v_2)$), we require $v_1\theta_J\in \Phi^+$.
\end{definition}

By strictly bounding our summation to these valid pairs, we can define a truncated parabolic induction method that targets the $E_7$ block. Now we state the main result of this section. 

\begin{theorem}\label{thm:E7_lift}
    We continue the notation as above. Let $f: H_{I_2} \to \mathbb{Z}[\q^{\pm 1}]$ be a $\mathbb Z[\q^{\pm 1}]$-linear function which vanishes on commutators $[H_{I_2}, H_{I_2}]$. Then the function
    \begin{align*}
        F : H\to \mathbb Z[\q^{\pm 1}],\qquad h\mapsto \sum_{(v_1, v_2)\text{ valid}} \q^{-\ell(v_1) - \ell(v_2)} f(T_{v_2}) \tau(T_{(v_1 v_2 w_{0,I_1})^{-1}} h T_{v_1})
    \end{align*}
    is well-defined, i.e.\ the sum has only finitely many non-zero terms for each $h\in H$.
    It satisfies the following properties:
    \begin{enumerate}
    \item We have $F(h)=0$ for all $h\in [H,H]$. Thus, we get an induced map $F : \overline H\to \BZ[\q^{\pm 1}]$.
    \item For $w\in W$, we get
    \begin{align*}
        F(T_w) \equiv f(T_{w_2})\pmod{\q-1}
    \end{align*}
    for any element $w_2\in W_{I_2}$ such that $w_{0,I_1} w_2$ is $W$-conjugate to $w$. If no such element $w_2$ exists, then $F(T_w)\equiv 0\pmod{\q-1}$.
    \end{enumerate}
\end{theorem}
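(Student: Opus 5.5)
The plan is to follow the three-step template of the proof of Theorem~\ref{thm:classPolynomialLifting}: finiteness, descent to the cocenter, and the $\q=1$ specialization. The new feature is the truncation to valid pairs. In the descent argument, this truncation must be shown to be compatible with the re-indexing of sums.

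\textbf{Finiteness.} Fix $h=T_w$ and suppose some valid pair $(v_1,v_2)$ contributes. As in Lemma~\ref{lem:liftingFiniteness1}, we get $v_1 v_2 w_{0,I_1} = z v_1$ for some $z\le w$. Since $v_1\in W^I$ and $v_2w_{0,I_1}\in W_I$, the product is length additive, so $\ell(v_2)+\ell(w_{0,I_1})\le \ell(w)$. This leaves finitely many $v_2$. For fixed $v_2$, the element $v_1$ conjugates each $s\in I_1$ to an element of length $\le \ell(w)$, again by Bruhat monotonicity of conjugation by $v_1\in W^I$. We then argue as in \S\ref{subsec:finiteness}, but with $N_W(W_{I_1})$ in place of $N_W(W_J)$.

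The obstacle is that $N_W(W_{I_1})$ need not lie in $W_I$. Indeed, the Lusztig--Spaltenstein algorithm (Proposition~\ref{prop:LSalgo}) together with remark (a) shows that $N_W(W_{I_1})$ is generated by $W_I$ and the elementary conjugators $w_{0,J}w_{0,I_1}$ for the $E_8$-sets $J\supsetneq I_1$. Each such conjugator centralizes $W_{I_1}$, sends $\theta_J$ to a negative root, and is an involution modulo $W_{I_1}$. So the fibres of $v_1\mapsto(s\mapsto v_1sv_1^{-1})$ are cosets of a group generated by these commuting-type involutions. I expect to show that within each fibre, the validity condition $v_1\theta_J\in\Phi^+$ (for those $J$ commuting with $\operatorname{supp}(v_2)$) selects at most boundedly many representatives. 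The $J$ not commuting with $\operatorname{supp}(v_2)$ should be excluded separately: their conjugators fail to fix $z$, so they do not move within $\mathcal S_w$. This root-positivity bookkeeping is the step I expect to be hardest. I would first try to prove that the $E_8$ extensions through different $s\in S\setminus I$ give independent reflection-like moves, so that the fibre is $\prod_J\{1,w_{0,J}w_{0,I_1}\}$ and validity picks exactly one element.

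\textbf{Descent to the cocenter.} I would repeat the computation of the descent step in the proof of Theorem~\ref{thm:classPolynomialLifting}. The term $T_{v_2 w_{0,I_1}}=T_{v_2}T_{w_{0,I_1}}$ is handled by noting that $T_{w_{0,I_1}}$ commutes with $H_{I_2}$. The argument has two ingredients. First, the trace property of $f$ on $H_{I_2}$ is used when $z$ ranges over $W_{I_2}$; the $W_{I_1}$-components are absorbed since $T_{w_{0,I_1}}$ is invertible. Second, when re-indexing the outer sum over $u_1$ versus $v_1$, we need that validity depends only on the data $(v_1,\operatorname{supp}(v_2))$ and is preserved under the substitution. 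I would check that the exchanged pairs $(u_1,v_2)$ arising from the expansion remain valid. This is the point where positivity of $v_1\theta_J$ must be transported along the Bruhat-type relation between $v_1$ and $u_1$, using that $\theta_J$ is fixed by $W_{I_2}$-elements commuting with $J$.

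\textbf{The $\q=1$ specialization.} At $\q=1$, $F(T_w)$ counts valid pairs with $v_1 v_2 w_{0,I_1} v_1^{-1}=w$, weighted by $f(T_{v_2})$. For $w$ conjugate to $w_{0,I_1}w_2$, the candidate $v_1$ form a coset of the centralizer modulo $W_I$, which again is the group generated by the $E_8$ conjugators. By the finiteness analysis, validity selects exactly one $v_1$ per $W_{I_2}$-conjugate $v_2$ of $w_2$. Hence the count is $f(T_{w_2})$ modulo $\q-1$, by the trace property of $f$. If no such $w_2$ exists, the sum is empty.
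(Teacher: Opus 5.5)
Your three-step outline matches the paper's: finiteness, descent to the cocenter, and the $\q=1$ count. However, the two steps that carry the real content are only conjectured, and the mechanisms you propose for them do not work.

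\textbf{Finiteness.} Your description of $N_W(W_{I_1})$ via Proposition~\ref{prop:LSalgo} is correct, but the product structure you hope for is false.
\begin{itemize}
\item For an $E_8$-set $J=I_1\sqcup\{s\}$ one has $w_{0,J}w_{0,I_1}=s_{\theta_J}$. Indeed, it acts as $+1$ on the span of $\Phi_{I_1}$ and as $-1$ on $\theta_J$, which is orthogonal to $\Phi_{I_1}$.
\item Take $S=I_1\sqcup\{s,t\}$, with $s,t$ commuting and both joined by simple edges to the end of the long arm of $I_1$, so that $I=I_1$. Then $\theta_{J_s}=\beta+2\alpha_s$ and $\theta_{J_t}=\beta+2\alpha_t$ with $(\beta,\beta)=6$ and $(\beta,\alpha_s)=(\beta,\alpha_t)=-3$. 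Hence $\langle\theta_{J_s}^\vee,\theta_{J_t}\rangle=-6$, and $s_{\theta_{J_s}}s_{\theta_{J_t}}$ has infinite order.
\item So the fibre $W^I\cap v_1C_W(I_1)$ of your map contains the infinite set $v_1\langle s_{\theta_{J_s}},s_{\theta_{J_t}}\rangle$. It is not $\prod_J\{1,w_{0,J}w_{0,I_1}\}$, and everything rests on showing that validity cuts it down to finitely many elements. You give no argument for that.
\item Discarding the $E_8$-sets not commuting with $\supp(v_2)$ because ``their conjugators fail to fix $z$'' only addresses generators. The relevant $v_1$ range over a coset of the whole group $C_W(w_{0,I_1}v_2)$.
\end{itemize}
The paper never describes this group. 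Lemma~\ref{lem:validPairCentralizerFiniteness} shows that if $z\in C_W(w_{0,I_1}v_2)$ and $(v_1z,v_2)$ is valid, then $\mathrm{ht}(z\alpha_s)$ is bounded below for all $s\in S$. The reason is that a simple root $\alpha$ with very negative $\mathrm{ht}(z\alpha)$ forces $I_1\cup\{s_\alpha\}$ to be an $E_8$-set commuting with $\supp(v_2)$ that violates validity. Finiteness then follows from Lemma~\ref{lem:const}.

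\textbf{Descent.} Re-running the rearrangement from Theorem~\ref{thm:classPolynomialLifting} needs the truncated index set to be stable under exchanging $v_1$ and $u_1$, and it is not. Already for $h=T_s$, the pair $(sv_1,v_2)$ is invalid exactly when $v_1^{-1}\alpha_s=\theta_J$ for an $E_8$-set $J$ commuting with $\supp(v_2)$. Such valid $(v_1,v_2)$ exist even for $W=W(E_8)$, taking $v_2=1$ and $v_1\in W^{I_1}$ with $v_1\theta_J$ simple. So ``checking that the exchanged pairs remain valid'' fails, and you need a separate reason why these boundary terms do not break the trace property.

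That reason is the missing idea. The paper reduces to $F(T_sh)=F(hT_s)$ and proves the trichotomy of Lemma~\ref{lem:valid_reflection}:
\begin{itemize}
\item \emph{Boundary case.} Write $v_1=v_{1,1}v_{1,2}$ with $v_{1,2}\in W_J$. Then $v_{1,2}\theta_J$ is simple, and $v_{1,2}w_{0,I_1}v_{1,2}^{-1}$ is a length-additive involution commuting with the corresponding simple reflection. It follows that $T_s$ commutes with $T_{v_1}T_{(v_1w_{0,I_1}v_2)^{-1}}$, so each such term is invariant on its own.
\item \emph{Case $v_1^{-1}sv_1=s'\in I_2$.} Here $(v_1,v_2')$ must stay valid for $v_2'\in\{s'v_2,v_2s',s'v_2s'\}$, even though the support changes. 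This uses that $\theta_J+\alpha_{s'}$ is an imaginary root.
\end{itemize}

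\textbf{The $\q=1$ count.} Your count is off. One valid $v_1$ for each $W_{I_2}$-conjugate $v_2$ of $w_2$ would give $f(T_{w_2})$ times the number of such $v_2$. What is needed is that, for $w=w_{0,I_1}w_2$ with $w_2$ of minimal length, $(1,w_2)$ is the only contributing valid pair. The paper proves this via the Lusztig--Spaltenstein factorization of $v_1$. Its last elementary conjugator would have to be $w_{0,I_1}w_{0,C}$ for an $E_8$-set $C$ commuting with $\supp(v_2)$, forcing $v_1\theta_C\in\Phi^-$ and contradicting validity.
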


For the proof, we need some preparation.

\subsection{Finiteness conditions}

Our goal in this subsection is to establish that the sum defining the map $F$ in Theorem \ref{thm:E7_lift} is well-defined (i.e., finite). The main result is the following finiteness bound on valid pairs.

\begin{proposition}\label{prop:E7_lift_finiteness}
    Given $w\in W$, here are only finitely many valid pairs $(v_1, v_2)$ which satisfy $\tau(T_{v_1 v_2 w_{0,I_1}} T_w T_{v_1})\neq 0$.
\end{proposition}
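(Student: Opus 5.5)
Following the strategy of \S\ref{subsec:finiteness}, we first bound the length of $v_2$ and then bound $v_1$ for each fixed $v_2$. (The statement should be read with $(v_1v_2w_{0,I_1})^{-1}$, as in the definition of $F$.) Arguing as in Lemma~\ref{lem:liftingFiniteness1}, the product $T_wT_{v_1}$ is a combination of $T_{zv_1}$ with $z\leq w$. Non-vanishing of $\tau$ therefore forces $v_1 v_2 w_{0,I_1} = z v_1$ for some $z\le w$. Since $v_1\in W^I$ and $v_2w_{0,I_1}\in W_I$, the product $v_1\cdot v_2w_{0,I_1}$ is length additive. Hence $\ell(v_2)+\ell(w_{0,I_1})\le \ell(w)$, so only finitely many $v_2$ occur. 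The same Bruhat-order argument also gives $v_1 s v_1^{-1}\le w$ for every $s\in \supp(v_2)\cup I_1$. In particular $\ell(v_1 s v_1^{-1})\le \ell(w)$ for all $s\in K:=I_1\cup\supp(v_2)$.

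Now fix $v_2$ and consider the map $\Phi: v_1\mapsto (s\mapsto v_1sv_1^{-1})$ on $K$. Its image lies in a finite set. Two elements with the same image differ by an element of the centralizer $C_W(W_K)$ (pointwise). As in \S\ref{subsec:finiteness}, it therefore suffices to show the following: within one fibre $v_1 C_W(W_K)\cap W^I$, only finitely many $v_1$ give valid pairs. This is where the argument differs from Theorem~\ref{thm:classPolynomialLifting}, because $N_W(W_K)$ need not lie in $W_I$. For instance, if an $E_8$-set $J\supsetneq I_1$ commutes with $\supp(v_2)$, then the infinite group generated by reflections in $W_J$ centralizing $W_{I_1}$ can occur. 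The element $w_{0,J}w_{0,I_1}$ itself centralizes $I_1$, and iterates with other conjugators can produce infinitely many $v_1$.

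The key step, and the main obstacle, is to describe $C_W(W_K)$ modulo $W_{I_2}$ using the Lusztig--Spaltenstein algorithm (Proposition~\ref{prop:LSalgo} and remark (a)). An element $x\in{}^{K}W$ that centralizes $K$ factors into elementary conjugators $w_{0,C\cap K'}w_{0,C}$. Since $E_7$ is neither of type $A$ nor $D$, each stage fixes $I_1$ pointwise. Enlarging the $E_7$ component by a node $s$ gives a spherical component $C\supseteq I_1$, which must be of type $E_8$. Any other enlargement only involves components commuting with $I_1$, hence lies in $W_{I_2}\subseteq W_I$. So, modulo $W_I$, the relevant centralizer is generated by conjugates of the elements $r_J:=w_{0,J}w_{0,I_1}$, where $J\supsetneq I_1$ is of type $E_8$ and commutes with the current support. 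The element $r_J$ is the reflection in $\theta_J$: indeed, in $E_8$ the centralizer of $E_7$ is $\{1,s_{\theta}\}$.

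Each such step crosses the wall $\theta_J$. The validity condition $v_1\theta_J\in\Phi^+$ selects one side of each wall, so it should bound the number of wall-crossings. I would show that the $v_1$ in a given fibre are of the form $v_1' r_{J_1}\cdots r_{J_m}$ with a length-increasing chain of such reflections. Validity would then force each new reflection to send the relevant $\theta$ to a negative root, which contradicts the condition after a bounded number of steps. Concretely, I would attempt an induction on $\ell(v_1)$: if $v_1$ is not the minimal element of its fibre, then some $J$ with $v_1\theta_J<0$ arises, contradicting validity. This would leave at most one valid $v_1$ per fibre, or at least finitely many. Making this wall-crossing argument rigorous is the step I expect to be hardest.
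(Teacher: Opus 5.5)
\textbf{There is a genuine gap: finiteness within a fibre, which carries the whole proposition, is not proved, and the wall-crossing sketch for it does not work as stated.}

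Your opening reductions are sound and parallel the paper's. The bound $\ell(v_2)+\ell(w_{0,I_1})\le\ell(w)$ is correct. So is the fibring by $v_1\mapsto(s\mapsto v_1sv_1^{-1})$ on $K=I_1\cup\supp(v_2)$: its fibres $v_1C_W(W_K)\cap W^I$ lie inside the cosets $v_1C_W(w_{0,I_1}v_2)$ that the paper uses. The step you leave open is the paper's Lemma~\ref{lem:validPairCentralizerFiniteness}. The sketch fails for two reasons.

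\emph{The walls crossed are not the walls validity constrains.} The Lusztig--Spaltenstein factorization $x=w_1\cdots w_n$ of a centralizing element passes through intermediate subsets $K_i=I_1\sqcup K_i'$. Here $K_i'$ is a copy of $\supp(v_2)$ moved around by the $W_{I_2}$-factors. The $E_8$-sets $J_i$ that occur commute with $K_i'$, not necessarily with $\supp(v_2)$. Moreover, the factor $s_{\theta_{J_i}}$ enters conjugated by $w_1\cdots w_{i-1}$. So the wall actually crossed is $w_1\cdots w_{i-1}\theta_{J_i}$, which in general is not $\theta_J$ for any $J$ that the validity condition constrains.

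\emph{Nothing ties non-minimality to a violation of validity.} Validity is a sign condition on $v_1'\theta_J$ for the final element only. It depends on the arbitrary base point $v_1$ of the fibre, which may itself send some of the relevant roots to negative roots. Nothing in your sketch links ``$v_1'$ is not minimal in its fibre'' to the existence of an admissible $J$ with $v_1'\theta_J\in\Phi^-$. So ``at most one valid element per fibre'' is unsupported, and is also more than you need.

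\textbf{The missing idea is to avoid describing the centralizer at all.} With $v_1$ fixed, let $C$ be the minimum of $-64$ and the heights of the finitely many negative roots $\alpha$ with $v_1\alpha\in\Phi^+$. The paper shows that $\mathrm{ht}(z\alpha_s)\ge C$ for every $s\in S$ whenever $z\in C_W(w_{0,I_1}v_2)$ and $(v_1z,v_2)$ is valid. Lemma~\ref{lem:const} (Dickson's lemma plus a comparison of inversion sets) then gives finiteness.

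The height bound is a local argument. Suppose $\mathrm{ht}(z\alpha)<C$ for a simple root $\alpha$.
\begin{itemize}
\item Then $v_1z\alpha\in\Phi^-$, and since $v_1z\in W^I$ this forces $\alpha\notin I$.
\item The element $z$ fixes $I_1$ pointwise. This is where remark~(a) after Proposition~\ref{prop:LSalgo} enters, as in your sketch.
\item The set $J=I_1\cup\{s_\alpha\}$ lies in the right descent set of $w_{0,I_1}z$, so it is spherical; being connected and properly containing $E_7$, it is of type $E_8$.
\item Since $\theta_J$ has coefficient $2$ on $\alpha$ and $z$ fixes $\Phi_{I_1}$, the root $z\theta_J$ has height below $C$. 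Hence $v_1z\theta_J\in\Phi^-$, which will contradict validity once validity is known to apply to this $J$.
\end{itemize}
The one delicate point is showing that $s_\alpha$ commutes with $\supp(v_2)$, so that validity does apply to $J$. This is exactly your first obstruction. The paper resolves it by a separate descent-set argument (Step~3 of that proof), and your approach would have to supply an equivalent.
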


To prove this proposition, we must establish a sequence of geometric bounds on Coxeter groups. We begin with a general combinatorial lemma regarding integer sequences.

\begin{lemma}\label{lem:bound}
    Equip the $n$-fold product of the non-negative integers $\mathbb{Z}_{\geq 0}^n$ with a partial order by declaring $(v_1,\dotsc,v_n)\leq (u_1,\dotsc,u_n)$ iff $v_i\leq u_i$ for all $i$.
    \begin{enumerate}
        \item Each subset $A\subseteq \mathbb{Z}_{\geq 0}^n$ only has finitely many minima.
        \item If $A\subseteq \mathbb{Z}_{\geq 0}^n$ is an infinite subset, then there exists an infinite increasing chain $v^{(1)}<v^{(2)}<\cdots$ of elements in $A$.
    \end{enumerate}
\end{lemma}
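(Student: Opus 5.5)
We prove both parts by elementary induction on $n$; this is essentially Dickson's lemma. We first establish (2), since (1) follows quickly from it.

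For (2), the key step is the case $n=1$ together with a subsequence argument. Given an infinite $A\subseteq\mathbb Z_{\geq 0}^n$, enumerate it as a sequence $a^{(1)},a^{(2)},\dotsc$ of pairwise distinct elements. We extract successively, for $i=1,\dotsc,n$, an infinite subsequence along which the $i$-th coordinate is non-decreasing. This is possible because any sequence in $\mathbb Z_{\geq 0}$ has a non-decreasing subsequence. Indeed, if the sequence is bounded, some value repeats infinitely often. If it is unbounded, we greedily choose terms exceeding all previously chosen ones.

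After $n$ such extractions we obtain an infinite sequence that is non-decreasing in every coordinate, i.e.\ weakly increasing for the product order. Since its terms are pairwise distinct, consecutive terms satisfy $v^{(j)}\leq v^{(j+1)}$ and $v^{(j)}\neq v^{(j+1)}$. Hence $v^{(j)}<v^{(j+1)}$, which gives the desired strictly increasing chain in $A$.

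For (1), we apply (2) to the set $\mathrm{Min}(A)$ of minimal elements of $A$. Suppose $\mathrm{Min}(A)$ were infinite. Then (2) would produce elements $v^{(1)}<v^{(2)}$ both lying in $\mathrm{Min}(A)\subseteq A$, contradicting the minimality of $v^{(2)}$. So $\mathrm{Min}(A)$ is finite; in other words, minimal elements form an antichain and $\mathbb Z_{\geq 0}^n$ has no infinite antichains.

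There is no real obstacle here. The only point needing care is the passage from weakly to strictly increasing in (2), which uses the distinctness of the enumerated elements. If desired, we can also note the standard complement to (1): every element of $A$ lies above some element of $\mathrm{Min}(A)$, because the product order on $\mathbb Z_{\geq 0}^n$ is well-founded, as $\sum_i v_i$ strictly decreases along strictly descending chains.
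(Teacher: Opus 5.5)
Your proof is correct, but it runs in the opposite direction from the paper's.

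\textbf{The paper's route.} It proves (1) first, by induction on $n$. Fixing some $a^\ast\in A$, every minimal element $m$ satisfies $\pi_i(m)\le\pi_i(a^\ast)$ for some coordinate $i$. Hence $m$ is minimal in one of the finitely many fibres $\pi_i^{-1}(p)\cap A$ with $p\le\pi_i(a^\ast)$, and each fibre has finitely many minima by the inductive hypothesis. It then deduces (2) by a K\"onig-type argument. $A$ is covered by the finitely many cones $A_{\geq a}$ with $a$ minimal, so one cone is infinite; one removes its apex and iterates.

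\textbf{Your route.} You prove (2) directly by iterated extraction of coordinatewise non-decreasing subsequences, the standard Dickson/Ramsey argument. You then get (1) for free, since the minimal elements form an antichain.

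\textbf{Comparison.} Your argument is shorter and self-contained. It avoids the step in the paper's proof of (2) that $A=\bigcup_{a\in\mathrm{Min}(A)}A_{\geq a}$. That step silently uses well-foundedness of the product order, which is the fact you mention at the end. Your argument also proves directly the only part used later, in the proof of Lemma~\ref{lem:const}, where just (2) is invoked. The paper's route, in exchange, shows finiteness of minima structurally, by covering $\mathrm{Min}(A)$ with finitely many lower-dimensional fibres, without passing through sequences.

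One point of wording: your argument is not an induction on $n$ but an $n$-fold extraction over the coordinates. The mathematics is unaffected.
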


\begin{proof}
    \begin{enumerate}
        \item We proceed by induction on $n$, the cases $n=0,1$ being clear. Assume $n\geq 2$ and that the claim holds for $n-1$. For $i=1,\dotsc,n$, we let $\pi_i : \mathbb{Z}_{\geq 0}^n\to \mathbb{Z}_{\geq 0}$ be the projection to the $i$-th factor. We may assume $A\neq\emptyset$; fix an element $a^\ast\in A$. Pick a minimal element $m$ of $A$. Since $a^\ast \in A$, we cannot have $a^\ast < m$. Thus, every minimal element $m$ of $A$ satisfies $\pi_i(m)\leq \pi_i(a^\ast)$ for at least one $i\in \{1,\dotsc,n\}$. Moreover, if we set $p := \pi_i(m)\in\mathbb{Z}_{\geq 0}$, then $m$ is minimal in the fiber $\pi_i^{-1}(p)\cap A$.

        Consider the set
        \begin{align*}
            A_{i,p} := \{(a_1,\dotsc,a_{i-1},a_{i+1},\dotsc,a_n)\in\mathbb{Z}_{\geq 0}^{n-1}\mid (a_1,\dotsc,a_{i-1}, p, a_{i+1},\dotsc,a_n)\in A\}.
        \end{align*}
        By the inductive hypothesis, $A_{i,p}$ only has finitely many minima. By comparing elements between $\pi_i^{-1}(p)\cap A$ and $A_{i,p}$ (which correspond under inserting or deleting the $i$-th coordinate), we conclude that each $\pi_i^{-1}(p)\cap A$ only has finitely many minima. As established above, the set of minimal elements of $A$ is contained in the union of all minima of $\pi_i^{-1}(p)\cap A$ over the finite set of pairs $(i,p)$ with $i \in \{1,\dotsc,n\}$ and $p \in \{0, 1, \dotsc, \pi_i(a^\ast)\}$. Since the union of finitely many finite sets is finite, we are done.

        \item This is a standard application of K\"onig's lemma. For each $a\in A$, let $A_{\geq a} := \{b\in A\mid a\leq b\}$. Then $A$ is the union of the sets $A_{\geq a}$ where $a$ ranges over the (finitely many) minimal elements of $A$. Since $A$ is infinite, there must exist a minimal element $v^{(1)}$ of $A$ such that $A_{\geq v^{(1)}}$ is infinite.
        
        Set $A^{(1)} := A_{\geq v^{(1)}}\setminus \{v^{(1)}\}$. Again, this infinite set has finitely many minima (by part (1)), so we can find a minimal element $v^{(2)}\in A^{(1)}$ such that $A_{\geq v^{(2)}}$ is infinite. Setting $A^{(2)} := A_{\geq v^{(2)}}\setminus \{v^{(2)}\}$ and repeating this procedure indefinitely yields the desired infinite increasing chain. \qedhere
    \end{enumerate}
\end{proof}

Recall that the \emph{height} function $\mathrm{ht}: \mathbb{Z}\Phi\to \mathbb{Z}$ is the unique $\mathbb{Z}$-linear map sending each simple root to $1$. Thus, positive roots have positive height and negative roots have negative height.

\begin{lemma}\label{lem:const}
    Let $C\in\mathbb{Z}$ be any constant. Then the set
    \begin{align*}
        \{w\in W\mid \forall s\in S:~\mathrm{ht}(w\alpha_s)\geq C\}
    \end{align*}
    is finite.
\end{lemma}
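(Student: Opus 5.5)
Let $A$ denote the set in the statement. The plan is to prove that every $w\in A$ has length bounded by a constant depending only on $C$ and $(W,S)$; since $S$ is finite, only finitely many elements of $W$ have bounded length, so $A$ is finite.

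\emph{Step 1: height of an inversion.} Let $w\in W$ with $\ell(w)=m$. I will use the standard description of the inversion set: the positive roots $\beta$ with $w\beta\in\Phi^-$ are exactly the $m$ roots $\beta_j=s_{1}\cdots s_{j-1}\alpha_{s_j}$ coming from a reduced expression $w^{-1}=s_1\cdots s_m$. Take a positive root $\beta$ with $w\beta<0$. Write $\beta=\sum_s c_s\alpha_s$ with $c_s\in\mathbb Z_{\geq 0}$. Then
\[\mathrm{ht}(w\beta)=\sum_s c_s\,\mathrm{ht}(w\alpha_s)\ \geq\ C\,\mathrm{ht}(\beta)\quad\text{if }C\leq 0,\]
since each $\mathrm{ht}(w\alpha_s)\geq C$ and each $c_s\geq0$ (if $C>0$ the set is empty unless $w=1$, as $w\alpha_s>0$ for all $s$ forces $w=1$). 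Because $w\beta$ is a negative root, $\mathrm{ht}(w\beta)\leq -1$. This alone does not bound $m$, so I refine it.

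\emph{Step 2: small-height inversions.} In a crystallographic root system, a root $\beta\in\Phi^+$ with $\mathrm{ht}(\beta)>1$ admits a simple $s$ with $\langle\alpha_s^\vee,\beta\rangle>0$, and then $s\beta=\beta-k\alpha_s$ is a positive root of smaller height. I will use this to show that if $w\neq1$ then some $\alpha_s$ is itself an inversion, namely any $s$ with $ws<w$. Better: I will argue by induction on $\ell(w)$ that $\min_s \mathrm{ht}(w\alpha_s)\leq -\,g(\ell(w))$ for some function $g$ with $g(m)\to\infty$.

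Concretely, pick $s$ with $ws<w$, so that $w\alpha_s<0$. The roots $w\alpha_t$ ($t\in S$) form a basis of the root lattice, and every negative root $w\beta$ with $\beta$ an inversion is a nonnegative combination of them. The inversion set of $w$ has $m$ elements, and all of them lie in $\Phi$. There are only finitely many roots of height in any bounded window: indeed the number of roots of height $\geq -N$ that are negative is finite for each $N$. Each inversion $\beta$ maps to a negative root $w\beta=\sum c_s w\alpha_s$. If all $\mathrm{ht}(w\alpha_s)\geq C$, then the negative part of this sum comes only from those $s$ with $w\alpha_s<0$. I then intend to show that the map $\beta\mapsto w\beta$ sends inversions into negative roots of height $\geq C\cdot h_{\max}$, where $h_{\max}$ is a bound on $\mathrm{ht}(\beta)$. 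Bounding $\mathrm{ht}(\beta)$ is the main obstacle.

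\emph{Main obstacle and fallback.} To bound $\mathrm{ht}(\beta)$, I will instead use $\mathrm{ht}(\beta)=\mathrm{ht}(w^{-1}(w\beta))$ together with the symmetric statement. This is circular, so my preferred route is to apply the condition to $w^{-1}$ directly. Observe the following: the map $w\mapsto(\mathrm{ht}(w\alpha_s)-C)_{s}\in\mathbb Z_{\geq0}^S$ is injective on $A$, because $w$ is determined by its action on the basis $\{\alpha_s\}$. The problem is that the height vector loses information, since it does not determine $w\alpha_s$.

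If $A$ were infinite, I would choose an infinite sequence and use Lemma~\ref{lem:bound}-style compactness on the finitely many negative roots of height $\geq C$, i.e. on the set $\{s: w\alpha_s<0\}$ and those values, to extract $w_n$ with a fixed pattern. Write $w_n=u_n x$ with $x$ fixed. The negative roots of height in $[C,-1]$ are finite in number (negative roots of height $\geq C$ are finite, since each coefficient is bounded by $|C|$). The hard step will be showing $\ell(w_n)$ bounded. For this I intend to use that each inversion of $w_n$ is obtained from the inversion $\alpha_{s_1}$ by repeated reflections, with heights controlled by Step~1 applied to the descent chain. I expect this last step to require the most care.
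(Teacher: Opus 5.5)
Your proposal does not reach a proof. The step that carries all the content, bounding $\ell(w)$ or otherwise ruling out an infinite set, is left open, and you note yourself that the height-based route is circular.

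Concretely, the inequality $\mathrm{ht}(w\beta)\ge C\,\mathrm{ht}(\beta)$ from Step~1 gives no upper bound on $\mathrm{ht}(\beta)$ for an inversion $\beta$ when $C<0$: it only gets weaker as $\mathrm{ht}(\beta)$ grows. So there is nothing to feed into the ``descent chain'' of your last paragraph, and nothing you have written justifies writing $w_n=u_nx$ with $x$ fixed.

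Passing to a subsequence on which the descent set and the finitely many possible negative values $w_n\alpha_s$ are constant is a legitimate first reduction. But it just restates the problem: you would still need to show that infinitely many elements cannot share this finite pattern. The roots $w_n\alpha_s$ that are positive are unconstrained by the hypothesis, and that is exactly where the difficulty lies.

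The missing idea is close to your remark that the height vector ``loses information''. Encode $w$ by the full integer matrix $\vartheta(w)$ of coefficients of the roots $w\alpha_t$ in the simple roots, rather than by their heights.

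\begin{itemize}
\item This map is injective, and its entries are bounded below by $\min(0,C)$. Positive roots have nonnegative coefficients, and a negative root of height $\ge C$ has all coefficients in $[C,0]$, as you observed.
\item By Dickson's lemma (Lemma~\ref{lem:bound}), an infinite set would therefore contain an infinite strictly increasing chain $\vartheta(w_1)<\vartheta(w_2)<\cdots$ for the componentwise order.
\item The key monotonicity: if $\vartheta(w_1)\le\vartheta(w_2)$, then $w_2\beta-w_1\beta$ is a nonnegative combination of simple roots for every $\beta\in\Phi^+$. Hence $w_2\beta\in\Phi^-$ forces $w_1\beta\in\Phi^-$.
\item This gives $\{\beta\in\Phi^+\mid w_2\beta\in\Phi^-\}\subseteq\{\beta\in\Phi^+\mid w_1\beta\in\Phi^-\}$, and so $\ell(w_2)\le\ell(w_1)$.
\item The chain is then an infinite family of distinct elements of length at most $\ell(w_1)$, which is a contradiction.
\end{itemize}

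This argument proves finiteness without ever producing an explicit length bound in terms of $C$, which is the object your Steps~1 and~2 were trying to construct directly.
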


\begin{proof}
    Let $Z\subseteq W$ be the set under consideration and $\{\alpha_1,\dotsc,\alpha_n\} = \{\alpha_s\mid s\in S\}$ be an enumeration of the simple roots. Define a map
    \begin{align*}
        \vartheta : Z\to \mathbb{Z}^{n^2},\qquad w\mapsto (\text{the $\alpha_i$-coefficient of } w\alpha_j)_{i,j\in\{1,\dotsc,n\}}.
    \end{align*}
    Because the simple roots form a basis for the geometric representation, the map $\vartheta$ is injective. Denote its image by $A$. 
    
    We claim that the coordinates of all elements in $A$ are uniformly bounded from below. Indeed, if $w\alpha_j$ is a positive root, all of its coefficients are non-negative. If $w\alpha_j$ is a negative root, all of its coefficients are non-positive. Because the height of a root is the sum of its coefficients, the condition $\mathrm{ht}(w\alpha_j) \geq C$ forces every individual coefficient of a negative root to be bounded below by $C$. Thus, all coordinates of elements in $A$ are bounded below by $\min(0, C)$. 
    
    By shifting $A$ component-wise by $-\min(0,C)$, we obtain a subset of $\mathbb{Z}_{\geq 0}^{n^2}$. If $Z$ were infinite, then $A$ would be infinite, and by Lemma \ref{lem:bound}, we would find an infinite increasing chain $\vartheta(w_1) < \vartheta(w_2) < \cdots$ in $A$.

    Consider two elements $w_1, w_2\in W$ with $\vartheta(w_1) < \vartheta(w_2)$. By definition, this means that for every simple root $\alpha_j$, the difference $w_2\alpha_j - w_1\alpha_j$ is a non-negative linear combination of simple roots. Because any positive root $\alpha \in \Phi^+$ is a non-negative linear combination of simple roots, the difference $w_2\alpha - w_1\alpha$ must also be a non-negative linear combination of simple roots. 
    
    In particular, if $w_2\alpha$ is a negative root, then $w_1\alpha$ must also be a negative root. This implies an inclusion of inversion sets:
    $$ \{\alpha \in \Phi^+ \mid w_2\alpha \in \Phi^-\} \subseteq \{\alpha \in \Phi^+ \mid w_1\alpha \in \Phi^-\}. $$
    Since the length of an element is exactly the cardinality of this inversion set, it follows that $l(w_2) \leq l(w_1)$.

    This argument shows that if $Z$ were infinite, we would find an infinite sequence of elements in $Z$ with monotonically decreasing (and thus eventually bounded) lengths. Since a Coxeter group contains only finitely many elements of any bounded length, this is clearly impossible.
\end{proof}

We now apply these general bounds to the specific centralizers in type $E_7$.

\begin{lemma}\label{lem:validPairCentralizerFiniteness}
    Let $v_1\in W^I, v_2\in W_{I_2}$. Then there exist only finitely many valid pairs $(v_1', v_2)$ with $v_1' \in v_1 C_W(w_{0,I_1}v_2)$.
\end{lemma}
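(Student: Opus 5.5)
The plan is to apply Lemma~\ref{lem:const}. Put $x := w_{0,I_1}v_2$ and $y := v_1 x v_1^{-1}$. For every $v_1' \in v_1 C_W(x)$ we have $v_1' x v_1'^{-1} = y$, so $y$ is a fixed element, independent of $v_1'$. It therefore suffices to find a constant $C$, depending only on $y$, $I_1$ and $v_2$, such that
\[
\mathrm{ht}(v_1'\alpha_s)\ge C \quad\text{for all } s\in S,
\]
for every valid pair $(v_1',v_2)$ with $v_1'\in v_1C_W(x)$. Lemma~\ref{lem:const} then gives only finitely many such $v_1'$.

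\emph{Step 1 (roots in $\Phi_I$).} If $s\in I$, then $v_1'\alpha_s\in\Phi^+$ because $v_1'\in W^I$, so the height is $\ge 1$.

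\emph{Step 2 (a uniform upper bound on $\Phi_{I_1}$).} Let $t\in I_1$ and let $t^\ast := w_{0,I_1}tw_{0,I_1}$. Since $v_2\in W_{I_2}$ commutes with $W_{I_1}$, we get $x\alpha_t = -\alpha_{t^\ast}$. Hence $y(v_1'\alpha_t) = -v_1'\alpha_{t^\ast}$. Both $v_1'\alpha_t$ and $v_1'\alpha_{t^\ast}$ are positive (Step 1), so $v_1'\alpha_t$ lies in the inversion set $\{\beta\in\Phi^+\mid y\beta\in\Phi^-\}$. This set is finite, of cardinality $\ell(y)$. Consequently $\mathrm{ht}(v_1'\beta)\le D$ for all $\beta\in\Phi_{I_1}^+$, with $D$ depending only on $y$.

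\emph{Step 3 (simple reflections outside $I$).} Let $s\notin I$. Then $s\notin I_2$, so $s$ fails to commute with some element of $I_1$, and $J := I_1\cup\{s\}$ is connected. First suppose $J$ is of type $E_8$ and $J\subseteq C_W(\supp(v_2))$. Then validity gives $v_1'\theta_J\in\Phi^+$. Since $\theta_J = 2\alpha_s+\beta$ with $\beta\in\sum_{t\in I_1}\mathbb Z_{\ge0}\alpha_t$, Step 2 yields
\[
2\,\mathrm{ht}(v_1'\alpha_s)\ge 1-\mathrm{ht}(v_1'\beta)\ge 1-7D',
\]
where $D'$ bounds the heights $\mathrm{ht}(v_1'\alpha_t)$, $t\in I_1$.

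\emph{Step 4 (the remaining cases).} Two cases are left: $J$ is not of type $E_8$ (affine $\tilde E_7$ or indefinite), or $J$ is of type $E_8$ but some element of $\supp(v_2)$ fails to commute with $s$. In both cases the plan is again to exhibit a root $\gamma = m\alpha_s+\beta$ with $m\ge1$ and $\beta$ either in the span of $\Phi_{I_1}$ or in $\Phi_I$, such that $v_1'\gamma$ is controlled by $y$. Two mechanisms would do this: either $v_1'\gamma$ lies in the finite inversion set of $y$ or $y^{-1}$, or $\gamma - x\gamma$ lies in $\mathbb Z\Phi_I$, so that $v_1'\gamma - yv_1'\gamma$ has bounded height.
\begin{itemize}
\item Non-spherical $J$: here $w_{0,I_1}\alpha_s$ and its further images contain $\alpha_s$ with the same coefficient $1$. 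I would compute $x\alpha_s - \alpha_s\in\mathbb Z\Phi_I$ and iterate under $y$. Because $y$ is a fixed element of $W$, a very negative $v_1'\alpha_s$ would force the roots $y^kv_1'\alpha_s$ to cross $\Phi^+$ more than $\ell(y)$ times, which is impossible.
\item $E_8$ case with $s$ not commuting with $\supp(v_2)$: the same computation should work, since $v_2\alpha_s-\alpha_s\in\mathbb Z\Phi_{I_2}$ has bounded image under $v_1'$.
\end{itemize}

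I expect Step 4 to be the main obstacle. The truncation in Definition~\ref{def:valid_pair} exists precisely because the $E_8$, commuting case is the one in which no such control from $y$ is available. So the delicate point is to check that in every other configuration, the relation $v_1'\alpha_s\mapsto yv_1'\alpha_s$ genuinely involves $\alpha_s$ with a nonzero coefficient and a correction in $\mathbb Z\Phi_I$ of bounded height. I would first try an explicit root computation in $\tilde E_7$, and then reduce the indefinite case to it via the subdiagram of $J$.
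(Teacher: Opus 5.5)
\textbf{Verdict: there is a genuine gap, and it is in Step~4.}

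Steps 1--3 are sound. Step~2 is a good observation: for $\beta\in\Phi_{I_1}^+$ one has $y(v_1'\beta)=v_1'w_{0,I_1}\beta\in\Phi^-$, so $v_1'\beta$ lies in the finite set $\mathrm{Inv}(y)=\{\gamma\in\Phi^+\mid y\gamma\in\Phi^-\}$. This gives directly the control over $I_1$ that the paper gets from its Step~1 (that $z$ centralizes $I_1$). In Step~3 the constant should be $27D'$, since the $I_1$-part of $\theta_J$ has height $27$; this is harmless.

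Step~4, however, carries the whole content of the lemma, and the mechanism you propose there fails. Take $s\notin I$ commuting with $\supp(v_2)$ (e.g.\ $v_2=1$) and $J=I_1\cup\{s\}$ non-spherical. Then $x\alpha_s=w_{0,I_1}\alpha_s=\alpha_s+\beta_1$ with $0\neq\beta_1\in\sum_{t\in I_1}\mathbb Z_{\geq 0}\alpha_t$, and $x^2=v_2^2$ fixes $\alpha_s$. So the $y$-orbit of $v_1'\alpha_s$ is just $\{v_1'\alpha_s,\ v_1'\alpha_s+v_1'\beta_1\}$. If $\mathrm{ht}(v_1'\alpha_s)\ll 0$, both roots are negative, nothing crosses $\Phi^+$, and no contradiction arises.

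Worse, this computation is literally identical when $J$ is of type $E_8$. There, absent validity, $\mathrm{ht}(v_1'\alpha_s)$ can be unbounded: take $W$ of type $\tilde E_8$, $v_2=1$, where $C_W(w_{0,I_1})$ contains the translations by $\mathbb Z\theta^\vee$. So no argument built only from the $y$-dynamics of $\alpha_s$ and the Step~2 bounds can succeed; you need an input that separates non-spherical $J$ from $E_8$. Separately, in the non-commuting $E_8$ case, the claim that $v_2\alpha_s-\alpha_s$ has bounded image under $v_1'$ is not justified, because Step~2 controls $v_1'$ only on $\Phi_{I_1}$.

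\textbf{Missing idea and repair.} The missing idea is the one the paper uses for $w_{0,I_1}z$: the right descent set $\{r\in S\mid gr<g\}$ of any $g\in W$ is spherical. It slots into your framework as follows.
\begin{enumerate}
\item Non-spherical $J$. Put $g:=v_1'w_{0,I_1}$. Then $g\alpha_t=-v_1'\alpha_{t^\ast}\in\Phi^-$ for $t\in I_1$, and $\mathrm{ht}(g\alpha_s)\leq\mathrm{ht}(v_1'\alpha_s)+D'\,\mathrm{ht}(\beta_1)$. If this is negative, every element of $J$ is a right descent of $g$, so $J$ is spherical, i.e.\ of type $E_8$. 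This bounds $\mathrm{ht}(v_1'\alpha_s)$ below uniformly whenever $J$ is non-spherical.
\item $J$ of type $E_8$, with some $u\in\supp(v_2)$ not commuting with $s$. First bound $\mathrm{ht}(v_1'\alpha_u)$. For $\beta\in\Phi_{I_2}^+$ with $v_2\beta\in\Phi^-$ one has $y(v_1'\beta)=v_1'v_2\beta\in\Phi^-$, so $v_1'\beta\in\mathrm{Inv}(y)$. Moreover $\alpha_u$ occurs with positive coefficient in some such $\beta$ (use the last occurrence of $u$ in a reduced word). Then $g':=v_1'w_{0,I_1}u$ sends $\alpha_t$ ($t\in I_1$) and $\alpha_u$ to negative roots. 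Once $\mathrm{ht}(v_1'\alpha_s)$ is sufficiently negative, it also sends $\alpha_s$ to a negative root, because $g'\alpha_s=v_1'(\alpha_s+\beta_1)+c\,v_1'\alpha_u$ with $c\geq 1$. Hence the right descent set of $g'$ contains the connected nine-element set $I_1\cup\{s,u\}$, which is not spherical. This is a contradiction.
\end{enumerate}
Alternatively, $W$-invariance of the positive imaginary roots yields the same bounds. With either repair, your height-bound route goes through and is somewhat more direct than the paper's Steps~1 and~3.
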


\begin{proof}
Recall that the maximal height of positive roots in $E_7$ is $63$. Let $C = \min\{\operatorname{ht}(\alpha)\mid \alpha\in \Phi^-\text{ and } v_1\alpha\in \Phi^+\}\cup\{-64\}$. 
    Let $z\in C_W(w_{0,I_1}v_2)$ such that $(v_1 z, v_2)$ is a valid pair. By Lemma \ref{lem:const}, it suffices to show that $\operatorname{ht}(z\alpha)\geq C$ for all simple roots $\alpha$.

    Aiming for a contradiction, assume this is not the case. Then we can find a simple root $\alpha$ with $\operatorname{ht}(z\alpha)<C$. By the definition of $C$, this forces $v_1z\alpha$ to be a negative root. By assumption, $v_1 z\in W^I$, which means $v_1 z$ sends all simple roots in $I$ to positive roots. Thus, we must have $\alpha\notin I$.

    \textbf{Step 1: We prove that $z\in W^{I_1}$ centralizes every element in $I_1$.} Let us decompose $z = z_1 z_2$ with $z_1\in W^{I}$ and $z_2\in W_{I}$. Since $W_{I}$ is contained in the centralizer of $w_{0,I_1}$, we have $z_1\in C_W(w_{0,I_1} z_2v_2z_2^{-1})$ and $z_2 v_2 z_2^{-1}\in W_{I_2}$. Apply now Lemma~\ref{lem:parabolicConjugation} (1) to $z_1$ and any maximal Bruhat order chain
    \begin{align*}
        1 = w_0\lessdot w_1\lessdot\cdots \lessdot w_\ell = w_{0,I_1} z_2v_2z_2^{-1}.
    \end{align*}
    Then it follows from said lemma that $z_1 w_1 z_1^{-1}\leq w_{0,I_1} z_2v_2z_2^{-1}$ has length $1$, so $z_1 w_1 z_1^{-1}\in \supp(w_{0,I_1} z_2v_2z_2^{-1})$.
    This shows that $z_1 \big(I_1 \cup \supp(z_2v_2z_2^{-1})\big) (z_1)^{-1} = I_1 \cup \supp(z_2v_2z_2^{-1})$. Because $z_2v_2z_2^{-1} \in W_{I_2}$, the simple reflections in $I_1$ commute with those in $\supp(z_2v_2z_2^{-1})$. Thus, $I_1$ forms an isolated connected component of type $E_7$ within $I_1 \cup \supp(z_2v_2z_2^{-1})$. By Section \ref{sec:LS} (a), conjugation by $z_1$ must map this $E_7$ component to itself and its restriction to this $E_7$ component is the trivial action. We conclude that $z_1 s z_1^{-1} = s$ for all $s\in I_1$. In particular, $z_1\Phi_{I_1}^+ = \Phi_{I_1}^+$. Now, the fact that $v_1\in W^{I_1}$ guarantees $v_1 z_1 \in W^{I_1}$. Since $(v_1 z, v_2)$ is a valid pair, we know $v_1 z \in W^I \subseteq W^{I_1}$. The equation $v_1 z = (v_1 z_1) z_2$ then implies $z_2 \in W_{I_2}$. Thus, $z \in W^{I_1}$, and $z$ centralizes every simple reflection in $I_1$.

    \textbf{Step 2: Construction of $J$.} The fact that $\alpha\notin I$ implies that $\langle \alpha^\vee,\beta\rangle\neq 0$ for at least one simple root $\beta$ belonging to $I_1$. So $J = I_1 \cup \{s_\alpha\}$ is a connected subset of the Coxeter diagram of $(W, S)$ containing $I_1$ as a strict subset. We claim that $w_{0,I_1} z s < w_{0,I_1} z$ for all $s\in J$. Indeed, if $s\in I_1$, then because $z$ centralizes $I_1$ and $z \in W^{I_1}$, we have $w_{0,I_1} z s = w_{0,I_1} s z < w_{0,I_1} z$, so the claim follows. Furthermore, because $\operatorname{ht}(z\alpha) < -64$, we conclude that $w_{0,I_1} z\alpha \in \Phi^-$. This implies $w_{0,I_1} z s_\alpha < w_{0,I_1} z$. This establishes the claim for all $s\in J$. Therefore, $J$ is a spherical subset of $S$. As $J$ is connected and contains $E_7$ as a proper subset, $J$ is of type $E_8$. 

    To establish the desired contradiction, we must show that $J$ satisfies the prerequisite condition of Definition \ref{def:valid_pair} for the valid pair $(v_1 z, v_2)$. Specifically, if we can prove that every simple reflection in $J$ commutes with $\operatorname{supp}(v_2)$, then the definition of a valid pair strictly dictates that $(v_1 z)\theta_J \in \Phi^+$. This would directly contradict our established finding that $v_1 z \theta_J \in \Phi^-$. 
    
    \textbf{Step 3: Completing the proof} Since the simple reflections in $I_1 \subset J$ already commute with $\operatorname{supp}(v_2)$ by construction, it remains only to prove that $s_\alpha$ centralizes $\operatorname{supp}(v_2)$. This is done as follows. We decompose $I_2 = I_3\sqcup I_4$ as
    \begin{align*}
        I_3 &:= \{s\in I_2\mid s s_\alpha\neq s_\alpha s\},\\
        I_4 &:= \{s\in I_2\mid s s_\alpha = s_\alpha s\}.
    \end{align*}
    Note that every $z'\in z W_{I_4}$ centralizes $w_{0,I_1}$ and satisfies $z'\alpha = z\alpha\in \Phi^-$. Now the set $D := \{s\in S\mid w_{0,I_1} z's<w_{0,I_1} z'\}$ contains $J$ and is spherical. Since $J$ is of type $E_8$, it follows that $D\cap I_3=\emptyset$. So if we decompose $z$ again as $z = z_3 z_4$ with $z_3\in W^{I_4}$ and $z_4\in W_{I_4}$, this argument shows that $z_3\in W^{I_3}$, and we simply get $z_3\in W^{I_1}$ as well from $z\in W^{I_1}$. Thus, $z_3\in W^I$.

    Again, the set $D' = \{s\in S\mid w_{0,I_1}z_3s<w_{0,I_1}z_3\}$ is a spherical subset of $S$ containing $J$ and being contained in $S\setminus I_2$. It follows that $D' = J$ as above. Hence, $\alpha$ is the only simple root with $z_3\alpha\in\Phi^-$.

    Decompose $z_3 = z_5 z_6$ with $z_5\in W_I$ and $z_6\in {}^I W$. The fact that $z_3\alpha\in \Phi^-$ implies $z_6\alpha\in \Phi^-\cup \Phi_I$. Then $z_6\neq 1$ because $z_3\alpha \in \Phi^-$ and $z_5\alpha\in \Phi^+$. Hence there exists a simple root $\alpha'$ with $z_6\alpha'\in \Phi^-$, and any such root will satisfy $z_3\alpha'\in \Phi^-$ since the product $z_3 = z_5 z_6$ is length additive. In view of the above argument, we see that $\alpha' = \alpha$ is the only simple root with $z_6\alpha'\in \Phi^-$. Finally, the fact that $z = z_5 z_6 z_4\in C_W(w_{0,I_1} v_2)$ shows that conjugation by $z_6$ maps $z_4^{-1} w_{0,I_1}v_2 z_4\in W_I$ to $z_5^{-1} w_{0,I_1} v_2 z_5\in W_I$. The fact that $z_6\in {}^I W^I$ thus implies that $z_6^{-1} s z_6\in I$ for all $s\in \operatorname{supp}(z_4^{-1} v_2 z_4)$ by Lemma~\ref{lem:parabolicConjugation}.

    Note that $\operatorname{supp}(v_2)\cap I_3 = \operatorname{supp}(z_4^{-1} v_2 z_4)\cap I_3$ from the fact that $z_4\in W_{I_4}$. Now given any simple reflection $s_1\in \operatorname{supp}(v_2)\cap I_3$, we get that $s_2 = z_6^{-1} s_1 z_6$ lies in $I$ by Lemma \ref{lem:parabolicConjugation}. Consider now the element $w := w_{0,I_1} s_2 z_6\in W$, which is a length additive product. We have $w\alpha\in \Phi^-$ since $z_6\alpha\in \Phi^-$. Moreover, we have $w\beta\in \Phi^-$ for all simple roots $\beta$ whose corresponding simple reflection lies in $I_1\cup\{s_1\}$, as we also can write $w$ as either length additive product
    \begin{align*}
        w = s_2 z_6 w_{0,I_1} = w_{0,I_1} z_6 s_1\in W.
    \end{align*}
    So the set $\{s\in S\mid ws<w\}$ contains $J\cup \{s_1\}$. This is not spherical. This contradiction shows that $\operatorname{supp}(v_2)\cap I_3=\emptyset$, finishing the proof.
\end{proof}

With these geometric bounds established, the proof of the main finiteness condition follows immediately.

\begin{proof}[Proof of Proposition \ref{prop:E7_lift_finiteness}]
    Suppose $(v_1, v_2)$ is a valid pair satisfying the condition. In the Hecke algebra, the product $T_w T_{v_1}$ expands as a linear combination $\sum_{z\leq w} c_z T_{zv_1}$ with $c_z \in \mathbb{Z}[q]$. The assumption $\tau(T_{v_1 v_2 w_{0,I_1}} T_w T_{v_1})\neq 0$ implies there exists an element $z\leq w$ such that $zv_1 = v_1 v_2 w_{0,I_1}$, or equivalently, $z = v_1 v_2 w_{0,I_1} v_1^{-1}$.
    
    Because $v_1 \in W^I$ and $v_2 w_{0,I_1} \in W_I$, the product $v_1 (v_2 w_{0,I_1})$ is length-additive. This allows us to bound the lengths:
    \begin{equation*}
        l(w) \geq l(z) \geq l(zv_1) - l(v_1) = l(v_1 v_2 w_{0,I_1}) - l(v_1) = l(v_2) + l(w_{0,I_1}).
    \end{equation*}
    In particular, this forces $l(v_2) \leq l(w)$ and $l(z) \leq l(w)$. Since $W$ contains only finitely many elements of bounded length, there are only finitely many possibilities for both $v_2$ and $z$.
    
    Let us fix such a choice of $v_2$ and $z$, and study the set of possible first components:
    \begin{equation*}
        V := \{v_1\in W^I \mid (v_1, v_2) \text{ is a valid pair and } z = v_1 w_{0,I_1} v_2 v_1^{-1}\}.
    \end{equation*}
    If $v_1, v_1' \in V$, then $(v_1')^{-1} z v_1' = v_1^{-1} z v_1 = w_{0,I_1} v_2$, which means $(v_1')^{-1} v_1$ commutes with $w_{0,I_1} v_2$. Thus, $V$ is contained in at most one right $C_W(w_{0,I_1} v_2)$-coset. By Lemma \ref{lem:validPairCentralizerFiniteness}, there can only be finitely many valid pairs within this coset, meaning $V$ is finite. Since there are only finitely many choices for the parameters $v_2$ and $z$, the total number of such valid pairs is finite.
\end{proof}

\subsection{Descent to the cocenter}
\begin{lemma}\label{lem:valid_reflection}
    Let $(v_1, v_2)$ be a valid pair and $s\in S$. Then exactly one of the following three claims is true.
    \begin{enumerate}
        \item The pair $(sv_1, v_2)$ is valid or
        \item the element $s' := v_1^{-1} s v_1$ lies in $I_2$ and $(v_1, v_2')$ is valid for any $v_2'\in\{s'v_2, v_2s', s' v_2 s'\}$ or
        \item there exists no valid pair $(v_1', v_2')$ with $v_1' v_2' = s v_1 v_2$, and we have
        \begin{align*}
            T_s T_{v_1} T_{(v_1 w_{0,I_1} v_2)^{-1}} = T_{v_1} T_{(v_1 w_{0,I_1} v_2)^{-1}} T_s
        \end{align*}
        in $H$.
    \end{enumerate}
    We write
    \begin{align*}
        (v_2, v_2)\in X_s^{(1)}\text{ resp.\ }X_s^{(2)}\text{ resp.\ }X_s^{(3)}
    \end{align*}
    in each respective case.
\end{lemma}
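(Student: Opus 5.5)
A natural plan is to split according to where $sv_1$ sits relative to the decomposition $W = W^I\cdot W_I$, and then use the Deodhar-type trichotomy for $v_1\in W^I$ and $s\in S$. Exactly one of the following holds: $sv_1\in W^I$ with $sv_1\neq v_1$; or $s v_1 = v_1 s'$ with $s' = v_1^{-1}sv_1\in I$; or the remaining degenerate situation. Since $v_1\in W^I$, the second alternative means $s$ and $v_1$ interact only through $W_I$.

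\textbf{Case A: $sv_1\in W^I$.} Here one checks validity of $(sv_1,v_2)$ directly. The sets $J$ of type $E_8$ in Definition~\ref{def:valid_pair} depend only on $v_2$, so the condition changes only if $v_1\theta_J\in\Phi^+$ but $sv_1\theta_J\in\Phi^-$. That forces $v_1\theta_J = \alpha_s$.
\begin{itemize}
\item If no such $J$ exists, we are in claim (1).
\item If such a $J$ exists, then $v_1^{-1}sv_1 = s_{\theta_J}$. I would aim to show that we are then in claim (3). First, $s_{\theta_J}$ commutes with $W_{I_2}$-elements supported in $C_W(J)$, and $\theta_J$ is orthogonal to $I_1$'s roots except through $w_{0,I_1}$; more precisely, $\theta_{E_8}$ is orthogonal to $\Phi_{E_7}$. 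Hence $s$ commutes with $v_1 w_{0,I_1} v_2 v_1^{-1}$. Second, the relevant lengths add up, so that $T_s$ commutes with $T_{v_1}T_{(v_1w_{0,I_1}v_2)^{-1}}$; this should follow by writing both sides as $T$ of a single element via length-additivity.
\item Non-existence of valid $(v_1',v_2')$ with $v_1'v_2' = sv_1v_2$: the decomposition $W^I\times W_I$ is unique, so it would force $v_1' = sv_1$ and $v_2' = v_2$, which is invalid by assumption.
\end{itemize}

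\textbf{Case B: $s' = v_1^{-1}sv_1\in I$.}
\begin{itemize}
\item If $s'\in I_2$, then replacing $v_2$ by $s'v_2$, $v_2s'$ or $s'v_2s'$ may change $\supp(v_2)$, and hence the collection of $E_8$-sets $J$. The main point is to show validity survives. Enlarging the support only shrinks the set of relevant $J$, so that direction is harmless. If the support shrinks, new $J$ appear with $s'$ commuting with $J$. For these I would argue that $v_1\theta_J\in\Phi^+$ anyway, by relating $\theta_J$ to $v_1^{-1}$ applied to positive roots and using $v_1\in W^I$. This gives claim (2).
\item If $s'\in I_1$, then $s$ commutes with $v_1w_{0,I_1}v_1^{-1}$ up to the $w_{0,I_1}$-twist. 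Using $s'w_{0,I_1} = w_{0,I_1}s''$ for some $s''\in I_1$, I expect the Hecke relation in (3) to follow. Also $sv_1v_2 = v_1s'v_2\notin W^I W_{I_2}$, so no valid pair has that product, giving claim (3).
\end{itemize}

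Exclusivity of the three claims is immediate, since the three cases correspond to disjoint positions of $sv_1$.

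\textbf{Main obstacle.} The hard step is the subcase of Case A where validity fails exactly because $v_1\theta_J = \alpha_s$. There one must verify the Hecke commutation identity, which requires controlling lengths of $v_1 w_{0,I_1}v_2$ against $s$ via the $E_8$ structure of $J\supset I_1$. I would try to exploit $s_{\theta_J}w_{0,I_1} = w_{0,J}$, which holds in $E_8$ because $w_{0,E_8} = w_{0,E_7}s_\theta$.
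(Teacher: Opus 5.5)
Your case split is the paper's, and several parts are fine: claim (1) when no $E_8$-set obstructs, and the uniqueness of the $W^I\times W_I$ factorization for the first half of (3). The subcase $s'\in I_1$ is also fine, but there you need $s''=s'$, which holds because $w_{0,I_1}$ is central in $W(E_7)$.

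\textbf{The subcase $s'\in I_2$ has a real gap, and you describe it backwards.} Since $\supp(v_2)\subseteq\supp(v_2')\cup\{s'\}$, any $E_8$-set $J\supsetneq I_1$ that commutes with both $\supp(v_2')$ and $s'$ already commutes with $\supp(v_2)$. Such $J$ are covered by validity of $(v_1,v_2)$. The genuinely new sets are those that do \emph{not} commute with $s'$. For these, $v_1\in W^I$ gives nothing, because $\theta_J\notin\Phi_I$. The missing idea is an imaginary root:
\begin{itemize}
\item Since $s'$ commutes with $I_1$, it must be joined to the node $s''$ of $J\setminus I_1$. So $J\cup\{s'\}$ is non-spherical, and $\gamma=\theta_J+\alpha_{s'}$ pairs non-positively with every simple coroot of $J\cup\{s'\}$. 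Hence $\gamma$ is a positive imaginary root.
\item Since $sv_1=v_1s'$ is length-increasing, $v_1\alpha_{s'}=\alpha_s$.
\item Thus $v_1\theta_J=v_1\gamma-\alpha_s$ is a real root equal to a positive imaginary root minus a simple root. This forces $v_1\theta_J\in\Phi^+$.
\end{itemize}

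\textbf{In Case A with the obstruction $v_1\theta_J=\alpha_s$, you only assert the Hecke identity.} Writing $T_{v_1}T_{(v_1w_{0,I_1}v_2)^{-1}}$ as a single $T_x$ requires
$\ell(v_1w_{0,I_1}v_2^{-1}v_1^{-1})=2\ell(v_1)+\ell(w_{0,I_1})+\ell(v_2)$.
Conjugation by an element of $W^I$ is not length-additive in general, you give no argument, and this is more than the lemma needs. Your route would also have to handle the case where $s$ shortens $x$.

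The paper sidesteps this as follows:
\begin{itemize}
\item Factor $v_1=v_{1,1}v_{1,2}$ with $v_{1,1}\in W^J$ and $v_{1,2}\in W_J$. Then $\alpha:=v_{1,2}\theta_J$ is a simple root of $J$ with $v_{1,1}\alpha=\alpha_s$.
\item Hence $T_sT_{v_{1,1}}=T_{v_{1,1}}T_{s_\alpha}$ and $T_{s_\alpha}T_{v_{1,1}^{-1}}=T_{v_{1,1}^{-1}}T_s$, so no length control of $v_{1,1}$ is ever needed.
\item Only $i:=v_{1,2}w_{0,I_1}v_{1,2}^{-1}$ must be length-additive, and this is proved inside $E_8$. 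A positive root $\beta$ with $v_{1,2}\beta,\,v_{1,2}w_{0,I_1}\beta\in\Phi^-$ would make $\beta+w_{0,I_1}\beta$ orthogonal to $\Phi_{I_1}$, hence a positive multiple of $\theta_J$. But $v_{1,2}$ would send it to a negative vector, contradicting $v_{1,2}\theta_J=\alpha\in\Phi^+$.
\item Your observation $\theta_J\perp\Phi_{I_1}$ gives $s_\alpha i=is_\alpha$. So $T_{s_\alpha}T_i=T_iT_{s_\alpha}$ by the quadratic relation, whichever of $\ell(s_\alpha i)=\ell(i)\pm1$ holds.
\item $T_{s_\alpha}$ also commutes with $T_{v_2^{-1}}$, because $J$ commutes with $\supp(v_2)$.
\end{itemize}
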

\begin{proof}
    First consider the case where $v_1^{-1} s v_1\in W_I$. Since $v_1\in W^I$, this means that $s' := v_1^{-1} s v_1$ must lie in $I$ by Lemma~\ref{lem:parabolicConjugation}.

    If $s'\in I_1$, then we get $sv_1 v_2\notin W^{I_1}$, so the first two cases are definitely not true. However, we get
    \begin{align*}
        T_s T_{v_1} T_{(v_1 w_{0,I_1} v_2)^{-1}} = T_{v_1} T_{s'}T_{(v_1 w_{0,I_1} v_2)^{-1}}
        =T_{v_1} T_{(v_1 w_{0,I_1} v_2)^{-1}}T_{s}
    \end{align*}
    as claimed.

    If $s'\in I_2$, then we consider $v_2'\in\{s'v_2, v_2s', s' v_2 s'\}$. We wish to show that $(v_1, v_2')$ is valid. For this, consider a set $J\supsetneq I_1$ of type $E_8$ which commutes with every simple reflection in $\supp(v_2')$. We have to show that $v_1\theta_J\in \Phi^+$. If it is true that $J$ commutes with every simple reflection in $\supp(v_2)$, then this is clear by the assumption that $(v_1, v_2)$ is valid. So assume this is not the case. Since $\supp(v_2')\cup \{s'\}= \supp(v_2)\cup \{s'\}$, this is only possible if $J$ does \emph{not} commute with $s'$ and $s'\in \supp(v_2)\setminus \supp(v_2')$.

    Let us decompose $v_1 = v_{1,1} v_{1,2}$ with $v_{1,1} \in W^{J\cup\{s'\}}$ and $v_{1,2}\in W_{J\cup\{s'\}}$. Let $\alpha_{s'}$ denote the simple root corresponding to $s'$. We note that $v_{1,2}\alpha_{s'}\in \Phi_{J\cup\{s'\}}$ is a root whose image under $v_{1,1}\in W^{J\cup\{s'\}}$ is simple. Thus, $v_{1,2}\alpha_{s'}$ is simple.

    Consider the element $\gamma := \theta_J + \alpha_{s'}\in \mathbb Z\Phi_{J\cup\{s'\}}$. Its pairing with all roots in $\Phi_{I_1}$ is zero (since $s'\in I_2$). For the unique simple reflection $s''\in J\setminus I_1$ and its simple root $\beta := \alpha_{s''}$, we get $\langle \beta^\vee,\theta_J\rangle=1$ (using an explicit calculation for $E_8$). Now the fact $\langle \beta^\vee,\alpha_{s'}\rangle\leq -1$ shows $\langle \beta^\vee,\gamma\rangle\leq 0$. Similarly, the fact that the pairing of $\alpha_{s'}$ with all roots in $\Phi_{I_1}$ is zero implies $\langle \alpha_{s'},\theta_J\rangle=2\langle \alpha_{s'},\beta\rangle \leq -2$. Hence $\langle \alpha_{s'},\gamma\rangle\leq 0$. We conclude that $\gamma$ is a positive imaginary root from \cite[Lemma~6.10]{Ma18}.

    Now consider $v_{1,2}\theta_J = v_{1,2}\gamma - v_{1,2}\alpha_{s'}$, where $v_{1,2}\gamma$ is positive imaginary and $v_{1,2}\alpha_{s'}$ is simple. It follows that $v_{1,2}\theta_J$ is a positive root in $\Phi_{J\cup\{s'\}}^+$. Hence $v_1\theta_J = v_{1,1} v_{1,2}\theta_J\in \Phi^+$ since $v_{1,1}\in W^{J\cup\{s'\}}$. 

    This shows that $(v_1, v_2')$ is a valid pair as claimed.
    
    It remains to consider the case where $v_1^{-1} s v_1\notin W_I$. Then $s v_1$ lies in $W^I$. We are done if $(sv_1, v_2)$ is valid. So let us assume this is not valid. This is only possible if there exists a set $I_1\subsetneq J\subseteq S$ of type $E_8$ which commutes with every simple reflection of $\supp(v_2)$ such that $sv_1\theta_J\in \Phi^-$. Since $v_1\theta_J\in \Phi^+$ by assumption, this means that $\theta_J = v_1^{-1}\alpha_s$.

    Now we decompose $v_1 = v_{1,1} v_{1,2}$ with $v_{1,1}\in W^J$ and $v_{1,2}\in W_J$. We note that $v_{1,2}\theta_J\in \Phi_J$ is a root whose image under $v_{1,1}$ is simple. Thus, $v_{1,2}\theta_J$ is a simple root again, which we denote by $\alpha\in \Phi_J$.

    We claim that the product $v_{1,2} w_{0,I_1} v_{1,2}^{-1}$ is length additive. Indeed, since $v_{1,2}\in W^I$, the product $v_{1,2} w_{0,I_1}$ is length additive. 
Now consider the usual length formula:
    \begin{align*}
        \ell(v_{1,2} w_{0,I_1} v_{1,2}^{-1}) &= \ell(v_{1,2} w_{0,I_1}) + \ell(v_{1,2}) \\
        &\quad - 2\#\{\beta\in \Phi^+\mid v_{1,2}\beta \in \Phi^- \text{ and } v_{1,2} w_{0,I_1} \beta \in \Phi^-\}.
    \end{align*}
    It remains to show that there is no $\beta\in \Phi^+$ with $v_{1,2}\beta, v_{1,2} w_{0,I_1}\beta\in \Phi^-$. Since $\supp(v_{1,2})\subseteq J$ and $v_{1,2}\in W^I$, such a root $\beta$ would lie in $\Phi_J^+$ but not in $\Phi_{I_1}$. Thus, $w_{0,I_1}\beta$ is a positive root and $\gamma := w_{0,I_1}\beta + \beta$ is a $\mathbb Z_{\geq 0}$-linear combination of simple roots in $\Phi_J^+$. It follows that $v_{1,2}\gamma$ is a $\mathbb Z_{\leq 0}$-linear combination of such roots. Moreover, the pairing of $\gamma$ with each root in $\Phi_{I_1}$ is zero. It follows that $\gamma$ is a $\mathbb Z$-multiple of $\theta_J$ (which also satisfies this property, and we use that the $E_8$ Cartan matrix is non-degenerate). We get a contradiction to the fact that $v_{1,2}\theta_J\in \Phi^+$ (from $(v_1, v_2)$ being valid and the observation that $v_1 = v_{1,1} v_{1,2}$ is length additive).

    The contradiction shows that $v_{1,2} w_{0,I_1} v_{1,2}^{-1}$ is indeed a length additive product. It is contained in $W_J$, which commutes with $\supp(v_2)$ by assumption. Hence we can compute
    \begin{align*}
        T_s T_{v_1} T_{(v_1 w_{0,I_1} v_2)^{-1}} &= T_{sv_1} T_{v_2^{-1}} T_{w_{0,I_1}} T_{v_1^{-1}}
        \\&=T_{v_{1,1} s_\alpha v_{1,2}} T_{v_2^{-1}} T_{w_{0,I_1}} T_{v_{1,2}^{-1}} T_{v_{1,1}^{-1}}
        \\&= T_{v_{1,1}} T_{s_{\alpha}} T_{v_{1,2}} T_{w_{0,I_1}} T_{v_{1,2}^{-1}} T_{v_2^{-1}} T_{v_{1,1}^{-1}}
        \\&= T_{v_{1,1}} T_{s_\alpha} T_{v_{1,2} w_{0,I_1} v_{1,2}^{-1}} T_{v_2^{-1}} T_{v_{1,1}^{-1}}.
    \end{align*}
    Observe that $i := v_{1,2} w_{0,I_1} v_{1,2}^{-1}\in W_J$ is an involution which commutes with $s_\alpha\in J$. We get $\ell(s_\alpha i) = \ell((s_\alpha i)^{-1}) = \ell(i s_\alpha)\in \ell(i)\pm 1$. If this is equal to $\ell(i)+1$, then we get
    \begin{align*}
        T_{s_\alpha} T_i = T_{s_\alpha i} = T_{is_\alpha} = T_i T_{s_\alpha}.
    \end{align*}
    If this is equal to $\ell(i)-1$, we get
    \begin{align*}
        T_{s_\alpha} T_i = \q T_{s_\alpha i} + (\q-1) i = \q T_{i s_\alpha} + (\q-1) i = T_i T_{s_\alpha}.
    \end{align*}

    So it is always true that $T_{s_\alpha}$ commutes with $T_i$. Continuing our calculation, and noting that $T_{s_\alpha}$ also commutes with $T_{v_2^{-1}}$ (since $s_\alpha \in J$ and $J$ commutes with $\supp(v_2)$):
    \begin{align*}
        T_{v_{1,1}} T_{s_\alpha} T_{v_{1,2} w_{0,I_1} v_{1,2}^{-1}} T_{v_2^{-1}} T_{v_{1,1}^{-1}}
        &=T_{v_{1,1}} T_{v_{1,2} w_{0,I_1} v_{1,2}^{-1}} T_{s_\alpha} T_{v_2^{-1}} T_{v_{1,1}^{-1}}
        \\&=T_{v_{1,1}} T_{v_{1,2} w_{0,I_1} v_{1,2}^{-1}} T_{v_2^{-1}} T_{s_\alpha}T_{v_{1,1}^{-1}}
        \\&=T_{v_{1,1}} T_{v_{1,2}} T_{w_{0,I_1}} T_{v_{1,2}^{-1}} T_{v_2^{-1}} T_{s_\alpha} T_{v_{1,1}^{-1}}
        \\&= T_{v_1} T_{w_{0,I_1}} T_{v_2^{-1}} T_{v_{1,2}^{-1} s_\alpha v_{1,1}^{-1}}
        \\&= T_{v_1} T_{w_{0,I_1}} T_{v_2^{-1}} T_{v_1^{-1} s}
        \\&= T_{v_1} T_{(v_1 w_{0,I_1} v_2)^{-1}} T_s.
    \end{align*}
    
    We see that only the third case is satisfied. This finishes the proof.
\end{proof}

\subsection{Proof of Theorem~\ref{thm:E7_lift}}
In this subsection, we complete the proof of Theorem~\ref{thm:E7_lift}.

Well-definedness of the map $F$ is proved in Proposition~\ref{prop:E7_lift_finiteness}. Let us next verify the fact that $F\vert_{[H,H]}=0$.
    
Since $H$ is generated, as an algebra over $\BZ[\q^{\pm 1}]$, by the elements $T_s$ for $s\in S$, it suffices to show that $F(h T_s) = F(T_s h)$ holds for all $s\in S$ and all $h\in H$.

        For a valid pair $(v_1, v_2)$, let us use the shorthand notation
        \begin{align*}
            F_{v_1, v_2}(h') := \q^{-\ell(v_1)-\ell(v_2)} f(T_{v_2}) \tau(T_{(v_1 w_{0,I_1} v_2)^{-1}} h' T_{v_1}),
        \end{align*}
        so that $F(h') = \sum_{\substack{(v_1, v_2)\text{ valid}}}F_{v_1, v_2}(h')$. Let us consider the sub-sums
        \begin{align*}
            F^{(i)}(h') &:= \sum_{(v_1, v_2)\in X_s^{(i)}} F_{v_1, v_2}(h')
        \end{align*}
        for $i=1,2,3$, where $X_s^{(i)}$ is the set defined in Lemma~\ref{lem:valid_reflection}. By the same lemma, we get $F = F^{(1)} + F^{(2)} + F^{(3)}$. We show that each of these three functions takes the same value on $T_s h$ as it takes on $h T_s$.

\textbf{Step 1.} First consider $F^{(1)}$. Its summands can be grouped into pairs according to the multiplication by $s$. Explicitly, for $(v_1, v_2)\in X_s^{(1)}$, we also get $(sv_1, v_2)\in X_s^{(1)}$ and hence
        \begin{align*}
            F^{(1)}(h') &= \sum_{\substack{(v_1, v_2)\in X_s^{(1)}\\ sv_1>v_1}} (F_{v_1, v_2}(h') + F_{sv_1, v_2}(h')).
        \end{align*}
        Each summand is given by
        \begin{align*}
        &F_{v_1, v_2}(h') + F_{sv_1, v_2}(h')
        \\&=\q \tau(T_{(v_1 w_{0,I_1} v_2)^{-1}} h T_s T_{v_1}) + \tau(T_{(v_1 w_{0,I_1} v_2)^{-1}} T_s h T_{s}^2 T_{v_1})
            \\&= \tau(T_{(v_1 w_{0,I_1} v_2)^{-1}} (\q h T_s + qT_s h + (\q-1) T_s h T_s) T_{v_1})
            \\&=\q \tau(T_{(v_1 w_{0,I_1} v_2)^{-1}}  T_s h T_{v_1}) + \tau(T_{(v_1 w_{0,I_1} v_2)^{-1}} T_s^2 h T_s T_{v_1}).
        \end{align*}
        Multiplication by $\q^{-\ell(sv_1)-\ell(v_2)} f(T_{v_2})$ yields $F_{v_1, v_2}(h T_s) + F_{sv_1, v_2}(h T_s) = F_{v_1, v_2}(T_s h) + F_{sv_1, v_2}(T_s h)$. We conclude $F^{(1)}(h T_s) = F^{(1)}(T_s h)$ as claimed.

\textbf{Step 2.} Next, we consider $F^{(2)}$. Given $(v_1, v_2)\in X_s^{(2)}$, set $s' = v_1^{-1} s v_1\in I_2$. Then also $(v_1, \sigma_1 v_2 \sigma_2)\in X_s^{(2)}$ for all $\sigma_1, \sigma_2\in \langle s'\rangle = \{1,s'\}$. It therefore suffices to show that the sum
\begin{align*}
    \hat F_{(v_1, v_2)} := \sum_{\sigma_1, \sigma_2\in \langle s'\rangle} F_{v_1, \sigma_1 v_2 \sigma_2}: H\to \BZ[\q^{\pm 1}]
\end{align*}
takes the same value on $T_s h$ as it takes on $h T_s$. For this, we define an $\mathbb Z[\q^{\pm 1}]$-linear map
\begin{align*}
    \pi_{v_1, v_2} : H\to H_{I_2},~T_w\mapsto\begin{cases} T_{v_2'}&\text{ if }\exists v_2'\in \langle s'\rangle v_2 \langle s'\rangle\text{ s.th.\ }w = v_1 w_{0,I_1} v_2',\\0,&\text{otherwise}\end{cases}
\end{align*}
Since $v_1 w_{0,I_1}\in W^{I_2}$, we see that $\pi_{v_1, v_2}(hT_{s'}) = \pi_{v_1, v_2}(h)T_{s'}$ holds for all $h\in H$. Moreover, the simple observation $\ell(sv_1) = \ell(v_1 s') = \ell(v_1)+1$ implies that
\begin{align*}
    T_s T_{v_1} = T_{sv_1} = T_{v_1s'} = T_{v_1} T_{s'}
\end{align*}
and similarly $T_s T_{v_1 w_{0,I_1} v_2} = T_{v_1 w_{0,I_1}} T_{s'} T_{v_2}$. We conclude that
\begin{align*}
    \pi_{v_1, v_2}(T_s h) = T_{s'}\pi_{v_1, v_2}(h)
\end{align*}
holds for all $h\in H$.

Now $\hat F_{(v_1, v_2)}(h')$ is a $\mathbb Z_{>0}$-multiple of $f\circ \pi_{v_1, v_2}(h' T_{v_1})$ by the usual properties of $\tau$. We get
\begin{align*}
    f(\pi_{v_1, v_2}(T_s h T_{v_1})) &= f(T_{s'} \pi_{v_1, v_2}(h T_{v_1}))
    =f(\pi_{v_1, v_2}(hT_{v_1}) T_{s'}) \\&= f(\pi_{v_1, v_2}(hT_{v_1} T_{s'}))
    =f(\pi_{v_1, v_2}(h T_s T_{v_1})).
\end{align*}
In particular, we also get $\hat F_{v_1, v_2}(T_s h) = \hat F_{v_1, v_2}(h T_s)$.

\textbf{Step 3.} We still have to study the function $F^{(3)}$. By Lemma~\ref{lem:valid_reflection}, this is thankfully very easy. If $(v_1, v_2)\in X_s^{(3)}$, we conclude
        \begin{align*}
            &\tau(T_{(v_1 w_{0,I_1} v_2)^{-1}} h T_s T_{v_1}) = \tau(T_s T_{v_1} T_{(v_1 w_{0,I_1} v_2)^{-1}} h_1)
            \\&\underset{\text{L\ref{lem:valid_reflection}}}=\tau(T_{v_1} T_{(v_1 w_{0,I_1} v_2)^{-1}} T_s h) = \tau(T_{(v_1 w_{0,I_1} v_2)^{-1}} T_s h T_{v_1}).
        \end{align*}
        Multiplied by $\q^{-\ell(v_1)-\ell(v_2)} f(T_{v_2})$, we see $F_{v_1, v_2}(h T_s) = F_{v_1, v_2}(T_s h)$. We conclude that $F^{(3)}(h T_s) = F^{(3)}(T_s h)$.

\textbf{Step 4.} The last remaining claim of Theorem~\ref{thm:E7_lift} concerns the specialization of $\q$ to $1$. Under this specialization, we see that $F(T_w)$ becomes
        \begin{align*}
            F(T_w)_{\q=1} = \sum_{\substack{(v_1, v_2)\text{ valid}\\v_2 w_{0,I_1} = v_1^{-1} w v_1}} f(T_{v_2})_{\q=1}.
        \end{align*}
        We observe that if $w$ is not $W$-conjugate to any element in $W_{I_2} w_{0,I_1}$, then $F(T_w)_{\q=1}=0$. Thus, let us assume we are given an element $w_2\in W_{I_2}$ such that $w$ is conjugate to $w_{0,I_1} w_2$.
        
        By (1), the value of $F(T_w)_{\q=1}$ only depends on the $W$-conjugacy class of $w$. Similarly, the value of $f(T_{w_2})_{\q=1}$ only depends on the $W_{I_2}$-conjugacy class of $w_2$. So let us assume that $w = w_{0,I_1} w_2$ and that $w_2$ is of minimal length in its $W_{I_2}$-conjugacy class.
        Then $(1, w_2)$ is a valid pair, whose contribution to the above sum is $f(T_{w_2})_{\q=1}$.

        Note from the above that $w\in W_I$ has minimal length in its $W_I$-conjugacy class, hence it has minimal length in its $W$-conjugacy class (Theorem~\ref{min}).
        If $(v_1, v_2)\neq (1, w_2)$ is another valid pair with $v_1^{-1} w v_1 = v_2 w_{0,I_1}$, then we get $v_1\neq 1$. Since $v_1\in W^I$ and $w, v_2 w_{0,I_1}\in W_I$ with $w$ having minimal length in its $W$-conjugacy class, it follows from Lemma~\ref{lem:parabolicConjugation} that $v_1 s v_1^{-1}\in I$ for all $i\in J := \supp(v_2 w_{0,I_1})$. Therefore, $v_1$ is a (length additive) product of elementary conjugators in the sense of Proposition~\ref{prop:LSalgo}.
        
        Explicitly, this means that there exists $s\in S\setminus J$ such that the irreducible component $C$ of $J\sqcup\{s\}$ containing $s$ is spherical, and the elementary conjugator $w = w_{0,C\cap J} w_{0,C}$ satisfies $\ell(v_1 w) = \ell(v_1) - \ell(w)$. In particular, $v_1s<v_1$ and hence $s\notin I$ (using $v_1\in W^I$). The only possibility for this is if $C = I_1\sqcup\{s\}$ is of type $E_8$, and contained in $C_W(\supp(v_2))$, as otherwise $C$ will not be spherical. But then $w \theta_C\in \Phi^-$ implies $v_1 \theta_C\in \Phi^-$, contradicting the assumption of a valid pair.

        The contradiction shows that $(1, w_2)$ is the only valid pair contributing to our above formula for $F(T_w)_{\q=1}$.
This finishes the proof of Theorem~\ref{thm:E7_lift}.

\section{Proof of the Main Theorem}\label{sec:proofOfMainThm}

Throughout this section, $(W,S)$ denotes the Weyl group of the
fixed split Kac--Moody group $G$. All parabolic subsystems
$(W_I,I)$ occurring in the induction are again crystallographic
Weyl groups.

\subsection{Weakly separating property}
By definition, the set of canonical elements $\{T_{\mathcal{O}}\mid \mathcal{O}\in Cl(W)\}\subseteq \overline{H}$ is linearly independent if and only if the following separating property is satisfied:

\smallskip\noindent\textbf{Property (SP):} For each $\mathcal{O}\in Cl(W)$, there exists a function $f_{\mathcal{O}} : \overline{H} \to \mathbb{Z}[\q^{\pm 1}]$ such that $f_{\mathcal{O}}(T_{\mathcal{O}'}) =\delta_{\mathcal{O},\mathcal{O}'}$ for all $\mathcal{O}'\in Cl(W)$. 

\smallskip
We introduce the following weaker version of the separating property, evaluated over the extended ring $A = \mathbb{Q}[\q^{\pm 1/2}]$: 

\smallskip\noindent\textbf{Property (WSP):} For each $\mathcal{O}\in Cl(W)$, there exists an $A$-linear function $f_{\mathcal{O}}: \overline{H}_A\to A$ such that for all $\mathcal{O}'\in Cl(W)$, we have
\begin{align*}
    f_{\mathcal{O}}(T_{\mathcal{O}'}) \equiv \delta_{\mathcal{O},\mathcal{O}'}\pmod{\q-1}.
\end{align*}

To execute our induction, we must carefully isolate irreducible components of type $E_8$. Let $J \subseteq S$. We say that $J$ is \emph{small} if $(W_J, J)$ is a spherical Coxeter system and $J$ does not contain any irreducible component of type $E_8$. 

\smallskip\noindent\textbf{Property (WSP) for small $J$:} 
For each $\mathcal{O}\in Cl(W)$ with $\mathcal{O} \cap W_J \neq \emptyset$, there exists an $A$-linear function $f_{\mathcal{O}}: \overline{H}_A\to A$ such that for all $\mathcal{O}'\in Cl(W)$, we have
\begin{align*}
    f_{\mathcal{O}}(T_{\mathcal{O}'}) \equiv \delta_{\mathcal{O},\mathcal{O}'}\pmod{\q-1}.
\end{align*}

\subsection{(WSP) $\Rightarrow$ (SP)}\label{subsec:almostIndependenceImpliesIndependence}
We assume that Property (WSP) holds for a Coxeter group $(W, S)$. 

Suppose that $c_1 T_{\mathcal{O}_1}+\cdots+c_n T_{\mathcal{O}_n}=0$ for pairwise distinct $\mathcal{O}_1,\dotsc,\mathcal{O}_n\in Cl(W)$ and coefficients $c_1,\dotsc,c_n\in \mathbb{Z}[\q^{\pm 1}]$. Let $f_{\mathcal{O}_1},\dotsc, f_{\mathcal{O}_n}$ be the corresponding functions given by Property (WSP). Define the matrix $M\in A^{n\times n}$ by
\begin{align*}
    M = (f_{\mathcal{O}_i}(T_{\mathcal{O}_j}))_{i,j=1,\dotsc,n}.
\end{align*}
By the definition of Property (WSP), evaluating this matrix modulo $\q-1$ yields the identity matrix. Because $\det M \equiv 1 \pmod{\q-1}$, the determinant $\det M \in A$ cannot be the zero polynomial.

We naturally identify $\mathbb{Z}[\q^{\pm 1}]$ as a subring of $A$. Let $v = (c_1,\dotsc,c_n)^T\in A^n$. Applying the linear maps $f_{\mathcal{O}_i}$ to our initial linear dependence equation yields the matrix equation $Mv=0$. Because the determinant of $M$ is non-zero and $A$ is an integral domain, the matrix $M$ is invertible over the fraction field of $A$, which forces $v=0$. Hence $c_1=\cdots=c_n=0$, establishing the linear independence required by Property (SP).

\subsection{(WSP) for small $J$ and for smaller $(W', S')$ $\Rightarrow$ (WSP) for $(W, S)$}

We first establish the case where $(W, S)$ is not irreducible, i.e.\ where there exists a decomposition into non-empty subsets $S = S_1\sqcup S_2$ with $s_1 s_2 = s_2 s_1$ for all $s_1\in S_1, s_2\in S_2$. Then the Hecke algebra decomposes into the tensor product $H_A = H_{S_1,A}\otimes_A H_{S_2, A}$, and each conjugacy class $\CO\in \Cl(W)$ is of the form $\CO = \CO_1\CO_2$ with $\CO_i\in \Cl(W_{S_i})$. Similarly, each $w\in W$ can be uniquely written as $w = w_1 w_2$ with $w_i\in W_{S_i}$. It remains to set
\begin{align*}
    f_{\CO}(T_w) := f_{\CO_1}(T_{w_1}) f_{\CO_2}(T_{w_2}),
\end{align*}
where $f_{\CO_i}$ is the function obtained from the inductive assumption that (WSP) holds for the smaller parabolic Weyl group $(W_{S_i}, S_i)$. This finishes the proof if $(W, S)$ is not irreducible.

Assume now that $(W, S)$ is irreducible. Let $\CO\in \Cl(W)$ be a conjugacy class. Let $J = \supp(w)$ for some minimal length representative $w \in \CO_{\min}$. If $J=S$, then we obtain a desired function $f_{\CO}$ from Theorem~\ref{thm:almostEllipticClassPolynomial}. If $J$ is \emph{small}, then we obtain a desired function $f_{\CO}$ from our assumption.
    
Suppose that $J\neq S$ but it is not small. Then, there exists an irreducible component $J'\subseteq J$ of $J$ such that $J'$ is of type $E_8$ or $W_{J'}$ is infinite.

Set $I= \{s\in S\mid ss' = s's \text{ for all } s' \in J'\}\cup J'$. Since $J' \subset J \subsetneq S$ and $(W, S)$ is irreducible, $I$ is a proper subset of $S$. By inductive hypothesis on $(W_I, I)$, we obtain a function $f : \overline H_I\to A$ with $f(T_{\CO'}) = \d_{\CO, \CO'}$.

By the Lusztig-Spaltenstein algorithm (see Proposition \ref{prop:LSalgo}), for every $K \subset S$ that is $J$-conjugate to $J$, we have $J' \subset K$. Therefore $N_W(K)\subseteq W_I$. Let us base-change Theorem~\ref{thm:classPolynomialLifting} and its proof from $\mathbb Z[\q^{\pm 1}]$ to $A$, then apply it to $(I, J, f)$. This yields a function $\indFunctionName : \overline H\to A$, which is our desired function $f_{\CO}$. 

\subsection{Inductive proof of Property (WSP) for small $J$}
We now prove Property~\textup{(WSP)} for small subsets of Weyl groups of Kac--Moody groups. We proceed by induction on $\#S$, assuming the assertion for all proper parabolic Weyl subsystems $(W_K,K)$.


Fix a small subset $J \subseteq S$ (meaning $W_J$ is a finite Coxeter group containing no irreducible component of type $E_8$). To isolate the anomalies caused by type $E_7$ components, we partition the relevant conjugacy classes. Let
\[
\mathcal{C}_{1}=\{\mathcal{O}_1 \in Cl(W_J) \mid \forall J' \subsetneq J : \mathcal{O}_1 \cap W_{J'} = \emptyset\}
\]
denote the set of elliptic conjugacy classes in $W_J$. For any irreducible component $J' \subseteq J$, we write $W_J = W_{J'} \times W_{J \setminus J'}$ and denote by $\pi_{J'}$ the projection map $W_J \to W_{J'}$. We define the subset of anomalous local classes as
\[
\mathcal{C}_{2}=\{\mathcal{O}_1 \in \mathcal{C}_{1} \mid \exists J' \subseteq J \text{ an irreduc. comp. of type } E_7 \text{ s.t. } \pi_{J'}(\mathcal{O}_1) = \{w_{0,J'}\}\}.
\]

We call the conjugacy classes of $W_J$ the ``local classes'' and the conjugacy classes of $W$ the ``global classes''. 

We globalize the local class sets $C_1$ and $C_2$ to the ambient Weyl group $W$ as follows:
\begin{align*}
\mathcal{C}_{3}&=\{\mathcal{O} \in Cl(W) \mid \exists \mathcal{O}_1 \in \mathcal{C}_1 \text{ such that } \mathcal{O}_1 \subseteq \mathcal{O}\}, \\
\mathcal{C}_{4}&=\{\mathcal{O} \in Cl(W) \mid \exists \mathcal{O}_2 \in \mathcal{C}_2 \text{ such that } \mathcal{O}_2 \subseteq \mathcal{O}\}.
\end{align*}
Thus, $\mathcal{C}_3$ contains all global conjugacy classes whose minimal support is $W$-conjugate to $J$, while $\mathcal{C}_4 \subseteq \mathcal{C}_3$ contains the globalized $E_7$-anomalous classes.

From our method of orbital integrals, we obtain the following generic separation result for non-anomalous classes.

\begin{proposition}\label{prop:smallwsp_oi}
For every $\mathcal{O}\in\mathcal{C}_{3}$, there exists an $A$-linear trace function $f:\overline{H}_{A}\rightarrow A$ such that for all $\mathcal{O}^{\prime}\in Cl(W)\setminus\mathcal{C}_{4}$, we have
\[
f(T_{\mathcal{O}^{\prime}})\equiv\delta_{\mathcal{O},\mathcal{O}^{\prime}} \pmod{\q-1}.
\]
\end{proposition}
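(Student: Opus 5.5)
Given $\mathcal O\in\mathcal C_3$, fix a local class $\mathcal O_1\in\mathcal C_1$ with $\mathcal O_1\subseteq\mathcal O$. We will take $f$ to be an $A$-linear combination of the global trace maps $\Psi_{\hat{\mathcal O}}$ of Theorem~\ref{thm:psi_global}, where $\hat{\mathcal O}$ ranges over the classes in $\mathcal C_3$. Theorem~\ref{thm:psi_global} says that such a combination vanishes on every $T_{\mathcal O'}$ whose minimal support is non-spherical. For spherical support $K$, its value on $T_{\mathcal O'}$ is given by the local pairing $\psi_K$, summed over the $W_K$-classes contained in $\hat{\mathcal O}$. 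So everything reduces to controlling the matrix with entries $\Psi_{\hat{\mathcal O}}(T_{\mathcal O'})$ modulo $\q-1$. That matrix is built from the specialized pairings $\psi_{K,1}$.

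\textbf{Step 1: supports other than $[J]$.} If $\Psi_{\hat{\mathcal O}}(T_{\mathcal O'})\not\equiv 0\pmod{\q-1}$ and $\mathcal O'$ has spherical minimal support $K$, then some term $\psi_{K,1}(\mathcal O_1'\otimes\mathcal O_2)$ is nonzero, where $\mathcal O_2\subseteq\hat{\mathcal O}$ and $\mathcal O_2\in\Cl(W_K)$. By Theorem~\ref{thm:lusztigPairingOverview}(5), $\mathcal O_1'$ and $\mathcal O_2$ then have minimal-length representatives with the same support. The representative of $\mathcal O_1'$ can be taken with support $K$, so $\mathcal O_2$ has minimal support $K$ inside $W_K$. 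But $\mathcal O_2\subseteq\hat{\mathcal O}$, whose minimal support is $[J]$. Hence $K\in[J]$, using Lemma~\ref{lem:parabolicConjugation} and the fact that minimal-length elements of $W_K$ remain minimal in $W$. Consequently only classes $\mathcal O'\in\mathcal C_3$ can pair nontrivially, and we may restrict to the square matrix indexed by $\mathcal C_3\times\mathcal C_3$.

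\textbf{Step 2: reduction to $W_J$.} For $\mathcal O',\hat{\mathcal O}\in\mathcal C_3$ we may evaluate at $K=J$. There, $\Psi_{\hat{\mathcal O}}(T_{\mathcal O'})=\sum\psi_J(\mathcal O_1'\otimes\mathcal O_2)$, with the sum over the elliptic $W_J$-classes $\mathcal O_2\subseteq\hat{\mathcal O}$. These form a single $\Aut(W,J)$-orbit, and $\mathcal O_1'$ is defined up to the same action. Thus the $\mathcal C_3\times\mathcal C_3$ matrix is the $\Aut(W,J)$-orbit-summed version of the elliptic block of $\psi_{J,1}$. By Theorem~\ref{thm:lusztigPairingOverview}(5), the elliptic classes of $W_J$ pair nontrivially only among themselves, so this block is a direct summand of the full pairing matrix.

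\textbf{Step 3: inverting the elliptic block.} $J$ has no $E_8$ component, so Corollary~\ref{cor:perfectPairingConsequences} applies to $W_J$. For each elliptic $\mathcal O_2\in\mathcal C_1$ it provides rational coefficients $c_{\mathcal O_2,\mathcal O''}$ with $\sum_{\mathcal O''}\psi_{J,1}(\mathcal O_1\otimes\mathcal O'')\,c_{\mathcal O_2,\mathcal O''}=\delta_{\mathcal O_1,\mathcal O_2}$ for every non-anomalous $\mathcal O_1$, i.e.\ every $\mathcal O_1\notin\mathcal C_2$.

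We want to define $f$ by grouping these coefficients according to global classes. To do so we first average $c_{\mathcal O_2,\bullet}$ over $\Aut(W,J)$. Transport of structure shows the averaged coefficients still satisfy the same identities on $\mathcal C_1\setminus\mathcal C_2$: $\psi_J$ is $\Aut(W,J)$-equivariant, and $\mathcal C_2$ is $\Aut(W,J)$-stable because automorphisms preserve the $E_7$ components and their longest elements. After averaging we set $f=\sum_{\hat{\mathcal O}}\bigl(\sum_{\mathcal O''\subseteq\hat{\mathcal O}}\bar c_{\mathcal O_1,\mathcal O''}\bigr)\Psi_{\hat{\mathcal O}}$, possibly rescaled by the orbit size. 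Evaluating on $T_{\mathcal O'}$ with $\mathcal O'\in\mathcal C_3\setminus\mathcal C_4$ then gives $\delta_{\mathcal O,\mathcal O'}$ modulo $\q-1$.

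\textbf{Main obstacle.} The delicate point is this last bookkeeping step. The constants coming from orbit sums need to be nonzero rationals, so that they can be divided out in $A$. More importantly, the sum over the local classes inside one global class must produce exactly a Kronecker delta rather than an orbit count. The first thing to check is that the equivariance of $\psi_J$ makes the orbit-summed matrix the orbit-quotient of an equivariant matrix whose inverse on $\mathcal C_1\setminus\mathcal C_2$ is itself equivariant.
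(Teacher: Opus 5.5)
Your architecture is the paper's. You write $f$ as a combination of the $\Psi_{\hat{\CO}}$ with $\hat{\CO}\in\mathcal C_3$. It vanishes modulo $\q-1$ off $\mathcal C_3$ by Theorem~\ref{thm:psi_global} and Theorem~\ref{thm:lusztigPairingOverview}(5); your Step~1 is a more careful version of the paper's one-line claim. The local input is Corollary~\ref{cor:perfectPairingConsequences}. However, Step~3 is exactly where the paper's proof does its work, you leave it unresolved, and both concrete claims you make there fail as stated.

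Write $G=\Aut(W,J)$, let $X$ run over the $G$-orbits in $\mathcal C_1$ (equivalently over $\hat{\CO}_X\in\mathcal C_3$), and put $m_X=\#X$. For $\tilde{\CO}\in\mathcal C_3$ choose $\tilde{\CO}_1\in\mathcal C_1$ with $\tilde{\CO}_1\subseteq\tilde{\CO}$, and write $c_{\CO''}$ for the Corollary's coefficients with target $\CO_1$.

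\textbf{Averaging.} There are two ways to average $c$ over $G$, and neither gives what you claim.
\begin{itemize}
\item The diagonal average $\frac1{\#G}\sum_\tau c_{\tau\CO_1,\tau\CO''}$ does preserve the identities. But it does not make $\CO''\mapsto\bar c_{\CO''}$ constant on orbits, so it does not allow grouping by global classes.
\item Averaging in the second slot alone does give orbit-constant coefficients. But then $\tilde{\CO}_1\mapsto\sum_{\CO''}\bar c_{\CO''}\psi_{J,1}(\tilde{\CO}_1\otimes\CO'')$ is $G$-invariant, so it cannot equal $\delta_{\tilde{\CO}_1,\CO_1}$ unless $\CO_1$ is $G$-fixed.
\end{itemize}

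\textbf{Coefficients of $\Psi_{\hat{\CO}}$.} You put the orbit sum $\sum_{\CO''\subseteq\hat{\CO}}\bar c_{\CO''}$ in front of $\Psi_{\hat{\CO}}$. But the value of $\Psi_{\hat{\CO}}$ on $T_{\tilde{\CO}}$ is itself the orbit sum $\sum_{\CO_2\subseteq\hat{\CO}}\psi_J(\tilde{\CO}_1\otimes\CO_2)$. The product therefore overcounts by $m_X$, which in general varies with $\hat{\CO}$. For example, orbits of sizes $1$ and $2$ occur when $G$ swaps two $B_2$ components. No single rescaling repairs this.

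\textbf{The fix.} Average in the second slot only, $\bar c_{\CO''}=\frac1{\#G}\sum_{\tau\in G}c_{\tau\CO''}$, and accept that the identity changes. For $\tilde{\CO}\in\mathcal C_3\setminus\mathcal C_4$, reindexing and $G$-invariance of $\psi_J$ give
\[
\sum_{\CO''\in\mathcal C_1}\bar c_{\CO''}\,\psi_{J,1}(\tilde{\CO}_1\otimes\CO'')=\frac1{\#G}\sum_{\tau\in G}\sum_{\CO''\in\mathcal C_1}c_{\CO''}\,\psi_{J,1}(\tau\tilde{\CO}_1\otimes\CO'')=\frac{\#\{\tau\in G\mid\tau\tilde{\CO}_1=\CO_1\}}{\#G}.
\]
The last step applies the Corollary at each $\tau\tilde{\CO}_1$. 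This is legitimate because $\tau\tilde{\CO}_1\subseteq\tilde{\CO}\notin\mathcal C_4$, so $\tau\tilde{\CO}_1\notin\mathcal C_2$.

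Now let $\bar c_X$ be the common value of $\bar c$ on $X$, which is the orbit \emph{average} of $c$, not the orbit sum. Then
\[
\bar c_X\,\Psi_{\hat{\CO}_X}(T_{\tilde{\CO}})\equiv\sum_{y\in X}\bar c_y\,\psi_{J,1}(\tilde{\CO}_1\otimes y)\pmod{\q-1}.
\]
Set $f:=\#(G\CO_1)\sum_X\bar c_X\Psi_{\hat{\CO}_X}$. The $\mathcal C_1$-classes inside one global class form a single $G$-orbit, so orbit--stabilizer gives $f(T_{\tilde{\CO}})\equiv\delta_{\CO,\tilde{\CO}}$. No equivariant inverse is needed. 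Such an inverse would not even be canonical when $E_7$ components occur, because the Corollary's coefficients are then not unique. This argument is equivalent to the paper's double-average computation via the substitution $\tau_2\mapsto\tau_2\tau_1$.
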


\begin{proof}
Pick a local elliptic class $\mathcal{O}_1 \in \mathcal{C}_1$ such that $\mathcal{O}_1 \subseteq \mathcal{O}$. By Corollary \ref{cor:perfectPairingConsequences} applied to the finite Coxeter group $W_J$, there exist rational coefficients $c_{\mathcal{O}_1'} \in \mathbb{Q}$ for $\mathcal{O}_1' \in \mathcal{C}_1$ such that for any non-anomalous class $\tilde{\mathcal{O}}_1 \in \mathcal{C}_1 \setminus \mathcal{C}_2$, we have
\[
\sum_{\mathcal{O}_1' \in \mathcal{C}_1} c_{\mathcal{O}_1'} \psi(\tilde{\mathcal{O}}_1 \otimes \mathcal{O}_1') = \delta_{\tilde{\mathcal{O}}_1, \mathcal{O}_1},
\]
where $\psi = \psi_J$ is the generic pairing for $(W_J, J)$. 

For each global class $\mathcal{O}_3' \in \mathcal{C}_3$, list the local classes in $\mathcal{C}_1$ contained in $\mathcal{O}_3'$ as $\mathcal{O}^{(1)}, \dots, \mathcal{O}^{(m)}$. These local classes form a single orbit under the action of $\operatorname{Aut}(W, J)$. We set $C_{\mathcal{O}_3'} := \sum_{i=1}^m c_{\mathcal{O}^{(i)}}$ to be the average of these coefficients, and construct the global function
\[
f := \#\{\tau(\CO_1)\mid\tau\in \Aut(W, J)\}\sum_{\mathcal{O}_3' \in \mathcal{C}_3} C_{\mathcal{O}_3'} \Psi_{\mathcal{O}_3'} : \overline{H}_A \longrightarrow A.
\]
By Theorem \ref{thm:psi_global} and Theorem \ref{thm:lusztigPairingOverview}, $f$ factors through $\overline{H}_A$ and satisfies $f(T_{\tilde{\mathcal{O}}}) \equiv 0 \pmod{\q-1}$ whenever $\tilde{\mathcal{O}} \notin \mathcal{C}_3$.

    It remains to study the case $\tilde \CO\in \mathcal C_3\setminus \mathcal C_4$. Choose $\tilde\CO_1\in \mathcal C_1$ with $\tilde\CO_1\subseteq \tilde \CO$. Then we compute using Theorem~\ref{thm:psi_global} that
    \begin{align*}
    C_{\CO_3'}\Psi_{\CO_3'}(T_{\tilde\CO})=\sum_{\mathcal C_1\ni\CO_1'\subseteq \CO_3'} C_{\CO_3'} \psi(\tilde \CO_1\otimes \CO_1').
    \end{align*}
    For any fixed $\CO_1^\ast\subseteq \CO_3'$, the set of conjugacy classes in $\mathcal C_1$ contained in $\CO_3'$ forms a single orbit under the action of $\Aut(W, J)$. Denote the cardinality of this orbit by $m$ as before. We conclude
    \begin{align*}
        C_{\CO_3'} \Psi_{\CO_3'}(T_{\tilde\CO}) = \frac m{\#\Aut(W, J)} \sum_{\tau\in \Aut(W, J)} C_{\CO_3'} \psi(\tilde \CO_1\otimes \tau \CO_1^\ast).
    \end{align*}
    By definition of $C_{\CO_3'}$, we get
    \begin{align*}
        C_{\CO_3'} \Psi_{\CO_3'}(T_{\tilde\CO}) &= \frac m{(\#\Aut(W, J))^2} \sum_{\tau_1,\tau_2\in \Aut(W, J)} c_{\tau_1\CO_1^\ast} \psi(\tilde \CO_1\otimes \tau_2 \CO_1^\ast)
        \\&\underset{\tau_2\mapsto \tau_2\tau_1}= \frac m{(\#\Aut(W, J))^2} \sum_{\tau_1,\tau_2\in \Aut(W, J)} c_{\tau_1\CO_1^\ast} \psi(\tilde \CO_1\otimes \tau_2 \tau_1\CO_1^\ast)
        \\&=\frac m{(\#\Aut(W, J))^2} \sum_{\tau_1,\tau_2\in \Aut(W, J)} c_{\tau_1\CO_1^\ast} \psi((\tau_2^{-1}\tilde \CO_1)\otimes  \tau_1\CO_1^\ast)
        \\&=\frac 1{\#\Aut(W, J)}\sum_{\tau\in \Aut(W, J)} \sum_{\mathcal C_1\ni \CO_1'\subseteq \CO_3} c_{\CO_1'}\psi((\tau\tilde \CO_1)\otimes \CO_1')
    \end{align*}
    using that $\psi$ is invariant under Coxeter diagram automorphisms of $(W_J, J)$.
    Taking the sum over all $\CO_3'\in \mathcal C_3$, we summarize
    \begin{align*}
        &\frac{f(T_{\tilde\CO})}{\#\{\tau(\CO_1)\mid\tau\in \Aut(W, J)\}} \\&= \sum_{\CO_3'\in \mathcal C_3}
        \frac 1{\#\Aut(W, J)} \sum_{\tau\in \Aut(W, J)}\sum_{\mathcal C_1\ni \CO_1'\subseteq \CO_3} c_{\CO_1'}\psi((\tau\tilde \CO_1)\otimes \CO_1')
        \\&= \frac 1{\#\Aut(W, J)} \sum_{\tau\in \Aut(W, J)}\sum_{\CO_1'\in\mathcal C_1} c_{\CO_1'}\psi((\tau\tilde \CO_1)\otimes \CO_1')
        \\&=\frac 1{\#\Aut(W,J)}\sum_{\tau \in \Aut(W, J)}\delta_{\tau\tilde \CO_1,\CO_1}
        \\&=\delta_{\tilde\CO,\CO}\frac{\#\{\tau\in \Aut(W, J)\mid \tau(\CO_1) = \CO_1\}}{\#\Aut(W, J)}.
    \end{align*}
    By the orbit-stabilizer formula, we indeed obtain $f(T_{\tilde\CO}) = \delta_{\CO,\tilde\CO}$.
\end{proof}

We now apply our special construction for type $E_7$ to handle the anomalous classes in $\mathcal{C}_4$.

\begin{proposition}\label{prop:smallwsp_special}
For every $\mathcal{O}\in\mathcal{C}_{4}$, there exists an $A$-linear trace function $F_{\mathcal{O}}:\overline{H}_A\rightarrow A$ such that for all $\mathcal{O}^{\prime}\in Cl(W)$, we have
\[
F_{\mathcal{O}}(T_{\mathcal{O}^{\prime}})\equiv\delta_{\mathcal{O},\mathcal{O}^{\prime}} \pmod{\q-1}.
\]
\end{proposition}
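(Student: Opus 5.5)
The plan is to build $F_{\mathcal O}$ from Theorem~\ref{thm:E7_lift}, using the inductive hypothesis to supply the local functional $f$ on $H_{I_2}$.

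\textbf{Reduction to a standard form.} Let $\mathcal O\in\mathcal C_4$. Choose $\mathcal O_2\in\mathcal C_2$ with $\mathcal O_2\subseteq\mathcal O$, and an irreducible component $I_1\subseteq J$ of type $E_7$ with $\pi_{I_1}(\mathcal O_2)=\{w_{0,I_1}\}$. Then every $w\in\mathcal O_2$ has the form $w_{0,I_1}w_2$, where $w_2$ lies in an elliptic class $\mathcal O''$ of $W_{J\setminus I_1}$. Put $I_2:=\{s\in S\mid ss'=s's\ \forall s'\in I_1\}$, so that $J\setminus I_1\subseteq I_2$ and $I_2\subsetneq S$.

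\textbf{Applying the inductive hypothesis.} By induction on $\#S$, (WSP) holds for $(W_{I_2},I_2)$ (with $A$-coefficients). This gives an $A$-linear trace function $f:\overline H_{I_2,A}\to A$ with $f(T_{\mathcal O'''})\equiv\delta_{\mathcal O''',\mathcal O''_{I_2}}\pmod{\q-1}$, where $\mathcal O''_{I_2}$ is the $W_{I_2}$-class of $w_2$. Base-changing Theorem~\ref{thm:E7_lift} to $A$ (its proof is linear in $f$), we set $F_{\mathcal O}:=F$. By Theorem~\ref{thm:E7_lift}(1), $F_{\mathcal O}$ is a trace function on $\overline H_A$.

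\textbf{Checking the values.} By Theorem~\ref{thm:E7_lift}(2), $F(T_{\mathcal O'})\equiv f(T_{w_2'})$ whenever $w_{0,I_1}w_2'\in\mathcal O'$ for some $w_2'\in W_{I_2}$, and $F(T_{\mathcal O'})\equiv 0$ otherwise. For $\mathcal O'=\mathcal O$ we may take $w_2'=w_2$, which gives $1$.

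The main obstacle is to show that if $w_{0,I_1}w_2'$ is $W$-conjugate to $w_{0,I_1}w_2$ with $w_2'\in W_{I_2}$, then $w_2'$ is $W_{I_2}$-conjugate to $w_2$. This is needed for two reasons: the value in Theorem~\ref{thm:E7_lift}(2) must be well defined, and we need $F(T_{\mathcal O'})\equiv 0$ for $\mathcal O'\neq\mathcal O$.

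\begin{itemize}
\item First replace $w_2'$ by a minimal length element in its $W_{I_2}$-class. Then $w_{0,I_1}w_2'$ is of minimal length in $W_I$, hence in $W$ by Theorem~\ref{min}.
\item Apply Lemma~\ref{lem:parabolicConjugation}(4). The minimal double coset representative $v$ conjugates $\supp$ bijectively, preserving length and Bruhat order.
\item By Proposition~\ref{prop:LSalgo} together with remark (a), $v$ fixes the $E_7$ component $I_1$ pointwise. So $v$ maps $w_2$ to $w_2'$ up to a $W$-conjugation between supports inside $I_2$.
\item Such a conjugation is induced by a product of elementary conjugators commuting with $I_1$. Its image in $W_{I_2}$ may still be a diagram automorphism not realized inside $W_{I_2}$.
\end{itemize}

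To handle this last possibility, I would choose $f$ to be $\operatorname{Aut}(W,\cdot)$-invariant. Concretely, average $f$ over the finitely many twists by elements of $\mathrm{Iso}_W$ preserving $I_1$ pointwise, normalizing by orbit size as in the proof of Proposition~\ref{prop:smallwsp_oi}. With this choice, $f(T_{w_2'})\equiv\delta$ holds exactly when $w_{0,I_1}w_2'\in\mathcal O$. This averaging step is the delicate point; I expect the orbit–stabilizer normalization from Proposition~\ref{prop:smallwsp_oi} to carry over verbatim.
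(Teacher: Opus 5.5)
Your construction is the paper's construction. Take $I_2$ to be the centralizer of $I_1$ in $S$, use the inductive hypothesis on $(W_{I_2},I_2)$ to get $f$ with $f(T_{\underline{\CO}})\equiv\delta_{\underline{\CO},\underline{\CO}_2}\pmod{\q-1}$, and let $F_{\CO}$ be the lift from Theorem~\ref{thm:E7_lift}. The paper then reads the values straight off Theorem~\ref{thm:E7_lift}(2).

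The problem is your ``main obstacle'': it is not needed for either of the two reasons you give.
\begin{itemize}
\item For $\CO'\neq\CO$ you only need the trivial direction. Suppose $w_{0,I_1}w_2'\in\CO'$ with $w_2'\in W_{I_2}$. Since $W_{I_2}$ commutes with $w_{0,I_1}$, a $W_{I_2}$-conjugacy between $w_2'$ and $w_2$ would put $w_{0,I_1}w_2'$ in $\CO$. So $w_2'$ is not $W_{I_2}$-conjugate to $w_2$, and $f(T_{w_2'})\equiv 0$.
\item Consistency of the value in part (2) is already part of what Theorem~\ref{thm:E7_lift} asserts (``for any element $w_2$''). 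Step~4 of its proof shows that $(1,w_2)$ is the only contributing valid pair. Hence $F(T_{w_{0,I_1}w_2})\equiv f(T_{w_2})$ for every $w_2\in W_{I_2}$, after passing to a minimal length $W_{I_2}$-conjugate. Part (1) then forces the values for different $w_2$ to agree.
\end{itemize}
So the implication ``$w_{0,I_1}w_2\sim_W w_{0,I_1}w_2'$ implies $w_2\sim_{W_{I_2}}w_2'$'' follows from the theorem together with (WSP) for $W_{I_2}$. It is not a prerequisite you have to supply.

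You should drop the averaging step. As you describe it, it is not well defined. An element of $\mathrm{Iso}_W(K,K')$ between two different subsets of $I_2$ does not act on $H_{I_2}$, so you cannot twist $f$ by it. Only automorphisms of $I_2$ itself induced by $W$ act on $H_{I_2}$. Averaging over those does not touch the partial identifications $K\setminus I_1\to K'\setminus I_1$ that you were worried about. The averaging in Proposition~\ref{prop:smallwsp_oi} is a different mechanism: it averages the global functionals $\Psi_{\CO_3'}$ over $\Aut(W,J)$ through $\psi_J$. It does not carry over verbatim.

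In fact the worry in your last bullet never arises. Consider the Lusztig--Spaltenstein factorization of $v$.
\begin{itemize}
\item An elementary conjugator with $s_i\notin I_2$ must have $C=I_1\sqcup\{s_i\}$ of type $E_8$.
\item Then $w_{0,I_1}w_{0,C}$ centralizes all of $W_{K_i}$. Both longest elements are central in their Weyl groups, and $C$ commutes with $K_i\setminus I_1$.
\item Discard these factors. Conjugation by $v$ on $W_K$ then equals conjugation by the product of the remaining elementary conjugators, all of which lie in $W_{I_2}$.
\end{itemize}
So your bullet argument already closes the hard direction without any averaging. As explained above, though, you do not need it.
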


\begin{proof}
    Let $\CO_2\in \mathcal C_2$ with $\CO_2\subseteq \CO$. Let $I_1\subseteq J$ be an irreducible component of $J$ of type $E_7$ with $\pi_{I_1}(\CO_2) = \{w_{0,I_1}\}$. Define
    \begin{align*}
        I_2 = \{s\in S\mid ss' = s's\text{ for all }s'\in I_1\},
    \end{align*}
    so that $J_2 := J\setminus I_1\subseteq I_2$. Write $W_J = W_{I_1}\times W_{J_2}$, and let $\underline \CO_2$ be the image of $\CO_2$ in $W_{J_2}$. 
    
By the inductive hypothesis applied to the smaller Coxeter subsystem $(W_{I_2}, I_2)$ and the small subset $J_2 \subseteq I_2$, we obtain a function $f: H_{A, I_2} \to A$ satisfying $f(T_{\underline{\mathcal{O}}}) \equiv \delta_{\underline{\mathcal{O}}_2, \underline{\mathcal{O}}} \pmod{\q-1}$ for all $\underline{\mathcal{O}} \in Cl(W_{I_2})$. 

Lift it to a function $F : \overline{H_A}\to A$ as in Theorem~\ref{thm:E7_lift} (with a base change of the theorem and its proof from $\BZ[\q^{\pm 1}]$ to $A$). Then $F$ satisfies the desired conditions by Theorem~\ref{thm:E7_lift}.
\end{proof}

Combining Proposition~\ref{prop:smallwsp_oi} with Proposition~\ref{prop:smallwsp_special}, we can finish the proof of (WSP) for small sets $\tilde J$:

If $\CO\in \Cl(W)$ is given with $W_{\tilde J}\cap \CO\neq\emptyset$, consider a minimal subset $J\subseteq \tilde J$ with $W_{J}\cap \CO\neq\emptyset$. Then $J$ is small. With our notation as above, this means $\CO\in \mathcal C_3$.

From Proposition~\ref{prop:smallwsp_oi}, we get a function $g : \overline H_A\to A$ satisfying
\begin{align*}
    g(T_{\CO'}) \equiv \delta_{\CO,\CO'}\pmod{\q-1}
\end{align*}
holds for all $\CO'\in \mathcal C_3\setminus \mathcal C_4$. For each $\tilde \CO\in \mathcal C_4$, we get a function $F_{\tilde\CO} : \overline H_A\to A$ satisfying
\begin{align*}
    F_{\tilde\CO}(T_{\CO'}) \equiv \delta_{\tilde\CO,\CO'}\pmod{\q-1}
\end{align*}
for all $\CO'\in \mathcal C_3$.

The function $f$ required to get (WSP) for $\CO$ is now given by
\begin{align*}
    f = g - \sum_{\tilde\CO\in \mathcal C_4} (g(T_{\tilde\CO})_{\q=1} - \delta_{\tilde\CO,\CO})F_{\tilde\CO} : \overline H_A\to A.
\end{align*}
This finishes the proof of (WSP) for small sets $\tilde J$ and hence the proof of Theorem~\ref{thm:intro_main}.

\section{Deligne-Lusztig varieties of Kac-Moody groups}\label{sec:dl_var}

\subsection{Class polynomials}

As an immediate consequence of our main result, Theorem~\ref{thm:intro_main}, we can formally define the \emph{class polynomials} for arbitrary Kac--Moody Hecke algebras and rigorously establish their positivity properties.

\begin{proposition}\label{prop:class_polynomials}
    For each $w \in W$, there exist uniquely determined polynomials $f_{w, \mathcal O} \in \mathbb N[\mathbf q - 1]$ for $\mathcal O \in \operatorname{Cl}(W)$ such that $f_{w, \mathcal O} = 0$ for all but finitely many $\mathcal O$, and
    \begin{align*}
        T_w \equiv \sum_{\mathcal O \in \operatorname{Cl}(W)} f_{w, \mathcal O} \, T_{\mathcal O} \pmod{[H,H]}.
    \end{align*}
\end{proposition}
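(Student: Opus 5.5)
Existence and uniqueness separate cleanly. Uniqueness is immediate from Theorem~\ref{thm:intro_main}: the elements $T_{\CO}$ form a $\mathbb Z[\q^{\pm1}]$-basis of $\overline H$. Hence any two expressions of the image of $T_w$ in this basis have identical coefficients. In particular the coefficients $f_{w,\CO}$ are determined already as elements of $\mathbb Z[\q^{\pm1}]$, and only finitely many are nonzero. The remaining content is that these uniquely determined coefficients actually lie in $\mathbb N[\q-1]$, i.e.\ are polynomials in $\q-1$ with non-negative integer coefficients.

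For existence with positivity, I will rerun the constructive argument of Proposition~\ref{prop:cocenterSpanning}, by induction on $\ell(w)$, while tracking the shape of the coefficients.

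\emph{Base case.} If $w\in\CO_{\min}$, then $T_w=T_{\CO}$ in $\overline H$. So $f_{w,\CO}=1$ and all other coefficients vanish, which is in $\mathbb N[\q-1]$.

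\emph{Inductive step.} Otherwise, Theorem~\ref{min}(1) gives a chain $w=w_1\to_{s_1}\cdots\to_{s_{n-1}} w_n\to_{s_n} w_{n+1}$ with $\ell(w_n)=\ell(w)$ and $\ell(w_{n+1})=\ell(w)-2$. Two facts then combine:
\begin{itemize}
\item each length-preserving cyclic shift preserves the image in $\overline H$, so $T_w\equiv T_{w_n}$;
\item the computation in the proof of Proposition~\ref{prop:cocenterSpanning} gives $T_{w_n}\equiv \q T_{w_{n+1}}+(\q-1)T_{s_nw_{n+1}}$.
\end{itemize}
Writing $\q=1+(\q-1)$, both coefficients $\q$ and $\q-1$ lie in $\mathbb N[\q-1]$. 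Both $w_{n+1}$ and $s_nw_{n+1}$ have length $<\ell(w)$, so by induction their images are $\mathbb N[\q-1]$-combinations of the $T_{\CO}$. Since $\mathbb N[\q-1]$ is closed under sums and products, $T_w$ is an $\mathbb N[\q-1]$-combination of the $T_{\CO}$ with finitely many nonzero terms.

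By uniqueness, these are exactly the $f_{w,\CO}$. Note that the expression produced depends on choices of the reduction chains, so without Theorem~\ref{thm:intro_main} it would not define the $f_{w,\CO}$; that theorem is what makes them well-defined.

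There is no serious obstacle. The only step requiring care is the length-preserving shift: one must check that $w'=sws$ with $\ell(w')=\ell(w)$ gives $T_w\equiv T_{w'}$. This holds because either $sw>w$, so $T_w=T_s^{-1}T_{sw}\equiv T_{sw}T_s^{-1}=T_{w'}$, or $sw<w$, in which case $ws>w$ by the length equality and the symmetric argument applies. This is the same strong-conjugation argument as in the corollary following Theorem~\ref{min}.
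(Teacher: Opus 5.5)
Your proposal is correct and follows the paper's proof: uniqueness comes from Theorem~\ref{thm:intro_main}, and existence with positivity comes from running the reduction of Proposition~\ref{prop:cocenterSpanning} (length-preserving cyclic shifts from Theorem~\ref{min}, then the quadratic relation with $\q = 1+(\q-1)$) while tracking that all coefficients stay in $\mathbb N[\q-1]$. One tiny slip: $sw<w$ together with $\ell(sws)=\ell(w)$ does not force $ws>w$ (take $w=s$); however, in that exceptional case one has $sws=w$, so $T_w\equiv T_{w'}$ holds trivially and nothing breaks.
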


\begin{proof}
By Proposition~\ref{prop:cocenterSpanning}, the cocenter $\overline H$ is spanned by the elements $\{T_{\mathcal O}\}_{\mathcal O \in \operatorname{Cl}(W)}$. The polynomials $f_{w, \mathcal O}$ can be constructed recursively using the following reductions:

\begin{enumerate}
    \item If $w$ has minimal length in its conjugacy class $\mathcal O'$, then
    \begin{align*}
        f_{w, \mathcal O} = \delta_{\mathcal O, \mathcal O'}.
    \end{align*}
    
    \item If $s \in S$ satisfies $\ell(sws) = \ell(w) - 2$, then
    \begin{align*}
        f_{w, \mathcal O} = \mathbf q \, f_{sws, \mathcal O} + (\mathbf q - 1) \, f_{sw, \mathcal O}.
    \end{align*}
    
    \item If $s \in S$ satisfies $\ell(sws) = \ell(w)$, then
    \begin{align*}
        f_{w, \mathcal O} = f_{sws, \mathcal O}.
    \end{align*}
\end{enumerate}

These formulas follow directly from the quadratic relation in the Hecke algebra:
\[
T_s^2 = (\mathbf q - 1) T_s + \mathbf q,
\]
and the fact that, in the cocenter, $T_w$ is unchanged under the equivalence relation $w \approx sws$ when $\ell(sws) = \ell(w)$. The recursion terminates because the length $\ell(w)$ strictly decreases in the second case, while the first case provides the base of the induction.

The existence of the polynomials $f_{w, \mathcal O}$ is thus established; the recursive formulas also show that each $f_{w, \mathcal O} \in \mathbb N[\mathbf q - 1]$. Uniqueness follows from the linear independence of the elements $\{T_{\mathcal O}\}$ in $\overline H$, which is the content of Theorem~\ref{thm:intro_main}. 
\end{proof}

\subsection{Deligne-Lusztig varieties for Kac-Moody groups}\label{subsec:kac-moody-deligne-lusztig}

Let \(G^{\bullet} / \mathbb F_q\) be the Kac--Moody group associated to our root system, split over $\mathbb F_q$, where \(\bullet \in \{\min, \max\}\). 
Associated to it we have the \emph{(thin) Kac--Moody flag variety}, which we denote by \(G^{\bullet} / B^{\bullet}\); this is an ind-scheme in the sense of \cite[Definition~1.6]{Ka20}.

We can then define the \emph{Deligne--Lusztig variety} associated to \(w \in W\) and \(b \in G^{\bullet}(\overline{\mathbb F}_q)\) as the reduced locally closed subscheme of \(G^{\bullet}/B^{\bullet}\) whose geometric points are given by
\begin{align*}
X_w(b)(\overline{\mathbb F}_q) = \{\, g \in G^{\bullet}(\overline{\mathbb F}_q) / B^{\bullet}(\overline{\mathbb F}_q) \mid g^{-1} b \, \sigma(g) \in B^{\bullet}(\overline{\mathbb F}_q) \, w \, B^{\bullet}(\overline{\mathbb F}_q) \,\},
\end{align*}
where \(\sigma\) denotes the Frobenius automorphism of \(\overline{\mathbb F}_q / \mathbb F_q\).

Classical Deligne--Lusztig varieties were introduced by Deligne and Lusztig \cite{DL76} and play a central role in the study of representations of finite groups of Lie type. Affine Deligne--Lusztig varieties were introduced by Rapoport \cite{Ra}; they serve as group-theoretic models for the reduction of Shimura varieties and Shtukas, and play an important role in arithmetic geometry and the Langlands program.

The \emph{Deligne--Lusztig reduction method} allows one to decompose arbitrary Deligne--Lusztig varieties into smaller pieces, ultimately reducing to those attached to minimal-length elements. This method was first introduced by Deligne and Lusztig \cite{DL76} for finite types, and later generalized to the affine setting in \cite[Corollary~2.5.3]{GH10}. The general case for arbitrary Kac--Moody groups can be proved by similar arguments.

\begin{proposition}[Deligne--Lusztig reduction]\label{prop:KacMoodyDeligneLusztigReduction}
Let \(w \in W\) and \(b \in G^{\bullet}(\overline{\mathbb F}_q)\).
\begin{enumerate}
    \item If \(s \in S\) satisfies \(\ell(sws) = \ell(w)\), then \(X_w(b)\) is universally homeomorphic to \(X_{sws}(b)\).
    \item If \(s \in S\) satisfies \(\ell(sws) = \ell(w) - 2\), then there exists a decomposition
    \begin{align*}
        X_w(b) = X_1 \sqcup X_2
    \end{align*}
    such that \(X_1 \subseteq X_w(b)\) is open, \(X_2 \subseteq X_w(b)\) is closed, and there are maps
    \begin{align*}
        p_1 : X_1 \longrightarrow X_{sw}(b), \qquad p_2 : X_2 \longrightarrow X_{sws}(b)
    \end{align*}
    which are compositions of Zariski locally trivial \(\mathbb G_m\)-fibre bundles (for \(p_1\)) and \(\mathbb A^1\)-fibre bundles (for \(p_2\)) with universal homeomorphisms.
\end{enumerate}
\end{proposition}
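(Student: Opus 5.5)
I would follow the classical argument of Deligne--Lusztig (and its affine version in \cite[Corollary~2.5.3]{GH10}), working in the ind-scheme $G^\bullet/B^\bullet$ and checking that every step only uses BN-pair combinatorics, which hold for the refined Tits system recalled in Section~\ref{subsec:kac-moody}. The main tool is the relative position map: for two points $gB, g'B$ of the flag variety one has $g^{-1}g'\in B\,x\,B$ for a unique $x\in W$. The condition defining $X_w(b)$ says that $gB$ and $b\sigma(g)B$ are in relative position $w$. Since the defining condition only involves finitely many Schubert cells, each $X_w(b)$ lives in a finite-dimensional closed piece of the ind-scheme. This lets us argue as if we were in a scheme.

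\textbf{Part (1).} By symmetry under $w\mapsto w^{-1}$ we may assume $\ell(sw)=\ell(w)-1$ and $\ell(sws)=\ell(w)$. Write $w=s\cdot w'$ with $w'=sw$, so that $\ell(w's)=\ell(w')+1$ and $sws=w's$. For $gB\in X_w(b)$ there is a unique $g_1B$ such that $gB$ and $g_1B$ are in relative position $s$, and $g_1B$ and $b\sigma(g)B$ are in relative position $w'$. This is the usual factorisation of $BwB=BsB\cdot Bw'B$ as a length-additive product. Then $g_1B$ and $b\sigma(g_1)B$ are in relative position $w's=sws$, since we can go from $g_1B$ to $b\sigma(g)B$ via $w'$ and then to $b\sigma(g_1)B$ via $\sigma(s)=s$, and this composition is again length additive. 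This gives a morphism $X_w(b)\to X_{sws}(b)$. The inverse map is constructed symmetrically, so the morphism is bijective on geometric points. The two maps are inverse up to Frobenius twists, which yields a universal homeomorphism. Uniqueness of $g_1$ gives algebraicity, because $g_1$ is obtained by projecting onto the $\mathbb P^1$-fibre of $G/B\to G/P_s$.

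\textbf{Part (2).} Here $\ell(sw)=\ell(ws)=\ell(w)-1$. Again let $g_1B$ be the unique point in relative position $s$ to $gB$ with $g_1B$ in relative position $sw$ to $b\sigma(g)B$. Consider the relative position $x$ of $g_1B$ and $b\sigma(g_1)B$. The BN-pair relation $BswB\cdot BsB=BswsB\cup BswB$, valid because $\ell(sws)<\ell(sw)$, forces $x\in\{sw,sws\}$. Define $X_1$ to be the locus where $x=sw$; it is open, since relative position $sws$ is a closed condition inside the $P_s$-orbit. Define $X_2$ to be the locus where $x=sws$, which is closed. The maps are $p_i(gB)=g_1B$.

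To identify the fibres, fix $g_1B\in X_{sw}(b)$, respectively $g_1B\in X_{sws}(b)$. The admissible $gB$ range over the line $g_1P_s/B\cong\mathbb P^1$ minus the point $g_1B$, that is an $\mathbb A^1$, with one further point removed in the $X_1$ case. The excluded point is the unique point in relative position $s$ to $g_1B$ that would violate the required relative position with $b\sigma(g)B$. The Frobenius dependence through $\sigma(g)$ is handled as in \cite{DL76}: the equation defining the fibre has the shape $y\mapsto y^q$ composed with an affine-linear change of coordinates. This produces $\mathbb G_m$- and $\mathbb A^1$-bundles up to universal homeomorphism. Local triviality is obtained from the Zariski local sections of $G/B\to G/P_s$ given by root subgroups $U_{-\alpha_s}$.

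\textbf{Main obstacle.} The step I expect to be hardest is the geometric one: making rigorous sense of Zariski local triviality and the universal homeomorphisms in the ind-scheme setting of \cite{Ka20}, particularly for $\bullet=\max$, where $B^{\max}$ is pro-unipotent rather than finite type. I would first reduce to a finite-type quotient, namely the Schubert-type subvariety containing $X_w(b)$ modulo a deep enough normal subgroup of $U^{\max}$ acting trivially on the relevant cells. Then the finite-type arguments of \cite{DL76, GH10} apply verbatim.
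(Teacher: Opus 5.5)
Your argument is the standard Deligne--Lusztig reduction, which is all the paper intends here. The paper gives no proof beyond saying that the Kac--Moody case follows by arguments similar to \cite{DL76} and \cite[Corollary~2.5.3]{GH10}. Your BN-pair bookkeeping in both parts is right: unique factorisation along length-additive products, $BswB\cdot BsB=BswsB\cup BswB$, fibre $\mathbb A^1$ over $X_{sws}(b)$, and fibre $\mathbb A^1$ minus a point (depending on $g_1B$ through $\sigma^{-1}$) over $X_{sw}(b)$.

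One small slip: the reduction at the start of (1) is not a symmetry under $w\mapsto w^{-1}$. Via $g\mapsto b\sigma(g)$, $X_{w^{-1}}(b)$ is the analogue of $X_w(\cdot)$ for $\sigma^{-1}$ rather than $\sigma$. Use instead that (1) is symmetric in $w$ and $sws$:
\begin{itemize}
\item if $\ell(ws)<\ell(w)$, replace $w$ by $sws$, for which $\ell(s\cdot sws)=\ell(ws)<\ell(sws)$;
\item if $\ell(sw)>\ell(w)$ and $\ell(ws)>\ell(w)$, then $\ell(sws)=\ell(w)$ forces $sws=w$, and there is nothing to prove.
\end{itemize}

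The genuine problem is the claim underlying your geometric set-up, namely that $X_w(b)$ lies in a single finite-dimensional Schubert variety. This is false as soon as $W$ is infinite. The defining condition constrains only the relative position of $gB$ and $b\sigma(g)B$, not the Bruhat cell of $gB$. For instance:
\begin{itemize}
\item $X_s(1)$ is stable under left multiplication by $G^\bullet(\mathbb F_q)$.
\item It contains $(P_s/B)\setminus(P_s/B)(\mathbb F_q)\subseteq BsB/B$.
\item Translating by lifts $\dot v\in G^\bullet(\mathbb F_q)$ of the infinitely many $v\in W$ with $\ell(vs)>\ell(v)$ gives points of $X_s(1)$ in $BvsB/B$ for every such $v$.
\end{itemize}
Compare the proof of Proposition~\ref{prop:DLMinLength}, where $G^\bullet(\mathbb F_q)$-translates of a single classical piece cover $X_w(1)$. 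So the reduction in your last paragraph to ``the Schubert-type subvariety containing $X_w(b)$'' cannot be carried out.

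Replace it by a piecewise reduction, which works for the following reasons.
\begin{itemize}
\item All the assertions (openness and closedness, Zariski local triviality, being a universal homeomorphism) are Zariski local on the target.
\item The maps $gB\mapsto b\sigma(g)B$ and $gB\mapsto g_1B$ send each Schubert variety into a finite union of Bruhat cells.
\item Since $gB$ lies on the line $g_1P_s/B$, both $p_1^{-1}\bigl(X_{sw}(b)\cap\overline{Bv'B}/B\bigr)$ and $p_2^{-1}\bigl(X_{sws}(b)\cap\overline{Bv'B}/B\bigr)$ lie in the Schubert variety $\overline{Bv'B}\,P_s/B$. The same holds for the map in (1).
\end{itemize}
The classical argument then applies over each finite-type piece of the target, and the results glue.
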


\subsection{Dimension=degree theorem}

As an application of our main result on the cocenter of Hecke algebras, we establish the ``dimension=degree'' theorem for the Deligne-Lusztig variety \(X_w(1)\).

\begin{theorem}\label{thm:dim=deg}
    Let \(w \in W\) and set
    \begin{equation}\label{eq:sum-class}
        f_{w,[1]} = \sum_{\mathcal O} \mathbf q^{\ell(\mathcal O)} f_{w,\mathcal O},
    \end{equation}
    where the sum is taken over all conjugacy classes of finite-order elements in \(W\), and \(\ell(\mathcal O) = \min\{\ell(v) \mid v \in \mathcal O\}\) for each \(\mathcal O \in \operatorname{Cl}(W)\). Then
    \begin{align*}
        \dim X_w(1) = \deg_{\mathbf q} f_{w,[1]},
    \end{align*}
    with the convention that \(\dim \emptyset = -\infty = \deg 0\).
\end{theorem}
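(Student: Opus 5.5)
We argue by induction on $\ell(w)$, running the Deligne--Lusztig reduction (Proposition~\ref{prop:KacMoodyDeligneLusztigReduction}) in parallel with the recursion for class polynomials in the proof of Proposition~\ref{prop:class_polynomials}. Since all $f_{w,\mathcal O}\in\mathbb N[\q-1]$, the polynomial $f_{w,[1]}$ has non-negative coefficients in $\q-1$, so no cancellation of leading terms can occur when adding. We follow the recursion. If $\ell(sws)=\ell(w)$, then $X_w(1)\cong X_{sws}(1)$ up to universal homeomorphism, and $f_{w,[1]}=f_{sws,[1]}$. If $\ell(sws)=\ell(w)-2$, then $\dim X_w(1)=\max(\dim X_{sw}(1)+1,\dim X_{sws}(1)+1)$, because the fibres of $p_1$ and $p_2$ are $\mathbb G_m$ and $\mathbb A^1$. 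On the Hecke side, $f_{w,[1]}=\q f_{sws,[1]}+(\q-1)f_{sw,[1]}$. Both summands have non-negative leading coefficients, so the degree is $\max(\deg f_{sws,[1]}+1,\deg f_{sw,[1]}+1)$, with the convention $-\infty+1=-\infty$. Hence both sides obey the same recursion, and by Theorem~\ref{min}(1) every $w$ reaches an element of $\mathcal O_{\min}$ in finitely many such steps.

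This reduces the statement to the base case where $w$ has minimal length in its class $\mathcal O$. There $f_{w,[1]}=\q^{\ell(w)}$ if $\mathcal O$ is of finite order and $f_{w,[1]}=0$ otherwise. So we must show that $X_w(1)$ has dimension $\ell(w)$ when $w$ is spherical, and is empty otherwise.

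\textbf{Spherical case.} We use Theorem~\ref{min}(2) and Lemma~\ref{lem:parabolicConjugation} to assume that $K=\supp(w)$ is spherical. Then $X_w(1)$ contains the classical Deligne--Lusztig variety of the finite-type Levi $M_K$, embedded via $M_K/B_{M_K}\hookrightarrow G/B$; this variety is nonempty of dimension $\ell(w)$. For the upper bound, let $gB\in X_w(1)$. Then $g^{-1}\sigma(g)\in BwB\subseteq P_K$. By Lang's theorem applied to the pro-unipotent part, or more simply by the relative position of $gP_K$ and $\sigma(gP_K)$ being trivial, $gP_K$ is an $\mathbb F_q$-rational point of $G/P_K$. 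This expresses $X_w(1)$ as a disjoint union, indexed by the discrete set $G(\mathbb F_q)/P_K(\mathbb F_q)$, of translates of the finite-type variety $X^{M_K}_w$. Each translate has dimension $\ell(w)$.

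\textbf{Non-spherical case.} We must show $X_w(1)=\emptyset$. By Theorem~\ref{thm:MarquisIndefiniteMinLength}, we may replace $w$ by some $w'\approx w$ with $w'=ab$, where $b$ is straight, $\ell(b)>0$, and $a\in W_J$ for a spherical $J$ normalised by $b$. Suppose $gB\in X_w(1)$ and set $x=g^{-1}\sigma(g)$. Then $x\in BwB$, and $x$ is $\sigma$-conjugate to $1$. Iterating gives $x\sigma(x)\cdots\sigma^{n-1}(x)=g^{-1}\sigma^n(g)$. Choose $n$ with $g$ defined over $\mathbb F_{q^n}$. Then $g^{-1}\sigma^n(g)=1$, while $x\sigma(x)\cdots\sigma^{n-1}(x)$ lies in $(BwB)^n$, which sits in $\bigcup B\,b^{n}a'\,B$ with $\ell(b^na')\ge n\ell(b)-\ell(w_{0,J})>0$ for large $n$. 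This is the same estimate as in Proposition~\ref{prop:orbitalIntegralLift}(1), and it gives a contradiction. The main obstacle here is bookkeeping with the Frobenius twist: $\sigma$ acts trivially on $W$ because $G$ is split, and we choose $n$ divisible enough, if necessary passing to a multiple. We must also check that membership in $B^\bullet$-double cosets behaves correctly over $\overline{\mathbb F}_q$ for both the minimal and the maximal group.
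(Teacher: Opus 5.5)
Your proposal is correct and follows the paper's proof. Both run an induction on $\ell(w)$ that matches the Deligne--Lusztig reduction with the class-polynomial recursion, and both split the base case into finite order (the $G(\mathbb F_q)$-translates of the classical Deligne--Lusztig variety of $M_K$ cover $X_w(1)$; your rationality argument for $gP_K$ spells out what the paper only sketches) and infinite order (length growth in $T_w^n$ after passing to Marquis' normal form).

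The one difference is in the infinite-order case. You force $g^{-1}\sigma^n(g)=1$, which requires choosing the representative of $gB$ in $U_v\dot v$, with $U_v$ a finite product of root subgroups; an arbitrary representative in $G^{\max}(\overline{\mathbb F}_q)$ need not be defined over any finite field. The paper avoids this by noting that, for every $n$, $g^{-1}\sigma^n(g)\in Bv^{-1}B\cdot BvB$ only meets Bruhat cells of length at most $2\ell(v)$.
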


The proof of Theorem~\ref{thm:dim=deg} proceeds by induction on the length \(\ell(w)\), using the recursive structure of the class polynomials (Proposition~\ref{prop:class_polynomials}) and the Deligne-Lusztig reduction (Proposition~\ref{prop:KacMoodyDeligneLusztigReduction}). The base case, where \(w\) is of minimal length in its conjugacy class, is handled separately. The inductive step then follows by comparing the two decompositions—one algebraic, one geometric—and observing that the degree of a sum of polynomials with non-negative coefficients is the maximum of their degrees.

\begin{remark}
    When \(W\) is a finite Weyl group, the summation in \eqref{eq:sum-class} is over all conjugacy classes of \(W\). It is easy to show using the inductive formula for class polynomials that \(\deg f_{w,[1]} = \ell(w)\). By \cite{DL76}, \(X_w(1)\) is locally homeomorphic to the Bruhat cell, so in particular \(\dim X_w(1) = \ell(w)\). Thus, in the finite type case, both quantities coincide for elementary reasons.
\end{remark}

In the affine case, the situation is already more subtle. The variety \(X_w(1)\) is not always nonempty, and when it is nonempty, its dimension is governed by a complicated formula. Affine Deligne-Lusztig varieties serve as group-theoretic models for the intersection of Kottwitz-Rapoport strata with the basic Newton stratum in the reduction of Shimura varieties. Understanding their nonemptiness patterns and dimension formulas is a fundamental problem in arithmetic geometry. We refer to the survey article of the first author at ICM2018 \cite{He-ICM} for details.

It was discovered by the first author in \cite{He-Ann} that the dimension of affine Deligne-Lusztig varieties can be computed via the degree of the class polynomials in affine Hecke algebras. This provides a powerful and computationally effective tool for studying affine Deligne-Lusztig varieties. Theorem~\ref{thm:dim=deg} extends this correspondence to the full Kac-Moody setting, thereby establishing a uniform dimension formula of $X_w(1)$ for all split Kac-Moody groups.

\subsection{Base step}

Assume that \(w\) is of minimal length in its \(W\)-conjugacy class. By definition of the class polynomials, we have
\[
f_{w,[1]} = 
\begin{cases}
0, & \text{if } w \text{ has infinite order},\\[4pt]
\mathbf q^{\ell(w)}, & \text{if } w \text{ has finite order}.
\end{cases}
\]
It remains to establish the corresponding statements for the Deligne-Lusztig varieties.

\begin{proposition}\label{prop:DLMinLength}
    Let \(\mathcal O \in \operatorname{Cl}(W)\) and \(w \in \mathcal O_{\min}\).
    \begin{enumerate}
        \item If \(w\) has infinite order, then \(X_w(1) = \emptyset\).
        \item If \(w\) has finite order, then \(\dim X_w(1) = \ell(w)\).
    \end{enumerate}
\end{proposition}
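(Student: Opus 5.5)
Both parts will be reduced to a structural fact about $X_w(1)$, which we then handle differently in the two cases. The starting point is Lang's theorem on finite-dimensional pieces: any point $gB$ of $X_w(1)$ lies in some Schubert cell $BxB/B$, and then $g^{-1}\sigma(g)$ lies in a bounded union of double cosets. The plan is to show that $g^{-1}\sigma(g)$ must generate a ``bounded'' (topologically periodic) element, or conversely to exhibit points explicitly.

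\textbf{Part (1).} We argue by contradiction: suppose $gB\in X_w(1)$, so $h:=g^{-1}\sigma(g)\in BwB$. Since $g\in G^\bullet(\mathbb F_{q^m})$ for some $m$ (points of the ind-scheme are defined over finite fields), we have $\sigma^m(g)=g$. This gives $h\,\sigma(h)\cdots\sigma^{m-1}(h)=g^{-1}\sigma^m(g)=1$. Each factor $\sigma^i(h)$ lies in $BwB$ because $w$ is split-rational. So $1\in (BwB)^m$, i.e.\ the coefficient of $T_1$ in $T_w^m$ is nonzero.

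To contradict this, we use Theorem~\ref{thm:MarquisIndefiniteMinLength}. Since $X_w(1)$ depends only on the $\approx$-class up to universal homeomorphism (Proposition~\ref{prop:KacMoodyDeligneLusztigReduction}(1)), we may replace $w$ by $w'=ab$ with $b$ straight and $a\in W_J$, $J$ spherical and normalized by $b$. Infinite order forces $\ell(b)>0$. Then, exactly as in the proof of Proposition~\ref{prop:orbitalIntegralLift}(1), $T_w^m\in\sum_{a'\in W_J}\mathbb Z[\q^{\pm1}]T_{b^ma'}$, and $\ell(b^ma')\ge m\ell(b)-\ell(w_{0,J})>0$ for $m$ large. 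We can always take $m$ large by passing to a multiple of it, so the coefficient of $T_1$ in $T_w^m$ vanishes, a contradiction.

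\textbf{Part (2).} Since $w$ has finite order, $K:=\supp(w)$ is spherical. The key step is to show $X_w(1)=\bigsqcup_{\gamma} \gamma\, X^{L_K}_w(1)$, a disjoint union of $G(\mathbb F_q)$-translates (indexed by $G(\mathbb F_q)/P_K(\mathbb F_q)$) of the classical Deligne--Lusztig variety of the finite-type Levi $L_K$. That classical variety has dimension $\ell(w)$ by \cite{DL76}, and the dimension of a locally finite disjoint union of equidimensional pieces is the supremum, which gives $\ell(w)$.

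The inclusion $\supseteq$ is clear. For $\subseteq$, the plan is to consider the relative position map to $G/P_K$: the condition $g^{-1}\sigma(g)\in BwB\subseteq P_K$ says $gP_K$ is a $\sigma$-fixed point of $G/P_K$. We therefore need Lang's theorem for the parahoric-type group $P_K$ over $\overline{\mathbb F}_q$ (which holds since $P_K$ is a connected pro-algebraic group) to get $(G/P_K)^\sigma=G(\mathbb F_q)/P_K(\mathbb F_q)$. Then $g\in\gamma P_K$ with $\gamma$ rational. Modding out the pro-unipotent radical $U_K$, which is again handled by Lang, identifies the fibre with the finite Deligne--Lusztig variety in $P_K/B\cong L_K/B_K$.

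I expect this Lang-theorem step for $P_K^{\max}$ (or its minimal analogue) in the ind-scheme setting to be the main obstacle. If needed, I would reduce to finite-type quotients $P_K/U_K^{(n)}$ by the congruence subgroups and pass to the limit.
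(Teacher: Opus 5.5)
Your proposal is correct and follows the paper's proof. In (1) you reduce via Theorem~\ref{thm:MarquisIndefiniteMinLength} and Proposition~\ref{prop:KacMoodyDeligneLusztigReduction} to $w=ab$ and use that $T_w^n$ is supported on elements of length at least $n\ell(b)-\ell(w_{0,J})$; the only variation is that you take an $\mathbb F_{q^m}$-rational representative of $gB$ (available because $gB$ lies in a Schubert cell) to get $1\in(BwB)^{km}$, whereas the paper bounds $g^{-1}\sigma^n(g)\in Bv^{-1}BvB$. In (2) you cover $X_w(1)$ by $G(\mathbb F_q)$-translates of the classical Deligne--Lusztig variety in $P_K/B\cong L_K/B_K$, exactly as the paper does, and the Lang step you worry about can be sidestepped: a $\sigma$-fixed point of $G/P_K$ lies in a $\sigma$-stable cell $BxP_K/P_K$, an affine space with $\sigma$-equivariant coordinates, so it has an $\mathbb F_q$-rational representative.
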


\begin{proof}
    The proof is analogous to that of Proposition~\ref{prop:orbitalIntegralLift}.
    
    (1) In view of Theorem~\ref{thm:MarquisIndefiniteMinLength} and Proposition~\ref{prop:KacMoodyDeligneLusztigReduction}, we may assume that \(w = ab\) has the special form from Theorem~\ref{thm:MarquisIndefiniteMinLength}, with \(b\) straight and \(\ell(b) > 0\). Suppose \(gB(\overline{\mathbb F}_q) \in X_w(1)(\overline{\mathbb F}_q)\), and let \(v \in W\) with \(g \in B(\overline{\mathbb F}_q) v B(\overline{\mathbb F}_q)\). Write \(b' = g^{-1}\sigma(g) \in B(\overline{\mathbb F}_q) w B(\overline{\mathbb F}_q)\). Then for \(n \gg 0\),
    \[
        g^{-1}\sigma^n(g) = b' \sigma(b') \cdots \sigma^{n-1}(b') \in (B(\overline{\mathbb F}_q) w B(\overline{\mathbb F}_q))^n.
    \]
    This forces the coefficient of some \(T_{v_n}\) with \(\ell(v_n) \le 2\ell(v)\) in \((T_w)^n\) to be non-zero. But as in the proof of Proposition~\ref{prop:orbitalIntegralLift},
    \[
        (T_w)^n \in \sum_{\substack{u \in W \\ \ell(u) \ge n\ell(b)-\ell(w_{0,J})}} \mathbb Z[\mathbf q^{\pm 1}] T_u,
    \]
    which is impossible for \(n\) sufficiently large since \(\ell(b) > 0\). Hence \(X_w(1) = \emptyset\).
    
    (2) If \(w\) has finite order, let \(J = \operatorname{supp}(w) \subseteq S\). Then \(W_J\) is finite and the Levi subgroup \(M_J \le G^\bullet\) is reductive. Moreover, $w$ being elliptic in $W_J$ implies that the classical Deligne-Lusztig variety $X_w\subseteq M_J / (M_J\cap B^\bullet)$ is irreducible. The natural embedding
    \[
        M_J / (M_J \cap B^\bullet) \hookrightarrow G^\bullet / B^\bullet
    \]
    identifies the classical Deligne-Lusztig variety \(X_w\) with an irreducible component of \(X_w(1)\). Moreover, the \(G^\bullet(\mathbb F_q)\)-translates of this component cover \(X_w(1)\) (see the proof of Proposition~\ref{prop:orbitalIntegralLift} for the analogous argument). Since the classical Deligne-Lusztig variety has dimension \(\ell(w)\), we conclude that \(\dim X_w(1) = \ell(w)\). 
\end{proof}

\subsection{Inductive step}

Assume that \(w\) does not have minimal length in its conjugacy class, and that the theorem has been proved for all elements of smaller length. By Theorem~\ref{min}, we find a sequence
\[
w = w_1 \rightarrow_{s_1} w_2 \rightarrow_{s_2} \cdots \rightarrow_{s_{n-1}} w_n \rightarrow_{s_n} w_{n+1}
\]
with
\[
\ell(w_1) = \ell(w_2) = \cdots = \ell(w_n) = \ell(w_{n+1}) + 2.
\]
From Proposition~\ref{prop:class_polynomials}, we obtain
\[
f_{w_1,[1]} = \cdots = f_{w_n,[1]} = \mathbf q \, f_{w_{n+1},[1]} + (\mathbf q - 1) \, f_{s_n w_n,[1]}.
\]
Similarly, using Proposition~\ref{prop:KacMoodyDeligneLusztigReduction}, we have
\[
\dim X_{w_1}(1) = \cdots = \dim X_{w_n}(1) = 1 + \max\bigl(\dim X_{w_{n+1}}(1),\, \dim X_{s_n w_n}(1)\bigr).
\]

Now observe that
\[
\ell(s_n w_n) = \ell(w) - 1, \qquad \ell(w_{n+1}) = \ell(w) - 2.
\]
Thus, by the inductive hypothesis applied to \(s_n w_n\) and \(w_{n+1}\), we get
\[
1 + \dim X_{w_{n+1}}(1) = \deg_{\mathbf q}\bigl(\mathbf q \, f_{w_{n+1},[1]}\bigr),
\]
and
\[
1 + \dim X_{s_n w_n}(1) = \deg_{\mathbf q}\bigl((\mathbf q - 1) \, f_{s_n w_n,[1]}\bigr).
\]

Finally, since both \(\mathbf q \, f_{w_{n+1},[1]}\) and \((\mathbf q - 1) \, f_{s_n w_n,[1]}\) lie in \(\mathbb N[\mathbf q - 1]\), their sum has degree equal to the maximum of their individual degrees:
\[
\deg_{\mathbf q}\bigl(\mathbf q \, f_{w_{n+1},[1]} + (\mathbf q - 1) \, f_{s_n w_n,[1]}\bigr)
= \max\Bigl(
    \deg_{\mathbf q}\bigl(\mathbf q \, f_{w_{n+1},[1]}\bigr),
    \deg_{\mathbf q}\bigl((\mathbf q - 1) \, f_{s_n w_n,[1]}\bigr)
\Bigr).
\]

Combining these equalities yields \(\dim X_w(1) = \deg_{\mathbf q} f_{w,[1]}\). This completes the inductive step and the proof of Theorem~\ref{thm:dim=deg}.

\subsection{Parabolic Deligne--Lusztig varieties and partial class polynomials}

A natural generalization of Section~\ref{subsec:kac-moody-deligne-lusztig} is the study of \emph{parabolic Deligne--Lusztig varieties}. For \(J \subseteq S\), \(w,b \in G^\bullet(\overline{\mathbb F}_q)\), define
\[
X_w^J(b) := \{\, g \in P_J^\bullet(\overline{\mathbb F}_q) / B^\bullet(\overline{\mathbb F}_q) \mid g^{-1} b \, \sigma(g) \in B^\bullet(\overline{\mathbb F}_q) \, w \, B^\bullet(\overline{\mathbb F}_q) \,\}.
\]
The case \(J=S\) recovers the usual Deligne--Lusztig variety. For affine flag varieties, the parabolic Deligne-Lusztig varieties arise in the study of level-changing morphisms of Shimura varieties with different level structures. Following \cite{Vi-Ann}, one may reduce to \(b \in W^J\), so the geometry can be studied using the Hecke algebra of \(W\).

A dimension=degree theorem for such varieties requires \emph{partial class polynomials}, arising from the partial cocenter
\[
\overline H^J := H \,/\, \operatorname{span}_{\mathbb Z[\mathbf q^{\pm 1}]}\{\, h_1 h_2 - h_2 h_1 \mid h_1 \in H,\ h_2 \in H_J \,\}.
\]
The following result, joint with K. Wu, will appear in a forthcoming paper.

\begin{theorem}[to appear]\label{thm:partial-cocenter}
    Let \((W,S)\) be crystallographic and \(J \subseteq S\).
    \begin{enumerate}
        \item For any \(W_J\)-conjugacy class \(\mathcal O\) of elements in \(W\), and any \(w_1,w_2 \in \mathcal O_{\min}\), the images of \(T_{w_1}\) and \(T_{w_2}\) in \(\overline H^J\) coincide. Denote this common image by \(T_{\mathcal O}\).
        \item The elements \(T_{\mathcal O}\), as \(\mathcal O\) ranges over all \(W_J\)-conjugacy classes in \(W\), form a \(\mathbb Z[\mathbf q^{\pm 1}]\)-basis of \(\overline H^J\).
    \end{enumerate}
\end{theorem}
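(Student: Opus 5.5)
The plan is to run the argument for Theorem~\ref{thm:intro_main} again, with $H$ acting on the left and only $H_J$ on the right. There are two parts: a spanning and well-definedness part (1), which is Coxeter-combinatorial, and a linear-independence part (2), which calls for separating functionals.

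\textbf{Part (1).} The first task is a relative version of Theorem~\ref{min}, with conjugation restricted to $W_J$. Concretely, we show that every $w\in W$ can be reduced to $\mathcal O_{\min}$, for $\mathcal O$ its $W_J$-conjugacy class, by cyclic shifts $w\mapsto sws$ with $s\in J$ and $\ell(sws)\le \ell(w)$. We also show that any two elements of $\mathcal O_{\min}$ are connected by strong conjugations $w\sim_x w'$ with $x\in W_J$.

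To get these statements I would adapt Marquis' geodesic argument on the Davis complex. The conjugating group now acts through $W_J$, so $W_J$-conjugacy of $w$ is the same as $W$-conjugacy of the pair $(w, W_J)$. A key tool is the decomposition $w = x\,u$ with $x\in {}^JW$ and $u\in W_J$ stable under the relevant twisted action.

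For $W_J$ finite, the statement reduces to the known finite-type result for twisted conjugacy $u\mapsto y\,u\,\delta(y)^{-1}$, using He's results on ${}^JW$ and $W_J$-conjugation: every $W_J$-class in $W$ meets ${}^J\tilde W$-type normal forms $W_{I(x)}\cdot x$.

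Once these statements are in place, the corollary proof earlier in the paper carries over verbatim. Strong conjugation by $x\in W_J$ is a commutator with $T_x\in H_J$, so $T_{\mathcal O}$ is well defined in $\overline H^J$. The spanning argument of Proposition~\ref{prop:cocenterSpanning} also applies, since the step $T_{sws}\equiv T_s^2T_w$ only uses $s\in J$.

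\textbf{Part (2).} Here I would imitate the (WSP) strategy: produce functionals on $\overline H^J_A$ that are congruent to $\delta_{\mathcal O,\mathcal O'}$ modulo $\q-1$, and then use the determinant argument of \S\ref{subsec:almostIndependenceImpliesIndependence}.

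The natural functionals are partial traces. For $y\in W$ and $f$ a trace on $H_J$, set
\[
h\mapsto \sum_{u} \q^{-\ell(u)} f(\cdot)\,\tau(T_{u^{-1}}\,h\,\cdot).
\]
This is exactly the structure of \eqref{eq:defining-F} with $I=J$, but without the $W^I$-summation, because conjugation is now only by $W_J$. The cases split as follows.
\begin{itemize}
\item \emph{Spherical $W_J$.} Everything is a finite sum. At $\q=1$, a functional of the form $h\mapsto \sum_{v\in W_J}\tau(T_{v^{-1}}h\,T_v\,T_{y^{-1}})$ counts the $W_J$-centralizer data and separates $W_J$-classes. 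This is the twisted-class analogue of Tits deformation.
\item \emph{Infinite $W_J$.} I would instead induct on $\#J$. Classes are sorted by the minimal support of the $W_J$-part. We then combine the parabolic induction of Theorem~\ref{thm:classPolynomialLifting} (inside $W_J$), the bimodule traces $\mathrm{tr}_{b,w}$ of Section~\ref{sec:elliptic}, and the orbital integrals over $\hat C$ restricted to $P_J(\mathbb F_q)$-conjugation, all transported to $\overline H^J$.
\end{itemize}

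\textbf{Main obstacle.} I expect the hardest step to be the relative minimal-length theorem in part (1) for arbitrary, possibly non-spherical, $J$. There is no standard normal form for $W_J$-conjugacy when $W_J$ is infinite. Marquis' convexity argument needs a geodesic function that is invariant only under $W_J$. My first attempt would be the function $d(w\cdot c, c)$ summed over a $W_J$-stable family of chambers. If that fails, I would fall back on reducing to $W$-conjugacy classes via the fibration $\mathcal O_W\to$ ($W_J$-classes inside it), and handle each fibre with Lemma~\ref{lem:parabolicConjugation}.
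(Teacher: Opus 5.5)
The paper gives no proof of this statement; it is announced as forthcoming joint work with K.~Wu. So there is no argument to compare yours with, and judged on its own your proposal is a plan whose two load-bearing steps are left open.

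\textbf{Part (1).} For non-spherical $J$, everything rests on a relative version of Theorem~\ref{min}: reduction to $\mathcal O_{\min}$ by cyclic shifts with $s\in J$, and strong conjugacy by elements of $W_J$. You do not prove it. Your fallback cannot supply it, for two reasons. The cyclic shifts in Theorem~\ref{min} use arbitrary $s\in S$, so they may leave the $W_J$-class. Moreover, a single $W$-class can split into infinitely many $W_J$-classes, so information about $W$-classes says nothing about the minimal elements of one fibre. Lemma~\ref{lem:parabolicConjugation} does not address this either.

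The natural tool is the partial-conjugation reduction you invoke only for finite $W_J$. One would show $w\rightarrow_J ux$ with $x\in{}^JW$ and $u\in W_I$, where $I=I(J,x)$ is the largest subset of $J$ normalized by $x$. This would identify $W_J$-classes with pairs consisting of $x$ and an $\Ad(x)$-twisted class of $W_I$. Every elementary conjugator in Proposition~\ref{prop:LSalgo} centralizes the non-spherical components, so $\Ad(x)$ is trivial on the non-spherical part of $I$. Theorem~\ref{min} together with the finite twisted theory would then apply. However, three things still have to be proved for infinite $W_J$: the reduction itself, this identification, and the length comparison placing the minimal elements of the orbit inside $W_Ix$.

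\textbf{Part (2), spherical $J$.} Your idea works after one correction. You have $\sum_{v\in W_J}\tau(T_{v^{-1}}hT_vT_{y^{-1}})=\tau(h\Theta)$ with $\Theta=\sum_v T_vT_{y^{-1}}T_{v^{-1}}$. Already for $W_J=\{1,s\}$ one finds $T_s\Theta-\Theta T_s=(\q-1)(T_{y^{-1}}T_s-T_sT_{y^{-1}})$, so this functional does not vanish on $[H,H_J]$. You need the weight $\q^{-\ell(v)}$. Then $\sum_{v\in W_J}\q^{-\ell(v)}T_vT_{y^{-1}}T_{v^{-1}}$ is the Casimir element of the symmetric algebra $(H_J,\tau)$ applied to $T_{y^{-1}}$, so it centralizes $H_J$. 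At $\q=1$ the resulting functional is $T_w\mapsto\#\{v\in W_J\mid v^{-1}wv=y\}$. This is diagonal with positive entries on the $T_{\mathcal O}$, and \S\ref{subsec:almostIndependenceImpliesIndependence} finishes, given part (1).

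\textbf{Part (2), non-spherical $J$.} This case includes $J=S$, i.e.\ Theorem~\ref{thm:intro_main} itself, and you give no argument for it.
\begin{itemize}
\item All the paper's functionals vanish on $[H,H]$. Modulo $\q-1$ they are therefore class functions on $W$, so they cannot give the congruences $\equiv\delta_{\mathcal O,\mathcal O'}$ needed to separate two $W_J$-classes inside one $W$-class.
\item The Casimir sums that would separate them in general diverge when $W_J$ is infinite. That is exactly the renormalization problem the paper's machinery solves for $J=S$.
\item You do not say what the relative versions are. ``The minimal support of the $W_J$-part'' is undefined for $w\notin W_J$; the natural invariant is $x$ together with a twisted class in $W_{I(J,x)}$.
\item Your first displayed formula has unspecified arguments.
\item There is no analogue of \eqref{eq:defining-F} over $W_J/W_{I(J,x)}$, and no finiteness proof for one.
\item Orbital integrals over $P_J(\mathbb F_q)$-stable sets do descend to $\overline H^J_{\mathbb C}$, by the proof of Proposition~\ref{prop:orbital_integral_cocenter}. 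But you give no local pairing for them, no non-degeneracy statement, and no check for an $E_7$-type defect.
\end{itemize}
Until such relative functionals are built, part (2) is proved only for spherical $J$.
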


\subsection{Further directions}

The results of this paper raise several natural questions, including: 

\begin{itemize}
    \item \textbf{Non-split groups and unequal parameters}
A natural extension is to study non-split (quasi-split) Kac--Moody groups, whose Iwahori--Hecke algebras have unequal parameters. Extending our parabolic induction and trace modules to this setting would provide an algebraic foundation for a dimension=degree theorem for non-split Deligne--Lusztig varieties. It is an interesting question whether Lusztig's pairing remains invertible in the unequal-parameter case.

\item \textbf{Disconnected groups and twisted cocenters}
For disconnected Kac--Moody groups, the standard cocenter must be replaced by the \emph{twisted cocenter} \(\overline H_{\theta} := H / [H,H]_{\theta}\), where \([H,H]_{\theta}\) is generated by \(h h' - h' \theta(h)\) for a diagram automorphism \(\theta\). While \(\theta\)-twisted conjugacy classes provide a natural spanning set, their linear independence remains open and is a prerequisite for a twisted Deligne--Lusztig theory.

\item \textbf{Infinite non-crystallographic Coxeter groups.}
The generic Hecke algebra can be defined for every Coxeter system.
For finite non-crystallographic Coxeter groups, including types
$H_3$, $H_4$, and $I_2(m)$, the cocenter basis theorem follows from the classical theory of finite Hecke algebras. For non-finite, non-crystallographic Coxeter groups, since no associated Kac--Moody group is available in this setting, our geometric methods do not apply in this setting, and establishing linear independence of \(\{T_{\mathcal O}\}\) would require new purely algebraic trace functionals.

\item \textbf{Non-invertible specializations}
At specializations such as \(\mathbf q = 0\) or roots of unity, the Hecke algebra degenerates to a $0$-Hecke algebra. The structure of the cocenter in these cases is of independent combinatorial interest and may reveal connections to  the representation theory of 0-Hecke algebras.

\item \textbf{Deligne--Lusztig varieties: further extensions}
The dimension=degree theorem for \(X_w(1)\) can be extended in several directions: the ``dimension=degree'' theorem for arbitrary \(b\), the description of the irreducible components, and the formula for the ``normalized cardinality'', and the upper and lower bound of $\dim X_w(b)$ with respect to $\ell(w)$, etc. 
\end{itemize}

We hope the methods and questions developed here will stimulate further research at the interface of Hecke algebras, Coxeter groups, and Kac--Moody groups. Especially the first two points reveal natural open questions about finite groups of Lie type.

\end{document}